\documentclass[11pt,a4paper]{amsart}
\usepackage[T1]{fontenc}
\usepackage{lmodern}
\usepackage{amsmath,amssymb,amsthm,mathrsfs,mathtools}
\usepackage{esint}
\usepackage[a4paper,left=25mm,right=25mm,top=22mm,bottom=23mm]{geometry}
\usepackage{microtype}
\usepackage{enumitem}
\usepackage{booktabs,tabularx,array}
\usepackage{cite}
\usepackage[unicode,hidelinks]{hyperref}
\numberwithin{equation}{section}
\allowdisplaybreaks[2]
\theoremstyle{plain}
\newtheorem{theorem}{Theorem}[section]
\newtheorem{proposition}[theorem]{Proposition}
\newtheorem{lemma}[theorem]{Lemma}
\newtheorem{corollary}[theorem]{Corollary}
\theoremstyle{definition}
\newtheorem{definition}[theorem]{Definition}
\newtheorem{remark}[theorem]{Remark}

\newcommand{\R}{\mathbb R}

\newcommand{\pv}{\operatorname{p.v.}}
\newcommand{\supp}{\operatorname{supp}}
\newcommand{\Lip}{\operatorname{Lip}}
\newcommand{\dist}{\operatorname{dist}}

\hypersetup{pdftitle={Reflectionless Riesz Measures: Rigidity and Stability},
 pdfauthor={Lu Chen and Haolin Liu},
 pdfsubject={Rigidity, energy estimates, and Sobolev estimates for Riesz transforms}}
\begin{document}
\title[Rigidity of reflectionless Riesz measures and Sobolev estimates]{Rigidity of reflectionless Riesz measures and Sobolev estimates for Riesz transforms on Lipschitz graphs}
\author{Lu Chen}
\address[Lu Chen]{Key Laboratory of Algebraic Lie Theory and Analysis of Ministry of Education, School of Mathematics and Statistics, Beijing Institute of Technology, Beijing
100081, PR China;  Tangshan Research Institute, Beijing Institute of Technology, Tangshan 063000, PR China}
\email{chenlu5818804@163.com}

\author{Haolin Liu$^{*}$}
\address[Haolin Liu]{Key Laboratory of Algebraic Lie Theory and Analysis of Ministry of Education, School of Mathematics and Statistics, Beijing Institute of Technology, Beijing
100081, PR China}
\email{haolinliu2023@126.com}

\author{Hongyu Liu}
\address[Hongyu Liu]{Department of Mathematics, City University of Hong Kong, Hong Kong SAR, China}
\email{hongyu.liuip@gmail.com, hongyliu@cityu.edu.hk}

\keywords{Riesz transforms, reflectionless measures, finite orthogonal energy, Lipschitz graphs, Sobolev stability}
\thanks{$*$ Corresponding author.}
\thanks{The first author was partly supported by the  National Natural Science Foundation of China (No. 12622108), a grant from Beijing Institute of Technology (No.2022CX01002) and Hebei Natural
Science Foundation (No. A2025105003).}
\thanks{\textbf{2020 Mathematics Subject Classification:} Primary 42B20; Secondary 28A75, 46E35.}

\date{}
\begin{abstract}
In this paper, we focus on the positive reflectionless measures rigidity problem proposed by Xavier Tolsa in \cite{TolsaICM}. Let \(1\le n<d\), we consider positive reflectionless measures for the
\(n\)-dimensional Riesz transform in \(\mathbb R^d\).
Under polynomial upper growth and a lower mass bound at large scales
about one point, we obtain two flatness results for reflectionless measures.
The first concerns globally defined Lipschitz graph measures with finite
\(\dot H^{1/2}\) energy and bounded, uniformly positive parameter
densities whose deviations from a constant belong to \(L^2\).
The second case concerns a reflectionless measure that coincides with a flat measure outside a compact set. In both cases, the measure is a constant multiple of the \(n\)-dimensional
Hausdorff measure on an affine \(n\)-plane.
For integers \(s>n/2\), we further obtain \(H^s\) estimates controlling
the graph gradient and density perturbation by the normalized Riesz
field under smallness of the slope and density perturbation in
\(L^\infty\). The constants are independent of the supports.
\end{abstract}
\maketitle
\tableofcontents

\section{Introduction}\label{sec:introduction}

Let $1\le n<d$ and put $m=d-n$. The $n$-dimensional Riesz kernel is
\begin{equation}\label{eq:Riesz-kernel}
 K_n(Z)=\frac{Z}{|Z|^{n+1}},\qquad Z\in\R^d\setminus\{0\}.
\end{equation}
For infinite measures, its action on the constant function must be
normalized at infinity. Throughout this paper, reflectionlessness means
the balanced weak identity introduced by Jaye--Nazarov in the class with uniformly
$L^2$-bounded Riesz truncations; the precise convention is given in
Definition~\ref{def:reflectionless}. A nonzero measure is called flat
if it has the form $c\mathcal H^n|_L$, where $c>0$ and $L$ is an
affine $n$-plane.

For integer-dimensional Riesz transforms, Jaye--Nazarov
\cite[Question~1.2(b)]{JayeNazarovII} ask whether every
AD-regular reflectionless measure is flat.
At ICM 2026, Tolsa \cite[Question~4.10]{TolsaICM} formulates the following corresponding
classification question for measures with polynomial growth.

\begin{flushleft}
 \textbf{Problem} Let $n$ be an integer such that $2\leq n\leq d-1$. If $\mu$ is a reflectionless measure with polynomial
$n-$growth (or even $n-$Ahlfors regular) for the $n-$Riesz transform, then is $\mu$ $n-$flat?
\end{flushleft}

That survey records the classification for $n=1$ and lists
the case $2\le n\le d-1$ as open, including the AD-regular
codimension-one case.
The results below concern measures satisfying the additional
finite-energy or exterior-flatness assumptions stated in our
theorems.

Our results provide a resolution of a restricted version of Tolsa's
classification problem for reflectionless Riesz measures under additional
assumptions controlling the behavior at infinity. Theorem~\ref{prop:ambient-planarity} establishes
an orthogonal energy rigidity criterion, showing that finite interaction
energy in directions orthogonal to an \(n\)-plane forces the support of the
measure to lie in a translate of that plane. This rigidity criterion is the
key ingredient in the proof of Theorem~\ref{thm:compact-flat}. Theorem~\ref{thm:global-rigidity} and Theorem~\ref{thm:compact-flat} give affirmative answers to the classification problem in
two different settings with controlled behavior at infinity. Finally, Theorem~\ref{thm:high-stability} provides a Sobolev stability estimate for the
normalized Riesz field, which can be viewed as a local stability result
for recovering geometric and density perturbations from the associated
nonlocal field.

\subsection{Rigidity under a finite-energy assumption}
We assume polynomial upper growth
\begin{equation}\label{eq:upper-growth}
 \mu(B(U,r))\le A_0r^n
 \qquad(U\in\R^d,\ r>0),
\end{equation}
and, when specified, the following lower growth condition at large scales
\begin{equation}\label{eq:large-lower-growth}
 \mu(B(O,r))\ge a_0r^n\qquad(r\ge R_*),
\end{equation}
where $O\in\R^d$ is one fixed point and $a_0,R_*>0$.
For an $n$-dimensional linear subspace $P$, let $\Pi_{P^\perp}$ be the
orthogonal projection onto $P^\perp$, and set
\begin{equation}\label{eq:ambient-perp-energy-intro}
 \mathscr E_P^\perp(\mu)
 =\frac12\iint
 \frac{|\Pi_{P^\perp}(U-V)|^2}{|U-V|^{n+1}}
 \,d\mu(U)d\mu(V).
\end{equation}
The integrand is set equal to zero on the diagonal.

\begin{theorem}
\label{prop:ambient-planarity}
Let $\mu$ be reflectionless and satisfy
\eqref{eq:upper-growth}--\eqref{eq:large-lower-growth}.
If $\mathscr E_P^\perp(\mu)<\infty$ for an $n$-dimensional linear
subspace $P$, then
\begin{equation}\label{eq:ambient-planarity}
 \supp\mu\subset b+P
 \quad\text{for some }b\in P^\perp.
\end{equation}

\end{theorem}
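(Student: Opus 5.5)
We test the reflectionless identity against the vector field \(\Pi_{P^\perp}\) applied to suitably cut-off linear functions, and show that the resulting quantity is both zero and comparable to the orthogonal energy restricted to a large ball. The algebraic identity behind this is
\[
\langle K_n(U-V),\Pi_{P^\perp}(U-V)\rangle=\frac{|\Pi_{P^\perp}(U-V)|^2}{|U-V|^{n+1}} .
\]
For each component, pair the \(j\)-th coordinate of the kernel, for \(e_j\) in an orthonormal basis of \(P^\perp\), with the test function \(\varphi_R(U)\,\langle U-O,e_j\rangle\). Here \(\varphi_R\) is a smooth bump equal to \(1\) on \(B(O,R)\), supported in \(B(O,2R)\), with \(|\nabla\varphi_R|\lesssim 1/R\). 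Antisymmetrize in \((U,V)\), as in the balanced weak formulation of Definition~\ref{def:reflectionless}. The double integral then becomes
\[
\tfrac12\iint \big\langle K_n(U-V),\,\Pi_{P^\perp}\big(\varphi_R(U)(U-O)-\varphi_R(V)(V-O)\big)\big\rangle\,d\mu\,d\mu .
\]
Write \(\varphi_R(U)(U-O)-\varphi_R(V)(V-O)=\varphi_R(U)(U-V)+(\varphi_R(U)-\varphi_R(V))(V-O)\). Summing over \(j\), the first piece gives the localized energy \(\iint \varphi_R(U)\,|\Pi_{P^\perp}(U-V)|^2|U-V|^{-n-1}\). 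Reflectionlessness makes the total vanish, up to the normalization constants at infinity, which are annihilated because the test functions are compactly supported Lipschitz functions in the admissible class. Hence the localized energy equals minus the commutator term
\[
\mathcal C_R=\tfrac12\iint \big(\varphi_R(U)-\varphi_R(V)\big)\big\langle K_n(U-V),\Pi_{P^\perp}(V-O)\big\rangle\,d\mu\,d\mu .
\]

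The key step, and the main obstacle, is to show \(\mathcal C_R\to0\) as \(R\to\infty\) (along a subsequence suffices). A crude bound \(|\Pi_{P^\perp}(V-O)|\lesssim R\) combined with \(|\varphi_R(U)-\varphi_R(V)|\lesssim\min(1,|U-V|/R)\) only gives \(\mathcal C_R=O(R^n)\) by the growth condition \eqref{eq:upper-growth}, which is useless. So we must exploit finite energy.

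First, use \(\langle K_n(U-V),w\rangle\) with \(w\in P^\perp\). By Cauchy--Schwarz, \(|\langle K_n(U-V),\Pi_{P^\perp}(V-O)\rangle|\le |\Pi_{P^\perp}(U-V)|\,|U-V|^{-n-1}|\Pi_{P^\perp}(V-O)|\). One factor is then controlled by the energy density, and the other by \(\iint (\varphi_R(U)-\varphi_R(V))^2|\Pi_{P^\perp}(V-O)|^2|U-V|^{-n-1}\).

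Second, control the transverse displacement \(|\Pi_{P^\perp}(V-O)|\) on the annulus \(B(O,4R)\setminus B(O,R/2)\). Finite energy plus the lower bound \eqref{eq:large-lower-growth} imply that the set where \(\mu\) sits at transverse distance \(\ge\delta R\) from a fixed translate of \(P\) carries energy \(\gtrsim \delta^2 R^{2}\cdot R^{n}\cdot R^{-n-1}\cdot\mu(\text{that set})\). The lower mass ensures enough partners \(U\) nearer the plane. This gives \(\mu(\{V\in B(O,4R): |\Pi_{P^\perp}(V-O)|>\delta R\})\lesssim \delta^{-2}R^{-1}\,\mathscr E^\perp_P(\mu;\text{annulus})\). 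Dyadic summation over annuli and the tail of a convergent energy then yield \(\mathcal C_R\le o(1)\cdot\) (localized energy)\(^{1/2}\), plus \(o(1)\).

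With \(\mathcal C_R\to0\), we get \(\mathscr E^\perp_P(\mu)=\lim_R(\text{localized energy})=0\). Hence \(\Pi_{P^\perp}(U-V)=0\) for \(\mu\otimes\mu\)-a.e. pair, so \(\Pi_{P^\perp}\) is \(\mu\)-a.e. constant. It equals some \(b\in P^\perp\), and closedness of \(b+P\) gives \(\supp\mu\subset b+P\), which is \eqref{eq:ambient-planarity}.

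If the commutator decay fails to close as above, the fallback is to replace \(O\) by a \(\mu\)-weighted transverse barycenter \(b_R\) on \(B(O,2R)\). This removes the linear growth of \(\Pi_{P^\perp}(V-O)\), at the cost of an extra error term that is handled by the same Poincaré-type energy inequality.
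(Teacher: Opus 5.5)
Your architecture (test with a cutoff of the orthogonal coordinate, split off a localized energy, kill a commutator) is close to the paper's final step, but the step where you invoke reflectionlessness is not available. Definition~\ref{def:reflectionless} gives $\langle\mathcal R^n(f\mu),1\rangle_\mu=0$ only for $f\in\Lip_c$ with $\int f\,d\mu=0$. Your tests $f_j=\varphi_R\langle\cdot-O,e_j\rangle$ have no reason to be balanced, and compact support plus Lipschitz regularity does not annihilate anything at infinity.

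For an unbalanced test the full pairing with $1$ is not even defined in general; the cutoff-independence estimate \eqref{eq:balanced-tail-quantitative} uses the zero mean. Its $e_j$-component is absolutely convergent under finite orthogonal energy and equals $-\mathscr B_\mu(h_{e_j},f_j)$, with $h_e(U)=e\cdot U$. But its vanishing is exactly what has to be proved. On balanced tests the identity only says that $g\mapsto\mathscr B_\mu(h_e,g)$ is a constant multiple of $g\mapsto\int g\,d\mu$ on $\Lip_c$.

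The paper removes that constant with the unit-mass tests $\beta_t$ of Lemma~\ref{lem:expanding-balance}, whose energy is $O(t^{-n-1})$ thanks to \eqref{eq:large-lower-growth}. This yields $\mathscr B_\mu(h_e,g)=0$ for all $g\in\Lip_c$ before testing with $v_R=\chi_R(h_e-c_R)$. Within your framework there is a cheaper repair: center at the $\varphi_R\,d\mu$-weighted mean. Use $f_j=\varphi_R(h_{e_j}-c'_{R,j})$ with $c'_{R,j}=\int\varphi_Rh_{e_j}\,d\mu\big/\int\varphi_R\,d\mu$. Then each test is balanced. After you identify its $e_j$-component with the absolutely convergent integral by dominated convergence, as in Lemma~\ref{lem:normal-pairing-representation}, the identity applies directly. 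Your fallback barycenter ``on $B(O,2R)$'' achieves this only if the weight is exactly $\varphi_R\,d\mu$, and you introduce it for a different purpose.

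The commutator estimate also does not close as written. With the fixed base point $O$, the second Cauchy--Schwarz factor contains a term comparable to $R^{-1}\int_{B_{4R}}|h_e-h_e(O)|^2\,d\mu$. This is of order $R^{n-1}$ when the local means of $h_e$ stay away from $h_e(O)$; a plane plus a small parallel disc at height one through $O$ has finite orthogonal energy and behaves this way. So for $n\ge2$ you get no decay of $\mathcal C_R$, and recentering is necessary, not a fallback. Nor does a weak-type Chebyshev bound on $\{|\Pi_{P^\perp}(V-O)|>\delta R\}$ control this $L^2$ quantity.

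What is needed is the strong variance bound $\int_{B_{4R}}|h_e-c_R|^2\,d\mu\le Ca_0^{-1}R\,\mathscr E_\mu(h_e)$, obtained by expanding the square and using $\mu(B_{4R})\ge a_0(4R)^n$. You also need control of the pairs with $V$ far outside $B_{2R}$, where your splitting evaluates the displacement $h_e(V)-c$; this requires telescoping of dyadic averages. The paper sidesteps the far field by writing $\mathscr E_\mu(h)-\mathscr B_\mu(h,v_R)=\mathscr B_\mu(h,h-v_R)$. It bounds this by $e_R^{1/2}\bigl(\mathscr E_\mu(h)^{1/2}+\mathscr E_\mu(v_R)^{1/2}\bigr)$, with tail energy $e_R\to0$. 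It then proves $\mathscr E_\mu(v_R)\le C\mathscr E_\mu(h)$, where the displacement only appears at points of $B_{2R}$ (Lemma~\ref{lem:energy-cutoff}). Since $|c_R-c'_R|^2\le CR^{1-n}\mathscr E_\mu(h)$, these estimates survive the weighted centering. With both repairs your outline becomes a correct proof that even dispenses with $\beta_t$.
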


The proof uses the coordinate tests and normalized cutoff functions
of \cite[Section~10]{JayeNazarovII}. Their finite
\emph{full}-energy argument rules out nonzero reflectionless measures.
In contrast, a nonzero flat measure has zero orthogonal energy but
\[
 \iint |U-V|^{1-n}\,d\mu(U)d\mu(V)=\infty.
\]
Thus their full-energy hypothesis does not apply to the flat backgrounds
considered here. The cutoff estimates that permit the use of finite
orthogonal energy are
\[
 \mathscr E_\mu(\beta_t)\le Ct^{-n-1},\qquad
 \mathscr E_\mu\bigl(\chi_R(u-u_{B_{4R},\mu})\bigr)
 \le C\mathscr E_\mu(u).
\]
These estimates are proved in Section~\ref{sec:balancing}.

\subsection{Classification of two classes of measures}
First consider a globally Lipschitz map $\varphi:\R^n\to\R^m$ and write
\begin{equation}\label{eq:graph-measure}
 X_\varphi(x)=(x,\varphi(x)),\qquad
 \Gamma_\varphi=X_\varphi(\R^n),\qquad
 \mu_{\varphi,a}=(X_\varphi)_\#(a(x)\,dx).
\end{equation}
The parameter density $a$ and the intrinsic Hausdorff density $g$ satisfy
\begin{equation}\label{eq:hausdorff-density}
 d\mu_{\varphi,a}=g\,d\mathcal H^n\big|_{\Gamma_\varphi},\qquad
 a=(g\circ X_\varphi)J_\varphi,\qquad
 J_\varphi=\sqrt{\det(I+D\varphi^TD\varphi)}.
\end{equation}
We define the homogeneous seminorm by
\begin{equation}\label{eq:half-seminorm}
 [u]_{1/2}^2=[u]_{\dot H^{1/2}(\R^n)}^2
 :=\iint_{\R^n\times\R^n}
 \frac{|u(x)-u(y)|^2}{|x-y|^{n+1}}\,dx\,dy.
\end{equation}
\begin{theorem}\label{thm:global-rigidity}
Let $\varphi:\R^n\to\R^m$ be globally Lipschitz and satisfy
$[\varphi]_{1/2}<\infty$. Suppose that
\begin{equation}\label{eq:density-bounds}
 0<c_0\le a\le C_0<\infty,\qquad
 a-a_\infty\in L^2(\R^n),\qquad a_\infty>0.
\end{equation}
If $\mu_{\varphi,a}$ is reflectionless and satisfies the polynomial upper growth condition
\eqref{eq:upper-growth} and the lower mass bound at large scales
\eqref{eq:large-lower-growth} then for some $b\in\R^m$,
\begin{equation}\label{eq:rigidity-conclusion}
 \varphi\equiv b,\qquad a\equiv a_\infty,\qquad
 \mu_{\varphi,a}=a_\infty\mathcal H^n
       \big|_{\R^n\times\{b\}}.
\end{equation}
 If $\varphi\in L^2$, or if
$\varphi=0$ outside a compact set, then $b=0$.
\end{theorem}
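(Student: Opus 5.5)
The plan is to reduce Theorem~\ref{thm:global-rigidity} to Theorem~\ref{prop:ambient-planarity} with $P=\R^n\times\{0\}$. We then read off the graph and the density from the planarity.

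\emph{Step 1: the orthogonal energy is finite.} We have $\Pi_{P^\perp}(X_\varphi(x)-X_\varphi(y))=(0,\varphi(x)-\varphi(y))$ and $|X_\varphi(x)-X_\varphi(y)|\ge|x-y|$. Pulling back by $X_\varphi$ gives
\[
 \mathscr E_P^\perp(\mu_{\varphi,a})
 =\frac12\iint\frac{|\varphi(x)-\varphi(y)|^2}{|X_\varphi(x)-X_\varphi(y)|^{n+1}}a(x)a(y)\,dx\,dy
 \le \frac{C_0^2}{2}[\varphi]_{1/2}^2<\infty.
\]
The upper growth \eqref{eq:upper-growth} and the lower bound \eqref{eq:large-lower-growth} are hypotheses of Theorem~\ref{thm:global-rigidity}. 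They are in any case automatic for Lipschitz graphs with $c_0\le a\le C_0$, using the area formula and $|x-y|\le|X_\varphi(x)-X_\varphi(y)|\le(1+\Lip\varphi)|x-y|$. Reflectionlessness is also assumed. So Theorem~\ref{prop:ambient-planarity} applies and gives $\supp\mu_{\varphi,a}\subset b+P$ for some $b\in P^\perp\cong\R^m$. Because $a\ge c_0>0$, the support is all of $\Gamma_\varphi$. Hence $\varphi(x)=b$ for every $x$, by continuity.

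\emph{Step 2: the density is constant.} The measure is now $\mu=a(x)\,dx\otimes\delta_b$ on a flat plane, with $J_\varphi=1$ and $g=a$. For measures on a plane, the $n$-dimensional Riesz transform restricted to $P$ is the vector of classical Riesz transforms $R_j$ on $\R^n$. The $P^\perp$-components of the kernel vanish identically. Write $a=a_\infty+f$ with $f\in L^2\cap L^\infty$. The constant part is reflectionless by symmetry, since the normalized transform of Lebesgue measure vanishes. Subtracting it, reflectionlessness of $\mu$ should give that, in the Jaye--Nazarov weak sense of Definition~\ref{def:reflectionless}, $\langle R_j(f\,dx),\psi\,a\rangle-\langle R_j(a\,\psi),\cdot\rangle$ balances to zero for test functions $\psi$. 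Since $f\in L^2$, the principal-value transform $R_jf$ is an honest $L^2$ function and the normalization at infinity disappears. The identity should reduce to $R_j f\cdot a=c_j$ a.e. on $\R^n$ for constants $c_j$. This forces $c_j a\in L^2+$ const with $R_jf\in L^2$, hence $c_j=0$ (as $R_jf\in L^2$ while $a\ge c_0$ is not in $L^2$). Then $R_jf=0$ for all $j$, so $\sum_j R_j^2f=-f=0$, and $a\equiv a_\infty$.

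\emph{Final claim on $b$.} If $\varphi\in L^2$, or $\varphi$ vanishes outside a compact set, then a constant $\varphi\equiv b$ must be $0$.

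\emph{Main obstacle.} I expect the main obstacle to be Step 2: translating the balanced weak reflectionless identity of Definition~\ref{def:reflectionless} into the pointwise statement $R_jf=0$. This must handle the normalization of the transform of the infinite measure $a_\infty\,dx$ and the pairing with the weight $a$. My first attempt would test the identity against the normalized cutoffs $\chi_R$ from Section~\ref{sec:balancing}, expand it in $f$, and let $R\to\infty$ using $f\in L^2$.

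Alternatively, one can avoid Step 2 entirely by applying the energy identity from Theorem~\ref{prop:ambient-planarity}'s proof to the in-plane energy $\mathscr E_\mu(u)$ with $u=f$. There $\mathscr E$ of the density perturbation is controlled by the cutoff estimate $\mathscr E_\mu(\chi_R(u-u_{B_{4R},\mu}))\le C\mathscr E_\mu(u)$, and the cross term is killed by reflectionlessness, forcing $[f]_{1/2}=0$. That argument needs $[a]_{1/2}<\infty$, which is not assumed, so I will try the Fourier route first.
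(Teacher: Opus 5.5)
\paragraph{Where the proposal falls short.} Your Step~1 and the endgame match the paper's proof: finite orthogonal energy, planarity via Theorem~\ref{prop:ambient-planarity}, a constant Riesz field that $L^2$ forces to vanish, $\sum_j\mathcal R_j^2=-I$, and $b=0$ under either normalization. The gap is the core of Step~2. You never derive the flat identity from Definition~\ref{def:reflectionless}. You write that reflectionlessness ``should give'' a balanced identity and ``should reduce'' to $a\,\mathcal R_jf=c_j$, and you flag this yourself as the main obstacle.

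The heuristic ``the constant part is reflectionless, subtract it'' is also not legitimate as stated. The weak pairing $\langle\mathcal R^n(g\mu),1\rangle_\mu$ is quadratic in $\mu$. If you split $\mu=a_\infty\,dx+f\,dx$ in both slots, the resulting pieces pair Lebesgue measure against tests with $\int g\,dx\neq0$; only $\int ga\,dx=0$ is given. These are unbalanced pairings, whose cutoff limits need not exist separately. The paper explicitly refuses to split an unbalanced pairing this way. Your suggested fix, ``expand in $f$ and let $R\to\infty$'', does not say how this is avoided.

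\paragraph{The missing idea.} Keep the balance inside the test density.
\begin{itemize}
\item Fix $g\in\Lip_c$ with $\int ga=0$ and put $h=ga$. Then $h\in L^2$ has compact support and $\int h\,dx=0$.
\item $\mathcal Rh\in L^2$ is integrable on bounded sets. The zero mean gives $|\mathcal Rh(x)|\le C|x|^{-n-1}$ far away. Hence $\mathcal Rh\in L^1$.
\item Using the uniform $L^2$ bounds of the truncations and Vitali (Lemma~\ref{lem:potential-representation}), the balanced pairing equals, up to $c_{n,R}$, the absolutely convergent integral $\int a\,\mathcal Rh\,dx$.
\item Only now split the outer factor $a=a_\infty+f$. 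First, $\int\mathcal Rh=\widehat{\mathcal Rh}(0)=0$, since $\widehat h(\xi)=O(|\xi|)$ and $\widehat{\mathcal Rh}$ is continuous. Second, $\int f\,\mathcal Rh=-\int h\,\mathcal Rf$ by $L^2$ skew-adjointness.
\end{itemize}
This gives $\int g\,a\,\mathcal Rf=0$ for every $g$ with $\int ga=0$. The annihilation argument (Lemma~\ref{lem:weighted-annihilation}) then yields $\mathcal Rf\equiv c$, not $a\,\mathcal Rf\equiv c$ as you wrote. You have conflated the Lebesgue-balanced and $\mu$-balanced test classes. The slip does not change the conclusion, since either version forces $c=0$ by $L^2$ membership. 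But it shows the reduction was not worked out, and that reduction is exactly what the proof requires.
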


The graph measure satisfies the growth assumptions of
Theorem~\ref{prop:ambient-planarity}, and its orthogonal energy is
comparable to $[\varphi]_{1/2}^2$. Once the graph is contained in a plane, the weak
identity implies that the Riesz transform $\mathcal R(a-a_\infty)$ is constant. Its $L^2$ membership
then forces that constant to vanish. The density assumption does not
include an estimate of
$\|a-a_\infty\|_2$ in terms of the graph energy.

A different application retains exact flatness at infinity but removes
the graph assumption in the exceptional region.

\begin{theorem}
\label{thm:compact-flat}
Let $\mu$ be a positive reflectionless measure satisfying
\eqref{eq:upper-growth} and \eqref{eq:large-lower-growth}. Suppose that for an affine $n$-plane $L$,
$c>0$, and $R_0<\infty$,
\begin{equation}\label{eq:exterior-flat}
 \mu\big|_{\R^d\setminus\overline B_{R_0}}
 =c\mathcal H^n\big|_{L\setminus\overline B_{R_0}}.
\end{equation}
Then $\mu=c\mathcal H^n|_L$ on all of $\R^d$.

\end{theorem}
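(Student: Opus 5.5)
The plan is to reduce to Theorem~\ref{prop:ambient-planarity} with $P$ the linear $n$-plane parallel to $L$. Write $L=b_0+P$ with $b_0\in P^\perp$ and set $\nu=\mu|_{\overline B_{R_0}}$, a finite measure with $\nu(\R^d)\le A_0R_0^n$ by \eqref{eq:upper-growth}. Outside the ball $\mu$ coincides with $\lambda:=c\mathcal H^n|_{L}$, so $\mu=\lambda-\lambda|_{\overline B_{R_0}}+\nu$.

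The first step is to show $\mathscr E_P^\perp(\mu)<\infty$. I split $\mu\otimes\mu$ into four pieces according to whether each of $U,V$ lies in $\overline B_{R_0}$. If both $U,V$ lie outside, both are on $L$, so $\Pi_{P^\perp}(U-V)=0$ and the contribution is zero. If both lie inside, I bound the integrand by $|U-V|^{1-n}$ and use \eqref{eq:upper-growth}: for fixed $U$, $\int_{B_{2R_0}}|U-V|^{1-n}d\mu(V)\lesssim A_0R_0$ by the layer-cake formula. Hence this piece is at most $CA_0^2R_0^{n+1}$. For the mixed piece, take $U\in\overline B_{R_0}$ and $V\in L\setminus\overline B_{R_0}$. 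Then $|\Pi_{P^\perp}(U-V)|=|\Pi_{P^\perp}U-b_0|\le R_0+|b_0|$, so the integrand is at most $(R_0+|b_0|)^2|U-V|^{-n-1}$. Near the ball this is handled by growth as before, since $|\Pi_{P^\perp}(U-V)|\le|U-V|$. Far away, $\int_{L\setminus B_{2R_0}}|U-V|^{-n-1}\,d\mathcal H^n(V)\lesssim R_0^{-1}$ uniformly in $U\in\overline B_{R_0}$. Integrating in $U$ against $\nu$ gives a finite bound. Therefore $\mathscr E_P^\perp(\mu)<\infty$.

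The second step applies Theorem~\ref{prop:ambient-planarity}, which is allowed because \eqref{eq:upper-growth}--\eqref{eq:large-lower-growth} hold by hypothesis. It gives $\supp\mu\subset b+P$ for some $b\in P^\perp$. Since $\supp\mu\supset L\setminus\overline B_{R_0}\neq\emptyset$, we get $b=b_0$ and $\supp\mu\subset L$. Identifying $L$ with $\R^n$, we can now write $\mu=c\,dx+\sigma$, where $\sigma=\nu-c\mathcal H^n|_{L\cap\overline B_{R_0}}$ is a compactly supported finite signed measure on $L$. The remaining task is to show $\sigma=0$.

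The third step, which I expect to be the main obstacle, uses reflectionlessness. The flat measure $c\,dx$ is itself reflectionless, and the $P^\perp$-components of the kernel vanish on $L$. So the weak reflectionless identity of Definition~\ref{def:reflectionless}, tested with the coordinate tests against Lipschitz compactly supported $\psi$, should reduce by linearity in the measure to the statement that the $n$-dimensional Riesz transform of $\sigma$ in $\R^n$ (with kernel $x/|x|^{n+1}$) vanishes $\mu$-a.e. in the balanced weak sense. The delicate point is to check that the normalization at infinity does not produce an extra constant. I would handle this with the explicit cutoffs of Section~\ref{sec:balancing}: since $\sigma$ is compactly supported, $\mathcal R\sigma$ decays like $|x|^{-n}$ at infinity, so any constant must be zero. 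This gives $\mathcal R\sigma=0$ on $L\setminus\overline B_{R_0}$, and, via the weak identity, also against test functions on $\supp\mu$.

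To conclude, note that $\mathcal R\sigma$ is the gradient of the harmonic-type potential $\sigma*|x|^{1-n}$, i.e.\ of $I_1\sigma$ up to a constant. Outside $\overline B_{R_0}$ this potential is real-analytic, decays to zero, and has vanishing gradient, so it vanishes there. Its Fourier transform is then a multiple of $|\xi|^{-1}\widehat\sigma$, with a further condition inside the ball from the weak identity on $\supp\mu$. Alternatively, and more robustly, I would pair the vanishing field with $\sigma$ itself to get $\iint|x-y|^{1-n}\,d\sigma\,d\sigma=0$. Positive definiteness of the Riesz energy, whose Fourier symbol is $|\xi|^{-1}>0$, then forces $\sigma=0$.

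Justifying this pairing for a signed $\sigma$ whose negative part is Lebesgue measure on a ball, using only the weak identity, is where I expect most of the work to lie. A possible route is to mollify $\sigma$ and use the uniform $L^2$ bounds from the Jaye--Nazarov class. The result is then $\mu=c\mathcal H^n|_L$.
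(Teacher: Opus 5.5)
\textbf{Verdict: there is a genuine gap, in the final step.} Your first two steps are the paper's own argument: finite orthogonal energy from splitting according to membership in $\overline B_{R_0}$, then Theorem~\ref{prop:ambient-planarity} together with $b=b_0$. Reducing to a Riesz-transform statement about the compact perturbation is also the right idea.

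\textbf{Why Route B fails.} After dividing by $c$, upper growth gives $\mu\ll dx$ on $L$ with bounded density $a$ (record this explicitly). So $\sigma=\rho\,dx$ with $\rho=(a-1)\mathbf 1_{\overline B_{R_0}}\in L^\infty_c$. The weak identity only tells you that the field is constant, hence zero, $\mu$-a.e., i.e.\ a.e.\ on $\{a>0\}$. It gives no information on a possible hole $\{a=0\}\cap B_{R_0}$, where $\sigma=-dx\neq0$. So pairing the vanishing field against $\sigma$ (for instance $\int x\cdot\bigl(\int\frac{x-y}{|x-y|^{n+1}}\,d\sigma(y)\bigr)d\sigma(x)=\tfrac12\iint|x-y|^{1-n}\,d\sigma\,d\sigma$) does not give zero. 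The contribution of the holes is uncontrolled, and excluding holes is exactly what must be proved. In addition, for $n=1$ the kernel $|x-y|^{1-n}\equiv1$ is not positive definite, so this pairing yields only $\sigma(\R)=0$. Mollifying and using the uniform $L^2$ bounds addresses neither problem.

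\textbf{Why Route A stalls.} You look for ``a further condition inside the ball,'' but none is needed.

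\textbf{The missing idea: anti-locality.} Test the weak identity only with zero-mean $f\in C_c^\infty(\{|x|>R_1\})$, $R_1>R_0$, where $a\equiv1$. Then
\[
 0=\int a\,\mathcal Rf=\int\mathcal Rf+\int\rho\,\mathcal Rf=-\int f\,\mathcal R\rho .
\]
Here $\int\mathcal Rf=0$ because $f$ has zero mean and compact support. This avoids any ``linearity in the measure,'' which is problematic because the flat part's pairing with $1$ is not absolutely convergent. Normalizing with one fixed bump $\eta$ shows that $\mathcal R\rho$ is a single constant on the whole exterior, including both components when $n=1$. The $|x|^{-n}$ decay makes that constant zero.

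Hence $\rho$ and every $\mathcal R_j\rho$ are compactly supported $L^2$ functions. So $F=\widehat\rho$ and $F_j=\widehat{\mathcal R_j\rho}$ are entire, and $G(z)=i\sum_j z_jF_j(z)$ is entire with $G(\xi)=|\xi|F(\xi)$ for real $\xi$. On a line $t\theta$, the identity $G(t\theta)=tF(t\theta)$ for $t>0$ extends to all $t$ by the identity theorem. But $G(t\theta)=-tF(t\theta)$ for $t<0$, so $F\equiv0$ and $\rho=0$.

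The same non-analyticity of $|\xi|$ completes your Route A for $n\ge2$. Once $I_1\sigma$ vanishes outside the ball, both $\widehat\sigma$ and $\widehat{I_1\sigma}$ are entire. The relation $\widehat\sigma=\mathrm{const}\cdot|\xi|\,\widehat{I_1\sigma}$ then forces both to vanish.
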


In the proof of Theorem~\ref{thm:compact-flat}, there is no graph assumption and no lower density assumption in
$\overline B_{R_0}$. The exterior hypothesis supplies both large-scale mass and finite
orthogonal energy. Once the support lies in $L$, upper growth gives a
bounded nonnegative density. To prove that this density is constant,
we use the exterior identity and the anti-locality of the Riesz transform.
We give an elementary Fourier proof of the required special case of the
fractional unique-continuation principle
\cite[Theorem~1.2]{GhoshSaloUhlmann}. Theorems~\ref{thm:global-rigidity}
and~\ref{thm:compact-flat} impose different conditions at infinity;
neither set of assumptions implies the other.

\subsection{Energy estimates and Sobolev stability on graphs}
The quantitative statements do not assume reflectionlessness. For
$0<c_0\le a\le C_0$ set
\begin{equation}\label{eq:graph-weight}
 w_{\varphi,a}(x,y)
 =\frac{a(x)a(y)}{|X_\varphi(x)-X_\varphi(y)|^{n+1}}.
\end{equation}
The distribution associated with the weighted orthogonal component is defined by
\begin{equation}\label{eq:weak-normal-field}
 \langle\mathcal F_{\varphi,a},v\rangle
 =\frac12\iint
 (\varphi(x)-\varphi(y))\cdot(v(x)-v(y))
 w_{\varphi,a}(x,y)\,dx\,dy,
 \qquad v\in C_c^\infty(\R^n;\R^m).
\end{equation}
For finite graph energy and bounded slope, Proposition~\ref{thm:half-stability}
gives the estimate
\begin{equation}\label{eq:half-stability}
 c[\varphi]_{1/2}\le
 \|\mathcal F_{\varphi,a}\|_{\dot H^{-1/2}}
 \le C[\varphi]_{1/2}.
\end{equation}
Here the norm is the supremum against smooth compactly supported test functions with
$[v]_{1/2}\le1$, as specified in \eqref{eq:dual-half-norm}.
This is a coercivity consequence of the energy form; it requires no
continuity or limiting value of $a$.

When the principal-value representation is available,
$\mathcal F_{\varphi,a}=a\mathcal N_{\varphi,a}$, where
\begin{equation}\label{eq:normal-field}
 \mathcal N_{\varphi,a}(x)=\pv\int
 \frac{\varphi(x)-\varphi(y)}{|X_\varphi(x)-X_\varphi(y)|^{n+1}}
 a(y)\,dy.
\end{equation}
The weak definition, not multiplication of an arbitrary distribution by
$a$, is used for merely measurable densities. Section~\ref{sec:vertical-energy}
also proves the conditional inequality
\begin{equation}\label{eq:intro-conditional-half}
 [\varphi]_{1/2}^2\le
 C\|\varphi\|_{L^2(a\,dx)}\|\mathcal N_{\varphi,a}\|_{L^2(a\,dx)}
\end{equation}
when $\varphi\in H^{1/2}$ and the weak field has the indicated $L^2$
representation. A compactly supported $C^{1,\alpha}$ graph admits this
representation for any bounded positive measurable density, and
fractional Poincar\'e recovers the linear $L^2$ estimate of
Corollary~\ref{cor:compact-half}.

For higher-order estimates write $a=1+\rho$ and, for $z=x-y$, put
\begin{equation}\label{eq:QAB-intro}
 Q_\varphi(x,y)=\frac{\varphi(x)-\varphi(y)}{|x-y|},\qquad
 A(q)=(1+|q|^2)^{-(n+1)/2},\qquad B(q)=qA(q).
\end{equation}
Subtract the horizontal constant-density background and define
\begin{align}
 \mathbf T^\top(\varphi,\rho)(x)
 &=\pv\int\frac{z}{|z|^{n+1}}
 \bigl(A(Q_\varphi(x,y))(1+\rho(y))-1\bigr)\,dy,
       \label{eq:T-tangent}\\
 \mathbf T^\perp(\varphi,\rho)(x)
 &=\pv\int\frac{B(Q_\varphi(x,y))(1+\rho(y))}{|z|^n}\,dy.
       \label{eq:T-normal}
\end{align}
The projections are relative to the fixed horizontal plane, not the
varying tangent plane. We write $\mathbf T=(\mathbf T^\top,\mathbf T^\perp)$
and first use centered parameter-space truncations.
Appendix~\ref{app:principal-values} identifies them with graph-distance
principal values under the regularity below.

\begin{theorem}\label{thm:high-stability}
Let $s\in\mathbb N$ satisfy $s>n/2$. There exist $\varepsilon_s,C_s>0$,
depending only on $n,m,s$, such that, if
\[
 \varphi\in H^{s+1}(\R^n;\R^m),\qquad \rho\in H^s(\R^n),\qquad
 \delta:=\|D\varphi\|_\infty+\|\rho\|_\infty\le\varepsilon_s,
\]
then $1+\rho\ge1/2$, $\mathbf T(\varphi,\rho)\in H^s(\R^n;\R^d)$, and
\begin{equation}\label{eq:high-stability}
 \|D\varphi\|_{H^s}+\|\rho\|_{H^s}
 \le C_s\|\mathbf T(\varphi,\rho)\|_{H^s}.
\end{equation}
 The full inhomogeneous norm satisfies
\begin{equation}\label{eq:high-with-low-frequency}
 \|\varphi\|_{H^{s+1}}+\|\rho\|_{H^s}
 \le C_s\bigl(\|\mathbf T(\varphi,\rho)\|_{H^s}+\|\varphi\|_2\bigr).
\end{equation}
If $\supp\varphi\subset B_R$, the $L^2$ term can be omitted with a
constant depending on $R$.
\end{theorem}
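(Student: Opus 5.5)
We prove \eqref{eq:high-stability} by a perturbative argument around the flat background $(\varphi,\rho)=(0,0)$. The plan is to split $\mathbf T$ into its linearization plus a remainder, invert the linear part on $H^s$, and absorb the remainder using smallness of $\delta$ and tame (Moser-type) estimates.

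\textbf{Step 1: the linear part.} Expand $A(q)=1-\tfrac{n+1}{2}|q|^2+O(|q|^4)$ and $B(q)=q+O(|q|^3)$. Then
\[
 \mathbf T^\top(\varphi,\rho)=\pv\int\frac{z}{|z|^{n+1}}\rho(y)\,dy+\mathcal E^\top,\qquad
 \mathbf T^\perp(\varphi,\rho)=\pv\int\frac{\varphi(x)-\varphi(y)}{|z|^{n+1}}\,dy+\mathcal E^\perp .
\]
Up to dimensional constants, the first main term is the vector of Riesz transforms $c_n\mathcal R\rho$ on $\R^n$. The second is $c_n(-\Delta)^{1/2}\varphi$, since $\varphi(x)-\varphi(y)$ is already odd-compensated.

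On the Fourier side,
\[
 |\widehat{\mathcal R\rho}(\xi)|=|\hat\rho(\xi)|,\qquad
 |\xi||\hat\varphi(\xi)|\simeq|\widehat{D\varphi}(\xi)| .
\]
Hence the linear operator $L(\varphi,\rho)=(c\mathcal R\rho,\,c(-\Delta)^{1/2}\varphi)$ satisfies
\[
 \|D\varphi\|_{H^s}+\|\rho\|_{H^s}\simeq\|L(\varphi,\rho)\|_{H^s},
\]
with no low-frequency loss. The $\mathcal R\rho$ component is vector valued, and $\sum_j|\widehat{R_j\rho}|^2=|\hat\rho|^2$, so it loses nothing. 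The bound $1+\rho\ge1/2$ is immediate from $\|\rho\|_\infty\le\varepsilon_s\le1/2$.

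\textbf{Step 2: the nonlinear remainder.} This is the main obstacle. We must show
\[
 \|\mathcal E\|_{H^s}\le C\delta\bigl(\|D\varphi\|_{H^s}+\|\rho\|_{H^s}\bigr).
\]
The remainder consists of terms of the form
\[
 \pv\int\frac{z}{|z|^{n+1}}\,F(Q_\varphi(x,y))\bigl(1+\rho(y)\bigr)\,dy,
\]
where $F$ vanishes at least to second order (tangential part) or third order (normal part, after removing $q$). There are also cross terms $\rho(y)(A(Q)-1)$ and $\rho(y)(B(Q)-0)$.

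The plan is to write
\[
 Q_\varphi(x,y)=\int_0^1D\varphi(x-tz)\,dt\cdot\frac{z}{|z|},
\]
which makes these Calder\'on commutator–type operators, in the spirit of Coifman–McIntosh–Meyer. Their kernels are smooth in $D\varphi$ with $|Q|\le\|D\varphi\|_\infty$.

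We differentiate $s$ times in $x$, using $\partial_x$ acting jointly: translate $y=x-z$, so derivatives fall on $D\varphi(x-tz)$ and $\rho(x-z)$. Each derivative term is then a multilinear operator. In each term the top-order factor ($D^{s+1}\varphi$ or $D^s\rho$) is measured in $L^2$ and the remaining factors in $L^\infty$. When several derivatives are split among factors, the intermediate factors are controlled by Gagliardo–Nirenberg interpolation, $\|D^k f\|_{L^{2s/k}}\lesssim\|f\|_\infty^{1-k/s}\|D^sf\|_2^{k/s}$.

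The $L^2$ boundedness of each multilinear kernel operator, with norm $C(\|D\varphi\|_\infty)$ times the $L^\infty$ norms, is supplied by the Coifman–McIntosh–Meyer theorem. Because $F$ vanishes to second order, every term carries at least one extra factor bounded by $\delta$. Since $s>n/2$, the embedding $H^s\subset L^\infty$ is available where needed to close the products. This yields the claimed bound on $\mathcal E$, and it also shows $\mathbf T\in H^s$.

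\textbf{Step 3: absorption and the inhomogeneous bound.} Combining the two steps,
\[
 \|D\varphi\|_{H^s}+\|\rho\|_{H^s}\le C\|\mathbf T\|_{H^s}+C\delta\bigl(\|D\varphi\|_{H^s}+\|\rho\|_{H^s}\bigr).
\]
Choosing $\varepsilon_s$ with $C\varepsilon_s\le1/2$ absorbs the last term and gives \eqref{eq:high-stability}.

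For \eqref{eq:high-with-low-frequency}, we use
\[
 \|\varphi\|_{H^{s+1}}\simeq\|\varphi\|_2+\|D\varphi\|_{H^s}.
\]

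When $\supp\varphi\subset B_R$, the Poincaré inequality gives $\|\varphi\|_2\le CR\|D\varphi\|_2$, so the $L^2$ term is dominated by the left side of \eqref{eq:high-stability} and can be dropped, at the cost of a constant depending on $R$.
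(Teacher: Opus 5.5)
Your architecture is the paper's: the linearization $(c_{n,R}\mathcal R\rho,c_{n,\Lambda}|D|\varphi)$, Plancherel coercivity for $\|D\varphi\|_{H^s}+\|\rho\|_{H^s}$, the substitution $y=x-z$ so that derivatives fall only on the entries, Gagliardo--Nirenberg with exponents $2s/k$, absorption, and Poincar\'e when $\supp\varphi\subset B_R$.

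\textbf{The gap.} It lies in the one hard step, which you attribute to Coifman--McIntosh--Meyer. That theorem, including its polynomial-growth $k$-linear form, bounds $\pv\int\frac1{x-y}\prod_j\frac{A_j(x)-A_j(y)}{x-y}f(y)\,dy$ in $L^2$ only when $f\in L^2$ and every $A_j'\in L^\infty$. Your scheme needs estimates of a different shape, in three places.

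First, the pure geometric terms $\pv\int\frac{z}{|z|^{n+1}}(A(Q_\varphi)-1)\,dy$ and $\pv\int|z|^{-n}(B(Q_\varphi)-Q_\varphi)\,dy$ have endpoint $f\equiv1\notin L^2$. Even their $L^2$ norms must therefore be bounded by $\|D\varphi\|_\infty^{q-1}\|D\varphi\|_2$, with the square-integrable factor on a coefficient.

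Second, every top-order term in which $D^\alpha$ falls on a graph factor inside a difference quotient needs $\|D^{s+1}\varphi\|_2$ on a coefficient, with the endpoint ($1$ or $\rho$) in $L^\infty$. This does not follow from an $L^2$ operator bound, since a Calder\'on--Zygmund operator bounded on $L^2$ only sends $L^\infty$ to BMO.

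Third, after the Gagliardo--Nirenberg step the factors lie in $L^{2s/k_i}$ with $\sum_i k_i/(2s)=1/2$. What is really needed is the H\"older-type family $\|\mathfrak C_{q,\Omega}(b_1,\dots,b_q;f)\|_2\lesssim\|f\|_{p_0}\prod_j\|Db_j\|_{p_j}$ with $\sum_j 1/p_j=1/2$, and with constants growing at most exponentially in $q$. A bound with one factor in $L^2$ and the rest in $L^\infty$ cannot be combined with $L^{2s/k}$ inputs. If you fall back on Sobolev embedding for the intermediate factors, two high norms appear in one product, and smallness of $\delta$ alone no longer permits absorption.

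\textbf{How the paper closes it.} This is exactly what Theorem~\ref{thm:CJ-holder} supplies. Muscalu's one-dimensional theorem gives the $q+1$ vertex estimates with $L^2$ on any single entry, including the endpoint $f=1$ in $L^\infty$, with constants $C(1+q)^{N_0}$. The method of rotations, using the parity condition \eqref{eq:CJ-parity}, transfers them to $\R^n$ uniformly in the smooth truncation. Multilinear interpolation between the vertices then yields the H\"older family with no further loss in $q$. The paper expands $A$ and $B$ in binomial series, whose coefficients grow polynomially, so every monomial is such a form, and sums geometrically for $\eta<\eta_*$.

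Keeping $F(Q_\varphi)$ intact and differentiating by the chain rule, as you propose, produces kernels $F^{(j)}(Q_\varphi)\prod_i\Delta_z(D^{\alpha_i}\varphi)$. These need the same mixed-exponent bounds, so the issue is not avoided.

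\textbf{A smaller omission.} You do not justify that the principal values defining $\mathbf T$ exist and coincide with the $H^s$ limit of uniformly bounded regularizations. The paper does this in Lemma~\ref{lem:series-convergence} and Proposition~\ref{prop:canonical-realization}.
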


The linear part is
\begin{equation}\label{eq:intro-linearization}
 D\mathbf T(0,0)[\psi,\sigma]
 =(c_{n,R}\mathcal R\sigma,c_{n,\Lambda}|D|\psi),
\end{equation}
where the constants are nonzero and depend only on the kernel
normalization. The main nonlinear estimate is
\begin{equation}\label{eq:intro-remainder}
 \|\mathbf T(\varphi,\rho)-D\mathbf T(0,0)[\varphi,\rho]\|_{H^s}
 \le C_s\delta\bigl(\|D\varphi\|_{H^s}+\|\rho\|_{H^s}\bigr).
\end{equation}
The proof uses Theorem~\ref{thm:CJ-holder}, which is obtained by applying
\cite[Theorem~1.1 and Section~2]{MuscaluII} on lines and using the method
of rotations. Interpolation and the product rule then give an estimate
with one Sobolev norm and the remaining factors in $L^\infty$.
Smallness is required only of $\delta$, not of the Sobolev norms.
All differentiations are
integer-order; no endpoint $s=n/2$ or fractional-order assertion is made.
The space remains $H^{s+1}\times H^s$, rather than its completion
under the derivative norm.

\subsection{Relation to previous work and organization}
The direct graph comparison is Tolsa's estimate
\cite[Theorem~1.3]{Tolsa2008}. For a compactly supported graph function
$A$, small $\|DA\|_\infty$, and a bounded positive Hausdorff density with
$\|g-1\|_{L^2(\Gamma)}\le C_2\|DA\|_2$, it gives
\begin{equation}\label{eq:Tolsa-comparison}
 \|\pv\mathcal R^{n,\perp}\mu\|_{L^2(\mu)}
 \asymp\|\pv\mathcal R^n\mu\|_{L^2(\mu)}
 \asymp\|DA\|_2.
\end{equation}
Our arbitrary-slope result is exact rigidity under a finite-energy
hypothesis, not a replacement for this $H^1$ lower bound. The parameter
density and Hausdorff density must be distinguished, particularly without
compact support; see Appendix~\ref{app:density-conversion}.

The graph estimates are part of a broader theory. In codimension one,
Nazarov--Tolsa--Volberg \cite{NazarovTolsaVolberg} obtain uniform
rectifiability from AD regularity and $L^2$ boundedness of the Riesz
transform. Tolsa \cite{TolsaNewCriteria} gives local rectifiability
criteria from Riesz-field oscillation and density information. These are
geometric regularity results for general measures, rather than the
classification results under the additional assumptions made here. On the nonlinear
operator side, shape dependence of layer potentials and Sobolev estimates
for graph-dependent singular integrals have an established theory; see,
for example, \cite{DallaRivaLuzziniMusolino,MatiocMatioc}. The estimates below use these established multilinear methods for
the particular graph and density variables in \eqref{eq:T-tangent}--
\eqref{eq:T-normal}.

Section~\ref{sec:reflectionless-framework} fixes the weak pairing and the
operator conventions. Section~\ref{sec:balancing} proves
Theorems~\ref{prop:ambient-planarity} and~\ref{thm:global-rigidity}.
The classification under the exterior flatness assumption and its annular compactness consequence
are proved in Sections~\ref{sec:compact-flat}.
Section~\ref{sec:vertical-energy} treats the orthogonal energy field.
Sections~\ref{sec:linearization}--\ref{sec:high-order-proof} prove
Theorem~\ref{thm:high-stability} and the local two-point estimate.

\section{Preliminaries}\label{sec:reflectionless-framework}
This section establishes the weak formulation and operator conventions used throughout the paper. Using the growth condition and cancellation of zero-mass tests, we define the balanced Riesz pairing and justify its representation by an integrable potential under the stated hypotheses. We also record the basic properties of weighted graph measures and the relevant Fourier and fractional Poincar\'e identities, providing the framework for the rigidity proofs and subsequent quantitative estimates.
\subsection{Growth, truncations, and the weak pairing}
Balls in the ambient space are denoted by $B(U,r)$; $B_R\subset\R^n$
denotes a parameter-space ball when graph coordinates are in use.
All unlabelled integrals over graph parameters are Lebesgue integrals.
We use the upper-growth condition \eqref{eq:upper-growth} throughout.
The lower bound \eqref{eq:large-lower-growth} is assumed only when stated.
A globally $n$-AD regular measure has both upper and lower bounds at all
radii about every point of its support. Thus \eqref{eq:large-lower-growth}
is weaker than global AD regularity.
For functions on a parameter space, $D$ denotes the full derivative;
$\|Du\|_{H^s}$ is the Euclidean sum of the component Sobolev norms.
The spaces $H^s$ are the usual inhomogeneous $L^2$ Sobolev spaces.
We write $\Lip_c$ for compactly supported Lipschitz functions.
Constants may change between occurrences; their dependencies are stated
in the corresponding assertion.

For compactly supported Lipschitz $f,\psi$, set
\[
 H_{f,\psi}(U,V)=\tfrac12(f(V)\psi(U)-f(U)\psi(V)).
\]
The identity
\[
 2H_{f,\psi}=(f(V)-f(U))\psi(U)+f(U)(\psi(U)-\psi(V))
\]
gives
\begin{equation}\label{eq:H-Lipschitz}
 |H_{f,\psi}(U,V)|\le
 \tfrac12\bigl(\Lip(f)\|\psi\|_\infty+
 \|f\|_\infty\Lip(\psi)\bigr)|U-V|.
\end{equation}

We shall repeatedly use two integrals that follow from upper growth.
For the first one decompose the punctured ball into
\[
 A_j(U,r)=\{V:2^{-j-1}r<|U-V|\le 2^{-j}r\},\qquad j\ge0.
\]
Since $n\ge1$, on $A_j(U,r)$ one has
$|U-V|^{1-n}\le(2^{-j-1}r)^{1-n}$. Consequently
\begin{align}
 \int_{0<|U-V|<r}|U-V|^{1-n}\,d\mu(V)
 &\le\sum_{j\ge0}(2^{-j-1}r)^{1-n}
                  \mu(B(U,2^{-j}r))\notag\\
 &\le 2^{n-1}A_0r\sum_{j\ge0}2^{-j}
 \le C_nA_0r.\label{eq:near-growth}
\end{align}
For the second one use the exterior annuli
$2^jr<|U-V|\le2^{j+1}r$:
\begin{align}
 \int_{|U-V|>r}|U-V|^{-n-1}\,d\mu(V)
 &\le\sum_{j\ge0}(2^jr)^{-n-1}\mu(B(U,2^{j+1}r))\notag\\
 &\le 2^nA_0r^{-1}\sum_{j\ge0}2^{-j}
 \le C_nA_0r^{-1}.\label{eq:far-growth}
\end{align}
These estimates are valid for arbitrary ambient centers; the constants
are independent of $U$ and $r$. Upper growth also gives
$\mu(\{U\})\le\inf_{r>0}A_0r^n=0$. Thus the product diagonal has zero
$\mu\otimes\mu$ measure. Indeed, on every bounded product set this
follows from Tonelli applied to $\mu(\{U\})$, and the whole diagonal is
the union of its bounded portions.
We say that a Radon measure $\mu$ has restricted growth at
infinity if
\[
 \int_{|U|>1}|U|^{-n-1}\,d\mu(U)<\infty.
\]
This condition is unchanged if the origin is replaced by any
fixed center and the cutoff radius $1$ by any positive radius.
Under \eqref{eq:upper-growth}, it follows from
\eqref{eq:far-growth}.
Put $K=\supp f\cup\supp\psi$ and choose $H$ with $K\subset B_H$.
The factor $H_{f,\psi}$ vanishes unless both variables belong to $K$.
Near the diagonal, \eqref{eq:H-Lipschitz} and \eqref{eq:near-growth} give
\[
 \iint_{\substack{U,V\in K\\0<|U-V|<1}}
 |K_n(U-V)H_{f,\psi}(U,V)|\,d\mu(U)d\mu(V)
 \le C_{f,\psi}A_0\mu(K)<\infty.
\]
On $K\times K\cap\{|U-V|\ge1\}$ the integrand is bounded and both
measures have finite mass. Hence the compact pairing
\begin{equation}\label{eq:JN-pairing}
 \langle\mathcal R^n(f\mu),\psi\rangle_\mu
 =\iint K_n(U-V)H_{f,\psi}(U,V)\,d\mu(U)d\mu(V)
\end{equation}
is absolutely convergent. A value of zero may be assigned on the
diagonal. If an odd regularization changes the kernel only for
$|U-V|<C\varepsilon$ and is bounded in absolute value by
$C|U-V|^{-n}$, its error in this pairing is at most
$C_{f,\psi}\mu(K)A_0\varepsilon$. This also explains the independence
of the small-scale regularization in the compact weak form.

A compactly supported signed measure $\nu$ is balanced if
$\nu(\R^d)=0$.  For $\nu=f\mu$, this means $\int f\,d\mu=0$.
If $\supp\nu\subset B(0,R_f)$ and $|Z|>2R_f$, then
\begin{align}
 \mathcal R^n\nu(Z)
 &=\int\bigl(K_n(Z-W)-K_n(Z)\bigr)\,d\nu(W),
       \label{eq:balanced-decay-identity}\\
 |\mathcal R^n\nu(Z)|
 &\le C|Z|^{-n-1}\int |W|\,d|\nu|(W).
       \label{eq:balanced-decay-bound}
\end{align}
Indeed $|DK_n(Z-tW)|\le C|Z|^{-n-1}$ for $0\le t\le1$.
Together with \eqref{eq:far-growth}, this makes the far-field potential
integrable against $\mu$.

For $0\le\chi_k\le1$, $\chi_k\in\Lip_c(\R^d)$ and
$\chi_k=1$ on $B(0,k)$, define
\begin{equation}\label{eq:JN-balanced-extension}
 \langle\mathcal R^n(f\mu),1\rangle_\mu
 :=\lim_{k\to\infty}\langle\mathcal R^n(f\mu),\chi_k\rangle_\mu,
 \qquad \int f\,d\mu=0.
\end{equation}
For large $k,j$, the difference of two cutoffs vanishes near $\supp f$.
Their pairing difference is therefore the ordinary integral of
$\mathcal R^n(f\mu)$ against that cutoff difference, and tends to zero
by \eqref{eq:balanced-decay-bound}.  This proves existence and independence
of the cutoff.  Small-scale symmetric regularizations give the same
compact pairing by \eqref{eq:H-Lipschitz}--\eqref{eq:near-growth}.
This is the balanced extension of \cite[Sections~3.3--3.5]{JayeNazarovI}.
For clarity, the cutoff argument can be written without assuming that a
principal value exists on the support. Put $\nu=f\mu$ and
$M_1(\nu)=\int|W|\,d|\nu|(W)$. If $h\in\Lip_c$ vanishes in
$B(0,2R_f)$, the two supports in the pairing are separated, and oddness
and Fubini give
\begin{align}
 \langle\mathcal R^n\nu,h\rangle_\mu
 &=\frac12\iint K_n(U-V)
       \bigl(f(V)h(U)-f(U)h(V)\bigr)\,d\mu(U)d\mu(V)\notag\\
 &=\int h(U)\left[\int K_n(U-V)\,d\nu(V)\right]d\mu(U).
 \label{eq:separated-pairing}
\end{align}
The inner integral is ordinary, because $U$ lies away from $\supp\nu$.
For $S>2R_f$ and $\supp h\subset B_S^c$, the zero mass of $\nu$ yields
\begin{equation}\label{eq:balanced-tail-quantitative}
 |\langle\mathcal R^n\nu,h\rangle_\mu|
 \le C\|h\|_\infty M_1(\nu)
      \int_{|U|>S}|U|^{-n-1}\,d\mu(U)
 \le C A_0\|h\|_\infty M_1(\nu)/S.
\end{equation}
In particular, if $\chi_j$ and $\chi_k$ are two admissible cutoffs,
then $h=\chi_j-\chi_k$ vanishes on $B_{\min(j,k)}$ and
$\|h\|_\infty\le1$. Thus their pairings form a Cauchy family.
The same estimate compares two entirely different cutoff sequences.
After taking this limit, it also defines the notation
$\langle\mathcal R^n\nu,1-\chi_S\rangle_\mu$ and bounds it by
$C A_0M_1(\nu)/S$. Neither \eqref{eq:separated-pairing} nor this argument
requires a pointwise potential near the support.

\begin{definition}[Reflectionless measures]
\label{def:reflectionless}
A positive Radon measure with \eqref{eq:upper-growth} is called
reflectionless in this paper if the symmetric truncations
$\mathcal R^n_{\mu,\varepsilon}f(U)
=\int_{|U-V|>\varepsilon}K_n(U-V)f(V)\,d\mu(V)$ are uniformly bounded
on $L^2(\mu)$ and
\begin{equation}\label{eq:reflectionless}
 \langle\mathcal R^n(f\mu),1\rangle_\mu=0
 \quad\text{for all }f\in\Lip_c(\R^d),\quad\int f\,d\mu=0.
\end{equation}
The identity is the Jaye--Nazarov weak reflectionless identity.  The
operator bound is an additional standing condition specifying the class
considered here.
\end{definition}

The untruncated double integral with the test $1$ need not be absolutely
convergent, even on a plane: its far-field absolute value can behave
like $r^{-n}$, whose radial integral is logarithmically divergent.
Consequently we do not split an unbalanced full-vector pairing into
separate terms.  Absolute representations for individual finite-energy
coordinates will be established before they are used.

\begin{lemma}
\label{lem:potential-representation}
Let $f\in\Lip_c(\R^d)$, $\int f\,d\mu=0$, and suppose that the symmetric
potentials $\mathcal R^n_\varepsilon(f\mu)$ converge $\mu$-a.e. to $V$.
If the truncated operators are uniformly $L^2(\mu)$ bounded and
$V\in L^1(\mu)$, then
\begin{equation}\label{eq:potential-representation}
 \langle\mathcal R^n(f\mu),1\rangle_\mu=\int V\,d\mu.
\end{equation}
The same conclusion holds if local $L^1(\mu)$ convergence of the
truncations is known directly.
\end{lemma}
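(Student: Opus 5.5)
The plan is to write the balanced pairing as a limit of compact pairings against cutoffs, identify each compact pairing with an integral of the truncated potential against the cutoff, and then pass to the limit in two stages: first $\varepsilon\to0$ for a fixed cutoff, then $k\to\infty$.

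\emph{Step 1: compact pairing at fixed truncation.} Fix an admissible cutoff $\chi_k$ and $\varepsilon>0$. Replace $K_n$ by the truncated kernel $K_n\mathbf 1_{|U-V|>\varepsilon}$. The resulting double integral is absolutely convergent, since both variables lie in a compact set and the kernel is bounded there. Antisymmetry of the truncated kernel and Fubini then give
\[
 \iint_{|U-V|>\varepsilon}K_n(U-V)H_{f,\chi_k}(U,V)\,d\mu\,d\mu
 =\int \chi_k\,\mathcal R^n_\varepsilon(f\mu)\,d\mu .
\]
As $\varepsilon\to0$, the left side tends to $\langle\mathcal R^n(f\mu),\chi_k\rangle_\mu$. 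Indeed, the discarded region $|U-V|<\varepsilon$ contributes at most $C_{f,\chi_k}A_0\mu(K)\varepsilon$, by \eqref{eq:H-Lipschitz} and \eqref{eq:near-growth}, as already remarked after \eqref{eq:JN-pairing}.

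\emph{Step 2: limit on the right side for fixed $k$.} Here we use the operator hypothesis. Uniform $L^2(\mu)$ boundedness shows that $\mathcal R^n_\varepsilon(f\mu)$ is bounded in $L^2(\mu)$ uniformly in $\varepsilon$. Together with $\mu$-a.e. convergence to $V$, this gives uniform integrability on the compact set $\supp\chi_k$, which has finite $\mu$-measure. Vitali's theorem then yields $\int\chi_k\mathcal R^n_\varepsilon(f\mu)\,d\mu\to\int\chi_k V\,d\mu$. Alternatively, this is immediate if local $L^1(\mu)$ convergence is assumed directly, which covers the last sentence of the lemma. Hence $\langle\mathcal R^n(f\mu),\chi_k\rangle_\mu=\int\chi_kV\,d\mu$ for every $k$.

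\emph{Step 3: $k\to\infty$.} Since $V\in L^1(\mu)$, $0\le\chi_k\le1$ and $\chi_k\to1$ pointwise, dominated convergence gives $\int\chi_kV\,d\mu\to\int V\,d\mu$. The left side converges to $\langle\mathcal R^n(f\mu),1\rangle_\mu$ by the definition \eqref{eq:JN-balanced-extension}, which exists and is independent of the cutoff by \eqref{eq:balanced-tail-quantitative}. This proves \eqref{eq:potential-representation}.

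The main obstacle I expect is Step 2: one must justify interchanging $\varepsilon\to0$ with integration against $\chi_k$ without a pointwise bound on $\mathcal R^n_\varepsilon(f\mu)$ near $\supp f$. The uniform $L^2$ bound is exactly what supplies the needed equi-integrability on sets of finite measure, so I would handle it via Vitali rather than dominated convergence.
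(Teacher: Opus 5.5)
Your proposal is correct and follows essentially the same route as the paper. The paper likewise symmetrizes the truncated pairing against a fixed compact cutoff and lets $\varepsilon\to0$, using the near-diagonal majorant on one side and Vitali's theorem, fed by the uniform $L^2(\mu)$ bound, on the other; it then sends the cutoffs to $1$ by dominated convergence using $V\in L^1(\mu)$. The paper also remarks that Vitali is applied along every sequence $\varepsilon_j\downarrow0$, which handles the continuous parameter.
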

\begin{proof}
We separate the two limits. For fixed $\varepsilon>0$ and a fixed
compact cutoff $\chi$, the variables in
\[
 I_{\varepsilon,\chi}
 =\int\chi(U)\mathcal R^n_\varepsilon(f\mu)(U)\,d\mu(U)
\]
are confined to $\supp\chi\times\supp f$ and the diagonal has been
removed. This is an absolutely convergent integral. Exchanging $U,V$
in a second copy and using $K_n(V-U)=-K_n(U-V)$ gives
\begin{align}
 I_{\varepsilon,\chi}
 &=\iint_{|U-V|>\varepsilon}K_n(U-V)f(V)\chi(U)\,d\mu(U)d\mu(V)\notag\\
 &=-\iint_{|U-V|>\varepsilon}K_n(U-V)f(U)\chi(V)\,d\mu(U)d\mu(V)\notag\\
 &=\frac12\iint_{|U-V|>\varepsilon}K_n(U-V)
       \bigl(f(V)\chi(U)-f(U)\chi(V)\bigr)\,d\mu(U)d\mu(V).
 \label{eq:potential-truncated-symmetry}
\end{align}
By the compact absolute majorant, the last expression converges to
$\langle\mathcal R^n(f\mu),\chi\rangle_\mu$.

To pass to the limit in the first expression, note that compactness
of $\supp f$ gives $f\in L^2(\mu)$. If the operator bound is $M$, then
\[
 \sup_{\varepsilon>0}\|\mathcal R^n_\varepsilon(f\mu)\|_{L^2(\mu)}
 \le M\|f\|_{L^2(\mu)}.
\]
For every measurable $E\subset\supp\chi$,
\begin{equation}\label{eq:potential-uniform-integrability}
 \sup_\varepsilon\int_E|\mathcal R^n_\varepsilon(f\mu)|\,d\mu
 \le M\|f\|_{L^2(\mu)}\mu(E)^{1/2}.
\end{equation}
Thus the potentials are uniformly integrable on the finite-measure set
$\supp\chi$. Their almost-everywhere convergence, together with
\eqref{eq:potential-uniform-integrability}, implies local $L^1$ convergence
by Vitali's theorem. The argument applies to every sequence
$\varepsilon_j\downarrow0$, and hence to the full truncation limit.
We have proved
\[
 \langle\mathcal R^n(f\mu),\chi\rangle_\mu
   =\int\chi V\,d\mu.
\]
Finally take $\chi=\chi_k$. Since $0\le\chi_k\le1$, $\chi_k\to1$
pointwise and $V\in L^1(\mu)$,
\[
 \langle\mathcal R^n(f\mu),1\rangle_\mu
 =\lim_k\int\chi_kV\,d\mu=\int V\,d\mu.
\]
Under the alternative hypothesis of local $L^1$ convergence, the Vitali
step is unnecessary. Local integrability of the limit alone would not
justify that step; the uniform integrability or the stated convergence
hypothesis is essential.
\end{proof}

\subsection{Weighted graphs and flat operators}
\begin{proposition}
\label{prop:graph-structural}
If $\varphi$ is globally Lipschitz and $0<c_0\le a\le C_0$, then
$\mu_{\varphi,a}$ is globally $n$-AD regular.  It has restricted growth without any additional assumption
at infinity.  The Riesz truncations are uniformly
bounded on $L^2(\mu_{\varphi,a})$.
\end{proposition}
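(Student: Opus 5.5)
Write $L=\Lip(\varphi)$ and $\mu=\mu_{\varphi,a}$. The proof has three parts: AD regularity, restricted growth, and $L^2$ boundedness of the truncations.

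\emph{AD regularity.} The basic fact is the bi-Lipschitz comparison
\[
|x-y|\le|X_\varphi(x)-X_\varphi(y)|\le\sqrt{1+L^2}\,|x-y| ,
\]
which is immediate from the definition of $X_\varphi$.

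Fix $U\in\R^d$ and $r>0$. If $X_\varphi(x)\in B(U,r)$, then for any other such point $X_\varphi(y)$ we get $|x-y|\le 2r$. So $X_\varphi^{-1}(B(U,r))$ has diameter at most $2r$ and lies in a parameter ball of radius $2r$. Hence
\[
\mu(B(U,r))\le C_0|B_{2r}|\le C_nC_0r^n ,
\]
which is \eqref{eq:upper-growth} with $A_0=C_nC_0$, valid for every ambient centre.

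For the lower bound, take $U=X_\varphi(x_0)$ on the support. Then $X_\varphi\bigl(B(x_0,r/\sqrt{1+L^2})\bigr)\subset B(U,r)$, so
\[
\mu(B(U,r))\ge c_0|B_1|(1+L^2)^{-n/2}r^n .
\]
This holds for all $r>0$, and in particular gives \eqref{eq:large-lower-growth} about any point of $\Gamma_\varphi$.

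\emph{Restricted growth.} This follows directly from \eqref{eq:far-growth} applied with the upper growth just proved. No assumption at infinity on $\varphi$ or $a$ is needed.

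\emph{Uniform $L^2$ bounds.} This is the only substantial step. I would not prove it from scratch, but reduce it to the classical theory of singular integrals on Lipschitz graphs.

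First, $f\mapsto f\circ X_\varphi$ is an isomorphism from $L^2(\mu)$ onto $L^2(\R^n)$. Its norms are controlled by $c_0$ and $C_0$, because $d\mu$ pulls back to $a\,dx$.

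Under this identification, the truncation becomes
\[
T_\varepsilon h(x)=\int_{|X_\varphi(x)-X_\varphi(y)|>\varepsilon}K_n\bigl(X_\varphi(x)-X_\varphi(y)\bigr)\,a(y)h(y)\,dy .
\]
The weight $a$ is bounded above and below, so multiplication by $a$ is harmless. The kernel $K_n(X_\varphi(x)-X_\varphi(y))$ is of the form $|x-y|^{-n}F\bigl((\varphi(x)-\varphi(y))/|x-y|,\,(x-y)/|x-y|\bigr)$ with $F$ smooth and odd. This is exactly the class covered by the Coifman--McIntosh--Meyer/David theorem for Lipschitz graphs. Equivalently, one can cite David's theorem that the Riesz transform is bounded on $L^2$ of a Lipschitz graph with respect to $\mathcal H^n$. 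In higher dimension and codimension this theorem is proved via the method of rotations and Calder\'on commutators, the same mechanism as the paper's Theorem~\ref{thm:CJ-holder}.

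Two technical points need care.

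\emph{Ambient versus parameter truncations.} The truncations in the definition are taken in ambient distance, while the classical results give bounds for the maximal truncated operator in parameter distance. The difference between the two is supported where $|x-y|\in[\varepsilon/\sqrt{1+L^2},\varepsilon]$. There the kernel is $O(\varepsilon^{-n})$, so the difference is dominated by $C\,Mh(x)$, with $M$ the Hardy--Littlewood maximal function. This is bounded on $L^2$ uniformly in $\varepsilon$.

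\emph{Hausdorff measure versus $\mu$.} The measure $\mathcal H^n|_{\Gamma_\varphi}$ and $\mu$ differ by the bounded density $g=a/(J_\varphi\circ X_\varphi^{-1})$, which is bounded above and below. So the bound transfers between them.

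I expect the main obstacle to be exactly this appeal to David's theorem in arbitrary codimension, together with the transfer to ambient-distance truncations. The rest is elementary.
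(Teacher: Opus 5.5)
Your proof is correct and follows essentially the same route as the paper: the bi-Lipschitz comparison gives the two-sided mass bounds, restricted growth follows from \eqref{eq:far-growth}, and the $L^2$ bound reduces to the classical Lipschitz-graph theorem, transferred through the bounded, boundedly invertible density. The difference is only in bookkeeping. The paper cites David--Semmes directly for hard ambient-distance truncations on $\mathcal H^n|_{\Gamma_\varphi}$ and writes $\mathcal R^n_{\mu,\varepsilon}f=\mathcal R^n_{\sigma,\varepsilon}(gf)$, whereas you work in parameter coordinates and add the maximal-function comparison of ambient and parameter truncations, which is the same comparison the paper later uses in Proposition~\ref{prop:normal-pv}.
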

\begin{proof}
Put $L_M=(1+\Lip(\varphi)^2)^{1/2}$.  Then
\begin{equation}\label{eq:graph-biLip}
 |x-y|\le |X_\varphi(x)-X_\varphi(y)|\le L_M|x-y|,
\end{equation}
and
\[
 B(x,r/L_M)\subset X_\varphi^{-1}(B(X_\varphi(x),r))\subset B(x,r).
\]
Consequently
\begin{equation}\label{eq:AD-regular}
 c_0\omega_nL_M^{-n}r^n\le
 \mu_{\varphi,a}(B(X_\varphi(x),r))\le C_0\omega_nr^n.
\end{equation}
The upper bound extends to arbitrary centers by enlarging a ball meeting
the support by a factor two.  Restricted growth follows from
\eqref{eq:far-growth}.  For the unweighted graph measure, the uniform $L^2$ boundedness
of the hard truncations follows from the Lipschitz-graph result
in \cite[Part~I, Section~1.2, pp.~13--14]{DavidSemmes}.
The Riesz kernels belong to the class in Definition~1.20 of
that reference, and the truncation convention is specified in
Definition~1.23. The bound depends only on the dimensions and
the Lipschitz constant of the graph.
Multiplication by the bounded, boundedly invertible Hausdorff
density in \eqref{eq:hausdorff-density} transfers this bound
to the weighted measure. More explicitly, put
$\sigma=\mathcal H^n|_{\Gamma_\varphi}$ and $g=a/J_\varphi$. Since
$1\le J_\varphi\le C_{n,m,M}$,
\[
 0<g_-:=c_0/C_{n,m,M}\le g\le C_0=:g_+.
\]
The weighted truncation is
$\mathcal R^n_{\mu,\varepsilon}f
=\mathcal R^n_{\sigma,\varepsilon}(gf)$. If $C_\Gamma$ is the
unweighted graph bound, then
\begin{align*}
 \|\mathcal R^n_{\mu,\varepsilon}f\|_{L^2(\mu)}
 &\le g_+^{1/2}C_\Gamma\|gf\|_{L^2(\sigma)}\\
 &\le g_+C_\Gamma\|f\|_{L^2(\mu)}.
\end{align*}
This estimate is uniform in $\varepsilon$. No limiting value of $a$ or
compact support of either perturbation is involved. In particular the
large-scale lower condition holds with center $X_\varphi(0)$, while
restricted growth follows from upper growth alone.
\end{proof}

We denote the normalized vector Riesz transform on $\R^n$ by
$\mathcal R=(\mathcal R_1,\ldots,\mathcal R_n)$, with
\[
 \widehat{\mathcal R_jf}(\xi)=-i\frac{\xi_j}{|\xi|}\widehat f(\xi),
 \qquad \widehat{|D|f}(\xi)=|\xi|\widehat f(\xi).
\]
Throughout the paper we use the Fourier transform
\begin{equation}\label{eq:Fourier-convention}
 \widehat f(\xi)=\int_{\R^n}e^{-ix\cdot\xi}f(x)\,dx,
 \qquad
 f(x)=(2\pi)^{-n}\int_{\R^n}e^{ix\cdot\xi}\widehat f(\xi)\,d\xi
\end{equation}
for Schwartz functions, with the usual extensions to $L^2$ and tempered
distributions. In particular,
\begin{equation}\label{eq:Sobolev-Fourier-convention}
 \|f\|_{H^s}^2=(2\pi)^{-n}
   \int_{\R^n}(1+|\xi|^2)^s|\widehat f(\xi)|^2\,d\xi.
\end{equation}
For vector-valued functions we sum the squared component norms.
Positive constants $c_{n,R}$ and $c_{n,\Lambda}$ convert the kernels
used in \eqref{eq:Riesz-kernel} to these normalized operators.
Plancherel's theorem gives
\begin{equation}\label{eq:Riesz-coercivity}
 \sum_{j=1}^n\|\mathcal R_jf\|_{H^s}^2=\|f\|_{H^s}^2,
 \qquad \||D|u\|_{H^s}=\|Du\|_{H^s}.
\end{equation}
For real $f,g\in L^2$, $\int(\mathcal R_jf)g=-\int f\mathcal R_jg$.
We use the standard $L^2$ principal-value realization of these flat
operators \cite{Stein}.

\begin{lemma}[Fractional Poincar\'e inequality]\label{lem:fractional-Poincare}
If $u\in H^{1/2}(\R^n)$ and $\supp u\subset B_R$, then
\begin{equation}\label{eq:fractional-Poincare}
 \|u\|_2\le C_nR^{1/2}[u]_{1/2}.
\end{equation}
\end{lemma}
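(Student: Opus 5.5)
The plan is to compare $u$ with its extension by zero and to use only the part of the seminorm $[u]_{1/2}^2$ in which one variable lies in $B_R$ and the other lies far outside. Since $u\in H^{1/2}$, the seminorm \eqref{eq:half-seminorm} is finite and equals a constant multiple of $\int|\xi||\widehat u(\xi)|^2\,d\xi$. We will not need the Fourier side, however; a direct real-variable argument suffices.

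First, for $x\in B_R$ and $|y|\ge 2R$, we have $u(y)=0$ and $|x-y|\le |y|+R\le \tfrac32|y|$. Restricting the double integral to $B_R\times\{|y|\ge2R\}$ and using symmetry in $(x,y)$ gives
\[
 [u]_{1/2}^2\ \ge\ 2\int_{B_R}|u(x)|^2\int_{|y|\ge2R}\frac{dy}{|x-y|^{n+1}}\,dx .
\]
Second, we bound the inner integral from below uniformly in $x\in B_R$:
\[
 \int_{|y|\ge2R}\frac{dy}{|x-y|^{n+1}}\ \ge\ (3/2)^{-n-1}\int_{|y|\ge2R}|y|^{-n-1}\,dy = c_n R^{-1},
\]
with $c_n=(3/2)^{-n-1}|S^{n-1}|/2>0$. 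Combining the two displays yields $[u]_{1/2}^2\ge 2c_nR^{-1}\|u\|_2^2$, since $u$ vanishes off $B_R$. This is \eqref{eq:fractional-Poincare} with $C_n=(2c_n)^{-1/2}$.

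The only point requiring care is the measure-theoretic meaning of "$\supp u\subset B_R$" for an $L^2$ function. We will interpret it as $u=0$ a.e.\ outside $B_R$; if the support is only assumed to lie in $\overline B_R$, the same argument works unchanged. The rest is Tonelli applied to a nonnegative integrand, so there is no real obstacle. The scaling $R^{1/2}$ is consistent with dilation: the seminorm scales as $R^{(n-1)/2}$ and the $L^2$ norm as $R^{n/2}$.
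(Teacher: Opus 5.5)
Your argument is correct and is essentially the paper's proof: for $x\in B_R$ the paper restricts the $y$-integral to the annulus $B_{3R}\setminus B_{2R}$, where $u(y)=0$ and $|x-y|\le 4R$, which gives the lower bound $c_nR^{-1}|u(x)|^2$ for the inner integral before integrating in $x$. Your use of the whole exterior $\{|y|\ge 2R\}$ with $|x-y|\le\tfrac32|y|$, together with the symmetry factor $2$, changes only the numerical value of $C_n$.
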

\begin{proof}
For $x\in B_R$, restrict the $y$-integral in \eqref{eq:half-seminorm} to
$B_{3R}\setminus B_{2R}$.  There $u(y)=0$ and $|x-y|\le4R$, so
\[
 \int_{B_{3R}\setminus B_{2R}}
 \frac{|u(x)-u(y)|^2}{|x-y|^{n+1}}\,dy
 \ge c_nR^{-1}|u(x)|^2.
\]
Integrate in $x$.  The vector-valued assertion follows by summing components.
\end{proof}

\section{Proof of Theorem~\ref{prop:ambient-planarity} and Theorem~\ref{thm:global-rigidity}: Finite orthogonal energy and rigidity}\label{sec:balancing}
This section proves Theorems \ref{prop:ambient-planarity} and \ref{thm:global-rigidity} by converting finite orthogonal energy into planar support and then recovering the density. Under the stated growth assumptions, energy cutoff estimates and normalized coordinate tests allow us to use reflectionlessness to show that each orthogonal coordinate is constant on the support. For weighted Lipschitz graphs, the flat Riesz identity and the \(L^2\) density perturbation then force the density to be constant. The support theorem also provides the geometric reduction needed for the compact perturbation result in Section \ref{sec:compact-flat}.
\subsection{An energy cutoff lemma}
Translate the reference point $O$ in \eqref{eq:large-lower-growth} to the
origin.  In this section write
\begin{equation}\label{eq:ambient-bilinear}
 \mathscr B_\mu(u,v)=\frac12\iint
 \frac{(u(U)-u(V))(v(U)-v(V))}{|U-V|^{n+1}}\,d\mu(U)d\mu(V),
 \qquad \mathscr E_\mu(u)=\mathscr B_\mu(u,u).
\end{equation}
All mixed integrals between finite-energy functions are absolutely
convergent and satisfy
\begin{equation}\label{eq:energy-CS}
 |\mathscr B_\mu(u,v)|\le
 \mathscr E_\mu(u)^{1/2}\mathscr E_\mu(v)^{1/2}.
\end{equation}

\begin{lemma}\label{lem:energy-cutoff}
Assume \eqref{eq:upper-growth} and \eqref{eq:large-lower-growth}.
Let $u\in L^2_{\rm loc}(\mu)$ have finite $\mathscr E_\mu(u)$.  Choose
$0\le\chi\le1$, $\chi\in C_c^\infty(B(0,2))$, $\chi=1$ on $B(0,1)$.
For $R\ge R_*$ put
\begin{equation}\label{eq:mean-cutoff}
 \chi_R(U)=\chi(U/R),\quad
 c_R=\frac1{\mu(B_{4R})}\int_{B_{4R}}u\,d\mu,\quad
 v_R=\chi_R(u-c_R).
\end{equation}
Then
\begin{equation}\label{eq:uniform-cutoff-energy}
 \mathscr E_\mu(v_R)\le C\mathscr E_\mu(u),
 \qquad
 \mathscr B_\mu(u,v_R)\longrightarrow\mathscr E_\mu(u),
\end{equation}
with $C$ independent of $R$.  If $u$ is an ambient Lipschitz function,
then $v_R\in\Lip_c(\R^d)$.
\end{lemma}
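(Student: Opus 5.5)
The plan is to write $w=u-c_R$ and split the increment of $v_R$ as
\[
 v_R(U)-v_R(V)=\chi_R(U)\bigl(w(U)-w(V)\bigr)+w(V)\bigl(\chi_R(U)-\chi_R(V)\bigr).
\]
Since $w(U)-w(V)=u(U)-u(V)$ and $0\le\chi_R\le1$, the first piece contributes at most $2\mathscr E_\mu(u)$ to $\mathscr E_\mu(v_R)$. The whole matter is therefore the cross term
\[
 J_R=\iint |w(V)|^2\frac{|\chi_R(U)-\chi_R(V)|^2}{|U-V|^{n+1}}\,d\mu(U)d\mu(V),
\]
and I will show $J_R\le C\mathscr E_\mu(u)$ uniformly in $R\ge R_*$.

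\textbf{Step 1: bounding $J_R$.} I split $J_R$ according to whether $V\in B_{4R}$.
\begin{itemize}
\item For $V\in B_{4R}$, use $|\chi_R(U)-\chi_R(V)|^2\le\min(C|U-V|^2R^{-2},1)$. Then \eqref{eq:near-growth} (on $|U-V|<R$) and \eqref{eq:far-growth} (on $|U-V|\ge R$) give an inner integral $\le CA_0R^{-1}$. This part is thus at most $CR^{-1}\int_{B_{4R}}|u-c_R|^2\,d\mu$.
\item For $|V|\ge4R$, the integrand is nonzero only if $U\in B_{2R}$, and then $|U-V|\ge|V|/2$. The inner integral is therefore $\le CA_0R^n|V|^{-n-1}$.
\end{itemize}
Both parts are controlled by the Poincaré-type estimate
\[
 \int_{B_\varrho}|u-c_\varrho|^2\,d\mu\le\frac1{\mu(B_\varrho)}\iint_{B_\varrho\times B_\varrho}|u(U)-u(V)|^2\,d\mu\,d\mu\le\frac{(2\varrho)^{n+1}}{\mu(B_\varrho)}\,2\mathscr E_\mu(u)\le C\varrho\,\mathscr E_\mu(u),
\]
valid for $\varrho\ge R_*$. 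The last inequality is exactly where the large-scale lower bound \eqref{eq:large-lower-growth} enters. This settles the part with $V\in B_{4R}$.

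\textbf{Step 2: the far part.} This is the main obstacle. I decompose $\{|V|\ge4R\}$ into dyadic annuli $A_k=B_{2^{k+3}R}\setminus B_{2^{k+2}R}$ and let $c^{(k)}$ be the mean of $u$ over $B_{2^{k+3}R}$. Comparing consecutive means via the Poincaré estimate and the lower bound gives
\[
 |c^{(k+1)}-c^{(k)}|^2\le C(2^kR)^{1-n}\mathscr E_\mu(u),
\]
and hence $|c^{(k)}-c_R|^2\le C(1+k)^2R^{1-n}\mathscr E_\mu(u)$. The factor $k^2$ is needed only when $n=1$. Combining this with upper growth, $\mu(A_k)\le CA_0(2^kR)^n$, I get
\[
 \int_{A_k}|w|^2\,d\mu\le C(1+k)^2\,2^{kn}R\,\mathscr E_\mu(u).
\]
Multiplying by $R^n(2^kR)^{-n-1}$ leaves $C(1+k)^2\,2^{-k}\mathscr E_\mu(u)$, which is summable in $k$. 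Hence $J_R\le C\mathscr E_\mu(u)$, and therefore $\mathscr E_\mu(v_R)\le C\mathscr E_\mu(u)$.

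\textbf{Step 3: convergence of $\mathscr B_\mu(u,v_R)$.} Using the same splitting,
\[
 \mathscr B_\mu(u,v_R)=\frac12\iint\chi_R(U)\frac{|u(U)-u(V)|^2}{|U-V|^{n+1}}\,d\mu\,d\mu+E_R,
\]
where $E_R$ collects the cross term. The first integral tends to $\mathscr E_\mu(u)$ by monotone or dominated convergence, since $\chi_R\to1$. For $E_R$, the integrand vanishes unless $\chi_R(U)\ne\chi_R(V)$, which forces the pair $(U,V)$ not to lie entirely in $B_R\times B_R$. Cauchy--Schwarz then gives
\[
 |E_R|\le\Bigl(\iint_{(B_R\times B_R)^c}\frac{|u(U)-u(V)|^2}{|U-V|^{n+1}}\,d\mu\,d\mu\Bigr)^{1/2}(CJ_R)^{1/2}.
\]
The first factor tends to $0$ by dominated convergence, because the total energy is finite, and $J_R$ is uniformly bounded by Step 2. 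So $E_R\to0$.

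Finally, if $u$ is ambient Lipschitz, then $v_R$ is the product of the Lipschitz function $u-c_R$ with $\chi_R\in C_c^\infty$, so $v_R\in\Lip_c(\R^d)$.
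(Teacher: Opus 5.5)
Your argument is correct. Its ingredients are the paper's: the single-scale variance bound from the lower mass at the fixed center, the kernel bound $\int|\chi_R(U)-\chi_R(V)|^2|U-V|^{-n-1}\,d\mu(U)\le CA_0/R$, the product splitting, and Cauchy--Schwarz against the vanishing tail energy $e_R$. The difference is in how you treat pairs with one point outside $B_{4R}$.

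You apply the splitting globally. This puts $|u(V)-c_R|^2$ with $|V|$ large into the cross term $J_R$, and that forces your Step~2: a dyadic telescoping of means (a Campanato-type estimate) that uses the lower bound at every scale $2^{k+3}R$, with a $(1+k)^2$ loss when $n=1$.

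The paper uses the splitting only on $B_{4R}\times B_{4R}$. It observes that when $V\notin B_{4R}$ one has $v_R(V)=0$, so the increment is just $v_R(U)$ with $U\in B_{2R}$. The exterior contribution is then $\int_{B_{2R}}|v_R|^2\int_{B_{4R}^c}|U-V|^{-n-1}\,d\mu(V)\,d\mu(U)\le CA_0R^{-1}\int_{B_{4R}}|u-c_R|^2\,d\mu$, which the variance at the single scale $4R$ controls.

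The limit is handled the same way. The paper writes $\mathscr E_\mu(u)-\mathscr B_\mu(u,v_R)=\mathscr B_\mu(u,u-v_R)$ and notes that increments of $u-v_R$ vanish on $B_R\times B_R$. It then bounds this by $e_R^{1/2}\bigl(\mathscr E_\mu(u)^{1/2}+\mathscr E_\mu(v_R)^{1/2}\bigr)$. So it needs only the already-proved bound on $\mathscr E_\mu(v_R)$, not a uniform bound on your global $J_R$.

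What you call the main obstacle is therefore an artifact of the global splitting. Your route costs an extra multiscale argument. In exchange it yields, as a by-product, the weighted tail bound $\int_{|V|\ge 4R}|u-c_R|^2R^n|V|^{-n-1}\,d\mu(V)\le C\mathscr E_\mu(u)$, which the paper neither needs nor proves.
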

\begin{proof}
Throughout this proof $B_r=B(0,r)$ is an ambient ball and
\[
 \mathscr E_{\mu,B_r}(u)
 =\frac12\iint_{B_r\times B_r}
   \frac{|u(U)-u(V)|^2}{|U-V|^{n+1}}\,d\mu(U)d\mu(V).
\]
All integrals involving $u$ on a fixed ball are legitimate because
$u\in L^2_{\rm loc}(\mu)$ and $\mu$ is locally finite.

Writing $m_R=\mu(B_{4R})$ and expanding the square gives
\[
 \iint_{B_{4R}^2}|u(U)-u(V)|^2\,d\mu(U)d\mu(V)
 =2m_R\int_{B_{4R}}|u|^2\,d\mu
  -2\left|\int_{B_{4R}}u\,d\mu\right|^2.
\]
Since $c_R=m_R^{-1}\int_{B_{4R}}u\,d\mu$, this yields
\begin{align}
 \int_{B_{4R}}|u-c_R|^2\,d\mu
 &=\frac1{2m_R}\iint_{B_{4R}^2}|u(U)-u(V)|^2\,d\mu(U)d\mu(V)\notag\\
 &\le\frac{(8R)^{n+1}}{m_R}\mathscr E_{\mu,B_{4R}}(u)
 \le C a_0^{-1}R\mathscr E_{\mu,B_{4R}}(u).
 \label{eq:local-variance}
\end{align}
The last inequality uses only the lower bound at the one fixed center:
$m_R\ge a_0(4R)^n$. No lower bound on balls centered at $U$ or $V$
is being used.

For $V\in\supp\mu$ define
\[
 Q_R(V)=\int
 \frac{|\chi_R(U)-\chi_R(V)|^2}{|U-V|^{n+1}}\,d\mu(U).
\]
The scaled cutoff satisfies
$|\chi_R(U)-\chi_R(V)|\le C\min\{|U-V|/R,1\}$.
Splitting at distance $R$ and using the two growth integrals gives
\begin{align}
 Q_R(V)
 &\le CR^{-2}\int_{0<|U-V|<R}|U-V|^{1-n}\,d\mu(U)\notag\\
 &\quad+C\int_{|U-V|\ge R}|U-V|^{-n-1}\,d\mu(U)
 \le C A_0/R.
 \label{eq:cutoff-kernel-bound}
\end{align}
The estimate is uniform in the center $V$ and the scale $R$.

For $U,V\in B_{4R}$ the identity
\[
 v_R(U)-v_R(V)=\chi_R(U)(u(U)-u(V))
 +(u(V)-c_R)(\chi_R(U)-\chi_R(V))
\]
and $|A+B|^2\le2|A|^2+2|B|^2$ imply
\begin{align}
 \mathscr E_{\mu,B_{4R}}(v_R)
 &\le 2\mathscr E_{\mu,B_{4R}}(u)
 +\int_{B_{4R}}|u(V)-c_R|^2Q_R(V)\,d\mu(V)\notag\\
 &\le C\mathscr E_\mu(u).
 \label{eq:cutoff-interior-detail}
\end{align}
For pairs outside $B_{4R}^2$, a nonzero difference of $v_R$ requires
one variable, say $U$, to lie in $B_{2R}$ and the other to lie outside
$B_{4R}$. Their distance is at least $2R$. By symmetry the factor $1/2$
in the energy cancels the two possible orders of such pairs, and
\begin{align}
 \mathscr E_\mu(v_R)-\mathscr E_{\mu,B_{4R}}(v_R)
 &=\int_{B_{2R}}|v_R(U)|^2
       \int_{B_{4R}^c}|U-V|^{-n-1}\,d\mu(V)d\mu(U)\notag\\
 &\le C A_0R^{-1}\int_{B_{4R}}|u-c_R|^2\,d\mu
 \le C\mathscr E_\mu(u).
 \label{eq:cutoff-exterior-detail}
\end{align}
This proves the first assertion of \eqref{eq:uniform-cutoff-energy}.

Let $D_R=(\R^d)^2\setminus(B_R\times B_R)$ and set
\[
 e_R=\frac12\iint_{D_R}
    \frac{|u(U)-u(V)|^2}{|U-V|^{n+1}}\,d\mu(U)d\mu(V).
\]
The integrand belongs to $L^1(\mu\otimes\mu)$ and the indicators of
$D_R$ decrease pointwise to zero. Therefore $e_R\to0$.
On $B_R$, $u-v_R=c_R$; hence the difference of $u-v_R$ vanishes for
pairs in $B_R^2$. Cauchy--Schwarz on the remaining pairs gives
\begin{align}
 |\mathscr E_\mu(u)-\mathscr B_\mu(u,v_R)|
 &=|\mathscr B_\mu(u,u-v_R)|\notag\\
 &\le e_R^{1/2}\mathscr E_\mu(u-v_R)^{1/2}\notag\\
 &\le e_R^{1/2}
       \bigl(\mathscr E_\mu(u)^{1/2}+\mathscr E_\mu(v_R)^{1/2}\bigr)
 \le C e_R^{1/2}\mathscr E_\mu(u)^{1/2}\longrightarrow0.
 \label{eq:cutoff-energy-limit}
\end{align}
This argument does not claim strong energy convergence of $v_R$ to $u$
or convergence of $c_R$. Only the displayed pairing limit is needed.
If $u$ is an ambient Lipschitz function, then $u-c_R$ is bounded on the
compact support of $\chi_R$, and its product with $\chi_R$ is a globally
Lipschitz function of compact support. Thus it is an admissible test.
\end{proof}

The same lemma holds for $dx$ on $\R^n$ and any symmetric weight
$w(x,y)$ comparable to $|x-y|^{-n-1}$.  One first proves the uniform
cutoff bound for the unweighted seminorm, uses comparability, and then
uses the weighted energy tail in \eqref{eq:cutoff-energy-limit}.  The
averages may be ordinary Lebesgue averages in this version.

\begin{lemma}
\label{lem:expanding-balance}
Under the same growth assumptions there exist $\beta_t\in\Lip_c(\R^d)$,
$t\ge R_*$, such that
\begin{equation}\label{eq:beta-energy}
 \int\beta_t\,d\mu=1,\qquad
 \mathscr E_\mu(\beta_t)\le Ct^{-n-1}.
\end{equation}
\end{lemma}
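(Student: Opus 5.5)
The plan is to take $\beta_t$ as a normalized scaled bump centered at the reference point (translated to the origin):
\[
 \beta_t=\frac{\chi_t}{\int\chi_t\,d\mu},\qquad \chi_t(U)=\chi(U/t),
\]
with $\chi$ as in Lemma~\ref{lem:energy-cutoff}, so $0\le\chi\le1$, $\chi\in C_c^\infty(B(0,2))$ and $\chi=1$ on $B(0,1)$. Then $\beta_t\in\Lip_c(\R^d)$ and $\int\beta_t\,d\mu=1$ by construction, once the denominator is shown to be positive.

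The first step bounds the denominator from below. Since $\chi_t\ge\mathbf 1_{B_t}$, the large-scale lower bound \eqref{eq:large-lower-growth} gives
\[
 \int\chi_t\,d\mu\ge\mu(B_t)\ge a_0t^n\qquad(t\ge R_*).
\]

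The second step bounds the energy of $\chi_t$ by $Ct^{n-1}$. I would write
\[
 \mathscr E_\mu(\chi_t)=\tfrac12\iint\frac{|\chi_t(U)-\chi_t(V)|^2}{|U-V|^{n+1}}\,d\mu\,d\mu .
\]
The integrand vanishes unless at least one variable lies in $B_{2t}$. By symmetry it suffices to integrate $V$ over $B_{2t}$ and double. For fixed $V$, the inner integral is exactly $Q_t(V)$ from the proof of Lemma~\ref{lem:energy-cutoff}, and \eqref{eq:cutoff-kernel-bound} gives $Q_t(V)\le CA_0/t$ uniformly in $V$. Upper growth gives $\mu(B_{2t})\le A_0(2t)^n$. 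Hence
\[
 \mathscr E_\mu(\chi_t)\le CA_0^2t^{n-1}.
\]

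The third step combines the two bounds. Dividing by the square of the normalization gives
\[
 \mathscr E_\mu(\beta_t)\le \frac{CA_0^2t^{n-1}}{a_0^2t^{2n}}=Ct^{-n-1},
\]
with $C$ depending on $n,A_0,a_0$ and $\chi$.

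No serious obstacle is expected. The only point needing care is the symmetric reduction: pairs with both variables outside $B_{2t}$ contribute nothing, and pairs with $V\in B_{2t}$ are already covered by the uniform bound on $Q_t$, whether or not $U$ also lies in $B_{2t}$. The lower bound is needed only at the single center, which is exactly what \eqref{eq:large-lower-growth} provides.
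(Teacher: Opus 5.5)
Your proposal is correct and follows the paper's proof essentially step for step. The paper also normalizes the scaled bump $\chi(\cdot/t)$ by its $\mu$-mass and bounds that mass below by $a_0t^n$ using the one-center lower bound. It then bounds $\mathscr E_\mu(\chi_t)\le\mu(B_{2t})\sup_V Q_t(V)\le CA_0^2t^{n-1}$ via \eqref{eq:cutoff-kernel-bound}, and divides by the squared normalization.
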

\begin{proof}
Choose $\eta\in C_c^\infty(B_2)$ with $0\le\eta\le1$ and $\eta=1$
on $B_1$, and put
\[
 \eta_t(U)=\eta(U/t),\qquad
 m_t=\int\eta_t\,d\mu,\qquad \beta_t=m_t^{-1}\eta_t.
\]
For $t\ge R_*$ the hypotheses give
\begin{equation}\label{eq:balance-mass-comparison}
 a_0t^n\le\mu(B_t)\le m_t\le\mu(B_{2t})\le 2^nA_0t^n.
\end{equation}
In particular the normalization is well defined, positive, and satisfies
$\int\beta_t\,d\mu=1$. The pointwise bounds
\[
 \|\beta_t\|_\infty\le a_0^{-1}t^{-n},\qquad
 \Lip(\beta_t)\le C a_0^{-1}t^{-n-1}
\]
alone do not prove the required energy estimate on an infinite measure;
one must also use the compact support of the numerator.

The difference $\eta_t(U)-\eta_t(V)$ is zero when both variables are
outside $B_{2t}$. Thus symmetry and \eqref{eq:cutoff-kernel-bound} give
\begin{align}
 \mathscr E_\mu(\eta_t)
 &\le\int_{B_{2t}}\int
   \frac{|\eta_t(U)-\eta_t(V)|^2}{|U-V|^{n+1}}\,d\mu(V)d\mu(U)\notag\\
 &\le \mu(B_{2t})\sup_{U\in\supp\mu}
   \int\frac{|\eta_t(U)-\eta_t(V)|^2}{|U-V|^{n+1}}\,d\mu(V)\notag\\
 &\le C A_0^2t^{n-1}.
 \label{eq:balance-unnormalized-energy}
\end{align}
Finally
\[
 \mathscr E_\mu(\beta_t)=m_t^{-2}\mathscr E_\mu(\eta_t)
 \le C A_0^2a_0^{-2}t^{-2n}t^{n-1}
 =C A_0^2a_0^{-2}t^{-n-1}.
\]
The test has unit mass but vanishing energy. Its support need not lie
in a flat part of the measure, and the proof uses no geometrical
information about that support beyond the two growth assumptions.
\end{proof}

\subsection{Proof of the support theorem}
\begin{lemma}
\label{lem:normal-pairing-representation}
Let $e\in\R^d$ be a unit vector and $h_e(U)=e\cdot U$.
Assume $\mathscr E_\mu(h_e)<\infty$.  For every $g\in\Lip_c(\R^d)$,
\begin{equation}\label{eq:coordinate-J}
 J_e(g):=\frac12\iint
 \frac{e\cdot(U-V)}{|U-V|^{n+1}}(g(V)-g(U))\,d\mu(U)d\mu(V)
 =-\mathscr B_\mu(h_e,g)
\end{equation}
is absolutely convergent.  If $\int g\,d\mu=0$, it is the $e$-component
of \eqref{eq:JN-balanced-extension}.
\end{lemma}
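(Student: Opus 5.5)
The plan is to prove the three assertions in order: the algebraic identity, then absolute convergence, then the identification with the balanced pairing.

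\textbf{Identity and absolute convergence.} Since $h_e(U)-h_e(V)=e\cdot(U-V)$, the integrand of $J_e(g)$ is exactly
\[
 -\tfrac12\,\frac{(h_e(U)-h_e(V))(g(U)-g(V))}{|U-V|^{n+1}}.
\]
This is the integrand of $-\mathscr B_\mu(h_e,g)$ in \eqref{eq:ambient-bilinear}, so the identity \eqref{eq:coordinate-J} is immediate once absolute convergence is known. For the latter, bound the modulus by Cauchy--Schwarz as in \eqref{eq:energy-CS}, which gives at most $\mathscr E_\mu(h_e)^{1/2}\mathscr E_\mu(g)^{1/2}$. The hypothesis gives $\mathscr E_\mu(h_e)<\infty$. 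To show $\mathscr E_\mu(g)<\infty$ for $g\in\Lip_c$, I would reuse the estimate \eqref{eq:balance-unnormalized-energy}. With $\supp g\subset B_S$, the integrand of $\mathscr E_\mu(g)$ vanishes unless one variable lies in $B_S$. For $U\in B_S$, $|g(U)-g(V)|^2\le C\min\{|U-V|^2,1\}$, and \eqref{eq:near-growth}--\eqref{eq:far-growth} bound the inner integral by $C A_0$ uniformly in $U$. Hence $\mathscr E_\mu(g)\le C\mu(B_S)<\infty$. The Tonelli justification is routine because $\mu\otimes\mu$ gives no mass to the diagonal.

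\textbf{Identification with the balanced pairing.} Assume $\int g\,d\mu=0$ and fix the cutoffs $\chi_k$ from \eqref{eq:JN-balanced-extension}. By \eqref{eq:JN-pairing}, the $e$-component of $\langle\mathcal R^n(g\mu),\chi_k\rangle_\mu$ is
\[
 \tfrac12\iint\frac{e\cdot(U-V)}{|U-V|^{n+1}}\bigl(g(V)\chi_k(U)-g(U)\chi_k(V)\bigr)\,d\mu\,d\mu .
\]
For $k$ large, $\chi_k=1$ on a neighbourhood $B_{2S}$ of $\supp g$. I will split $\chi_k=1-(1-\chi_k)$. Formally, the part with $1$ reproduces the integrand of $J_e(g)$, but that splitting is only formal, since the unbalanced pieces need not converge separately (see the remark after Definition~\ref{def:reflectionless}). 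So instead I would write the difference directly:
\[
 g(V)\chi_k(U)-g(U)\chi_k(V)-\bigl(g(V)-g(U)\bigr)
 =-g(V)(1-\chi_k(U))+g(U)(1-\chi_k(V)).
\]
Each term is supported on pairs with one point in $\supp g\subset B_S$ and the other outside $B_{2S}$. There the kernel is bounded by $|U-V|^{-n}$, but it is not absolutely integrable over all such pairs. So I would estimate this difference term as the separated pairing $\langle\mathcal R^n(g\mu),1-\chi_k\rangle_\mu$ from \eqref{eq:separated-pairing}. This is legitimate: the two integrals are each absolutely convergent after restricting to $V\in\supp g$, and, using oddness, they combine to $-\int(1-\chi_k)(U)\,e\cdot\mathcal R^n(g\mu)(U)\,d\mu(U)$ in the truncated sense. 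The balanced tail bound \eqref{eq:balanced-tail-quantitative} then gives a magnitude at most $CA_0M_1(g\mu)/k\to0$.

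\textbf{Main obstacle.} The delicate point is this last exchange. The difference must be handled with the truncation $1-\chi_k$ cut off at a finite radius $T$, via $\chi_k-\chi_T$, and $T$ sent to infinity only after using that $J_e(g)$ converges absolutely (dominated convergence). Otherwise we would be splitting a non-absolutely convergent integral. Concretely, for $T>k$ I will compare $\langle\mathcal R^n(g\mu),\chi_T\rangle_\mu$ with the truncated $J_e$-integral over $B_T\times B_T$. Their difference is supported where one variable lies outside $B_{T/2}$ and is controlled by $\chi_T$-pairings with compact support. Letting $T\to\infty$, both sides converge: the left by \eqref{eq:JN-balanced-extension}, the right by absolute convergence. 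This yields equality with the $e$-component of $\langle\mathcal R^n(g\mu),1\rangle_\mu$.
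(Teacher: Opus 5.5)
Your proof is correct in its final form, but the identification step takes a longer route than the paper. The paper notes that once $\chi_k=1$ on $\supp g$ and $0\le\chi_k\le1$, one has $|g(V)\chi_k(U)-g(U)\chi_k(V)|\le|g(V)-g(U)|$ in every case. Hence the $e$-component of the integrand of $\langle\mathcal R^n(g\mu),\chi_k\rangle_\mu$ is dominated by the integrable majorant $|e\cdot(U-V)|\,|g(V)-g(U)|\,|U-V|^{-n-1}$ of \eqref{eq:coordinate-absolute-bound}. Dominated convergence then gives $e\cdot\langle\mathcal R^n(g\mu),\chi_k\rangle_\mu\to J_e(g)$ at once.

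In particular, your claim that the difference term is not absolutely integrable is false for the $e$-component. Where $1-\chi_k(V)\ne0$ one has $g(V)=0$, so $|g(U)(1-\chi_k(V))|\le|g(U)-g(V)|$, and symmetrically for the other piece. The same majorant therefore controls each piece separately. The logarithmic divergence you have in mind belongs to the full kernel bound $|U-V|^{-n}$; the finite energy of $h_e$ is exactly what removes it for this coordinate. Consequently the Fubini swap in your middle paragraph is already justified, and the finite-radius detour is unnecessary.

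That detour is nevertheless sound. Compare $e\cdot\langle\mathcal R^n(g\mu),\chi_T\rangle_\mu$ with $J_e$ restricted to $B_T\times B_T$. By oddness, the discrepancy reduces to $\int_{B_T^c}\chi_T\,e\cdot\mathcal R^n(g\mu)\,d\mu$. This is an ordinary integral over a compact set separated from $\supp g$. The zero-mean decay \eqref{eq:balanced-decay-bound} and \eqref{eq:far-growth} bound it by $CA_0M_1(g\mu)/T$, and the restricted $J_e$ tends to $J_e(g)$ by absolute convergence.

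Your version buys an explicit rate for the discrepancy. Its cost is that the zero mean is used inside the limit. The paper's domination argument gives convergence to $J_e(g)$ for every $g\in\Lip_c$, and uses $\int g\,d\mu=0$ only to recognise the limit as the balanced extension \eqref{eq:JN-balanced-extension}.
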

\begin{proof}
First verify that $g$ has finite energy. Take $H$ with
$\supp g\subset B_H$. If both variables are outside $B_H$, the
difference of $g$ is zero. For the other pairs,
\begin{align*}
 \mathscr E_\mu(g)
 &\le\int_{B_H}\left[
 \Lip(g)^2\int_{|U-V|<1}|U-V|^{1-n}\,d\mu(V)
 +4\|g\|_\infty^2\int_{|U-V|\ge1}|U-V|^{-n-1}\,d\mu(V)
 \right]d\mu(U)\\
 &\le C A_0\mu(B_H)\bigl(\Lip(g)^2+\|g\|_\infty^2\bigr)<\infty.
\end{align*}
Since $h_e(U)-h_e(V)=e\cdot(U-V)$, Cauchy--Schwarz now gives
\begin{equation}\label{eq:coordinate-absolute-bound}
 \frac12\iint
 \frac{|e\cdot(U-V)|\,|g(V)-g(U)|}{|U-V|^{n+1}}\,d\mu(U)d\mu(V)
 \le\mathscr E_\mu(h_e)^{1/2}\mathscr E_\mu(g)^{1/2}.
\end{equation}
This proves absolute convergence and the sign in
$J_e(g)=-\mathscr B_\mu(h_e,g)$.

Suppose now that $\int g\,d\mu=0$. For large $k$, the compact cutoff
$\chi_k$ equals one on $\supp g$. The difference factor in the compact
pairing is
$g(V)\chi_k(U)-g(U)\chi_k(V)$. If both values of $g$ are nonzero,
this is exactly $g(V)-g(U)$. If only $g(V)$ is nonzero, its absolute
value is at most $|g(V)|=|g(V)-g(U)|$, and the case where only $g(U)$
is nonzero is identical. If both are zero, both expressions vanish.
Thus in all cases
\[
 |g(V)\chi_k(U)-g(U)\chi_k(V)|\le|g(V)-g(U)|.
\]
The right side, multiplied by the projected kernel, is the integrable
majorant in \eqref{eq:coordinate-absolute-bound}. Dominated convergence
therefore gives
\[
 e\cdot\langle\mathcal R^n(g\mu),1\rangle_\mu
 =\lim_k e\cdot\langle\mathcal R^n(g\mu),\chi_k\rangle_\mu=J_e(g).
\]
The absolute scalar integral $J_e(g)$ exists even without the zero-mean
condition. Only its identification with the full balanced vector pairing
uses that condition. This distinction will be maintained in the proof
below.
\end{proof}

\begin{proof}[Proof of Theorem~\ref{prop:ambient-planarity}]
Let $e_1,\ldots,e_m$ be an orthonormal basis of $P^\perp$. Then
\[
 \mathscr E_P^\perp(\mu)=\sum_{\ell=1}^m\mathscr E_\mu(h_{e_\ell}),
 \qquad h_e(U)=e\cdot U.
\]
Fix one $e=e_\ell$ and write $h=h_e$. The energy of $h$ is finite and
$h\in L^2_{\rm loc}(\mu)$ because $h$ is bounded on bounded sets.

For a fixed $g\in\Lip_c(\R^d)$ set $M_g=\int g\,d\mu$ and
$g_t=g-M_g\beta_t$. Each $g_t$ is an ambient compact Lipschitz function,
although its support is allowed to grow with $t$, and
\[
 \int g_t\,d\mu=M_g-M_g\int\beta_t\,d\mu=0.
\]
Since $g_t$ has zero mean, reflectionlessness applies to this test.
By Lemma~\ref{lem:normal-pairing-representation},
\begin{align}
 0=e\cdot\langle\mathcal R^n(g_t\mu),1\rangle_\mu
 &=J_e(g_t)\notag\\
 &=-\mathscr B_\mu(h,g)+M_g\mathscr B_\mu(h,\beta_t).
 \label{eq:planarity-balanced-test-detail}
\end{align}
The splitting on the last line is a splitting of absolutely convergent
energy integrals, not of unbalanced full-vector pairings. Hence
\[
 |\mathscr B_\mu(h,g)|
 \le |M_g|\mathscr E_\mu(h)^{1/2}\mathscr E_\mu(\beta_t)^{1/2}
 \le C|M_g|\mathscr E_\mu(h)^{1/2}t^{-(n+1)/2}\longrightarrow0.
\]
The quantities $M_g$ and $\mathscr E_\mu(h)$ are fixed when $t$ tends
to infinity. We have proved
\begin{equation}\label{eq:all-compact-tests-annihilated}
 \mathscr B_\mu(h,g)=0\qquad(g\in\Lip_c(\R^d)).
\end{equation}

Choose next
$g=v_R=\chi_R(h-c_R)$ from Lemma~\ref{lem:energy-cutoff}. For each
fixed $R$ this is an admissible test in
\eqref{eq:all-compact-tests-annihilated}. Consequently
\[
 0=\mathscr B_\mu(h,v_R)\longrightarrow\mathscr E_\mu(h).
\]
Thus $\mathscr E_\mu(h)=0$. The limit in $t$ is taken first for each fixed $R$; no uniform bound
for $M_{v_R}$ is required.

The kernel $|U-V|^{-n-1}$ is strictly positive off the product diagonal,
which is null for $\mu\otimes\mu$. Nonnegativity therefore implies
$h(U)=h(V)$ for almost every pair. By Fubini there is a $V_e$ for which
$h(U)=h(V_e)=:c_e$ for $\mu$-almost every $U$. The measure is nonzero
by the lower growth hypothesis. If some $U_0\in\supp\mu$ had
$h(U_0)\ne c_e$, continuity of $h$ would give a ball of positive
$\mu$-mass on which $|h-c_e|$ is bounded below, a contradiction.
Hence $h=c_e$ on the entire support.

Put $b=\sum_{\ell=1}^m c_{e_\ell}e_\ell\in P^\perp$. All the preceding
coordinate equalities say $\Pi_{P^\perp}U=b$ for $U\in\supp\mu$, or
equivalently $\supp\mu\subset b+P$, as required. The argument proves
support rigidity only; it makes no assertion about the density on that
plane.
\end{proof}

The support theorem uses the coordinate-testing method of
\cite[Section~10]{JayeNazarovII}; the finite orthogonal energy and the
uniform mean-subtracted cutoff bound are the hypotheses and estimates
used here to accommodate an infinite background.

\subsection{The density on the plane}
\begin{lemma}\label{lem:plane-reflectionless}
Let $\mu=a\,dx$ on $\R^n$, where $0<c_0\le a\le C_0$ and
$\rho=a-a_\infty\in L^2$.  If $\mu$ is reflectionless, then
\begin{equation}\label{eq:flat-reflectionless}
 \int f(x)a(x)\mathcal R\rho(x)\,dx=0
 \quad\text{whenever }f\in\Lip_c(\R^n),\quad\int fa=0.
\end{equation}
\end{lemma}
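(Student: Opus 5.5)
We start from the balanced weak pairing of Definition~\ref{def:reflectionless}, with $\mu=a\,dx$ on $\R^n$ (so $d=n$ in the pairing and $K_n(z)=z/|z|^{n+1}$ is the kernel of $c\,\mathcal R$ on $\R^n$). The idea is to split $a=a_\infty+\rho$ inside the compact pairing. The constant part $a_\infty\,dx$ contributes nothing after balancing, and the $\rho$ part produces $\int f a\,\mathcal R\rho$.

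Fix $f\in\Lip_c(\R^n)$ with $\int fa=0$ and a cutoff $\chi_k$ equal to $1$ near $\supp f$. By \eqref{eq:potential-truncated-symmetry}, the compact pairing is the limit as $\varepsilon\to0$ of
\[
 I_{\varepsilon,k}=\int\chi_k(x)a(x)\int_{|x-y|>\varepsilon}K_n(x-y)f(y)a(y)\,dy\,dx .
\]
We write $F=fa\in L^\infty_c$, which has zero mean. For fixed $k$, the inner integral is the truncated Riesz transform of $F$ on $\R^n$ with Lebesgue measure. It converges in $L^2(dx)$ to $c\,\mathcal R F$, with constant $c=c_{n,R}^{-1}$ or similar. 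This follows from classical Calder\'on--Zygmund theory \cite{Stein}, and $\chi_k a\in L^2$. Hence
\[
 \langle\mathcal R^n(f\mu),\chi_k\rangle_\mu=c\int\chi_k a\,\mathcal R F\,dx .
\]

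We now let $k\to\infty$, splitting $a=a_\infty+\rho$ in the outer factor. The $\rho$ term converges to $c\int\rho\,\mathcal RF$ by dominated convergence, since $\rho\in L^2$ and $\mathcal RF\in L^2$. For the $a_\infty$ term, $F$ has zero mean and compact support. By \eqref{eq:balanced-decay-bound}, $|\mathcal RF(x)|\lesssim|x|^{-n-1}$ at infinity, so $\mathcal RF\in L^1$. Moreover $\widehat{\mathcal RF}(0)=0$, because $\widehat F(0)=0$, $\widehat F$ is smooth, and the multiplier $\xi/|\xi|$ is bounded; this gives $\int\mathcal RF=0$. Dominated convergence then shows that this term tends to $a_\infty\int\mathcal RF=0$. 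Reflectionlessness therefore yields $\int\rho\,\mathcal R(fa)=0$. Antisymmetry, $\int(\mathcal R_jF)\rho=-\int F\mathcal R_j\rho$ for real $L^2$ functions, converts this to $\int fa\,\mathcal R\rho=0$, which is \eqref{eq:flat-reflectionless}.

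The main obstacle is justifying that $\mathcal RF\in L^1$ and that its integral is exactly zero. Near $\supp F$ we have $\mathcal RF\in L^2_{\rm loc}\subset L^1_{\rm loc}$. Far away, the decay estimate \eqref{eq:balanced-decay-bound} applies. Continuity of $\widehat{\mathcal RF}$ at $0$, which holds since $\mathcal RF\in L^1$, then identifies the integral with the limit of $-i\xi/|\xi|\,\widehat F(\xi)$ as $\xi\to0$. This limit is $0$ because $|\widehat F(\xi)|\le C|\xi|$, using the zero mean and the finite first moment of $F$. Interchanging the $\varepsilon$ and $k$ limits is harmless, since $\varepsilon\to0$ is taken first for each fixed $k$, exactly as in Lemma~\ref{lem:potential-representation}.
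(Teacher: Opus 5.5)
Your proof is correct and follows essentially the paper's argument. The paper likewise derives $0=\int a\,\mathcal R(fa)\,dx$ from the weak identity, shows $\mathcal R(fa)\in L^1$ with $\int\mathcal R(fa)=0$ using the balanced tail bound and evaluation of the Fourier transform at $\xi=0$, and then splits $a=a_\infty+\rho$ and applies skew-adjointness.

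The differences are cosmetic. You obtain the $\chi_k$-pairing identity directly from $L^2$ convergence of the flat truncations, and you split $a$ before letting $k\to\infty$. The paper instead invokes Lemma~\ref{lem:potential-representation}, and it extends $f$ off the plane by $f(x)\theta(z)$ because $\mu$ lives in $\R^d$; you should make that extension explicit rather than simply setting $d=n$.
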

\begin{proof}
Fix a real $f\in\Lip_c(\R^n)$ with $\int fa=0$. To use the ambient
weak definition, take $\theta\in C_c^\infty(\R^m)$ equal to one near
zero and extend $f$ by $\widetilde f(x,z)=f(x)\theta(z)$. On the plane
this agrees with $f$. Put $h=fa$. Then
\[
 \supp h\subset\supp f,\qquad h\in L^1\cap L^2,\qquad\int h=0.
\]
In particular $h$ need not be Lipschitz: boundedness of $a$ is sufficient
for these properties, and flat $L^2$ Riesz theory applies to $h$.
Choose $R_f\ge1$ containing its support. For $|x|>2R_f$,
\begin{align*}
 \mathcal Rh(x)
 &=c_{n,R}^{-1}\int\bigl(K_n(x-y)-K_n(x)\bigr)h(y)\,dy,\\
 K_n(x-y)-K_n(x)&=-\int_0^1 DK_n(x-ty)y\,dt.
\end{align*}
Since $|x-ty|\ge|x|/2$, this proves
\begin{equation}\label{eq:h-balanced-tail}
 |\mathcal Rh(x)|\le C|x|^{-n-1}\int|y||h(y)|\,dy.
\end{equation}
On $B_{2R_f}$, Cauchy--Schwarz and the $L^2$ Riesz bound give
\[
 \int_{B_{2R_f}}|\mathcal Rh|\,dx
 \le |B_{2R_f}|^{1/2}\|\mathcal Rh\|_2<\infty.
\]
Outside that ball use
$\int_{2R_f}^\infty r^{-n-1}r^{n-1}\,dr<\infty$.
Thus $\mathcal Rh\in L^1(dx)$ and, since $a\le C_0$, also in
$L^1(a\,dx)$. The flat principal value exists almost everywhere and
represents the same $L^2$ operator. Lemma~\ref{lem:potential-representation}
and reflectionlessness now give, after division by $c_{n,R}$,
\begin{equation}\label{eq:flat-potential-zero}
 0=\int a(x)\mathcal Rh(x)\,dx.
\end{equation}

It remains to remove the constant background. We do not assign an
unrenormalized whole-space Riesz integral to the constant function.
Instead, the zero mean of $h$ gives
\begin{align}
 \widehat h(\xi)
 &=\int(e^{-iy\cdot\xi}-1)h(y)\,dy,\notag\\
 |\widehat h(\xi)|&\le|\xi|\int|y||h(y)|\,dy.
 \label{eq:flat-h-low-frequency}
\end{align}
For $\xi\ne0$ the $L^2$ multiplier formula is
\[
 \widehat{\mathcal R_jh}(\xi)=-i\frac{\xi_j}{|\xi|}\widehat h(\xi).
\]
Its right side is continuous away from zero and has limit zero at zero
by \eqref{eq:flat-h-low-frequency}. The left side has a continuous
representative because $\mathcal R_jh\in L^1$. Equality almost everywhere
of these two continuous representatives implies equality everywhere.
Evaluating at zero therefore yields
\begin{equation}\label{eq:R-h-integral-zero}
 \int\mathcal R_jh(x)\,dx=\widehat{\mathcal R_jh}(0)=0.
\end{equation}
Finally $\rho,h\in L^2$, so $\int\rho\mathcal Rh$ and
$\int h\mathcal R\rho$ are absolutely integrable by Cauchy--Schwarz.
Skew-adjointness, component by component, gives
\begin{align*}
 0=\int a\mathcal Rh
 &=a_\infty\int\mathcal Rh+\int\rho\mathcal Rh\\
 &=-\int h\mathcal R\rho=-\int fa\mathcal R\rho.
\end{align*}
The compactness used in this proof is that of the test density $h=fa$.
The perturbation $\rho$ is used only in $L^2$, and need not be integrable
or compactly supported.
\end{proof}

\begin{lemma}
\label{lem:weighted-annihilation}
Let $0<c_0\le a\le C_0$ and $F\in L^2_{\rm loc}(\R^n)$.
If $\int faF=0$ for every $f\in\Lip_c$ with $\int fa=0$, then $F$ is
constant a.e.
\end{lemma}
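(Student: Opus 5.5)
The plan is to reduce the statement to a standard duality argument: the hypothesis says that the functional $f\mapsto\int faF$ vanishes on the kernel of the functional $f\mapsto\int fa$. That forces the first functional to be a scalar multiple of the second, and the multiplier is then identified as the constant value of $F$.

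First fix a reference test $f_0\in\Lip_c$ with $f_0\ge0$ and $f_0\not\equiv0$. Then $\int f_0a>0$ because $a\ge c_0>0$. For an arbitrary $f\in\Lip_c$, the function
\[
 \tilde f=f-\frac{\int fa}{\int f_0a}\,f_0
\]
is again compactly supported and Lipschitz, and it satisfies $\int\tilde fa=0$. The hypothesis applied to $\tilde f$ gives
\[
 \int faF=\lambda\int fa,\qquad \lambda:=\frac{\int f_0aF}{\int f_0a},
\]
for every $f\in\Lip_c$. All integrals are finite, because $F\in L^2_{\rm loc}\subset L^1_{\rm loc}$, $a$ is bounded and $f$ has compact support.

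The identity rewrites as $\int f\,a(F-\lambda)\,dx=0$ for all $f\in\Lip_c(\R^n)$, in particular for all $f\in C_c^\infty$. The function $a(F-\lambda)$ lies in $L^1_{\rm loc}$, so the fundamental lemma of the calculus of variations gives $a(F-\lambda)=0$ almost everywhere. For an elementary route, one can instead approximate indicators of balls by Lipschitz functions and apply Lebesgue differentiation. Since $a\ge c_0>0$, we can divide by $a$ and conclude $F=\lambda$ almost everywhere.

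There is no serious obstacle in this argument. The only points needing care are that the correction $\tilde f$ stays in the admissible class, which holds because $\Lip_c$ is a vector space, and that the lower bound on $a$ is used twice: once to make the normalization $\int f_0a$ nonzero, and once to divide out the weight at the end. Measurability of $a$ and the local integrability of $aF$ are enough for the density argument; neither the $L^2$ structure nor any regularity of $a$ is needed.
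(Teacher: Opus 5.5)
Your proposal is correct and follows the paper's proof essentially step for step. Like the paper, you fix a nonnegative bump, normalize it against $a$, subtract a multiple of it from an arbitrary test to get a zero-mean admissible test, conclude that $a(F-c)$ vanishes as a distribution, and divide by $a\ge c_0$; the only cosmetic difference is that the paper takes $C_c^\infty$ tests from the start, while you work in $\Lip_c$ and then restrict.
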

\begin{proof}
It suffices to argue for one real component of $F$. Choose a nonnegative
$\eta_0\in C_c^\infty$, $\eta_0\not\equiv0$. The lower bound for $a$
ensures $0<\int\eta_0a<\infty$. Put
\[
 \eta=\frac{\eta_0}{\int\eta_0a},\qquad
 c=\int\eta(x)a(x)F(x)\,dx.
\]
This constant is well defined because $a$ is bounded and $F$ is locally
square integrable. For arbitrary $q\in C_c^\infty$ define
$g=q-(\int qa)\eta$. Then $g\in\Lip_c$ and
\[
 \int ga=\int qa-\left(\int qa\right)\int\eta a=0.
\]
Applying the hypothesis to this $g$ gives
\begin{align*}
 0=\int gaF
 &=\int qaF-\left(\int qa\right)\left(\int\eta aF\right)\\
 &=\int q(x)a(x)(F(x)-c)\,dx.
\end{align*}
Thus the locally integrable function $a(F-c)$ is the zero distribution,
and so it is zero almost everywhere. Since $a\ge c_0>0$, $F=c$ almost
everywhere. The tests $q$ and $\eta$ are smooth. The measurable density occurs
only in the integrals, so no regularity of $1/a$ is required. For vector $F$, apply
the same construction componentwise.
\end{proof}

\begin{proof}[Proof of Theorem~\ref{thm:global-rigidity}]
Proposition~\ref{prop:graph-structural} gives the upper growth and the
large-scale lower bound, with center $X_\varphi(0)$. Set
$P=\R^n\times\{0\}$. The pushforward formula, applied first to
nonnegative functions and then by monotone convergence, yields
\begin{align}
 \mathscr E_P^\perp(\mu_{\varphi,a})
 &=\frac12\iint_{\R^n\times\R^n}
 \frac{|\varphi(x)-\varphi(y)|^2a(x)a(y)}
 {|X_\varphi(x)-X_\varphi(y)|^{n+1}}\,dx\,dy\notag\\
 &\le\frac{C_0^2}{2}\iint
       \frac{|\varphi(x)-\varphi(y)|^2}{|x-y|^{n+1}}\,dx\,dy
 =\frac{C_0^2}{2}[\varphi]_{1/2}^2<\infty.
 \label{eq:graph-energy-to-ambient}
\end{align}
Every point of $\Gamma_\varphi$ belongs to $\supp\mu_{\varphi,a}$:
the preimage of a ball around $X_\varphi(x)$ contains an ordinary ball
around $x$, and the density on that ball is at least $c_0$.
Conversely the graph is closed and carries the whole measure.
Consequently $\supp\mu_{\varphi,a}=\Gamma_\varphi$.
Theorem~\ref{prop:ambient-planarity} now gives
\[
 (x,\varphi(x))\in\R^n\times\{b\}\quad(x\in\R^n),
\]
so $\varphi(x)=b$ for every $x$.

Translate this plane vertically to $P$. Such a rigid motion preserves
the growth bounds, the symmetric kernel pairing, and reflectionlessness.
The remaining measure is $a(x)\,dx$ on $\R^n$. Put
$\rho=a-a_\infty\in L^2$. Lemma~\ref{lem:plane-reflectionless} supplies
the zero-mean annihilation identity, and
Lemma~\ref{lem:weighted-annihilation} gives
$\mathcal R\rho=c$ almost everywhere for one constant vector $c$.
Plancherel implies $\mathcal R\rho\in L^2$. If $c\ne0$, then
\[
 \|\mathcal R\rho\|_2^2
 \ge\int_{B_t}|c|^2\,dx=|c|^2\omega_nt^n\longrightarrow\infty,
\]
a contradiction. Hence all Riesz components vanish. By
\eqref{eq:Riesz-coercivity},
\[
 \|\rho\|_2^2=\sum_{j=1}^n\|\mathcal R_j\rho\|_2^2=0.
\]
This proves $a=a_\infty$ almost everywhere and the claimed equality of
measures. Finally, a nonzero constant map is not in $L^2(\R^n)$ and
cannot equal zero outside a compact set. Either additional normalization
therefore forces $b=0$.
\end{proof}

\begin{corollary}
\label{cor:compact-graph-L2}
In Theorem~\ref{thm:global-rigidity}, finite graph energy may in particular
be replaced by $\varphi\in W^{1,\infty}$ with compact support.
The density perturbation still need only belong to $L^2$, independently
of the size of the graph perturbation.
\end{corollary}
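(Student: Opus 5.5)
The corollary reduces to Theorem~\ref{thm:global-rigidity}. The only hypothesis to check is $[\varphi]_{1/2}<\infty$ when $\varphi\in W^{1,\infty}$ has compact support. The density hypotheses \eqref{eq:density-bounds} are left unchanged and enter only through Lemma~\ref{lem:plane-reflectionless}, which uses nothing but $\rho\in L^2$. So no relation between $\|a-a_\infty\|_2$ and the size of $\varphi$ is needed.

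To bound the seminorm, suppose $\supp\varphi\subset B_R$ and set $L=\Lip(\varphi)$. Split the domain of \eqref{eq:half-seminorm} into three regions.
\begin{itemize}
\item[(i)] The near-diagonal region $|x-y|<1$ with at least one variable in $B_{R+1}$. Here $|\varphi(x)-\varphi(y)|^2\le L^2|x-y|^2$. Integrating $|x-y|^{1-n}$ over $|z|<1$ is finite, and the outer variable ranges over a ball of radius $R+2$. This region contributes at most $C_nL^2(R+2)^n$.
\item[(ii)] The far region $|x-y|\ge1$. The difference vanishes unless one variable lies in $B_R$, and it is bounded by $2\|\varphi\|_\infty$. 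Using $\int_{|z|\ge1}|z|^{-n-1}dz<\infty$, this region contributes at most $C_n\|\varphi\|_\infty^2R^n$.
\item[(iii)] Pairs with $|x-y|<1$ and both variables outside $B_{R+1}$. Here $\varphi(x)=\varphi(y)=0$, so there is no contribution.
\end{itemize}
Hence $[\varphi]_{1/2}<\infty$. A compactly supported Lipschitz map is also globally Lipschitz, so every hypothesis of Theorem~\ref{thm:global-rigidity} holds.

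Applying the theorem gives $\varphi\equiv b$ and $a\equiv a_\infty$. Since $\varphi$ vanishes outside a compact set, the last clause of the theorem gives $b=0$.

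There is no real obstacle. The only point needing care is the remark about independence from the size of the graph perturbation: the energy bound \eqref{eq:graph-energy-to-ambient} uses only $a\le C_0$, and the density argument uses only $\rho\in L^2$. Neither step requires a quantitative link between $\rho$ and $D\varphi$, in contrast to Tolsa's hypothesis $\|g-1\|_{L^2(\Gamma)}\le C_2\|DA\|_2$ in \eqref{eq:Tolsa-comparison}.
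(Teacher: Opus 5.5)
Your proof is correct and follows the paper's own argument. The paper likewise checks $[\varphi]_{1/2}<\infty$ using the Lipschitz bound near the diagonal and the $|x-y|^{-n-1}$ tail when a variable is away from the support, then applies Theorem~\ref{thm:global-rigidity} and its exterior-zero clause to get $b=0$; your three-region split just spells out the constants the paper leaves implicit.
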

\begin{proof}
Near the diagonal use $|\varphi(x)-\varphi(y)|^2\le M^2|x-y|^2$.
If one variable is far from the support of $\varphi$, use the tail
$|x-y|^{-n-1}$.  These bounds show $[\varphi]_{1/2}<\infty$.
Apply the theorem and then the exterior zero normalization.
\end{proof}

\section{Proof of the Theorem~\ref{thm:compact-flat}: Compact perturbations of a flat measure and stability}\label{sec:compact-flat}
This section proves Theorem \ref{sec:compact-flat} by combining the support theorem with a Fourier argument for the anti-locality of the Riesz transform. Weak closure and a compactness argument then yield a criterion relating approximation by the same flat measure on finitely many exterior annuli to closeness at the unit scale. For weighted Lipschitz graphs, we also use the energy form to establish half-order estimates for the weak orthogonal field and obtain \(L^2\) consequences under additional hypotheses. These results connect the rigidity theory with quantitative estimates preceding the higher-order Sobolev analysis.
\subsection{A consequence of the anti-locality of the Riesz transform}
The following observation is a compact-support special case of the
fractional unique-continuation principle; compare
\cite[Theorem~1.2]{GhoshSaloUhlmann}.  We include a Fourier proof to keep
the density argument self-contained.

\begin{lemma}
\label{lem:anti-local}
Let $\rho\in L^2(\R^n)$ have compact support.  If every
$\mathcal R_j\rho$ also has compact support, then $\rho=0$.
\end{lemma}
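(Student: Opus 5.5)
The plan is to use Paley--Wiener: compact support makes Fourier transforms entire, and the multiplier $-i\xi_j/|\xi|$ is not analytic at the origin. Since $\rho\in L^2$ has compact support, $\rho\in L^1$, so $\widehat\rho$ extends to an entire function on $\mathbb C^n$. Each $\mathcal R_j\rho$ is in $L^2$, because the Riesz transforms are $L^2$ bounded, and it has compact support by hypothesis. Hence $\widehat{\mathcal R_j\rho}=:G_j$ is also an entire function, and
\[
 G_j(\xi)=-i\frac{\xi_j}{|\xi|}\widehat\rho(\xi)
\]
for almost every $\xi\in\R^n$. Both sides are continuous on $\R^n\setminus\{0\}$, so the identity holds for every $\xi\neq0$.

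Next I would remove the square root. For $\xi\ne0$,
\[
 \sum_j\xi_jG_j(\xi)=-i|\xi|\,\widehat\rho(\xi),
 \qquad\text{so}\qquad
 \Bigl(\sum_j\xi_jG_j(\xi)\Bigr)^2=-|\xi|^2\widehat\rho(\xi)^2 .
\]
Also $|\xi|^2G_j(\xi)^2=-\xi_j^2\widehat\rho(\xi)^2$. Both are identities between real-analytic functions on $\R^n\setminus\{0\}$, hence by continuity on all of $\R^n$.

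Restrict to a line through the origin: set $\xi=t\omega$ with $\omega\in S^{n-1}$ and $t\in\R$. Write $g(t)=\widehat\rho(t\omega)$ and $h(t)=\sum_j\omega_jG_j(t\omega)$; both are entire in $t$. For $t>0$ the first identity gives $t\,h(t)=-it\,g(t)$, so $h=-ig$ on $(0,\infty)$ and therefore everywhere, by analyticity. For $t<0$ we have $|t\omega|=-t$, so the same identity gives $t\,h(t)=it\,g(t)$, i.e. $h=ig$ on $(-\infty,0)$, and again everywhere. Subtracting, $2ig\equiv0$, so $\widehat\rho(t\omega)=0$ for all $t$. Since $\omega$ is arbitrary, $\widehat\rho\equiv0$ on $\R^n$, and Plancherel gives $\rho=0$.

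This one-variable comparison on the two half-lines is the key step; it directly exploits the sign jump of $\xi/|\xi|$ across the origin along each line. It makes the squared identities above unnecessary; I would keep only the half-line argument. The one point that needs care, and the main (mild) obstacle, is justifying that $\widehat{\mathcal R_j\rho}$ coincides with the entire Paley--Wiener function pointwise off the origin, not merely almost everywhere. This follows because both sides are continuous there: $\mathcal R_j\rho\in L^1$, being compactly supported and $L^2$, so its Fourier transform is continuous. This mirrors the continuous-representative argument already used in Lemma~\ref{lem:plane-reflectionless}.
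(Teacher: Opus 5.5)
Your proposal is correct and follows essentially the same route as the paper. Both arguments use entire Fourier--Laplace transforms from compact support, upgrade the a.e.\ multiplier identity to every $\xi\neq0$ by continuity, and compare the two half-lines of each ray through the origin via the identity theorem. The only cosmetic difference is that the paper forms the entire function $i\sum_j z_j\widehat{\mathcal R_j\rho}(z)$, which equals $|\xi|\widehat\rho(\xi)$ on real $\xi\neq0$; your $h$ is this function restricted to the ray and divided by $it$, so the extra factor of $t$ disappears before the identity theorem is applied.
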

\begin{proof}
Choose $R$ large enough to contain the supports of $\rho$ and all
$\mathcal R_j\rho$. They are in $L^1$ by Cauchy--Schwarz on $B_R$.
For $z\in\mathbb C^n$ set
\[
 F(z)=\int e^{-ix\cdot z}\rho(x)\,dx,\qquad
 F_j(z)=\int e^{-ix\cdot z}\mathcal R_j\rho(x)\,dx.
\]
On a compact complex-frequency set with $|\operatorname{Im}z|\le H$,
\[
 |x^\alpha e^{-ix\cdot z}\rho(x)|
 \le R^{|\alpha|}e^{RH}|\rho(x)|\mathbf1_{B_R}(x),
\]
and the analogous bound holds for $\mathcal R_j\rho$. Differentiation
under the integral is therefore justified to every order. In particular
$F$ and $F_j$ are entire; this elementary fact is the only
compact-support Fourier analyticity used here.

The $L^2$ Riesz identity gives
$F_j(\xi)=-i\xi_j|\xi|^{-1}F(\xi)$ for almost every real $\xi\ne0$.
Both sides are continuous there, so the identity holds for every such
$\xi$. Define
\[
 G(z)=i\sum_{j=1}^n z_jF_j(z).
\]
This is entire and, for real $\xi\ne0$,
\begin{equation}\label{eq:entire-modulus-identity}
 G(\xi)=\sum_{j=1}^n\frac{\xi_j^2}{|\xi|}F(\xi)=|\xi|F(\xi).
\end{equation}
Fix $\theta\in\mathbb S^{n-1}$ and form the entire one-variable
functions $p(t)=G(t\theta)$ and $q(t)=F(t\theta)$. For real $t>0$,
\[
 p(t)=tq(t).
\]
The entire function $p(t)-tq(t)$ vanishes on a real interval and hence
vanishes identically by the identity theorem. For real $t<0$,
\eqref{eq:entire-modulus-identity} also gives $p(t)=-tq(t)$.
Subtracting the two identities yields $2tq(t)=0$ for $t<0$.
The identity theorem again implies $q\equiv0$. Every nonzero real
frequency lies on such a line, and continuity covers zero, so
$\widehat\rho=0$ on $\R^n$. Plancherel or Fourier uniqueness gives
$\rho=0$. For $n=1$ the same line argument applies without change.
\end{proof}

\subsection{Proof of the classification theorem}
\begin{proof}[Proof of Theorem~\ref{thm:compact-flat}]

After a rigid motion taking the given affine plane to
$P=\R^n\times\{0\}$, enlargement of the exceptional ball, and division
of the measure by $c$, we may assume
\[
 \mu=\mu_0:=\mathcal H^n|_P\quad\hbox{on }\overline B_{R_0}^{\,c},
 \qquad R_0\ge1.
\]
All these operations preserve the weak reflectionless identity;
rescaling the measure multiplies its pairing by $c^{-2}$ and only
changes its operator bound by a finite constant. For $r\ge2R_0$,
\begin{equation}\label{eq:exterior-large-mass}
 \mu(B_r)\ge\mathcal H^n(P\cap(B_r\setminus\overline B_{R_0}))
 =\omega_n(r^n-R_0^n)\ge\omega_n(1-2^{-n})r^n.
\end{equation}
Thus the lower growth assumption of the support theorem is available.

Fix $e\in P^\perp$, $|e|=1$, and let $h_e(U)=e\cdot U$.
The integrand defining $\mathscr E_\mu(h_e)$ is zero for almost every
pair with both variables outside $\overline B_{R_0}$, since both points
then lie on $P$. On $B_{3R_0}^2$ it is bounded by $|U-V|^{1-n}$.
For $|U-V|<1$, \eqref{eq:near-growth} gives a finite inner integral,
uniform in $U$. For $|U-V|\ge1$ the kernel is at most one and the
product measure of the bounded set is finite. This proves finiteness
of the bounded part of the energy.

In the remaining nonzero region one variable belongs to
$\overline B_{R_0}$; by symmetry assume it is $U$. For $V\in P$,
$|V|>3R_0$, one has
\[
 |h_e(U)-h_e(V)|=|h_e(U)|\le R_0,\qquad
 |U-V|\ge\tfrac23|V|.
\]
Hence the far contribution is bounded by
\begin{align}
 C R_0^2\mu(\overline B_{R_0})
   \int_{P\cap\{|V|>3R_0\}}|V|^{-n-1}\,d\mathcal H^n(V)
 &=C_nR_0^2\mu(\overline B_{R_0})\int_{3R_0}^\infty r^{-2}\,dr\notag\\
 &<\infty.\label{eq:exterior-energy-tail-detail}
\end{align}
Summing over a basis of $P^\perp$ gives finite orthogonal energy.
Theorem~\ref{prop:ambient-planarity} yields $\supp\mu\subset b+P$.
There are support points in the exterior part of $P$ because the
measure there equals $\mathcal H^n|_P$. Parallel planes $b+P$ and $P$
with $b\in P^\perp$ intersect only if $b=0$. Thus $\supp\mu\subset P$.

Identify $P$ with $\R^n$. If $E\subset P$ has Lebesgue measure zero,
then for every $\delta>0$ it admits a countable cover by planar cubes
$Q_i$ with $\sum_i\ell(Q_i)^n<\delta$. Each cube lies in an ambient
ball of radius $C_n\ell(Q_i)$, and so
\[
 \mu(E)\le\sum_i\mu(Q_i)\le C_nA_0\sum_i\ell(Q_i)^n
 <C_nA_0\delta.
\]
It follows that $\mu\ll dx$. By the Radon--Nikodym theorem and
Lebesgue differentiation, for almost every $x$,
\[
 a(x)=\lim_{r\downarrow0}\frac{\mu(B_P(x,r))}{\omega_nr^n}
 \le A_0/\omega_n.
\]
Consequently
\begin{equation}\label{eq:plane-density-holes}
 d\mu=a(x)\,dx,\qquad 0\le a\le C_nA_0.
\end{equation}
The density may still vanish in part of the exceptional ball. In
particular, Lemma~\ref{lem:weighted-annihilation} cannot yet be applied
to all of $P$. Instead, exterior flatness gives
\begin{equation}\label{eq:compact-rho}
 \rho:=a-1\in L_c^\infty(\R^n)\subset L^1\cap L^2.
\end{equation}

Fix $R_1>R_0$ and put $\Omega=\{x:|x|>R_1\}$. For
$f\in C_c^\infty(\Omega)$ satisfying $\int f\,dx=0$, the identity
$a=1$ on $\Omega$ gives $fa=f$ and $\int f\,d\mu=0$.
An ambient compact extension of $f$ is obtained as in the flat lemma.
For this smooth $f$, symmetric flat truncations converge locally
uniformly to $\mathcal Rf$. Their integrability at infinity follows
from the zero mean, exactly as in \eqref{eq:h-balanced-tail}.
The representation lemma, followed by the flat skew-adjoint identity,
therefore gives
\begin{equation}\label{eq:exterior-flat-pairing}
 0=\int a\mathcal Rf
 =\int\mathcal Rf+\int\rho\mathcal Rf
 =-\int f\mathcal R\rho.
\end{equation}
Here $\int\mathcal Rf=0$ follows from
\eqref{eq:R-h-integral-zero}, and the products with $\rho$ are $L^2$
pairings. A positive lower bound for $a$ is not used in this argument.

Choose $\eta\in C_c^\infty(\Omega)$ with $\int\eta=1$ and put
$c_\Omega=\int\eta\mathcal R\rho$. Given any
$q\in C_c^\infty(\Omega)$, take $f=q-(\int q)\eta$ in
\eqref{eq:exterior-flat-pairing}; it follows that
\[
 \int q(x)(\mathcal R\rho(x)-c_\Omega)\,dx=0.
\]
Thus the same constant vector $c_\Omega$ occurs throughout $\Omega$.
This test uses one fixed $\eta$ and remains valid when $\Omega$ has
two connected components, as it does for $n=1$.

For $|x|>2R_1$, compact support of $\rho$ gives the ordinary integral
estimate
\[
 |\mathcal R\rho(x)|
 \le C_n\int\frac{|\rho(y)|}{|x-y|^n}\,dy
 \le C_n|x|^{-n}\|\rho\|_1\longrightarrow0.
\]
The field is continuous on the exterior, so its almost-everywhere
constant value is zero there. Hence every $\mathcal R_j\rho$ is an
$L^2$ function supported in $\overline B_{R_1}$. Apply
Lemma~\ref{lem:anti-local} to obtain $\rho=0$. Therefore $a=1$ almost
everywhere, including the exceptional region. Undoing the rigid motion
and the density normalization proves the original conclusion.
\end{proof}

No parameterization of the exceptional set was used.  The upper growth
condition ensures both diagonal integrability and the bounded planar
density in \eqref{eq:plane-density-holes}.  The exterior hypothesis has
two separate roles: it supplies large-scale mass and finite orthogonal
energy, and it supplies the open exterior region needed to eliminate
possible holes in the planar density.

\subsection{Weak limits in the uniformly bounded class}
We record the precise compactness statement used below.
The weak-identity part is the closure result of
\cite[Section~8, Corollary~8.5]{JayeNazarovI}; the proof below also records
the uniform operator-bound issue for our convention.

\begin{lemma}\label{lem:weak-closure}
Suppose $\mu_k$ satisfy \eqref{eq:upper-growth} with one constant $A_0$,
are reflectionless, and obey
$\sup_{k,\varepsilon}\|\mathcal R^n_{\mu_k,\varepsilon}\|_{2\to2}\le M$.
If $\mu_k\rightharpoonup\mu$ vaguely, then $\mu$ satisfies the weak
reflectionless identity and has uniformly bounded Riesz truncations,
with bound controlled by $M+A_0$.  The zero measure is allowed as a limit.
\end{lemma}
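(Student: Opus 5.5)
The argument has three parts: growth and the operator bound pass to the vague limit, the weak identity passes by a tail estimate uniform in $k$, and the zero limit needs no separate treatment.

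\emph{Growth.} For an open ball $B(U,r)$, lower semicontinuity of vague limits on open sets gives
\[
\mu(B(U,r))\le\liminf_k\mu_k(B(U,r))\le A_0r^n.
\]
Closed balls follow by shrinking-radius limits. So \eqref{eq:upper-growth} holds for $\mu$ with the same $A_0$.

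\emph{Operator bound.} A hard truncation is not a continuous test, so I first pass to a smooth truncation. Fix an odd kernel
\[
K^\varepsilon(Z)=K_n(Z)\bigl(1-\theta(|Z|/\varepsilon)\bigr),
\]
where $\theta$ is smooth, equal to $1$ on $[0,1/2]$ and $0$ on $[1,\infty)$. The difference from the hard truncation is a kernel of size $\lesssim\varepsilon^{-n}$ supported in $|Z|<\varepsilon$. By upper growth it is dominated by a constant times the centered maximal function. Hence the smooth truncations are bounded on $L^2(\mu_k)$ by $C(M+A_0)$, uniformly in $k$ and $\varepsilon$.

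For compactly supported Lipschitz $f,g$, the bilinear form
\[
\iint K^\varepsilon(U-V)f(V)g(U)\,d\mu_k(V)\,d\mu_k(U)
\]
has a continuous compactly supported integrand in $(U,V)$. Since $\mu_k\otimes\mu_k\rightharpoonup\mu\otimes\mu$ vaguely, the form converges. The right side $C(M+A_0)\|f\|_{L^2(\mu_k)}\|g\|_{L^2(\mu_k)}$ also converges, because $|f|^2,|g|^2$ are continuous with compact support. This gives the same bound for $\mu$. Density of $\Lip_c$ in $L^2(\mu)$ extends it to all of $L^2(\mu)$. Comparing back with hard truncations for $\mu$ costs another $CA_0$.

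\emph{Weak identity.} Fix $f\in\Lip_c$ with $\int f\,d\mu=0$. Then $\int f\,d\mu_k\to0$, but need not vanish. Correct it with a fixed bump $\eta\ge0$ in $\Lip_c$ satisfying $\int\eta\,d\mu>0$, and set
\[
f_k=f-\frac{\int f\,d\mu_k}{\int\eta\,d\mu_k}\,\eta .
\]
Each $f_k$ has zero $\mu_k$-mean, is eventually well defined, and $f_k\to f$ with uniformly bounded Lipschitz constants and supports. If no such $\eta$ exists, then $\mu=0$, which is discussed below.

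Reflectionlessness gives $\langle\mathcal R^n(f_k\mu_k),1\rangle_{\mu_k}=0$. Split this pairing with a cutoff $\chi_S$:
\[
\langle\mathcal R^n(f_k\mu_k),1\rangle_{\mu_k}
=\langle\mathcal R^n(f_k\mu_k),\chi_S\rangle_{\mu_k}
+\langle\mathcal R^n(f_k\mu_k),1-\chi_S\rangle_{\mu_k}.
\]
By \eqref{eq:balanced-tail-quantitative}, the tail term is at most $CA_0M_1(f_k\mu_k)/S$, uniformly in $k$.

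The compact term is the key step and the main obstacle. Its integrand $K_n(U-V)H_{f_k,\chi_S}(U,V)$ is discontinuous at the diagonal. I handle it as follows.
\begin{itemize}
\item Near the diagonal, replace $K_n$ by the smooth $K^\varepsilon$. By \eqref{eq:H-Lipschitz}--\eqref{eq:near-growth}, the error is at most $C\varepsilon A_0\mu_k(B_S)$, uniformly in $k$.
\item The regularized integrand is continuous with compact support, and $f_k\to f$ uniformly. So the regularized pairing converges under $\mu_k\otimes\mu_k\rightharpoonup\mu\otimes\mu$.
\end{itemize}

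Now let $k\to\infty$, then $\varepsilon\to0$, then $S\to\infty$. This yields $\langle\mathcal R^n(f\mu),1\rangle_\mu=0$.

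\emph{The zero limit.} If $\mu=0$, every pairing vanishes and the operator bound is trivial, so the zero measure is allowed.
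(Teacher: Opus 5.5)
Your proposal is correct and follows essentially the same route as the paper: renormalize the test by a fixed bump so that it has zero $\mu_k$-mean, and split off the tail using the bound $CA_0M_1/S$ from \eqref{eq:balanced-tail-quantitative}, which is uniform in $k$. Then smooth the kernel near the diagonal with an $O(\varepsilon)$ error uniform in $k$ and pass to the limit in the product measures; for the operator bound, pass smoothed truncations to the limit and compare back with hard truncations at a cost of $CA_0$.

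The only minor difference is how you bound the smoothed operators on $\mu_k$. The paper writes the smoothed kernel as an average $\int_1^2\mathbf 1_{\{|Z|>t\epsilon\}}K_n(Z)\vartheta'(t)\,dt$ of hard truncations, which gives the bound $M$ directly. You instead compare with a single hard truncation, and there Schur's test already suffices in place of the maximal function.
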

\begin{proof}

Let $B=B(U,r)$ and $\delta>0$. Choose a continuous compact cutoff
$0\le\zeta_\delta\le1$ equal to one on $\overline B$ and supported in
$B(U,r+\delta)$. Then
\[
 \mu(B)\le\int\zeta_\delta\,d\mu
 =\lim_k\int\zeta_\delta\,d\mu_k\le A_0(r+\delta)^n.
\]
Letting $\delta\downarrow0$ proves upper growth for $\mu$.
Assume first $\mu\ne0$. Fix $f\in\Lip_c$ with $\int f\,d\mu=0$
and choose $\psi\in\Lip_c$ such that $\int\psi\,d\mu\ne0$.
For large $k$ put
\[
 c_k=\frac{\int f\,d\mu_k}{\int\psi\,d\mu_k},\qquad
 f_k=f-c_k\psi.
\]
Vague convergence implies $c_k\to0$, and hence
\[
 \int f_k\,d\mu_k=0,\quad
 \|f_k-f\|_\infty+\Lip(f_k-f)\to0.
\]
All supports belong to one ball $B_H$. Their first weighted moments
satisfy
\[
 \sup_k\int|U||f_k(U)|\,d\mu_k(U)
 \le H\sup_k\|f_k\|_\infty A_0H^n<\infty.
\]

Fix $S>2H$ and a smooth cutoff $\chi_S$ equal to one on $B_S$.
For $0<\epsilon<1$ the near-diagonal contribution, in a fixed ball
$B_K$ containing both test supports, obeys
\begin{align}
 &\iint_{\substack{U,V\in B_K\\0<|U-V|<2\epsilon}}
 |K_n(U-V)H_{f_k,\chi_S}(U,V)|\,d\mu_k(U)d\mu_k(V)\notag\\
 &\qquad\le C_S\mu_k(B_K)
       \sup_U\int_{|U-V|<2\epsilon}|U-V|^{1-n}\,d\mu_k(V)
 \le C_S\epsilon.
 \label{eq:closure-diagonal-error}
\end{align}
The constant is uniform in $k$; the same bound holds for $\mu$.
Multiply the kernel by a smooth cutoff which vanishes for
$|U-V|\le\epsilon$ and equals one for $|U-V|\ge2\epsilon$.
At fixed $S,\epsilon$ the resulting integrand is continuous and has
compact support. Product measures converge on compact sets: to see this,
approximate a continuous function of $(U,V)$ uniformly by finite sums
$\sum_i p_i(U)q_i(V)$ on the relevant compact product, and use the
uniform mass bounds together with vague convergence on each factor.
The replacement of $f_k$ by $f$ also has vanishing error because
$f_k\to f$ uniformly and the smoothed kernel is bounded. Thus, first
letting $k\to\infty$ and then $\epsilon\downarrow0$, we obtain
\[
 \langle\mathcal R^n(f_k\mu_k),\chi_S\rangle_{\mu_k}
 \longrightarrow\langle\mathcal R^n(f\mu),\chi_S\rangle_\mu.
\]

By \eqref{eq:balanced-tail-quantitative} and the uniform first-moment
bound from Step 1,
\begin{equation}\label{eq:closure-uniform-tail}
 |\langle\mathcal R^n(f_k\mu_k),1-\chi_S\rangle_{\mu_k}|
 +|\langle\mathcal R^n(f\mu),1-\chi_S\rangle_\mu|
 \le C/S.
\end{equation}
For each $k$ the full balanced pairing is zero. Step 2 and
\eqref{eq:closure-uniform-tail} imply
$|\langle\mathcal R^n(f\mu),1\rangle_\mu|\le C/S$.
Let $S\to\infty$ to obtain reflectionlessness. The precise order is:
fix $S,\epsilon$, pass to the weak measure limit, remove the inner
smoothing, and finally send $S$ to infinity. The weak identity is
automatic if $\mu=0$.

Let $\vartheta\in C^\infty([0,\infty))$ be nondecreasing, zero on
$[0,1]$ and one on $[2,\infty)$. Define
\[
 K^{\rm sm}_\epsilon(Z)=\vartheta(|Z|/\epsilon)K_n(Z).
\]
Since $\int_1^2\vartheta'(t)\,dt=1$, for $Z\ne0$,
\[
 K^{\rm sm}_\epsilon(Z)
 =\int_1^2\mathbf1_{\{|Z|>t\epsilon\}}K_n(Z)\vartheta'(t)\,dt.
\]
The associated operators for $\mu_k$ have norm at most $M$, by
Minkowski's inequality and the uniform hard-truncation bounds. For scalar $f\in C_c(\R^d)$ and
$g\in C_c(\R^d;\R^d)$,
\begin{align*}
 &\left|\iint f(V)K^{\rm sm}_\epsilon(U-V)\cdot g(U)
                    \,d\mu_k(U)d\mu_k(V)\right|\\
 &\hspace{20mm}\le M\|f\|_{L^2(\mu_k)}
                         \|g\|_{L^2(\mu_k;\R^d)}.
\end{align*}
The left side has a continuous compactly supported integrand, and
both squared norms converge by vague convergence. Passing to the limit
and then using density defines an $L^2(\mu)$ operator of norm at most
$M$ for this fixed smoothing.

The difference between the smoothed kernel and the hard cutoff at
$\epsilon$ is bounded by
\[
 C\epsilon^{-n}\mathbf1_{\{\epsilon<|U-V|<2\epsilon\}}.
\]
The integrals of this positive majorant in either variable are at most
$C A_0$. Schur's test therefore bounds the difference operator by
$C A_0$, uniformly in $\epsilon$. The hard truncations for $\mu$ have
norm at most $M+C A_0$, as claimed. Exact preservation of the numerical
constant $M$ is not needed for the subsequent argument.
\end{proof}

\subsection{Approximation on annuli}
For $j\ge1$ put
\[
 A_j=B(0,2^{j+2})\setminus\overline B(0,2^{j-1}).
\]
For a flat measure $\sigma=c\mathcal H^n|_L$ define
\begin{equation}\label{eq:annular-distance}
 d_j(\mu,\sigma)=2^{-jn}
 \sup_{\substack{f\in\Lip_c(A_j)\\
       \|f\|_\infty\le1,\ 2^j\Lip(f)\le1}}
 \left|\int f\,d(\mu-\sigma)\right|.
\end{equation}
For tests supported in $B_2$, set
\begin{equation}\label{eq:core-distance}
 D_2(\mu,\sigma)=
 \sup_{\substack{f\in\Lip_c(B_2)\\
       \|f\|_\infty\le1,\ \Lip(f)\le1}}
 \left|\int f\,d(\mu-\sigma)\right|.
\end{equation}

\begin{proposition}
\label{thm:annular}
Fix $n,d,A_0,M$, $0<c_*\le c^*<\infty$, and $D_*<\infty$.
For each $\tau>0$ there exist $N\in\mathbb N$ and $\epsilon>0$,
depending only on these parameters and $\tau$, with the following
property.  Suppose $\mu$ is reflectionless, satisfies
\eqref{eq:upper-growth}, and has Riesz $L^2$ bound at most $M$.
Suppose one flat measure
\begin{equation}\label{eq:controlled-flat}
 \sigma=c\mathcal H^n|_L,\qquad
 c_*\le c\le c^*,\qquad\dist(0,L)\le D_*,
\end{equation}
satisfies $d_j(\mu,\sigma)<\epsilon$ for $1\le j\le N$.
Then $D_2(\mu,\sigma)<\tau$.
No AD lower bound is needed for this measure-distance assertion.
\end{proposition}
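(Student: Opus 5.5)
The proof will go by contradiction and compactness, with Lemma~\ref{lem:weak-closure} and Theorem~\ref{thm:compact-flat} as the main tools. Fix $\tau>0$ and suppose the conclusion fails. Then for every $N$ (taking $\epsilon=1/N$) there is a reflectionless $\mu_N$ with upper growth constant $A_0$ and Riesz bound at most $M$, and a flat measure $\sigma_N=c_N\mathcal H^n|_{L_N}$ satisfying \eqref{eq:controlled-flat}. These obey $d_j(\mu_N,\sigma_N)<1/N$ for $1\le j\le N$, but $D_2(\mu_N,\sigma_N)\ge\tau$.

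\textbf{Extracting limits.} The constraints $c_N\in[c_*,c^*]$ and $\dist(0,L_N)\le D_*$ put the flat measures in a compact parameter set, described by the Grassmannian and a bounded translation. After passing to a subsequence, $c_N\to c$ and $L_N\to L$, so $\sigma_N\rightharpoonup\sigma=c\mathcal H^n|_L$ vaguely. The upper growth bound gives uniform mass bounds on every ball. By weak-$*$ compactness, a further subsequence gives $\mu_N\rightharpoonup\mu$ vaguely. Lemma~\ref{lem:weak-closure} then shows that $\mu$ satisfies \eqref{eq:upper-growth}, the weak reflectionless identity, and uniformly bounded truncations. Because of this, the limit stays inside the class the rigidity theorem addresses.

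\textbf{Identifying the limit outside $B_1$.} Fix $j\ge1$ and $f\in\Lip_c(A_j)$. For $N\ge j$ we have $|\int f\,d(\mu_N-\sigma_N)|\le 2^{jn}N^{-1}(\|f\|_\infty+2^j\Lip f)\to0$. Passing to vague limits gives $\int f\,d\mu=\int f\,d\sigma$. The open annuli $A_j$ cover $\R^d\setminus\overline B_1$, so a partition of unity yields $\mu=\sigma$ on $\R^d\setminus\overline B_1$. This is exactly the exterior-flatness hypothesis \eqref{eq:exterior-flat} with $R_0=1$. The large-scale lower bound follows from exterior flatness as in \eqref{eq:exterior-large-mass}. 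Theorem~\ref{thm:compact-flat} then gives $\mu=\sigma$ on all of $\R^d$.

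\textbf{Reaching the contradiction.} The class of $f\in\Lip_c(B_2)$ with $\|f\|_\infty\le1$ and $\Lip f\le1$ is compact in the uniform norm by Arzel\`a--Ascoli. The masses $\mu_N(B_2)$ and $\sigma_N(B_2)$ are uniformly bounded. Together these upgrade vague convergence to uniform convergence over this class, so
\[
 D_2(\mu_N,\sigma_N)\le D_2(\mu_N,\mu)+D_2(\mu,\sigma)+D_2(\sigma,\sigma_N)\to0 .
\]
This contradicts $D_2(\mu_N,\sigma_N)\ge\tau$.

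\textbf{Remaining checks and main obstacle.} One technical point is that test functions touching $\partial B_2$ must be handled by a cutoff on $B_{2+\delta}$. The extra mass this introduces is controlled by upper growth for $\mu$ and $\sigma$. I expect the main obstacle to be the legitimacy of invoking Theorem~\ref{thm:compact-flat} for the limit, specifically its operator-bound requirement, which is exactly what Lemma~\ref{lem:weak-closure} supplies. The constants $N$ and $\epsilon$ depend only on the listed parameters, because the contradiction argument ranges over the whole class defined by them.
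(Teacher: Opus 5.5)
Your proposal is correct and follows the paper's proof essentially step by step. The shared steps are: contradiction with $N=k$, $\epsilon=1/k$; vague compactness of the $\mu_k$ and of the controlled flat measures; Lemma~\ref{lem:weak-closure} for the limit; the identification $\mu_\infty=\sigma_\infty$ off $\overline B_1$ via the annular tests; Theorem~\ref{thm:compact-flat}; and an Arzel\`a--Ascoli upgrade to the supremum in $D_2$.

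The differences are cosmetic. You write the last step as a triangle inequality rather than a finite $\delta$-net. The paper says ``totally bounded'' rather than ``compact'', which is the boundary issue your remark handles. You also check the large-scale lower bound explicitly from exterior flatness, where the paper only notes that $c_\infty\ge c_*>0$.
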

\begin{proof}
Fix $\tau>0$ and suppose the assertion fails. For each integer $k$ take
$N=k$ and $\epsilon=1/k$ in its negation. There are then measures
$\mu_k$ and $\sigma_k=c_k\mathcal H^n|_{L_k}$ satisfying all the fixed
parameter bounds such that
\begin{equation}\label{eq:annular-contradiction-sequence}
 d_j(\mu_k,\sigma_k)<1/k\quad(1\le j\le k),\qquad
 D_2(\mu_k,\sigma_k)\ge\tau.
\end{equation}
Upper growth bounds $\mu_k(B_H)$ for each fixed $H$, so a diagonal
weak-compactness argument gives a vaguely convergent subsequence
$\mu_k\rightharpoonup\mu_\infty$.

Write $L_k=b_k+P_k$, where $b_k\perp P_k$ is the nearest point of
$L_k$ to zero. The orthogonal projections onto the $n$-planes $P_k$
form a compact set of matrices, $|b_k|\le D_*$, and
$c_*\le c_k\le c^*$. After a further subsequence,
\[
 P_k\to P_\infty,\qquad b_k\to b_\infty\in P_\infty^\perp,
 \qquad c_k\to c_\infty\in[c_*,c^*].
\]
One may choose orthonormal frames $E_k:\R^n\to P_k$ converging to a
frame $E_\infty$ after another subsequence. For $f\in C_c(\R^d)$,
\[
 \int f\,d\sigma_k=c_k\int_{\R^n}f(b_k+E_kx)\,dx
 \longrightarrow c_\infty\int_{\R^n}f(b_\infty+E_\infty x)\,dx.
\]
Dominated convergence is applicable because $b_k$ are bounded and the
frames are isometries, so all nonzero integrands lie in one bounded
parameter ball. Thus $\sigma_k\rightharpoonup\sigma_\infty$, where
$\sigma_\infty=c_\infty\mathcal H^n|_{L_\infty}$ and
$L_\infty=b_\infty+P_\infty$.
Lemma~\ref{lem:weak-closure} makes $\mu_\infty$ reflectionless with the
required upper growth and a finite uniform operator bound.

For fixed $j$ and $f\in\Lip_c(A_j)$ set
$B_f=\max\{\|f\|_\infty,2^j\Lip(f)\}$. If $B_f>0$, then $f/B_f$
is admissible in the definition of $d_j$, so for $k\ge j$,
\[
 \left|\int f\,d(\mu_k-\sigma_k)\right|
 \le B_f2^{jn}d_j(\mu_k,\sigma_k)
 \le B_f2^{jn}/k\longrightarrow0.
\]
The equality is trivial if $B_f=0$. Passing to the weak limits and
approximating continuous compact tests by Lipschitz tests shows
$\mu_\infty|_{A_j}=\sigma_\infty|_{A_j}$. The open annuli cover
$\R^d\setminus\overline B_1$. A partition of unity on the compact
support of an exterior test therefore gives
\begin{equation}\label{eq:annular-limit-exterior}
 \mu_\infty=\sigma_\infty\quad\text{on }\R^d\setminus\overline B_1.
\end{equation}
Because $c_\infty\ge c_*>0$, this limit is nonzero. Apply
Theorem~\ref{thm:compact-flat} to conclude
$\mu_\infty=\sigma_\infty$ on all of $\R^d$.

It remains to turn convergence on each fixed test into the supremum
in $D_2$. Extend every test in \eqref{eq:core-distance} by zero outside $B_2$.
The resulting functions are uniformly bounded, uniformly Lipschitz,
and supported in $\overline B_2$. By Arzel\`a--Ascoli this family is
totally bounded in the uniform norm. For any $\delta>0$, choose a
finite uniform $\delta$-net $f_1,\ldots,f_J$ from it. Then
\begin{align}
 D_2(\mu_k,\sigma_k)
 &\le\max_{1\le i\le J}\left|\int f_i\,d(\mu_k-\sigma_k)\right|
 +\delta\bigl(\mu_k(B_2)+\sigma_k(B_2)\bigr)\notag\\
 &\le\max_{1\le i\le J}\left|\int f_i\,d(\mu_k-\sigma_k)\right|
 +C\delta.
 \label{eq:core-finite-net-detail}
\end{align}
The finite maximum tends to zero because the two limiting measures
are equal. Taking a limsup and then $\delta\downarrow0$ contradicts
\eqref{eq:annular-contradiction-sequence}. This proves the existence of
$N$ and $\epsilon$ with the stated dependencies; it does not provide
an explicit modulus.
\end{proof}

\begin{corollary}\label{cor:annular-support}
Let $\mathcal A_n(\R^d)$ denote the set of affine $n$-planes in $\R^d$.
Under the fixed parameters in Proposition~\ref{thm:annular}, small
$D_2(\mu,\sigma)$ gives
\begin{equation}\label{eq:beta-core}
 \beta_{2,\mu}(B_1)^2
 :=\inf_{L'\in\mathcal A_n(\R^d)}
     \int_{B_1}\dist(U,L')^2\,d\mu(U)
 \le C D_2(\mu,\sigma).
\end{equation}
If in addition $\mu(B(U,r))\ge a_*r^n$ for
$U\in\supp\mu$ and $0<r\le1$, then, when $D_2$ is sufficiently small,
\begin{align}
 &\sup_{U\in\supp\mu\cap B_1}\dist(U,L)
 +\sup_{V\in L\cap B_1}\dist(V,\supp\mu)
 \le C D_2(\mu,\sigma)^{1/(n+1)}.
       \label{eq:bilateral-core}
\end{align}
The additional constants depend on $a_*$ and $c_*$.  In
\eqref{eq:bilateral-core}, a supremum over an empty set is taken to be zero.
\end{corollary}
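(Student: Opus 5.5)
The plan is to test the measure difference $\mu-\sigma$ against explicit Lipschitz functions supported in $B_2$. Each function is built to vanish on one of the two sets $L$ or $\supp\mu$ and to be positive on the other. The definition \eqref{eq:core-distance} of $D_2$ then converts the surviving integral into an upper bound. All normalizing constants depend only on $D_*$, $c_*$, $a_*$ and the dimensions; this is where the controlled-flat hypothesis \eqref{eq:controlled-flat} enters. Proposition~\ref{thm:annular} is only the reason $D_2$ is small; the corollary itself is a deterministic estimate.

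\emph{Square-function bound.} Take $L'=L$ in the infimum. Choose $\chi\in C_c^\infty(B_2)$ with $0\le\chi\le1$ and $\chi=1$ on $B_1$, and put $f(U)=\chi(U)\dist(U,L)^2$.
\begin{itemize}
\item On $B_2$ we have $\dist(U,L)\le 2+D_*$. Hence $\|f\|_\infty+\Lip(f)\le C(D_*)$, and $f/C(D_*)$ is admissible in \eqref{eq:core-distance}.
\item Since $f$ vanishes on $L$, we have $\int f\,d\sigma=0$.
\item Since $f\ge0$, it follows that
\[
 \int_{B_1}\dist(U,L)^2\,d\mu(U)\le\int f\,d\mu=\int f\,d(\mu-\sigma)\le C(D_*)D_2(\mu,\sigma),
\]
\end{itemize}
which is \eqref{eq:beta-core}.

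\emph{First half of \eqref{eq:bilateral-core}.} Let $U\in\supp\mu\cap B_1$ and $t=\dist(U,L)$. Set $r=\min\{t,1\}/2$ and use the tent $f(V)=(r-|V-U|)_+$.
\begin{itemize}
\item $f$ is $1$-Lipschitz, bounded by $1$, and supported in $B(U,r)\subset B_2$.
\item $B(U,r)$ is at positive distance from $L$, so $\int f\,d\sigma=0$.
\item The lower density bound at $U$, with radius $r/2\le1$, gives $\int f\,d\mu\ge\tfrac r2\,\mu(B(U,r/2))\ge c\,a_*r^{n+1}$.
\end{itemize}
Therefore $r^{n+1}\le C a_*^{-1}D_2$. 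When $D_2$ is sufficiently small this forces $r<1/2$, so $t<1$, $r=t/2$, and $t\le C D_2^{1/(n+1)}$.

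\emph{Second half of \eqref{eq:bilateral-core}.} Let $V\in L\cap B_1$ and $s=\dist(V,\supp\mu)$. Set $r=\min\{s,1\}$ and use the tent $f(W)=(r-|W-V|)_+$.
\begin{itemize}
\item The open ball $B(V,r)$ misses $\supp\mu$, so $\int f\,d\mu=0$.
\item Integrating the tent over the $n$-disc $L\cap B(V,r)$ gives $\int f\,d\sigma=c\int_{L}(r-|W-V|)_+\,d\mathcal H^n(W)=c_n c\,r^{n+1}\ge c_n c_*r^{n+1}$.
\end{itemize}
Hence $r^{n+1}\le C c_*^{-1}D_2$. Smallness of $D_2$ again excludes $r=1$, so $s=r\le C D_2^{1/(n+1)}$. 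Empty suprema are zero by convention.

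The only step I expect to need care is the a priori restriction of the tent radii to at most $1$, so that both the local lower density hypothesis ($r\le1$) and the support condition $\supp f\subset B_2$ hold. This is handled by the truncation $\min\{\cdot,1\}$ followed by the smallness of $D_2$, which is precisely why the second estimate is stated only for sufficiently small $D_2$.
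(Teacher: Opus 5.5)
Your proof is correct and follows the paper's argument. You use the same normalized test $\chi\,\dist(\cdot,L)^2$ for \eqref{eq:beta-core}, and the same tent functions $(r-|\cdot-U|)_+$ and $(r-|\cdot-V|)_+$ with a capped radius, where the cap is then ruled out by smallness of $D_2$; only the numerical radius choices differ ($\min\{t,1\}/2$ and $\min\{s,1\}$ instead of the paper's $\min\{t/4,1/4\}$).
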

\begin{proof}
Let $0\le\chi\le1$ be smooth, supported in $B_2$, and equal to one on
$B_1$.  The function $\chi(U)\dist(U,L)^2$ has uniformly bounded
supremum and Lipschitz norm, depending on $D_*$, and its integral
against $\sigma$ is zero.  Normalize it in
\eqref{eq:core-distance} to obtain \eqref{eq:beta-core}.

If $U\in\supp\mu\cap B_1$ has $t=\dist(U,L)>0$, let
$r=\min\{t/4,1/4\}$ and test with $f(Z)=(r-|Z-U|)_+$.
The test is admissible, vanishes on $L$, and
\[
 \int f\,d\mu\ge (r/2)\mu(B(U,r/2))\ge c a_*r^{n+1}.
\]
For sufficiently small $D_2$, the choice $r=1/4$ is excluded, and
$t\le C D_2^{1/(n+1)}$.  For a point $V\in L\cap B_1$ at distance $t$
from $\supp\mu$, use the same tent function centered at $V$.
Now its $\mu$ integral is zero and its $\sigma$ integral is at least
$cc_*r^{n+1}$.  This proves the other direction.
\end{proof}

Proposition~\ref{thm:annular} is a compactness consequence of exterior-flat
rigidity and the known weak closure mechanism.  The comparison plane
and its density must be the same on all tested annuli.  The result does
not provide an explicit modulus, a geometric decay factor between
successive scales, or an elimination of Carleson-packed exceptional
cubes.  None of those statements is used or claimed here.

\subsection{The orthogonal field and half-order estimates}\label{sec:vertical-energy}

Keep $\operatorname{Lip}(\varphi)\le M$ and $0<c_0\le a\le C_0$,
without any density normalization at infinity.  For vector-valued
functions set
\[
 \mathscr B_{\varphi,a}(u,v)
 =\frac12\iint (u(x)-u(y))\cdot(v(x)-v(y))
 w_{\varphi,a}(x,y)\,dx\,dy,
 \qquad \mathscr E_{\varphi,a}(u)=\mathscr B_{\varphi,a}(u,u).
\]
From \eqref{eq:graph-biLip},
\begin{equation}\label{eq:weight-comparison}
 \frac{c_0^2}{(1+M^2)^{(n+1)/2}}|x-y|^{-n-1}
 \le w_{\varphi,a}(x,y)\le C_0^2|x-y|^{-n-1}.
\end{equation}
In particular,
\begin{equation}\label{eq:energy-comparison}
 c[u]_{1/2}^2\le\mathscr E_{\varphi,a}(u)\le C[u]_{1/2}^2,
 \qquad
 |\mathscr B_{\varphi,a}(u,v)|\le C[u]_{1/2}[v]_{1/2}.
\end{equation}

\begin{equation}\label{eq:dual-half-norm}
 \|\mathcal F\|_{\dot H^{-1/2}}
 =\sup\left\{ |\langle\mathcal F,v\rangle|:
 v\in C_c^\infty(\R^n;\R^m),\ [v]_{1/2}\le1\right\}.
\end{equation}

\begin{proposition}\label{thm:half-stability}
Suppose $\varphi$ is globally Lipschitz, $[\varphi]_{1/2}<\infty$,
$\Lip(\varphi)\le M$, and $0<c_0\le a\le C_0$. Then
\[
 c[\varphi]_{1/2}\le
 \|\mathcal F_{\varphi,a}\|_{\dot H^{-1/2}}
 \le C[\varphi]_{1/2},
\]
where the positive constants depend only on $n,m,M,c_0,C_0$.
There is no compact-support, density-tail, or H\"older continuity
assumption. The weak field is defined by \eqref{eq:weak-normal-field}.
\end{proposition}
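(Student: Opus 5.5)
The plan is to observe that, by definition, $\langle\mathcal F_{\varphi,a},v\rangle=\mathscr B_{\varphi,a}(\varphi,v)$ for every $v\in C_c^\infty(\R^n;\R^m)$. This is meaningful because $\mathscr E_{\varphi,a}(\varphi)\le C[\varphi]_{1/2}^2<\infty$ by \eqref{eq:energy-comparison}, so the defining integral \eqref{eq:weak-normal-field} converges absolutely by Cauchy--Schwarz. The weight comparison \eqref{eq:weight-comparison} turns both inequalities into statements about the bounded, coercive bilinear form $\mathscr B_{\varphi,a}$ on the $\dot H^{1/2}$ seminorm. No principal values, limits of $a$, or regularity of $a$ enter, since everything is an absolutely convergent double integral.

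The upper bound is immediate. For $[v]_{1/2}\le1$, the second estimate in \eqref{eq:energy-comparison} gives $|\langle\mathcal F_{\varphi,a},v\rangle|\le C[\varphi]_{1/2}[v]_{1/2}\le C[\varphi]_{1/2}$. Taking the supremum in \eqref{eq:dual-half-norm} yields $\|\mathcal F_{\varphi,a}\|_{\dot H^{-1/2}}\le C[\varphi]_{1/2}$ with $C=C_0^2$ (up to the factor $1/2$).

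For the lower bound, I would test against $\varphi$ itself, but $\varphi$ is not compactly supported or smooth, so I need approximants. Following the remark after Lemma~\ref{lem:energy-cutoff} (the version for $dx$ on $\R^n$ with the symmetric weight $w_{\varphi,a}$ comparable to $|x-y|^{-n-1}$), I set $v_R=\chi_R(\varphi-c_R)$ with Lebesgue averages $c_R$ over $B_{4R}$. That lemma then gives $[v_R]_{1/2}\le C[\varphi]_{1/2}$ uniformly in $R$ and $\mathscr B_{\varphi,a}(\varphi,v_R)\to\mathscr E_{\varphi,a}(\varphi)$; here Lebesgue measure supplies both growth bounds trivially. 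Since $v_R$ is only Lipschitz with compact support, I mollify: $v_{R,\epsilon}=v_R*\eta_\epsilon\in C_c^\infty$ satisfies $[v_{R,\epsilon}-v_R]_{1/2}\to0$ as $\epsilon\to0$, because $v_R\in H^1$ with compact support, hence $v_R\in H^{1/2}$, and mollification converges in $H^{1/2}$. By continuity of $\mathscr B_{\varphi,a}(\varphi,\cdot)$ in the seminorm, the pairings converge. Therefore
\[
 c[\varphi]_{1/2}^2\le\mathscr E_{\varphi,a}(\varphi)
 =\lim_R\lim_\epsilon\langle\mathcal F_{\varphi,a},v_{R,\epsilon}\rangle
 \le\|\mathcal F_{\varphi,a}\|_{\dot H^{-1/2}}\sup_{R,\epsilon}[v_{R,\epsilon}]_{1/2}
 \le C\|\mathcal F_{\varphi,a}\|_{\dot H^{-1/2}}[\varphi]_{1/2}.
\]
Dividing by $[\varphi]_{1/2}$ gives the lower bound; the case $[\varphi]_{1/2}=0$ is trivial.

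The main obstacle is the cutoff step: it needs a uniform bound on $[v_R]_{1/2}$ together with the convergence of the pairing, despite $\varphi\notin L^2$ in general. This is exactly what the mean-subtracted cutoff lemma provides. I would check that its proof transfers verbatim, using the local variance bound \eqref{eq:local-variance} with $m_R=|B_{4R}|$, and that the tail argument \eqref{eq:cutoff-energy-limit} is applied with the weighted energy $\mathscr E_{\varphi,a}$, which is finite and dominated by $C[\cdot]_{1/2}^2$. All constants then depend only on $n,m,M,c_0,C_0$.
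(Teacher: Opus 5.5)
Your proposal is correct and follows the paper's proof essentially step by step. The upper bound comes from the weight comparison. For the lower bound, you test with the mean-subtracted cutoffs $v_R=\chi_R(\varphi-c_R)$ from the Lebesgue version of Lemma~\ref{lem:energy-cutoff}, mollify, and pass to the limit in the pairing. The only cosmetic difference is how you justify $[v_{R,\epsilon}-v_R]_{1/2}\to0$: you use $v_R\in H^1\subset H^{1/2}$, whereas the paper uses translation continuity of the difference quotient in $L^2(\R^{2n})$. Either works.
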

\begin{proof}
Fix the graph and density in the definition of $\mathscr B_{\varphi,a}$. In particular,
this bilinear form is linear in its two test arguments, even though its
kernel depends nonlinearly on the fixed graph. From
\eqref{eq:weight-comparison},
\begin{align}
 |\langle\mathcal F_{\varphi,a},v\rangle|
 &\le \frac{C_0^2}{2}\iint
 \frac{|\varphi(x)-\varphi(y)|\,|v(x)-v(y)|}{|x-y|^{n+1}}\,dx\,dy\notag\\
 &\le \frac{C_0^2}{2}[\varphi]_{1/2}[v]_{1/2}.
 \label{eq:dual-upper-detail}
\end{align}
Thus the weak field is a continuous functional for the homogeneous
seminorm, and the upper estimate follows from the definition of its
norm. This definition applies to bounded measurable densities.

 Set
\[
 c_R=\fint_{B_{4R}}\varphi(x)\,dx,\qquad
 v_R=\chi_R(\varphi-c_R).
\]
Apply the Lebesgue version of Lemma~\ref{lem:energy-cutoff} to each
coordinate and then use \eqref{eq:weight-comparison}. This gives
\begin{equation}\label{eq:dual-test-uniform}
 [v_R]_{1/2}\le C[\varphi]_{1/2},\qquad
 \mathscr B_{\varphi,a}(\varphi,v_R)
 \longrightarrow\mathscr E_{\varphi,a}(\varphi).
\end{equation}
For the second assertion, one may repeat the tail argument of that
lemma with $w_{\varphi,a}$: the differences of $\varphi-v_R$ vanish
on $B_R\times B_R$, the weighted energy of $v_R$ is uniformly bounded,
and the weighted energy of $\varphi$ outside $B_R\times B_R$ tends
to zero. This does not assert strong convergence of $v_R$ in the
homogeneous space, and does not require $c_R$ to converge.

For each
fixed $R$, $v_R$ is compactly supported and Lipschitz. Let $\zeta$ be
a nonnegative smooth mollifier of integral one and put
$v_{R,\epsilon}=\zeta_\epsilon*v_R$. Then
$v_{R,\epsilon}\in C_c^\infty$. To verify the required seminorm
convergence, define
\[
 \mathcal D v(x,y)=\frac{v(x)-v(y)}{|x-y|^{(n+1)/2}}.
\]
Since $\mathcal Dv_R\in L^2(\R^{2n})$, continuity of simultaneous
translations in this $L^2$ space gives
\[
 [v_R(\cdot-h)-v_R]_{1/2}
 =\|\mathcal Dv_R(\cdot-h,\cdot-h)-\mathcal Dv_R\|_2\to0
 \quad(h\to0).
\]
Minkowski's inequality, followed by this translation continuity, yields
\begin{align}
 [v_{R,\epsilon}-v_R]_{1/2}
 &\le\int\zeta_\epsilon(h)
       [v_R(\cdot-h)-v_R]_{1/2}\,dh\longrightarrow0.
 \label{eq:mollifier-energy-detail}
\end{align}
The same construction converges in $L^2$, although that fact is not
needed in the homogeneous argument. By \eqref{eq:dual-upper-detail}
and the definition of the dual norm,
\[
 |\mathscr B_{\varphi,a}(\varphi,v_R)|
 \le\|\mathcal F_{\varphi,a}\|_{\dot H^{-1/2}}[v_R]_{1/2}.
\]
Here the inequality is first applied to $v_{R,\epsilon}$ and then
$\epsilon\downarrow0$ is taken with $R$ fixed.

Finally, \eqref{eq:energy-comparison} and
\eqref{eq:dual-test-uniform} imply
\[
 c[\varphi]_{1/2}^2
 \le\mathscr E_{\varphi,a}(\varphi)
 =\lim_{R\to\infty}\mathscr B_{\varphi,a}(\varphi,v_R)
 \le C\|\mathcal F_{\varphi,a}\|_{\dot H^{-1/2}}[\varphi]_{1/2}.
\]
Divide when $[\varphi]_{1/2}>0$. If it vanishes, the upper bound gives
$\mathcal F_{\varphi,a}=0$ as a functional. This also covers the critical
case $n=1$ without choosing an $L^p$ representative of a homogeneous
Sobolev class.
\end{proof}

\begin{proposition}
\label{prop:conditional-half}
Under the hypotheses of Proposition~\ref{thm:half-stability}, assume also
$\varphi\in H^{1/2}$ and that the distribution
$\mathcal F_{\varphi,a}$ is represented by $a\mathcal N$ with
$\mathcal N\in L^2(a\,dx)$.  Then
\begin{equation}\label{eq:energy-identity}
 \int\varphi\cdot\mathcal N\,a\,dx
 =\mathscr E_{\varphi,a}(\varphi),
\end{equation}
and
\begin{equation}\label{eq:conditional-half}
 [\varphi]_{1/2}^2\le
 C\|\varphi\|_{L^2(a\,dx)}\|\mathcal N\|_{L^2(a\,dx)}.
\end{equation}
In particular, if $\|\varphi\|_{L^2(a\,dx)}\le A$, then
$[\varphi]_{1/2}\le C A^{1/2}\|\mathcal N\|_{L^2(a\,dx)}^{1/2}$.
\end{proposition}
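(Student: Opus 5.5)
The plan is to test the weak field against $\varphi$ itself by approximation. The hypothesis says that for every $v\in C_c^\infty(\R^n;\R^m)$,
\[
 \int v\cdot\mathcal N\,a\,dx=\langle\mathcal F_{\varphi,a},v\rangle=\mathscr B_{\varphi,a}(\varphi,v).
\]
Both sides are continuous in $v$ for the full norm $\|v\|_2+[v]_{1/2}$. The left side is continuous because $\mathcal N\in L^2(a\,dx)$ and $a\le C_0$, so $|\int v\cdot\mathcal N a|\le C_0^{1/2}\|v\|_2\|\mathcal N\|_{L^2(a\,dx)}$. The right side is continuous by \eqref{eq:energy-comparison}: $|\mathscr B_{\varphi,a}(\varphi,v)|\le C[\varphi]_{1/2}[v]_{1/2}$. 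Hence the identity extends to every $v$ in the closure of $C_c^\infty$ under this norm.

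That closure is all of $H^{1/2}(\R^n)$. I would show this by the same two moves as in the proof of Proposition~\ref{thm:half-stability}. First, truncate with $\chi_R\varphi$. The convergence $\|\chi_R\varphi-\varphi\|_2\to0$ is immediate. The convergence $[\chi_R\varphi-\varphi]_{1/2}\to0$ follows from the splitting
\[
 (1-\chi_R)(x)\bigl(\varphi(x)-\varphi(y)\bigr)+\varphi(y)\bigl(\chi_R(y)-\chi_R(x)\bigr),
\]
with dominated convergence on the first term. For the second term, \eqref{eq:cutoff-kernel-bound} in its Lebesgue version gives the bound $\int|\varphi(y)|^2Q_R(y)\,dy$ with $Q_R\le C/R$ uniformly, and this tends to zero because $\varphi\in L^2$. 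Second, mollify, using translation continuity as in \eqref{eq:mollifier-energy-detail} together with $L^2$ continuity. I expect this density step, with the cutoff error controlled by the $L^2$ norm of $\varphi$, to be the only genuinely non-formal point. It is also exactly where the extra hypothesis $\varphi\in H^{1/2}$, rather than merely finite seminorm, is used.

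Taking $v=\varphi$ in the extended identity gives \eqref{eq:energy-identity}:
\[
 \int\varphi\cdot\mathcal N\,a\,dx=\mathscr B_{\varphi,a}(\varphi,\varphi)=\mathscr E_{\varphi,a}(\varphi).
\]
Then \eqref{eq:energy-comparison} and Cauchy--Schwarz in $L^2(a\,dx)$ give
\[
 c[\varphi]_{1/2}^2\le\mathscr E_{\varphi,a}(\varphi)\le\|\varphi\|_{L^2(a\,dx)}\|\mathcal N\|_{L^2(a\,dx)},
\]
which is \eqref{eq:conditional-half} with constant depending on $n,m,M,c_0,C_0$. The final assertion is obtained by substituting $\|\varphi\|_{L^2(a\,dx)}\le A$ and taking square roots.
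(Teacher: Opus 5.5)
Your proposal is correct and follows the paper's proof: you extend the test identity $\mathscr B_{\varphi,a}(\varphi,v)=\int v\cdot\mathcal N\,a\,dx$ by continuity in the $H^{1/2}$ norm to $v=\varphi$, then combine the energy comparison with Cauchy--Schwarz in $L^2(a\,dx)$. The only difference is that you prove the density of $C_c^\infty$ in $H^{1/2}$ directly, by the cutoff $\chi_R\varphi$ with the bound $Q_R\le C/R$ followed by mollification, where the paper simply cites \cite{DiNezzaPalatucciValdinoci}.
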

\begin{proof}
The representation hypothesis means precisely that
\[
 \mathscr B_{\varphi,a}(\varphi,v)
 =\int v(x)\cdot\mathcal N(x)a(x)\,dx
 \qquad(v\in C_c^\infty).
\]
Both sides extend continuously to $H^{1/2}$: the left side by
\eqref{eq:energy-comparison}, and the right side because
\[
 \left|\int v\cdot\mathcal N a\right|
 \le\|v\|_{L^2(a\,dx)}\|\mathcal N\|_{L^2(a\,dx)}
 \le C_0^{1/2}\|v\|_2\|\mathcal N\|_{L^2(a\,dx)}.
\]
Choose $\varphi_j\in C_c^\infty$ converging to $\varphi$ in the
inhomogeneous space $H^{1/2}$; see \cite{DiNezzaPalatucciValdinoci}.
Then
\begin{align*}
 |\mathscr B_{\varphi,a}(\varphi,\varphi_j)
       -\mathscr E_{\varphi,a}(\varphi)|
 &\le C[\varphi]_{1/2}[\varphi_j-\varphi]_{1/2}\to0,\\
 \left|\int(\varphi_j-\varphi)\cdot\mathcal N a\right|
 &\le C_0^{1/2}\|\varphi_j-\varphi\|_2
                  \|\mathcal N\|_{L^2(a\,dx)}\to0.
\end{align*}
Passing to the limit in the test identity proves
\eqref{eq:energy-identity}. Consequently,
\[
 c[\varphi]_{1/2}^2
 \le\mathscr E_{\varphi,a}(\varphi)
 \le\|\varphi\|_{L^2(a\,dx)}\|\mathcal N\|_{L^2(a\,dx)}.
\]
This is \eqref{eq:conditional-half}. Under the additional bound
$\|\varphi\|_{L^2(a\,dx)}\le A$, take square roots to obtain the stated
conditional estimate. The representation and $L^2$ membership of
$\mathcal N$ are assumptions here: neither is inferred merely from
finite half-order energy.
\end{proof}

For $\varepsilon>0$ write
\begin{equation}\label{eq:normal-truncated}
 \mathcal N^\varepsilon_{\varphi,a}(x)=
 \int_{|X_\varphi(x)-X_\varphi(y)|>\varepsilon}
 \frac{\varphi(x)-\varphi(y)}
 {|X_\varphi(x)-X_\varphi(y)|^{n+1}}a(y)\,dy.
\end{equation}

\begin{proposition}
\label{prop:normal-pv}
Let $\varphi\in C_c^{1,\alpha}(\R^n;\R^m)$, $0<\alpha<1$,
$\supp\varphi\subset B_R$, and $0<c_0\le a\le C_0$.
The truncations \eqref{eq:normal-truncated} converge in $L^2(a\,dx)$
to a field $\mathcal N_{\varphi,a}$.  They also have the usual
principal-value representative a.e.  Outside $B_{2R}$,
\begin{equation}\label{eq:normal-decay}
 |\mathcal N_{\varphi,a}(x)|
 \le C\|a\varphi\|_{L^1(B_R)}|x|^{-n-1}.
\end{equation}
No regularity or decay assumption on $a$ beyond these bounds is required.
\end{proposition}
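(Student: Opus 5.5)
The plan is to split $\mathcal N^\varepsilon_{\varphi,a}(x)$ into a region near the support of $\varphi$ and a far region. Fix $x$ and write the truncation region in graph distance, $E_\varepsilon(x)=\{y:|X_\varphi(x)-X_\varphi(y)|>\varepsilon\}$.

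\emph{Exterior decay.} For $|x|>2R$ we have $\varphi(x)=0$. The integrand vanishes unless $y\in B_R$, and then $|X_\varphi(x)-X_\varphi(y)|\ge|x-y|\ge|x|/2$. Hence for $\varepsilon<|x|/2$ the truncation does not depend on $\varepsilon$, equals
\[
 -\int_{B_R}\varphi(y)a(y)|X_\varphi(x)-X_\varphi(y)|^{-n-1}\,dy,
\]
and is bounded by $2^{n+1}|x|^{-n-1}\|a\varphi\|_{L^1(B_R)}$. This gives \eqref{eq:normal-decay}. It also gives uniform-in-$\varepsilon$ domination on $\{|x|>2R\}$ by an $L^2$ function, since $|x|^{-n-1}\in L^2$ outside $B_{2R}$. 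For $\varepsilon\ge|x|/2$, the same bound holds because the integrand is restricted to a subset.

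\emph{Near region.} For $|x|\le2R$, split
\[
 \varphi(x)-\varphi(y)=D\varphi(x)(x-y)+r(x,y),\qquad |r(x,y)|\le C|x-y|^{1+\alpha}.
\]
The remainder term is absolutely integrable near the diagonal, since $|x-y|^{1+\alpha-n-1}$ is integrable, and it is bounded by $|x-y|^{-n}$ at larger distances. Only a bounded region matters, because once $|y|>R$ and $|x-y|\ge 3R$ the integrand is $\varphi(x)a(y)|X_\varphi(x)-X_\varphi(y)|^{-n-1}$, which is integrable in $y$. So dominated convergence applies to that part, uniformly in $x\in B_{2R}$.

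The main obstacle is the linear term
\[
 D\varphi(x)\int_{E_\varepsilon(x)\cap B_{4R}}\frac{(x-y)\,a(y)}{|X_\varphi(x)-X_\varphi(y)|^{n+1}}\,dy .
\]
Since $a$ is only bounded and measurable, this term has no pointwise cancellation. I would handle it through the uniform $L^2$ boundedness of graph Riesz truncations (Proposition~\ref{prop:graph-structural}). Writing $x-y=\Pi_P(X_\varphi(x)-X_\varphi(y))$, this quantity is $D\varphi(x)$ times the horizontal component of $\mathcal R^n_{\mu,\varepsilon}(\mathbf 1_{B_{4R}})$ at $X_\varphi(x)$, evaluated for the weighted graph measure $\mu_{\varphi,a}$. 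Convergence then reduces to $\mu$-a.e.\ and $L^2(\mu)$ existence of principal values of the Riesz transform on Lipschitz graphs for $L^2$ functions. This is the standard consequence of $L^2$ boundedness plus convergence on a dense class (for example, Lipschitz functions on the graph). For that class, cancellation in $D\varphi(x)$-linearized coordinates together with the $C^{1,\alpha}$ regularity of the graph gives pointwise convergence. The remaining step is a Cotlar-type maximal inequality, available for Lipschitz graphs (David--Semmes), which extends a.e.\ convergence to all of $L^2(\mu)$.

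Combining the pieces, the truncations converge a.e.\ and are dominated in $L^2(a\,dx)$: by the maximal operator on $B_{2R}$ and by $C|x|^{-n-1}$ outside. Dominated convergence then gives convergence in $L^2(a\,dx)$, and the pointwise limit is the principal-value representative. Since $a$ enters only as a bounded factor in the measure, no regularity of $a$ is used.
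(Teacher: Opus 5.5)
Your argument is essentially the paper's: the exterior is handled by the same $\varepsilon$-independent formula. The interior is reduced to a.e.\ and $L^2$ existence of principal values for an $L^2$-bounded graph Riesz component acting on an $L^2$ input that contains $a$, obtained from convergence on a dense class plus a Cotlar maximal inequality. The paper proves Cotlar directly and applies it to the orthogonal kernel with input $a\zeta$, so your Taylor linearization of $\varphi$ is harmless but not needed.

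The one step to state precisely is the dense-class step. It must be run for the unweighted graph measure $\mu_{\varphi,1}$ with $a\mathbf 1_{B_{4R}}$ as the $L^2$ input, as the paper does, and not for $\mu_{\varphi,a}$. For $\mu_{\varphi,a}$ and Lipschitz test functions $g$, the effective density $ga$ is not H\"older, so the frozen-kernel cancellation of Appendix~\ref{app:principal-values} does not give pointwise convergence there.
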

\begin{proof}
We do not expand the density $a$ by Taylor's formula. It is only assumed
to be bounded. Instead, we first establish a linear $L^2$ realization
and then localize the nonintegrable function $a$.

Let
\[
 S_\epsilon f(x)=\int_{|X_\varphi(x)-X_\varphi(y)|>\epsilon}
 \frac{\varphi(x)-\varphi(y)}{|X_\varphi(x)-X_\varphi(y)|^{n+1}}
 f(y)\,dy.
\]
This is the orthogonal component of the graph Riesz operator with
respect to $\mu_{\varphi,1}$. Proposition~\ref{prop:graph-structural}
gives
\begin{equation}\label{eq:rough-linear-L2}
 \sup_{\epsilon>0}\|S_\epsilon f\|_2\le C_M\|f\|_2.
\end{equation}
For $f\in C_c^\infty$, Appendix~\ref{app:principal-values}, with the
smooth signed function $f$ in place of $a$, gives convergence locally
uniformly in $x$. For $|x|>2R$, the numerator vanishes unless
$y\in B_R$, and
\begin{equation}\label{eq:smooth-input-orthogonal-tail}
 |S_\epsilon f(x)|
 \le C|x|^{-n-1}\int_{B_R}|\varphi(y)f(y)|\,dy.
\end{equation}
This bound holds for every $\epsilon$ and is square integrable in the
exterior. Thus $S_\epsilon f$ has a strong $L^2$ limit for smooth compact
$f$.

For general $f\in L^2$, choose $f_j\in C_c^\infty$ with $f_j\to f$
in $L^2$. For $\epsilon,\epsilon'>0$,
\[
 \|S_\epsilon f-S_{\epsilon'}f\|_2
 \le2C_M\|f-f_j\|_2
       +\|S_\epsilon f_j-S_{\epsilon'}f_j\|_2.
\]
First make the first term small by fixing $j$, and then take
$\epsilon,\epsilon'\downarrow0$. Completeness defines a bounded
operator $S:L^2\to L^2$, with $S_\epsilon f\to Sf$ strongly.

We next justify the maximal estimate for these truncations. Let
$\mathcal M$ denote the Hardy--Littlewood maximal operator on $\R^n$,
and set
\[
 k(x,y)=\frac{\varphi(x)-\varphi(y)}
              {|X_\varphi(x)-X_\varphi(y)|^{n+1}},\qquad
 \widetilde S_\epsilon f(x)=\int_{|x-y|>\epsilon}k(x,y)f(y)\,dy.
\]
The Lipschitz bound and \eqref{eq:graph-biLip} imply
\begin{align}
 |k(x,y)|&\le C_M|x-y|^{-n},\notag\\
 |k(x,y)-k(x',y)|+|k(y,x)-k(y,x')|
   &\le C_M\frac{|x-x'|}{|x-y|^{n+1}},
       \quad |x-x'|\le\tfrac12|x-y|.
       \label{eq:parameter-kernel-standard}
\end{align}
For example, apply the mean value theorem to
$Z\mapsto\Pi^\perp Z|Z|^{-n-1}$. Along the segment joining the two
graph increments, the horizontal coordinates have modulus at least
$|x-y|/2$, and the difference of the increments is at most
$L_M|x-x'|$. These observations prove the second estimate.
The two truncations can differ only when
$\epsilon/L_M<|x-y|\le\epsilon$. Consequently,
\begin{equation}\label{eq:graph-parameter-maximal-comparison}
 |S_\epsilon f(x)-\widetilde S_\epsilon f(x)|
 \le C_M\epsilon^{-n}
        \int_{B(x,\epsilon)}|f(y)|\,dy
 \le C_M\mathcal Mf(x).
\end{equation}
For smooth compactly supported $f$, Appendix~\ref{app:principal-values}
identifies the principal values for the two truncations. Their strong
$L^2$ limits therefore agree by density. Thus $S$ is an $L^2$-bounded
Calder\'on--Zygmund operator with the standard kernel
\eqref{eq:parameter-kernel-standard}.

We recall the weak $(1,1)$ estimate used in Cotlar's inequality;
these are the standard Calder\'on--Zygmund arguments (see \cite{Stein}).
Apply the Calder\'on--Zygmund decomposition to
$f\in L^1\cap L^2$ at height $\lambda>0$:
\[
 f=g+\sum_Q b_Q,\qquad
 \|g\|_2^2\le C_n\lambda\|f\|_1,\quad
 \sum_Q|Q|\le\lambda^{-1}\|f\|_1,\quad
 \sum_Q\|b_Q\|_1\le C_n\|f\|_1.
\]
Here $b_Q$ is supported in $Q$ and has integral zero. Let $Q^*$ be a
fixed sufficiently large concentric dilation, and let $y_Q$ be the
center of $Q$. By \eqref{eq:parameter-kernel-standard},
\[
 \int_{(Q^*)^c}|Sb_Q(x)|\,dx
 \le\int_Q |b_Q(y)|
    \int_{(Q^*)^c}|k(x,y)-k(x,y_Q)|\,dx\,dy
 \le C_M\|b_Q\|_1.
\]
The $L^2$ estimate for $Sg$, the measure of $\bigcup_Q Q^*$, and
Chebyshev's inequality for the last display imply
\begin{equation}\label{eq:rough-weak-one}
 |\{x:|Sf(x)|>\lambda\}|\le C_M\lambda^{-1}\|f\|_1.
\end{equation}
Thus $S$ is of weak type $(1,1)$. We now prove the form of Cotlar's
inequality needed here. Fix $0<\gamma<1$. For compact smooth $f$,
\begin{equation}\label{eq:Cotlar-gamma}
 \widetilde S_*f(x)
 \le C_{M,\gamma}
 \left\{\bigl[\mathcal M(|Sf|^\gamma)(x)\bigr]^{1/\gamma}
                       +\mathcal Mf(x)\right\}.
\end{equation}
Indeed, for a fixed $\epsilon$ put
$B=B(x,\epsilon/4)$, $f_1=f\mathbf1_{B(x,2\epsilon)}$ and
$f_2=f-f_1$. Summing the kernel regularity estimate over the annuli
$|x-y|\asymp2^j\epsilon$, and estimating the remaining annulus by
size, gives
\[
 |Sf_2(z)-\widetilde S_\epsilon f(x)|
       \le C_M\mathcal Mf(x),\qquad z\in B.
\]
Since $Sf_2=Sf-Sf_1$ almost everywhere, averaging the $\gamma$th powers
on $B$ reduces \eqref{eq:Cotlar-gamma} to
\[
 \left(\fint_B|Sf_1|^\gamma\right)^{1/\gamma}
 \le C_\gamma |B|^{-1}\|Sf_1\|_{L^{1,\infty}}
 \le C_{M,\gamma}|B|^{-1}\|f_1\|_1
 \le C_{M,\gamma}\mathcal Mf(x).
\]
The first inequality follows by integrating the distribution function
of $|Sf_1|$; the second is the weak $(1,1)$ estimate. The argument
applies to each component of $S$ and hence to its Euclidean norm.
The boundedness of $\mathcal M$ on $L^{2/\gamma}$ and on $L^2$, together
with \eqref{eq:graph-parameter-maximal-comparison}, now yields
\begin{equation}\label{eq:rough-maximal-L2}
 \|S_*f\|_2\le C_M\|f\|_2,
 \qquad S_*f=\sup_{\epsilon>0}|S_\epsilon f|.
\end{equation}
Approximation extends this inequality to every $f\in L^2$.
The oscillation of $S_\epsilon f$ as $\epsilon\downarrow0$ is bounded
by $2S_*(f-f_j)$, since that of $f_j$ is zero. Chebyshev's inequality
therefore shows, for every $\lambda>0$,
\[
 |\{x:\limsup_{\epsilon,\epsilon'\downarrow0}
           |S_\epsilon f(x)-S_{\epsilon'}f(x)|>\lambda\}|
 \le C_M\lambda^{-2}\|f-f_j\|_2^2\to0.
\]
Thus the ordinary principal value exists almost everywhere. It agrees
with $Sf$, because a subsequence of the strong $L^2$ convergence also
converges almost everywhere.

 Choose
$0\le\zeta\le1$ smooth, supported in $B_{5R}$ and equal to one on
$B_{4R}$. For $x\in B_{2R}$ and $0<\epsilon<R$,
\begin{equation}\label{eq:rough-density-splitting}
 \mathcal N^\epsilon_{\varphi,a}(x)=S_\epsilon(a\zeta)(x)+H(x),\qquad
 H(x)=\varphi(x)\int
 \frac{(1-\zeta(y))a(y)}{|X_\varphi(x)-X_\varphi(y)|^{n+1}}\,dy.
\end{equation}
Indeed, on the support of $1-\zeta$ one has $\varphi(y)=0$ and
$|x-y|\ge2R>\epsilon$, so this portion of the integral is independent
of $\epsilon$. Moreover,
\[
 |H(x)|\le C C_0\|\varphi\|_\infty
          \int_{|y|>4R}|y|^{-n-1}\,dy
 \le C C_0R^{-1}\|\varphi\|_\infty.
\]
Since $a\zeta\in L^2$, Step 1 gives $L^2(B_{2R})$ and almost-everywhere
convergence of the first term. No regularity of $a$ is needed.

 If $|x|>2R$ and $y\in B_R$,
then $|X_\varphi(x)-X_\varphi(y)|\ge|x-y|\ge|x|/2>R$.
For $\epsilon<R$, no contributing point in this region is truncated.
Consequently,
\[
 \mathcal N^\epsilon_{\varphi,a}(x)
 =-\int_{B_R}
 \frac{\varphi(y)a(y)}{(|x-y|^2+|\varphi(y)|^2)^{(n+1)/2}}\,dy,
\]
and its absolute value is bounded by the right side of
\eqref{eq:normal-decay}. The exterior formula is independent of all
sufficiently small $\epsilon$ and belongs to $L^2$. Combining the
interior and exterior statements proves convergence in $L^2(dx)$.
Finally,
\[
 c_0\|u\|_2^2\le\|u\|_{L^2(a\,dx)}^2\le C_0\|u\|_2^2
\]
transfers both the strong convergence and the almost-everywhere
representative to the weighted measure. In particular, the rough density
has not been inserted into the H\"older expansion in Appendix A.
\end{proof}

\begin{proposition}\label{prop:energy-identity}
Under the hypotheses of Proposition~\ref{prop:normal-pv},
\begin{equation}\label{eq:compact-energy-identity}
 \int\varphi\cdot\mathcal N_{\varphi,a}\,a\,dx
 =\frac12\iint|\varphi(x)-\varphi(y)|^2
             w_{\varphi,a}(x,y)\,dx\,dy.
\end{equation}
Moreover $\mathcal F_{\varphi,a}=a\mathcal N_{\varphi,a}$ as distributions.
\end{proposition}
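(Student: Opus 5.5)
The plan is to prove the distributional identity $\mathcal F_{\varphi,a}=a\mathcal N_{\varphi,a}$ first, by symmetrizing the truncated field against a test function. The energy identity \eqref{eq:compact-energy-identity} then follows by testing with $\varphi$ itself, which is an admissible limit since $\varphi\in C^{1,\alpha}_c\subset H^{1/2}$.

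\textbf{Step 1: symmetrization at fixed $\varepsilon$.} Fix $v\in C_c^\infty(\R^n;\R^m)$ and $\varepsilon>0$. Since the truncated kernel is bounded by $C\varepsilon^{-n}$ on $\{|X_\varphi(x)-X_\varphi(y)|>\varepsilon\}$ and decays like $|x-y|^{-n}$, the integral
\[
 I_\varepsilon=\int v(x)\cdot\mathcal N^\varepsilon_{\varphi,a}(x)a(x)\,dx
\]
is absolutely convergent. The reason is that $v$ has compact support, the numerator $\varphi(x)-\varphi(y)$ vanishes unless $x$ or $y$ lies in $B_R$, and when $x\in\supp v$ and $y$ is far away the kernel is $O(|y|^{-n})$, multiplied by $\varphi(x)$ or by $\varphi(y)$, which is compactly supported. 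The second case needs decay $|y|^{-n-1}$ after pairing. I would check this by splitting the $y$-integration at $|y|\le 2R'$, where $\supp v\cup B_R\subset B_{R'}$: beyond that radius $\varphi(y)=0$ and the $y$-integral is $\varphi(x)\int a(y)|X_\varphi(x)-X_\varphi(y)|^{-n-1}dy<\infty$. Exchanging $x$ and $y$ in the symmetric region, exactly as in the proof of Lemma~\ref{lem:potential-representation}, gives
\[
 I_\varepsilon=\frac12\iint_{|X_\varphi(x)-X_\varphi(y)|>\varepsilon}(\varphi(x)-\varphi(y))\cdot(v(x)-v(y))\,w_{\varphi,a}(x,y)\,dx\,dy .
\]
This exchange is the delicate point, because the unsymmetrized double integral must be absolutely convergent to apply Fubini. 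I would handle it with a large cutoff in $y$: integrals with $|y|>S$ contribute $O(\|\varphi\|_\infty\|v\|_1 S^{-1})$, and I remove the cutoff only after symmetrizing.

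\textbf{Step 2: limits.} As $\varepsilon\downarrow0$ the right-hand side converges to $\langle\mathcal F_{\varphi,a},v\rangle$ by dominated convergence. The majorant is $C_0^2|\varphi(x)-\varphi(y)||v(x)-v(y)||x-y|^{-n-1}$, which is integrable by Cauchy--Schwarz and $[\varphi]_{1/2},[v]_{1/2}<\infty$; finiteness of $[\varphi]_{1/2}$ is Corollary~\ref{cor:compact-graph-L2}'s argument. The left-hand side converges to $\int v\cdot\mathcal N_{\varphi,a}a$ because $\mathcal N^\varepsilon\to\mathcal N$ in $L^2(a\,dx)$ by Proposition~\ref{prop:normal-pv}, and $v\in L^2(a\,dx)$. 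This gives $\mathcal F_{\varphi,a}=a\mathcal N_{\varphi,a}$.

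\textbf{Step 3: the energy identity.} Now $\mathcal N_{\varphi,a}\in L^2(a\,dx)$ and $\varphi\in H^{1/2}$, so Proposition~\ref{prop:conditional-half} applies with $\mathcal N=\mathcal N_{\varphi,a}$. Its identity \eqref{eq:energy-identity} is precisely \eqref{eq:compact-energy-identity}, because $\mathscr E_{\varphi,a}(\varphi)$ is the right-hand side.

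The main obstacle I expect is the Fubini and tail justification in Step 1. The density $a$ is not integrable at infinity, so the one-sided integral is finite only thanks to the compact support of $\varphi$ and $v$. I would handle it with the large cutoff described there, removing it via the decay bound \eqref{eq:normal-decay}.
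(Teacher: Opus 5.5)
Your proof is correct. Its core is exactly how the paper obtains $\mathcal F_{\varphi,a}=a\mathcal N_{\varphi,a}$: symmetrize the truncated pairing at fixed $\varepsilon$, then pass to the limit using $L^2(a\,dx)$ convergence of $\mathcal N^\varepsilon_{\varphi,a}$ on one side and dominated convergence on the other.

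The difference lies in the energy identity.
\begin{itemize}
\item The paper proves it first and directly. It takes $\varphi$ itself as the test function at fixed $\varepsilon$, which is legitimate because the Fubini step only uses boundedness and compact support of the test. This gives an exact identity between the truncated pairing and the truncated energy, and the paper then concludes by monotone convergence of these nonnegative truncated energies.
\item You instead deduce it from the distributional identity via Proposition~\ref{prop:conditional-half}, that is, by density of $C_c^\infty$ in $H^{1/2}$.
\end{itemize}
Your route reuses an already proved result and only ever inserts smooth tests into the truncated computation. The price is the $H^{1/2}$ approximation and the check that $\varphi\in H^{1/2}$. The paper's route is self-contained and yields the truncated identity as a by-product. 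Note that your Steps 1--2 work verbatim with $v=\varphi$, so the detour is avoidable.

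Two small points in Step 1:
\begin{itemize}
\item Your splitting at radius $2R'$ already shows that the unsymmetrized integral over $\{|X_\varphi(x)-X_\varphi(y)|>\varepsilon\}$ is absolutely convergent. There the kernel is $O(|y|^{-n-1})$, not $O(|y|^{-n})$, since $\varphi(y)=0$. So the large cutoff is unnecessary.
\item If you keep the cutoff, it must restrict to $B_S\times B_S$ rather than to $|y|<S$ alone; otherwise the domain is not symmetric and the exchange of variables fails. Also, \eqref{eq:normal-decay} is decay of $\mathcal N_{\varphi,a}$ in $x$, not the estimate that controls the $y$-tail.
\end{itemize}
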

\begin{proof}
For $\epsilon>0$ let
\[
 D_\epsilon=\{(x,y):|X_\varphi(x)-X_\varphi(y)|>\epsilon\},\qquad
 I_\epsilon=\int\varphi(x)\cdot
                  \mathcal N^\epsilon_{\varphi,a}(x)a(x)\,dx.
\]
Absolute convergence before symmetrization must be checked. The variable
$x$ is restricted to $B_R$. On a bounded $y$ region, the truncation
bounds the singular kernel, and both $a$ and $\varphi$ are bounded.
For $|y|>2R$ the second graph value is zero and
\[
 \left|\frac{\varphi(x)\cdot(\varphi(x)-\varphi(y))}
 {|X_\varphi(x)-X_\varphi(y)|^{n+1}}a(x)a(y)\right|
 \le C|\varphi(x)|^2|y|^{-n-1}.
\]
This is integrable. Fubini and interchange of the two variables are
therefore legitimate at fixed $\epsilon$.

Since $w_{\varphi,a}(y,x)=w_{\varphi,a}(x,y)$ and $D_\epsilon$ is
unchanged by $(x,y)\mapsto(y,x)$,
\begin{align*}
 I_\epsilon
 &=\iint_{D_\epsilon}
   \varphi(x)\cdot(\varphi(x)-\varphi(y))w_{\varphi,a}(x,y)\,dx\,dy,\\
 I_\epsilon
 &=\iint_{D_\epsilon}
   \varphi(y)\cdot(\varphi(y)-\varphi(x))w_{\varphi,a}(x,y)\,dx\,dy.
\end{align*}
Add these equalities and divide by two. The numerator becomes
\[
 |\varphi(x)|^2+|\varphi(y)|^2-2\varphi(x)\cdot\varphi(y)
 =|\varphi(x)-\varphi(y)|^2.
\]
Hence
\begin{equation}\label{eq:truncated-energy-identity}
 I_\epsilon
 =\frac12\iint_{D_\epsilon}|\varphi(x)-\varphi(y)|^2
                         w_{\varphi,a}(x,y)\,dx\,dy
 =:\mathscr E_\epsilon.
\end{equation}
This is the truncated energy, not the untruncated energy for fixed
$\epsilon$.

As $\epsilon\downarrow0$, the domains increase to $\{x\ne y\}$.
The integrand is nonnegative and its full integral is finite by the
compact Lipschitz energy estimate. Monotone convergence gives
\[
 \mathscr E_\epsilon\uparrow\mathscr E_{\varphi,a}(\varphi).
\]
Independently, Proposition~\ref{prop:normal-pv} and weighted
Cauchy--Schwarz give
\begin{align}
 \left|I_\epsilon-\int\varphi\cdot\mathcal N_{\varphi,a}a\,dx\right|
 &\le\int|\varphi|\,|\mathcal N^\epsilon_{\varphi,a}
                       -\mathcal N_{\varphi,a}|a\,dx\notag\\
 &\le\|\varphi\|_{L^2(a\,dx)}
        \|\mathcal N^\epsilon_{\varphi,a}-\mathcal N_{\varphi,a}
          \|_{L^2(a\,dx)}\longrightarrow0.
 \label{eq:energy-left-limit-detail}
\end{align}
Identifying these two limits of the same number $I_\epsilon$ proves
\eqref{eq:compact-energy-identity}.

For the distributional assertion, take $v\in C_c^\infty(\R^n;\R^m)$.
At fixed $\epsilon$ the same Fubini argument yields
\[
 \int v\cdot\mathcal N^\epsilon_{\varphi,a}a\,dx
 =\frac12\iint_{D_\epsilon}
 (v(x)-v(y))\cdot(\varphi(x)-\varphi(y))w_{\varphi,a}(x,y)\,dx\,dy.
\]
The right side is dominated by an absolutely integrable mixed-energy
function, by Cauchy--Schwarz and \eqref{eq:energy-comparison}.
Dominated convergence applies to that side, whereas the left side
converges by the weighted $L^2$ convergence already proved. The result is
\[
 \int v\cdot\mathcal N_{\varphi,a}a\,dx
 =\mathscr B_{\varphi,a}(\varphi,v)
 =\langle\mathcal F_{\varphi,a},v\rangle.
\]
This proves $\mathcal F_{\varphi,a}=a\mathcal N_{\varphi,a}$ in the
stated concrete representation.
\end{proof}

\begin{corollary}
\label{cor:compact-half}
If $\varphi\in C_c^{1,\alpha}$, $\supp\varphi\subset B_R$,
$\|D\varphi\|_\infty\le M$, and $0<c_0\le a\le C_0$, then
\begin{equation}\label{eq:compact-half-stability}
 [\varphi]_{1/2}\le
 C(R,M,c_0,C_0,n,m)\|\mathcal N_{\varphi,a}\|_{L^2(a\,dx)}.
\end{equation}
\end{corollary}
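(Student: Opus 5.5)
The plan is to chain three earlier results: the energy identity of Proposition~\ref{prop:energy-identity}, the coercivity \eqref{eq:energy-comparison}, and the fractional Poincar\'e inequality of Lemma~\ref{lem:fractional-Poincare}. Under the stated hypotheses, Proposition~\ref{prop:normal-pv} supplies the field $\mathcal N_{\varphi,a}\in L^2(a\,dx)$ as an $L^2$ limit of truncations. Proposition~\ref{prop:energy-identity} then gives
\[
 \mathscr E_{\varphi,a}(\varphi)=\int\varphi\cdot\mathcal N_{\varphi,a}\,a\,dx .
\]
Alternatively, since $\mathcal F_{\varphi,a}=a\mathcal N_{\varphi,a}$ and $\varphi\in C_c^{1,\alpha}\subset H^{1/2}$, Proposition~\ref{prop:conditional-half} applies directly and yields \eqref{eq:conditional-half}.

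Combining this identity with the lower bound in \eqref{eq:energy-comparison} and weighted Cauchy--Schwarz gives
\[
 c[\varphi]_{1/2}^2\le\|\varphi\|_{L^2(a\,dx)}\|\mathcal N_{\varphi,a}\|_{L^2(a\,dx)}
 \le C_0^{1/2}\|\varphi\|_2\|\mathcal N_{\varphi,a}\|_{L^2(a\,dx)} ,
\]
where $c$ depends on $n,c_0,M$. The seminorm $[\varphi]_{1/2}$ is finite here by the near-diagonal and tail argument of Corollary~\ref{cor:compact-graph-L2}.

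Next we apply the fractional Poincar\'e inequality componentwise, using $\supp\varphi\subset B_R$: $\|\varphi\|_2\le C_nR^{1/2}[\varphi]_{1/2}$. Substituting gives $c[\varphi]_{1/2}^2\le C_nC_0^{1/2}R^{1/2}[\varphi]_{1/2}\|\mathcal N_{\varphi,a}\|_{L^2(a\,dx)}$. If $[\varphi]_{1/2}=0$ the claim is trivial. Otherwise we divide by $[\varphi]_{1/2}$ and obtain \eqref{eq:compact-half-stability} with constant $C(R,M,c_0,C_0,n,m)\sim R^{1/2}C_0^{1/2}(1+M^2)^{(n+1)/2}c_0^{-2}$.

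There is no substantial obstacle, because all the analytic difficulty (existence of the principal value for a rough density, and the identification of the weak field) is contained in Propositions~\ref{prop:normal-pv} and~\ref{prop:energy-identity}. The only point needing care is to confirm that the hypotheses of those propositions are exactly the ones given here: $C^{1,\alpha}_c$ regularity with $0<\alpha<1$, which covers higher H\"older exponents by restricting $\alpha$, and the bounds $0<c_0\le a\le C_0$.
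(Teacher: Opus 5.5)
Your proposal is correct and matches the paper's own proof: the energy identity \eqref{eq:compact-energy-identity} combined with the lower bound in \eqref{eq:energy-comparison} gives $c[\varphi]_{1/2}^2\le\|\varphi\|_{L^2(a\,dx)}\|\mathcal N_{\varphi,a}\|_{L^2(a\,dx)}$, and fractional Poincar\'e on $B_R$ plus division (trivial case $[\varphi]_{1/2}=0$) finishes. Your explicit constant, and the check that $[\varphi]_{1/2}<\infty$ before dividing, are both accurate.
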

\begin{proof}
Combining \eqref{eq:energy-comparison},
\eqref{eq:compact-energy-identity}, and
Lemma~\ref{lem:fractional-Poincare} gives
\[
 c[\varphi]_{1/2}^2
 \le \|\varphi\|_{L^2(a)}\|\mathcal N_{\varphi,a}\|_{L^2(a)}
 \le C R^{1/2}[\varphi]_{1/2}\|\mathcal N_{\varphi,a}\|_{L^2(a)}.
\]
Divide unless $[\varphi]_{1/2}=0$, in which case the result is immediate.
The constants in the lower estimate do not involve H\"older norms,
although H\"older regularity of the graph was used for this realization.
\end{proof}

\section{The flat linearization and Multilinear estimates}\label{sec:linearization}
This section develops the linear and multilinear estimates needed to study the normalized Riesz field near a flat measure. We identify the flat linearization through the Riesz transform and \(|D|\), and use their Fourier multipliers to control the graph gradient and density perturbation. One-dimensional Calder\'on commutator estimates, the method of rotations, and interpolation then yield Sobolev bounds that are uniform in the truncation parameter. These estimates justify the operator limits and provide the tools for controlling the nonlinear terms in Section \ref{sec:nonlinear-remainder}.
\subsection{The two model operators}
With the normalization in Section~\ref{sec:reflectionless-framework},
\begin{align}
 \pv\int\frac{x-y}{|x-y|^{n+1}}\sigma(y)\,dy
 &=c_{n,R}\mathcal R\sigma(x),\label{eq:flat-Riesz}\\
 \pv\int\frac{\psi(x)-\psi(y)}{|x-y|^{n+1}}\,dy
 &=c_{n,\Lambda}|D|\psi(x).\label{eq:flat-Lambda}
\end{align}
The first identity is understood componentwise and the second for each
graph component.  Both constants are nonzero and depend only on $n$.
For clarity, the order of the second operator can be read directly from
its symmetric kernel. For a Schwartz scalar function, centered inner
and outer cutoffs, the Fourier multiplier is
\[
 m_{\epsilon,R}(\xi)
 =\int_{\epsilon<|z|<R}
        \frac{1-e^{-iz\cdot\xi}}{|z|^{n+1}}\,dz
 =\int_{\epsilon<|z|<R}
        \frac{1-\cos(z\cdot\xi)}{|z|^{n+1}}\,dz.
\]
The imaginary part vanishes by oddness. The bounds
$0\le1-\cos t\le\min\{t^2/2,2\}$ show integrability at both ends:
near zero the radial integrand is bounded, and at infinity it is
$O(r^{-2})$. Rotation and the substitution $w=|\xi|z$ give, for
$\xi\ne0$,
\begin{equation}\label{eq:half-laplacian-symbol-detail}
 \lim_{\epsilon\downarrow0,\ R\uparrow\infty}m_{\epsilon,R}(\xi)
 =|\xi|\int_{\R^n}\frac{1-\cos w_1}{|w|^{n+1}}\,dw
 =c_{n,\Lambda}|\xi|,
 \qquad 0<c_{n,\Lambda}<\infty.
\end{equation}
The vector Riesz multiplier is the normalized multiplier fixed in
Section~2. Thus the orthogonal linear operator has order one, whereas
the tangential density operator has order zero.

\begin{proposition}\label{prop:linearization}
On $H^{s+1}(\R^n;\R^m)\times H^s(\R^n)$, $s>n/2$ an integer,
the differential of $\mathbf T$ at zero is
\begin{equation}\label{eq:linearization}
 \mathcal L(\psi,\sigma):=D\mathbf T(0,0)[\psi,\sigma]
 =(c_{n,R}\mathcal R\sigma,c_{n,\Lambda}|D|\psi).
\end{equation}
\end{proposition}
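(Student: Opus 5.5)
We compute the derivative by expanding $\mathbf T$ to first order in $(\psi,\sigma)$ along the ray $(t\psi,t\sigma)$ and identifying the coefficient of $t$ with the two model operators \eqref{eq:flat-Riesz}--\eqref{eq:flat-Lambda}. Write $Q_{t\psi}=tQ_\psi$ and use the elementary expansions
\[
 A(q)=1+O(|q|^2),\qquad B(q)=q+O(|q|^3),
\]
where the error terms are smooth functions vanishing to the indicated order with bounded derivatives on $|q|\le \|D\psi\|_\infty$. Then
\[
 \mathbf T^\top(t\psi,t\sigma)(x)=t\,\pv\!\int\frac{z}{|z|^{n+1}}\sigma(y)\,dy
 +\pv\!\int\frac{z}{|z|^{n+1}}\bigl[(A(tQ_\psi)-1)(1+t\sigma(y))\bigr]dy ,
\]
and the first term is $t\,c_{n,R}\mathcal R\sigma$ by \eqref{eq:flat-Riesz}. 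Similarly
\[
 \mathbf T^\perp(t\psi,t\sigma)=t\,\pv\!\int\frac{\psi(x)-\psi(y)}{|z|^{n+1}}\,dy+\text{remainder},
\]
and the first term is $t\,c_{n,\Lambda}|D|\psi$ by \eqref{eq:flat-Lambda}. Note that $B(tQ_\psi)t\sigma/|z|^n$ is already $O(t^2)$.

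It then suffices to show that the remainders are $o(t)$ in $H^s$. The remainders consist of three pieces:
\begin{itemize}
 \item the tangential piece $\int z|z|^{-n-1}(A(tQ_\psi)-1)(1+t\sigma)$, of order $t^2$;
 \item the normal piece $\int |z|^{-n}(B(tQ_\psi)-tQ_\psi)$, of order $t^3$;
 \item the cross piece $t\int |z|^{-n}B(tQ_\psi)\sigma$, of order $t^2$.
\end{itemize}
Each is a Calder\'on-commutator-type operator whose kernel is a smooth function $F(tQ_\psi)$ vanishing to order at least two, multiplied by $\sigma$ or $1$, against an odd or even Calder\'on--Zygmund kernel. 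I would invoke the multilinear estimates of this section, namely Theorem~\ref{thm:CJ-holder} with the method of rotations and interpolation. In the form of \eqref{eq:intro-remainder}, these give an $H^s$ bound
\[
 C\,\|tD\psi\|_\infty\bigl(\|tD\psi\|_{H^s}+\|t\sigma\|_{H^s}\bigr)=O(t^2),
\]
using that $s>n/2$ makes $H^s\subset L^\infty$ so that $\|D\psi\|_\infty\lesssim\|\psi\|_{H^{s+1}}$.

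For a Fréchet rather than Gâteaux derivative, run the same estimate with $(\psi,\sigma)$ in a small ball. The bound $C\delta(\|D\psi\|_{H^s}+\|\sigma\|_{H^s})\le C\|(\psi,\sigma)\|^2_{H^{s+1}\times H^s}$ is quadratic, which gives differentiability at $0$ with derivative $\mathcal L$. The Sobolev embedding also controls $\delta$ by the norm, so $\delta$ is small.

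The main obstacle is the uniform-in-truncation $H^s$ control of these nonlinear commutators. One must differentiate $s$ times under the principal value, distribute the derivatives via the chain and product rules over $F(tQ_\psi)$ (noting that $\partial_x Q_\psi$ is an average of $D\psi$), and place exactly one factor in $L^2$-Sobolev and the rest in $L^\infty$. I would reduce each term by rotations to one-dimensional Calder\'on commutators $\int F\bigl(\frac{\psi(x)-\psi(x-r\omega)}{r}\bigr)g(x-r\omega)\,\frac{dr}{r}$, applying Theorem~\ref{thm:CJ-holder} on lines. The identification of centered parameter truncations with principal values is supplied by Appendix~\ref{app:principal-values}.
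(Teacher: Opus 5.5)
Your proposal follows the paper's proof: you identify the first-order terms through $A(q)=1+O(|q|^2)$ and $B(q)=q+O(|q|^3)$ (equivalently $A(0)=1$, $DA(0)=0$, $B(0)=0$, $DB(0)=I$) together with the model identities \eqref{eq:flat-Riesz}--\eqref{eq:flat-Lambda}. You then get Fr\'echet differentiability from the quadratic remainder bound $C\delta Z_s(U)\le C\|U\|^2_{H^{s+1}\times H^s}$ via Sobolev embedding, exactly as the paper does.

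One caveat concerns your sketch of how that remainder bound is obtained. Theorem~\ref{thm:CJ-holder} is multilinear, so the paper does not apply a chain rule to a general smooth $F(tQ_\psi)$. Instead it expands $A$ and $B$ in their binomial series and sums the monomial bounds, whose constants grow polynomially in $q$ (Lemma~\ref{lem:series-convergence}); this is where the smallness of the slope enters.
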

\begin{proof}
We first compute a candidate differential with smooth compact functions at
fixed inner and outer truncations. Put $p=(n+1)/2$. Direct differentiation
gives
\[
 DA(q)[v]=-2p(1+|q|^2)^{-p-1}q\cdot v,\qquad
 DB(q)[v]=vA(q)+qDA(q)[v].
\]
In particular,
\[
 A(0)=1,\quad DA(0)=0,\quad B(0)=0,\quad DB(0)=I.
\]
For the tangential factor, the product rule yields
\begin{align*}
 \frac d{dt}\bigl[A(tQ_\psi)(1+t\sigma(y))-1\bigr]
 &=DA(tQ_\psi)[Q_\psi](1+t\sigma(y))
   +A(tQ_\psi)\sigma(y),\\
 \left.\frac d{dt}\right|_{t=0}
       \bigl[A(tQ_\psi)(1+t\sigma(y))-1\bigr]&=\sigma(y).
\end{align*}
There is no linear graph contribution in the tangential component.
For the orthogonal factor,
\begin{align*}
 \frac d{dt}\bigl[B(tQ_\psi)(1+t\sigma(y))\bigr]
 &=DB(tQ_\psi)[Q_\psi](1+t\sigma(y))
    +B(tQ_\psi)\sigma(y),\\
 \left.\frac d{dt}\right|_{t=0}
       \bigl[B(tQ_\psi)(1+t\sigma(y))\bigr]&=Q_\psi.
\end{align*}
Hence the only first-order terms are
\[
 \int\frac{x-y}{|x-y|^{n+1}}\sigma(y)\,dy,
 \qquad
 \int\frac{\psi(x)-\psi(y)}{|x-y|^{n+1}}\,dy,
\]
with the same temporary truncations. The model identities
\eqref{eq:flat-Riesz}--\eqref{eq:flat-Lambda} identify their limits
with the right side of \eqref{eq:linearization}.

The calculation alone is not yet a Fr\'echet differentiability proof.
That justification is supplied independently by the multilinear estimates
and the summation in Sections~\ref{sec:linearization}--\ref{sec:nonlinear-remainder}.
They give, for $U=(\varphi,\rho)$ near zero in
$H^{s+1}\times H^s$,
\[
 \|\mathbf T(U)-\mathcal LU\|_{H^s}
 \le C_s\bigl(\|D\varphi\|_\infty+\|\rho\|_\infty\bigr)Z_s(U).
\]
Since $s>n/2$, Sobolev embedding bounds the first parenthesis by
$C_s\|U\|_{H^{s+1}\times H^s}$, and $Z_s(U)$ is bounded by the same
product norm. Therefore
\[
 \frac{\|\mathbf T(U)-\mathcal LU\|_{H^s}}
      {\|U\|_{H^{s+1}\times H^s}}
 \le C_s\|U\|_{H^{s+1}\times H^s}\longrightarrow0.
\]
This proves the asserted differential once the independently constructed
field is used, and explains the order of the argument. In particular,
we do not use differentiability to prove the multilinear bound needed
for differentiability.
\end{proof}

\subsection{The linear estimate}
Set
\begin{equation}\label{eq:Zs-definition}
 Z_s(\psi,\sigma)=\|D\psi\|_{H^s}+\|\sigma\|_{H^s}.
\end{equation}
We continue to take the functions in the inhomogeneous product space;
we do not need its completion under $Z_s$.

\begin{proposition}
\label{prop:linear-coercivity}
For every $s\ge0$ and every $\psi\in H^{s+1}$, $\sigma\in H^s$,
\begin{equation}\label{eq:linear-coercivity}
 Z_s(\psi,\sigma)\le C_n\|\mathcal L(\psi,\sigma)\|_{H^s}.
\end{equation}
Also
\begin{equation}\label{eq:linear-full-low}
 \|\psi\|_{H^{s+1}}+\|\sigma\|_{H^s}
 \le C_s\bigl(\|\mathcal L(\psi,\sigma)\|_{H^s}+\|\psi\|_2\bigr).
\end{equation}
If $\supp\psi\subset B_R$, the last $L^2$ term can be omitted with a
constant depending on $R$.  No support restriction on $\sigma$ is used.
\end{proposition}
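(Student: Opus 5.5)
The estimate is a pure Fourier-multiplier computation, so I will work entirely on the Fourier side with the conventions \eqref{eq:Fourier-convention}--\eqref{eq:Sobolev-Fourier-convention}. By \eqref{eq:linearization},
\[
 \|\mathcal L(\psi,\sigma)\|_{H^s}^2
 =c_{n,R}^2\sum_{j=1}^n\|\mathcal R_j\sigma\|_{H^s}^2
 +c_{n,\Lambda}^2\||D|\psi\|_{H^s}^2 .
\]
The Plancherel identities \eqref{eq:Riesz-coercivity} turn the first sum into $\|\sigma\|_{H^s}^2$ and the second norm into $\|D\psi\|_{H^s}$. For the latter, I use that $\widehat{\partial_k\psi}(\xi)=i\xi_k\widehat\psi(\xi)$, so summing over $k$ gives $|\xi|^2|\widehat\psi|^2$, which is the symbol of $|D|\psi$ squared.

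Hence
\[
 \|\mathcal L(\psi,\sigma)\|_{H^s}^2
 =c_{n,R}^2\|\sigma\|_{H^s}^2+c_{n,\Lambda}^2\|D\psi\|_{H^s}^2 .
\]
Since $Z_s^2\le2(\|D\psi\|_{H^s}^2+\|\sigma\|_{H^s}^2)$, we get \eqref{eq:linear-coercivity} with $C_n=\sqrt2\,\max(c_{n,R}^{-1},c_{n,\Lambda}^{-1})$. Both constants are nonzero and depend only on $n$, so $C_n$ does too.

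For \eqref{eq:linear-full-low}, I would use the pointwise bound $(1+|\xi|^2)^{s+1}\le(1+|\xi|^2)^s+(1+|\xi|^2)^s|\xi|^2$. Integrated against $|\widehat\psi|^2$, it gives $\|\psi\|_{H^{s+1}}^2\le\|\psi\|_{H^s}^2+\|D\psi\|_{H^s}^2$. This only moves the difficulty to the lower-order term $\|\psi\|_{H^s}$. To bound that term, split frequencies at $|\xi|=1$. On the low part, $(1+|\xi|^2)^s\le2^s$, so it is controlled by $2^s\|\psi\|_2^2$. On the high part, $(1+|\xi|^2)^s\le(1+|\xi|^2)^s|\xi|^2$, so it is controlled by $\|D\psi\|_{H^s}^2$. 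Together with the first part, this yields \eqref{eq:linear-full-low}.

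The only step with any substance is dropping $\|\psi\|_2$ when $\supp\psi\subset B_R$. My plan is to invoke the fractional Poincar\'e inequality, Lemma~\ref{lem:fractional-Poincare}:
\[
 \|\psi\|_2\le C_nR^{1/2}[\psi]_{1/2}.
\]
On the Fourier side, $[\psi]_{1/2}^2$ equals a dimensional constant times $\int|\xi|\,|\widehat\psi|^2\,d\xi$; this follows from the same computation that gave \eqref{eq:half-laplacian-symbol-detail}. By Cauchy--Schwarz,
\[
 \int|\xi|\,|\widehat\psi|^2\,d\xi\le\|\psi\|_2\,\|D\psi\|_2 .
\]
Combining, $\|\psi\|_2^2\le CR\,\|\psi\|_2\|D\psi\|_2$, and after dividing, $\|\psi\|_2\le CR\|D\psi\|_2\le CR\,Z_s$. (One could also apply the ordinary Poincar\'e inequality on $B_R$ directly.) Substituting this into \eqref{eq:linear-full-low} and using \eqref{eq:linear-coercivity} removes the $L^2$ term, with a constant depending on $R$.

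No support hypothesis on $\sigma$ enters anywhere: the $\sigma$-part is controlled exactly, by Riesz isometry, with no low-frequency deficiency.
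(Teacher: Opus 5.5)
Your proposal is correct and follows the paper's proof: the Plancherel identities for $\mathcal R$ and $|D|$, orthogonality of the two output components, and a frequency split at $|\xi|=1$ for the inhomogeneous norm. The only difference is in the compact-support step, where you pass through Lemma~\ref{lem:fractional-Poincare} and the interpolation bound $[\psi]_{1/2}^2\le C\|\psi\|_2\|D\psi\|_2$, whereas the paper applies the ordinary zero-boundary Poincar\'e inequality on $B_{2R}$ directly (your parenthetical alternative); both routes give $\|\psi\|_2\le C_nR\|D\psi\|_2$.
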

\begin{proof}
Use the Fourier convention \eqref{eq:Fourier-convention} and the
Sobolev norm \eqref{eq:Sobolev-Fourier-convention}. For $\xi\ne0$,
\[
 \sum_{j=1}^n\left|\frac{\xi_j}{|\xi|}\right|^2=1.
\]
Multiplying by $(1+|\xi|^2)^s|\widehat\sigma(\xi)|^2$ and integrating
therefore gives
\begin{equation}\label{eq:linear-Riesz-Hs-detail}
 \|\mathcal R\sigma\|_{H^s(\R^n;\R^n)}^2
 =(2\pi)^{-n}\int(1+|\xi|^2)^s|\widehat\sigma(\xi)|^2\,d\xi
 =\|\sigma\|_{H^s}^2.
\end{equation}
Similarly, summing over all derivatives and graph components,
\begin{equation}\label{eq:linear-gradient-Hs-detail}
 \|D\psi\|_{H^s}^2
 =(2\pi)^{-n}\int(1+|\xi|^2)^s|\xi|^2|\widehat\psi(\xi)|^2\,d\xi
 =\||D|\psi\|_{H^s}^2.
\end{equation}
The orthogonality of the output components implies
\[
 \|\mathcal L(\psi,\sigma)\|_{H^s}^2
 =|c_{n,R}|^2\|\sigma\|_{H^s}^2
   +|c_{n,\Lambda}|^2\|D\psi\|_{H^s}^2.
\]
The inequality $A+B\le\sqrt2(A^2+B^2)^{1/2}$ now proves
\eqref{eq:linear-coercivity}. The value of a multiplier at $\xi=0$
does not affect any of these $L^2$ identities.

For $s\ge0$ the weights satisfy
\[
 (1+|\xi|^2)^{s+1}\asymp_s
 1+|\xi|^2(1+|\xi|^2)^s.
\]
One checks this separately on $|\xi|\le1$ and $|\xi|>1$.
After integration,
\[
 \|\psi\|_{H^{s+1}}\le C_s(\|\psi\|_2+\|D\psi\|_{H^s}),
\]
which proves \eqref{eq:linear-full-low}.

If $\supp\psi\subset B_R$, then $\psi\in H_0^1(B_{2R})$ after
restriction. The zero-boundary Poincar\'e inequality on this ball gives
$\|\psi\|_2\le C_nR\|D\psi\|_2$. No trace regularity at
$\partial B_R$ is needed, since the larger ball is used.
Substitution into the preceding inequalities completes the
compact-geometry conclusion. The density estimate never uses a
support condition on $\sigma$.
\end{proof}

Neither the vector Riesz transform nor $|D|$ loses information at a
nonzero frequency.  The failure of uniform control of $\|\psi\|_2$ is due to small
frequencies; the multiplier has no nontrivial kernel in $L^2$.  This distinction is important when a support restriction
is removed.

\subsection{Multilinear estimates}\label{sec:tame-commutators}

We prove multilinear estimates in which a differentiated coefficient
is measured by $\|Db\|_{H^s}$. The operators are higher-order Calder\'on
commutators and can also be written as Christ--Journ\'e forms. We first
prove the $L^2$ estimates needed below, keeping track of their dependence
on the number of coefficients and on the truncation parameter.

\subsection{Difference quotients and multilinear estimates}
For $x\ne y$ and a locally Lipschitz function $u$, write
\begin{equation}\label{eq:segment-mean}
 m_{x,y}u=\int_0^1u(tx+(1-t)y)\,dt.
\end{equation}
The fundamental theorem of calculus gives, with
$\theta=(x-y)/|x-y|$,
\begin{equation}\label{eq:difference-as-segment-mean}
 \frac{b(x)-b(y)}{|x-y|}
 =\sum_{\ell=1}^n\theta_\ell\,m_{x,y}(\partial_\ell b).
\end{equation}
Thus products of graph difference quotients are finite sums of
Christ--Journ\'e segment-mean forms.

We use only the two angular factors
\begin{equation}\label{eq:angular-families}
 \Omega(\theta)=1
 \quad\hbox{or}\quad
 \Omega(\theta)=\theta_\ell,
 \qquad 1\le \ell\le n.
\end{equation}
For $q\ge1$, define the distributional principal-value operator
\begin{equation}\label{eq:Cq-distribution}
 \mathfrak C_{q,\Omega}(b_1,\ldots,b_q;f)(x)
 =\operatorname{p.v.}\int_{\R^n}
 \frac{\Omega((x-y)/|x-y|)}{|x-y|^n}
 \prod_{j=1}^q
 \frac{b_j(x)-b_j(y)}{|x-y|}
 f(y)\,dy.
\end{equation}
Choose once and for all a radial function
$\chi\in C^\infty([0,\infty))$ with $\chi=0$ on $[0,1]$ and
$\chi=1$ on $[2,\infty)$.  For $0<\varepsilon<1/2$ set
\begin{equation}\label{eq:radial-two-sided-cutoff}
 \vartheta_\varepsilon(r)
 =\chi(r/\varepsilon)\,\chi(1/(\varepsilon r)).
\end{equation}
Thus $\vartheta_\varepsilon$ is supported in a centered annulus and
tends pointwise to one on $(0,\infty)$.  The smooth symmetric annular
regularizations are
\begin{equation}\label{eq:Cq-eps}
 \mathfrak C_{q,\Omega}^{\varepsilon}
 (b_1,\ldots,b_q;f)(x)
 =\int_{\R^n}
 \vartheta_\varepsilon(|x-y|)
 \frac{\Omega((x-y)/|x-y|)}{|x-y|^n}
 \prod_{j=1}^q
 \frac{b_j(x)-b_j(y)}{|x-y|}
 f(y)\,dy.
\end{equation}
For smooth compactly supported coefficient functions these regularizations
converge to \eqref{eq:Cq-distribution} in distributions.  Hard symmetric
truncations give the same principal value whenever the latter is used
pointwise.
In every application below the parity condition
\begin{equation}\label{eq:CJ-parity}
 \Omega(-\theta)=(-1)^{q+1}\Omega(\theta)
\end{equation}
holds: tangential terms have $\Omega(\theta)=\theta_\ell$ and even
$q$, while orthogonal terms have $\Omega\equiv1$ and odd $q$.
Consequently, after expanding \eqref{eq:difference-as-segment-mean},
each resulting homogeneous angular kernel has spherical mean zero and
is a regular Calder\'on--Zygmund convolution kernel.

\begin{lemma}\label{lem:angular-kernel-CZ}
For a coordinate multi-index
$\boldsymbol\ell=(\ell_1,\ldots,\ell_q)$ put
\[
 \kappa_{q,\boldsymbol\ell}(z)
 =\frac{\Omega(z/|z|)}{|z|^n}
   \prod_{j=1}^q\frac{z_{\ell_j}}{|z|}.
\]
If \eqref{eq:CJ-parity} holds, then
$\kappa_{q,\boldsymbol\ell}$ is odd and has zero spherical mean.  For
every fixed integer $N\ge0$,
\begin{equation}\label{eq:angular-CZ-seminorm}
 \sup_{z\ne0}|z|^{n+|\beta|}
 |\partial^\beta\kappa_{q,\boldsymbol\ell}(z)|
 \le C_{N,n}(1+q)^N,
 \qquad |\beta|\le N.
\end{equation}
In addition, for some fixed $N_*=N_*(n)$,
\begin{equation}\label{eq:angular-multiplier-bound}
 \|\widehat{\kappa_{q,\boldsymbol\ell}}\|_{L^\infty}
 \le C_n(1+q)^{N_*}.
\end{equation}
The annular kernels
$\vartheta_\varepsilon(|z|)\kappa_{q,\boldsymbol\ell}(z)$ satisfy the
same scale-invariant size, regularity, and Fourier-multiplier bounds,
with constants independent of $0<\varepsilon<1/2$.
\end{lemma}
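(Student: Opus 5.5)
The argument splits into three parts: parity, the pointwise derivative bounds, and the multiplier bound. Write $\kappa_{q,\boldsymbol\ell}(z)=|z|^{-n}\Phi(z/|z|)$ with
\[
 \Phi(\theta)=\Omega(\theta)\prod_{j=1}^q\theta_{\ell_j}.
\]

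For parity, $\prod_j(-\theta_{\ell_j})=(-1)^q\prod_j\theta_{\ell_j}$. Combined with \eqref{eq:CJ-parity} this gives $\Phi(-\theta)=(-1)^{q+1}(-1)^q\Phi(\theta)=-\Phi(\theta)$. Hence $\kappa_{q,\boldsymbol\ell}$ is odd, and integrating $\Phi$ over $\mathbb S^{n-1}$ against the antipodal map shows that its spherical mean is zero.

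For \eqref{eq:angular-CZ-seminorm}, homogeneity of degree $-n-|\beta|$ of $\partial^\beta\kappa$ reduces the bound to $|z|=1$. There I would write $\kappa=|z|^{-n-q-\deg\Omega}P(z)$, where $P$ is a monomial of degree $q+\deg\Omega\le q+1$ with coefficient $1$. Leibniz's rule gives at most $2^{|\beta|}$ terms. Derivatives of the monomial contribute factors at most $(q+1)^{|\beta|}$ on the unit sphere. Derivatives of $|z|^{-n-q-1}$ produce polynomial factors in $q$ of degree at most $|\beta|$, since each differentiation multiplies by a factor of size $O(n+q+1+|\beta|)$ times powers of $z/|z|$. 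This yields $C_{N,n}(1+q)^N$.

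For \eqref{eq:angular-multiplier-bound} I would use the classical formula for homogeneous kernels of degree $-n$ with mean zero:
\[
 \widehat\kappa(\xi)=\int_{\mathbb S^{n-1}}\Phi(\theta)\Bigl(-\tfrac{i\pi}{2}\operatorname{sgn}(\theta\cdot\xi')+\log\frac1{|\theta\cdot\xi'|}\Bigr)d\theta,
 \qquad \xi'=\xi/|\xi|.
\]
Since $\Phi$ is odd, the even logarithmic part integrates to zero. Only the sign term survives, and it is bounded by $\frac{\pi}{2}\|\Phi\|_{L^1(\mathbb S^{n-1})}\le C_n$, because $|\Phi|\le1$. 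This would give even $N_*=0$. To avoid quoting the formula, I would instead derive it directly: compute the truncated multiplier $\int_{\epsilon<|z|<R}\kappa(z)e^{-iz\cdot\xi}\,dz$ in polar coordinates, use oddness to replace $e^{-i r\theta\cdot\xi}$ by $-i\sin(r\theta\cdot\xi)$, and then apply the uniform bound $\bigl|\int_a^b\sin(rt)\,dr/r\bigr|\le C$. The same computation bounds the truncations uniformly in $\epsilon$ and $R$.

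For the annular kernels $\vartheta_\varepsilon(|z|)\kappa(z)$, the main obstacle is uniformity in $\varepsilon$. Size is immediate since $0\le\vartheta_\varepsilon\le1$. For regularity, derivatives of $\chi(r/\varepsilon)$ are $O(\varepsilon^{-k})$ and are supported where $r\sim\varepsilon$. Derivatives of $\chi(1/(\varepsilon r))$ are $O(r^{-k})$ and are supported where $r\sim1/\varepsilon$. In both cases they are bounded by $C_k r^{-k}$, so Leibniz's rule preserves the scale-invariant bounds with constants independent of $\varepsilon$. The kernel stays odd, so the polar computation above applies with the radial weight $\vartheta_\varepsilon(r)$ inserted. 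The one-dimensional integral $\int_0^\infty\vartheta_\varepsilon(r)\sin(rt)\,dr/r$ is bounded uniformly: write it as an average of hard truncations, as in the proof of Lemma~\ref{lem:weak-closure}, or integrate by parts using that $\vartheta_\varepsilon$ has bounded total variation on $(0,\infty)$.
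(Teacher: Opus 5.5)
Your proposal is correct, and for the multiplier bound it takes a genuinely different route from the paper.

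\textbf{Parity and derivative bounds.} These parts match the paper's argument. The paper distributes the derivatives over all $q+2$ factors; you group the factors into the monomial $z^\alpha$ with $|\alpha|=q+\deg\Omega$ and the radial power $|z|^{-n-|\alpha|}$. Either way, homogeneity then gives $C_{N,n}(1+q)^N$.

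\textbf{Multiplier bound: the paper's route.} The paper decomposes dyadically, setting $k_j(z)=\gamma(2^{-j}z)\kappa(z)$ so that $\widehat{k_j}(\xi)=\widehat\Psi(2^j\xi)$. It uses the zero integral of $\Psi$ for $|\eta|\le1$ and an $N$-fold integration by parts for $|\eta|>1$, taking $N=n+2$. That integration by parts consumes derivatives of the angular factor, and this is where the polynomial loss $(1+q)^{N}$ comes from. The smooth truncation is handled by inserting the radial cutoff into each dyadic piece.

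\textbf{Multiplier bound: your route.} Your polar-coordinate computation uses oddness of $\Phi$ to replace $e^{-ir\theta\cdot\xi}$ by $-i\sin(r\theta\cdot\xi)$. After that, the only input is the uniform bound on $\int_a^b\sin u\,du/u$. No smoothness of $\Phi$ is needed, only $\|\Phi\|_{L^1(\mathbb S^{n-1})}\le|\mathbb S^{n-1}|$. This gives a bound uniform in $q$, i.e.\ $N_*=0$, and decouples the multiplier estimate from the regularity estimate.

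\textbf{Annular kernels.} Your integration by parts against the sine integral works because the total variation of $\vartheta_\varepsilon$ on $(0,\infty)$ is at most $2\|\chi'\|_{L^1}$, independently of $\varepsilon$. To see this, use the identity $\vartheta_\varepsilon(r)=\chi(r/\varepsilon)+\chi(1/(\varepsilon r))-1$ for $\varepsilon<1/2$, which the paper itself uses in the proof of Theorem~\ref{thm:CJ-holder}. The bound holds even if $\chi$ is not monotone, and there are no boundary terms.

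\textbf{Trade-off.} The paper's dyadic argument needs only mean zero plus smoothness, so it would also cover non-odd mean-zero kernels. Yours uses oddness essentially, to kill the even logarithmic term, but the parity condition \eqref{eq:CJ-parity} supplies exactly that, and you get a sharper constant.

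\textbf{One step to make explicit.} The truncated kernels converge to $\operatorname{p.v.}\kappa$ in $\mathcal S'$ because of the odd cancellation. Their uniformly bounded multipliers converge pointwise for $\xi\ne0$. Dominated convergence then identifies $\widehat{\kappa}$ with that bounded limit, which transfers your bound to the principal-value distribution.
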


\begin{proof}
Let $\Theta(\theta)=\Omega(\theta)\prod_{j=1}^q\theta_{\ell_j}$.
The parity assumption gives
\[
 \Theta(-\theta)=(-1)^{q+1}(-1)^q\Theta(\theta)=-\Theta(\theta).
\]
Thus $\int_{\mathbb S^{n-1}}\Theta=0$, and every centered annular
integral of the kernel is zero. For $n=1$, this is the same cancellation
between the two points of $\mathbb S^0$.

For every multi-index $\gamma$, the coordinate factors satisfy
\[
 \left|\partial^\gamma\frac{z_\ell}{|z|}\right|
 \le C_{\gamma,n}|z|^{-|\gamma|},
\]
Equivalently, this follows by
homogeneity and smoothness on $1/2\le|z|\le2$.
A derivative of order $k\le N$ of the product defining $\kappa$ is a
sum over distributions of $k$ derivatives among its $q$ coordinate
factors, the radial factor, and $\Omega$. The number with multiplicities
is bounded by $C_{N,n}(q+2)^N$. Each summand has size at most
$C_{N,n}|z|^{-n-k}$. This proves \eqref{eq:angular-CZ-seminorm}.

We include a dyadic argument to make the uniform multiplier bound
explicit. Choose a smooth radial function $\gamma$ supported in
$1/2<|z|<2$ with
$\sum_{j\in\mathbb Z}\gamma(2^{-j}z)=1$ for $z\ne0$. Put
\[
 k_j(z)=\gamma(2^{-j}z)\kappa(z),\qquad
 \Psi(w)=\gamma(w)\kappa(w).
\]
Homogeneity yields $k_j(z)=2^{-jn}\Psi(2^{-j}z)$, and hence
$\widehat k_j(\xi)=\widehat\Psi(2^j\xi)$.
The function $\Psi$ has integral zero. For $|\eta|\le1$,
\[
 |\widehat\Psi(\eta)|
 =\left|\int(e^{-iw\cdot\eta}-1)\Psi(w)\,dw\right|
 \le |\eta|\int|w\Psi(w)|\,dw\le C_n|\eta|.
\]
For $|\eta|>1$, integration by parts $N$ times in a coordinate
whose frequency has modulus at least $|\eta|/\sqrt n$ gives
\[
 |\widehat\Psi(\eta)|
 \le C_{N,n}(1+q)^N|\eta|^{-N}.
\]
Taking, for instance, $N=n+2$ and splitting the dyadic sum at
$2^j|\xi|=1$, we obtain
\begin{align}
 \sum_{j\in\mathbb Z}|\widehat k_j(\xi)|
 &\le C_n\sum_{2^j|\xi|\le1}2^j|\xi|
   +C_{N,n}(1+q)^N
                \sum_{2^j|\xi|>1}(2^j|\xi|)^{-N}\notag\\
 &\le C_{N,n}(1+q)^N\qquad(\xi\ne0).
 \label{eq:dyadic-angular-multiplier-detail}
\end{align}
The dyadic sum defines the principal-value convolution distribution,
so this proves \eqref{eq:angular-multiplier-bound}.

For the two-sided smooth truncations replace $\Psi(w)$ in the $j$th
annulus by
\[
 \Psi_{j,\epsilon}(w)
 =\gamma(w)\vartheta_\epsilon(2^j|w|)\kappa(w).
\]
The cutoff remains radial, and therefore its integral against the odd
kernel is zero. Its derivatives in $w$ of every fixed order are bounded
uniformly in $j,\epsilon$: in a transition annulus the dimensionless
argument of the differentiated cutoff is comparable to one, and outside
that annulus the derivative vanishes. The preceding low- and
high-frequency estimates consequently hold for $\Psi_{j,\epsilon}$
with the same constants. Summing them proves a multiplier bound uniform
in $\epsilon$. The pointwise size and regularity estimates follow from
the identical derivative observation. This proves all assertions.
\end{proof}

For the homogeneous kernels used here, the required quantitative estimate
can be obtained from the one-dimensional Calder\'on commutators by the
method of rotations. We give the reduction, including the treatment of
the smooth truncations. This specifies the dependence on the order of
the commutator without introducing a norm on the more general kernel
classes of Christ--Journ\'e and Seeger--Smart--Street
\cite{ChristJourne,SeegerSmartStreet}.

The one-dimensional result we use is \cite[Theorem~1.1, p.~1091]{MuscaluII}.
In the notation of that paper, the order of the commutator is denoted by
$d$; here it is $q$ and is unrelated to either ambient dimension.
For functions $a_1,\ldots,a_q$ on $\R$, write
\begin{equation}\label{eq:one-dimensional-CJ}
 \mathcal C_q(\mathbf a;f)(u)
 =\operatorname{p.v.}\int_{\R}\frac{f(u-t)}{t}
       \prod_{j=1}^q\left(\int_0^1a_j(u-\alpha t)\,d\alpha\right)dt.
\end{equation}
If $A_j'=a_j$, the segment average is
$(A_j(u)-A_j(u-t))/t$. Thus \eqref{eq:one-dimensional-CJ} is the
multilinear version of the commutator in \cite[(1.2)--(1.5)]{MuscaluII},
up to a fixed normalization of the Hilbert transform. Theorem~1.1 there
gives a bound of the form
\[
 C(q)C(l)\prod_{i=0}^q C(p_i),
\]
where $C(q)$ is at most polynomial in $q$, $l$ is the number of finite
 exponents, and $C(\infty)=1$. We shall first use only the cases
in which exactly one exponent is $2$ and all the others are
$\infty$. In these cases $l=1$, so there are absolute constants
$C,N_0$ such that, for every $0\le i\le q$,
\begin{equation}\label{eq:one-dimensional-vertices}
 \|\mathcal C_q(a_1,\ldots,a_q;f)\|_2
 \le C(1+q)^{N_0}\|u_i\|_2\prod_{j\ne i}\|u_j\|_\infty,
 \qquad (u_0,u_1,\ldots,u_q)=(f,a_1,\ldots,a_q).
\end{equation}
The extension from Schwartz functions to $L^\infty$ entries is part of
\cite[Section~2, pp.~1092--1093]{MuscaluII}. In particular, the endpoint
$f=1$ is permitted. No assertion that compactly supported functions are
norm dense in $L^\infty$ is needed.

\begin{theorem}\label{thm:CJ-holder}
There are constants $A=A(n)\ge1$ and $M=M(n)\ge0$ with the following
property. Let $q\ge1$, let $\Omega$ be one of the functions in
\eqref{eq:angular-families}, and assume \eqref{eq:CJ-parity}.
If $p_0,p_1,\ldots,p_q\in[2,\infty]$ satisfy
\begin{equation}\label{eq:CJ-operator-exponents}
 \sum_{j=0}^q\frac1{p_j}=\frac12,
\end{equation}
then, for $b_1,\ldots,b_q,f\in C_c^\infty(\R^n)$,
\begin{equation}\label{eq:CJ-holder}
 \|\mathfrak C_{q,\Omega}(b_1,\ldots,b_q;f)\|_2
 \le A^q(1+q)^M\|f\|_{p_0}
                         \prod_{j=1}^q\|Db_j\|_{p_j}.
\end{equation}
Equivalently, for $g\in C_c^\infty(\R^n)$ the exponents obey
\begin{equation}\label{eq:CJ-form-exponents}
 \frac1{p_0}+\sum_{j=1}^q\frac1{p_j}+\frac12=1,
\end{equation}
and
\begin{equation}\label{eq:CJ-form-bound}
 \left|\int\mathfrak C_{q,\Omega}(\mathbf b;f)(x)g(x)\,dx\right|
 \le A^q(1+q)^M\|f\|_{p_0}\|g\|_2
                         \prod_{j=1}^q\|Db_j\|_{p_j}.
\end{equation}
The same estimates hold for the smooth annular regularizations
\eqref{eq:Cq-eps}, with constants independent of $\varepsilon$.
They also hold with $f=1$ and $p_0=\infty$.
\end{theorem}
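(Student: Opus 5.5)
The plan is to reduce Theorem~\ref{thm:CJ-holder} to the one-dimensional vertex bounds \eqref{eq:one-dimensional-vertices} by the method of rotations, and then to pass from the vertices to general exponents by multilinear interpolation.

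\textbf{Polar form.} First expand each difference quotient through \eqref{eq:difference-as-segment-mean}. This writes $\mathfrak C_{q,\Omega}$ as a sum of at most $n^q$ terms, indexed by $\boldsymbol\ell$, each with kernel $\kappa_{q,\boldsymbol\ell}$ and coefficients $m_{x,y}(\partial_{\ell_j}b_j)$. The number of terms accounts for the factor $A^q$. For a fixed term with the smooth annular cutoff, write $y=x-t\theta$ with $t\in\R$ and $\theta\in\mathbb S^{n-1}$, and integrate over a half-sphere. Oddness of $\Theta$ (Lemma~\ref{lem:angular-kernel-CZ}) folds the two half-lines into a principal value in $t$:
\[
\mathfrak C^\varepsilon(\mathbf b;f)(x)=\tfrac12\int_{\mathbb S^{n-1}}\Theta(\theta)\,\int_{\R}\vartheta_\varepsilon(|t|)\frac{f(x-t\theta)}{t}\prod_j\int_0^1\partial_{\ell_j}b_j(x-\alpha t\theta)\,d\alpha\,dt\,d\theta .
\]
The inner integral is the one-dimensional form \eqref{eq:one-dimensional-CJ} applied along the lines $x_0+\R\theta$, with $a_j=\partial_{\ell_j}b_j$ restricted to those lines and $|\Theta|\le1$.

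\textbf{Vertex estimates.} Apply Minkowski's inequality in $\theta$. Then use Fubini over lines parallel to $\theta$, and apply \eqref{eq:one-dimensional-vertices} on each line. This gives
\[
\|\mathfrak C\|_2\le C(1+q)^{N_0}\|u_i\|_2\prod_{j\ne i}\|u_j\|_\infty,
\]
because sup norms along lines are bounded by global sup norms and $L^2$ norms integrate over the transversal variable. The smooth truncation $\vartheta_\varepsilon(|t|)$ must be handled here. Since it is even and radial, it can be written as an average of hard symmetric truncations, as in the proof of Lemma~\ref{lem:weak-closure}: $\vartheta_\varepsilon(|t|)=\int\mathbf 1_{\{\varepsilon s_1<|t|<s_2/\varepsilon\}}\,d\nu(s)$ for a probability-type measure $\nu$. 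Uniform bounds on hard truncations of the one-dimensional commutator follow from the maximal version in \cite{MuscaluII}, or from a Cotlar argument as in Proposition~\ref{prop:normal-pv} using the kernel bounds of Lemma~\ref{lem:angular-kernel-CZ}. This gives bounds independent of $\varepsilon$. The case $f=1$, $p_0=\infty$ is just the vertex with $i\ne0$, which is allowed by the stated $L^\infty$ extension.

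\textbf{Interpolation and the form version.} For general exponents satisfying \eqref{eq:CJ-operator-exponents}, the point $(1/p_0,\dots,1/p_q)$ is a convex combination of the $q+1$ vertices. Multilinear complex interpolation (Riesz--Thorin type, on simple functions, with analytic families $|u_j|^{z\cdots}$) then yields \eqref{eq:CJ-holder} with the geometric mean of the vertex constants, which is again $C(1+q)^{N_0}$. I expect this to be the main obstacle. The map is multilinear in $(f,Db_1,\dots,Db_q)$, but the inputs appear only through the gradients $Db_j$. To interpolate I must therefore treat the segment-mean form as a multilinear operator in independent functions $a_j=\partial_{\ell_j}b_j$, which is legitimate since the polar formula only involves the $a_j$. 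Density from simple functions back to $C_c^\infty$ gradients then follows from the estimate itself. Finally, the form version \eqref{eq:CJ-form-bound} is $L^2$ duality applied to \eqref{eq:CJ-holder}.
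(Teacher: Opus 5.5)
Your architecture is sound and is essentially the paper's. You expand into $n^q$ coordinate terms, which is absorbed into $A^q$; the paper instead uses $\partial_\theta b_j$ directly. You then fold to a full-line principal value by oddness of $\Theta$, use Fubini along lines for the $q+1$ vertex bounds, interpolate in \emph{independent} entries $a_j$, and finish by duality. The interpolation step you flag as the main obstacle is handled correctly. The real gap is the uniformity in $\varepsilon$. That uniformity is an explicit part of the theorem, and you need it at \emph{all} $q+1$ vertices, because the interpolation must be run on the truncated operators.

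Writing $\vartheta_\varepsilon$ as an average of two-sided hard truncations is fine, since $\chi'$ has bounded total variation. But it only moves the problem to uniform bounds for the cutoffs $\mathbf 1_{\{|t|>\delta\}}$ of the one-dimensional commutator. The only one-dimensional input available, \eqref{eq:one-dimensional-vertices}, is a principal-value bound. You do not point to any maximal-truncation theorem in \cite{MuscaluII}, and the paper supplies none.

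The Cotlar argument of Proposition~\ref{prop:normal-pv} is for a linear Calder\'on--Zygmund operator acting on its $L^2$ input. It therefore covers only the vertex $i=0$, where the kernel in the $f$ slot is $\frac1{u-v}\prod_j\frac{A_j(u)-A_j(v)}{u-v}$. At a vertex with $a_i\in L^2$ and $f\in L^\infty$ (including the endpoint $f=1$), the operator sees $a_i$ only through a segment average. Its kernel in that slot comes from integrating $f(u-t)/(t|t|)$ times the other segment means over those $t$ for which $v$ lies between $u-t$ and $u$. Its standard estimates must be proved separately. Moreover, the cutoff $|t|>\delta$ is not the cutoff $|u-v|>\delta$ of that kernel: the two differ on $|u-v|\le\delta$ by a piece of size $O(\delta^{-1})$, which would need a maximal-function bound. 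Lemma~\ref{lem:angular-kernel-CZ} gives none of this, since it concerns the $n$-dimensional convolution kernels $\kappa_{q,\boldsymbol\ell}$, not the commutator as a function of its entries.

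The paper avoids maximal operators altogether. Write $w_\varepsilon(t)=\vartheta_\varepsilon(|t|)=\omega_1(\varepsilon t)-\omega_0(t/\varepsilon)$ with fixed $\omega_0,\omega_1\in C_c^\infty$. Dilation invariance then gives $\sup_\varepsilon\|\widehat{w_\varepsilon}\|_{L^1}<\infty$. Splitting $e^{i\tau t}=e^{i\tau u}e^{-i\tau(u-t)}$ yields \eqref{eq:modulated-cutoff-formula}, so the truncated operator is an $L^1(d\tau)$-average of the untruncated $\mathcal C_q(\mathbf a;e^{-i\tau\cdot}f)$. Modulation preserves every Lebesgue norm, and $e^{-i\tau\cdot}$ is an admissible $L^\infty$ entry when $f=1$. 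Hence each principal-value vertex bound transfers to the truncated operator at every vertex at once, with a fixed extra factor.

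You should also say how the untruncated bound \eqref{eq:CJ-holder} itself follows, since your polar formula is written only for $\mathfrak C^\varepsilon$. For $C_c^\infty$ entries the regularizations converge pointwise to the principal value, and Fatou's lemma passes the uniform bound to the limit.
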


\begin{proof}

For $\theta\in\mathbb S^{n-1}$ define
\[
 \mathcal C_{q,\theta}(a_1,\ldots,a_q;f)(x)
 =\operatorname{p.v.}\int_{\R}\frac{f(x-t\theta)}{t}
   \prod_{j=1}^q\left(\int_0^1a_j(x-\alpha t\theta)\,d\alpha\right)dt.
\]
Write $x=z+u\theta$, where $z\in\theta^\perp$. For each fixed $z$,
this is \eqref{eq:one-dimensional-CJ} applied to functions of $u$.
Applying \eqref{eq:one-dimensional-vertices} and integrating the square
in $z$ gives
\begin{equation}\label{eq:directional-vertices}
 \|\mathcal C_{q,\theta}(u_1,\ldots,u_q;u_0)\|_{L^2(\R^n)}
 \le K_q\|u_i\|_{L^2(\R^n)}
                  \prod_{j\ne i}\|u_j\|_{L^\infty(\R^n)},
 \qquad K_q=C(1+q)^{N_0}.
\end{equation}
The constant is independent of $i$ and $\theta$. At the $i$th estimate,
the other factors on each line are bounded by their global supremum
norms, so Fubini's theorem applies directly to the one $L^2$ factor.
When $n=1$, the space $\theta^\perp$ is a point and no transverse
integration is needed.

The multilinear operator acts on
$(u_0,\ldots,u_q)$. Interpolate between its $q+1$ estimates
\eqref{eq:directional-vertices}. Indeed, the weights
$\lambda_i=2/p_i$ are nonnegative and sum to one, and the convex
combination of the reciprocal-exponent vectors is
$(1/p_0,\ldots,1/p_q)$. All output spaces are $L^2$.
Multilinear interpolation therefore gives
\begin{equation}\label{eq:directional-holder}
 \|\mathcal C_{q,\theta}(u_1,\ldots,u_q;u_0)\|_2
 \le K_q\prod_{i=0}^q\|u_i\|_{p_i}.
\end{equation}
The interpolation constant is at most
$\prod_iK_q^{\lambda_i}=K_q$. Thus interpolation introduces no
uncontrolled dependence on the number of finite exponents. The
$L^\infty$ entries have the compatible extensions described above.

Let $w_\varepsilon(t)=\vartheta_\varepsilon(|t|)$, where
$\vartheta_\varepsilon$ is defined in
\eqref{eq:radial-two-sided-cutoff}. There is a constant depending only
on the fixed cutoff such that
\begin{equation}\label{eq:cutoff-Fourier-L1}
 \sup_{0<\varepsilon<1/2}\|\widehat w_\varepsilon\|_{L^1(\R)}<\infty.
\end{equation}
To prove this, set
\[
 \omega_0(t)=1-\chi(|t|),\qquad
 \omega_1(t)=\chi(1/|t|),\quad\omega_1(0)=1.
\]
Both functions are smooth and compactly supported. On the support of
$\omega_0(t/\varepsilon)$, one has
$\varepsilon|t|\le2\varepsilon^2<1/2$, so
$\omega_1(\varepsilon t)=1$. Consequently
\[
 w_\varepsilon(t)=\omega_1(\varepsilon t)-\omega_0(t/\varepsilon).
\]
The Fourier $L^1$ norm is invariant under these dilations. Hence
\[
 \|\widehat w_\varepsilon\|_1
 \le\|\widehat\omega_1\|_1+\|\widehat\omega_0\|_1,
\]
which proves \eqref{eq:cutoff-Fourier-L1}.

For the one-dimensional operator, multiplication of the kernel by
$w_\varepsilon(t)$ gives
\begin{equation}\label{eq:modulated-cutoff-formula}
 \mathcal C_q^{w_\varepsilon}(\mathbf a;f)(u)
 =(2\pi)^{-1}\int_{\R}\widehat w_\varepsilon(\tau)e^{i\tau u}
       \mathcal C_q(\mathbf a;e^{-i\tau\cdot}f)(u)\,d\tau.
\end{equation}
For smooth compactly supported $a_j,f$, this identity follows from
Fourier inversion for $w_\varepsilon$. One may first remove a centered
interval about zero. To pass to the principal value, subtract the
constant term of the smooth integrand near $t=0$; the remainder for a
modulated endpoint is bounded by $C(1+|\tau|)$. The Fourier transform
of $w_\varepsilon$ is Schwartz at each fixed $\varepsilon$, so this
subtraction also justifies the interchange in the distributional
pairing. At infinity, the segment averages of compact smooth $a_j$
give an integrable tail for $q\ge1$. The same argument applies to
$f=1$; its modulation is an allowed $L^\infty$ function.

Modulation preserves all functions Lebesgue norms. Minkowski's
inequality, \eqref{eq:one-dimensional-vertices}, and
\eqref{eq:cutoff-Fourier-L1} show that the truncated one-dimensional
operators satisfy the same vertex estimates, up to a fixed factor.
Apply Fubini and the interpolation of Step~1 to obtain
\begin{equation}\label{eq:directional-truncated-holder}
 \|\mathcal C_{q,\theta}^{w_\varepsilon}
                         (u_1,\ldots,u_q;u_0)\|_2
 \le C K_q\prod_{i=0}^q\|u_i\|_{p_i},
\end{equation}
uniformly in $\varepsilon$ and $\theta$. This proves uniformity for the
particular regularizations used in the paper, rather than presuming it
from a bound for the untruncated operator.

For smooth coefficient functions put $a_{j,\theta}=\partial_\theta b_j$.
The fundamental theorem of calculus gives
\[
 \frac{b_j(x)-b_j(x-t\theta)}{t}
   =\int_0^1\partial_\theta b_j(x-\alpha t\theta)\,d\alpha,
      \qquad t\ne0.
\]
Polar coordinates and the parity condition
$\Omega(-\theta)=(-1)^{q+1}\Omega(\theta)$ yield
\begin{equation}\label{eq:CJ-rotations}
 \mathfrak C_{q,\Omega}^{\varepsilon}(\mathbf b;f)(x)
 =\frac12\int_{\mathbb S^{n-1}}\Omega(\theta)
   \mathcal C_{q,\theta}^{w_\varepsilon}
        (\partial_\theta b_1,\ldots,\partial_\theta b_q;f)(x)
       \,d\theta.
\end{equation}
At fixed $\varepsilon$, all integrals in this identity are ordinary
annular integrals. The part with $t>0$ is exactly the polar-coordinate
formula. In the part with $t<0$, set $t=-r$ and replace $\theta$ by
$-\theta$. The factor $1/t$, the $q$ directional derivatives, and
$\Omega$ contribute the sign
$(-1)(-1)^q(-1)^{q+1}=1$. Thus the two parts are equal, which accounts
for the factor $1/2$.

Since $|\partial_\theta b_j|\le|Db_j|$, Minkowski's inequality and
\eqref{eq:directional-truncated-holder} give
\begin{align}
 \|\mathfrak C_{q,\Omega}^{\varepsilon}(\mathbf b;f)\|_2
 &\le\frac12\int_{\mathbb S^{n-1}}|\Omega(\theta)|
       \|\mathcal C_{q,\theta}^{w_\varepsilon}
                 (\partial_\theta\mathbf b;f)\|_2\,d\theta\notag\\
 &\le C_n K_q\|f\|_{p_0}\prod_{j=1}^q\|Db_j\|_{p_j}.
       \label{eq:rotations-L2-bound}
\end{align}
Here $|\Omega|\le1$ for the two families in
\eqref{eq:angular-families}. In particular the constant is bounded by
$A^q(1+q)^M$ with $A,M$ depending only on $n$ and the fixed cutoff.

For compact smooth coefficients the annular integrals converge to the
principal value: the leading homogeneous term near zero is odd, and
after subtracting it the kernel is locally integrable. At infinity the
$q$ differences give decay $|x-y|^{-n-q}$. The same argument applies
when $f=1$. Passing to a pointwise limit in
\eqref{eq:rotations-L2-bound} and using Fatou's lemma proves
\eqref{eq:CJ-holder} for these assumptions. Duality gives
\eqref{eq:CJ-form-bound}. Strong convergence and the extension to the
Sobolev spaces are proved in Proposition~\ref{prop:canonical-realization}.
\end{proof}

\begin{remark}\label{rem:CJ-scope}
The identity \eqref{eq:difference-as-segment-mean} expresses the operators
above as sums of Christ--Journ\'e segment-average forms. The general
theory is developed in \cite{ChristJourne,SeegerSmartStreet}.
For the homogeneous kernels in this paper, the proof of
Theorem~\ref{thm:CJ-holder} uses only
\cite[Theorem~1.1 and Section~2]{MuscaluII}, the method of rotations,
and the cutoff argument \eqref{eq:modulated-cutoff-formula}. In
particular, no quantitative assertion about an unspecified norm on a
general convolution-kernel class is needed in the sequel. The theorem
has been stated only for output exponent $2$ and the exponents
used in the Sobolev estimates.
\end{remark}

\subsection{Interpolation inequalities}
We use a Gagliardo--Nirenberg estimate with the low norm chosen to be $L^\infty$.

\begin{lemma}\label{lem:GN-derivative}
Let $s\in\mathbb N$, $s\ge1$, and let $0\le k\le s$.  If $u\in H^s(\mathbb R^n)\cap L^\infty(\mathbb R^n)$, then, with
\[
 p_k=\begin{cases}
 \infty,&k=0,\\[1mm]
 2s/k,&1\le k\le s,
 \end{cases}
\]
one has
\begin{equation}\label{eq:GN-f}
 \|D^ku\|_{L^{p_k}}
 \le C_s
 \|u\|_{L^\infty}^{1-k/s}
 \|u\|_{H^s}^{k/s}.
\end{equation}
If $b\in H^{s+1}$ and $\nabla b\in L^\infty$, then
\begin{equation}\label{eq:GN-b}
 \|D^{k+1}b\|_{L^{p_k}}
 \le C_s
 \|\nabla b\|_{L^\infty}^{1-k/s}
 \|Db\|_{H^s}^{k/s}.
\end{equation}
The inequalities hold componentwise for vector-valued functions.
\end{lemma}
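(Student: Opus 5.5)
The plan is to derive \eqref{eq:GN-f} from the classical Gagliardo--Nirenberg inequality on $\R^n$ with endpoint $L^\infty$, and then obtain \eqref{eq:GN-b} by applying \eqref{eq:GN-f} to the components of $Db$. The case $k=0$ is trivial, since $\|u\|_\infty\le\|u\|_\infty$. The case $k=s$ is also trivial: $p_s=2$ and $\|D^su\|_2\le\|u\|_{H^s}$. So only $1\le k\le s-1$ remains.

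For this range I would use the Nirenberg inequality
\[
 \|D^ku\|_{L^{2s/k}}\le C_{s,n}\|u\|_{L^\infty}^{1-k/s}\|D^su\|_{L^2}^{k/s}.
\]
The exponents are consistent with scaling: $\tfrac{k}{2s}=\tfrac kn+(1-\tfrac ks)\cdot0+\tfrac ks(\tfrac12-\tfrac sn)$, and this holds identically. This is exactly the Gagliardo--Nirenberg interpolation with $\theta=k/s$, where the exceptional case is excluded because $\theta<1$. The inequality is then finished by bounding $\|D^su\|_2\le\|u\|_{H^s}$.

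To keep the argument self-contained, I would prove the homogeneous inequality in two steps.

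1. First prove the two-term case $\|Dv\|_{2p}^2\le C\|v\|_\infty\|D^2v\|_{p}$ for $p\ge1$ and smooth compactly supported $v$. Integrate by parts in $\int|\partial_jv|^{2p}=-\int v\,\partial_j(\partial_jv|\partial_jv|^{2p-2})$ and apply H\"older.
2. Then iterate this via the usual log-convexity argument: set $a_k=\|D^ku\|_{2s/k}$, derive $a_k^2\le Ca_{k-1}a_{k+1}$, and conclude $a_k\le Ca_0^{1-k/s}a_s^{k/s}$ by discrete convexity.

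For general $u\in H^s\cap L^\infty$, I would mollify and truncate by $\chi(\cdot/R)$, and pass to the limit using Fatou on the left side. Truncation errors vanish because $D^ju\in L^2$ for $j\le s$, and the cutoff derivatives carry factors $R^{-i}$.

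\textbf{Main obstacle.} The step I expect to be delicate is exactly this approximation with only $L^\infty$ control, not decay, of $u$. The terms with derivatives on the cutoff contribute $R^{-i}\|D^{k-i}u\|_{L^{2s/k}(B_{2R})}$. These must be bounded by a quantity tending to zero, or at least not exceeding the right side in the limit.

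First I would try the following simpler route. Since $s\ge1$ and $u\in H^s$, each $D^ju$ with $j\ge1$ lies in $L^2$. By Sobolev interpolation the $L^{2s/k}$ norms are already finite on the whole space. This is because $u\in L^2\cap L^\infty$ (the inhomogeneous $H^s$ contains $L^2$), so one can apply the inequality to the mollified $u_\epsilon\in H^\infty$ directly. Integration by parts is legitimate there: all terms are integrable since $u_\epsilon$ and its derivatives lie in $L^2\cap L^\infty$. Then let $\epsilon\to0$; the right side is controlled by $\|u_\epsilon\|_\infty\le\|u\|_\infty$ and $\|u_\epsilon\|_{H^s}\le\|u\|_{H^s}$.

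Finally, for \eqref{eq:GN-b} I would apply \eqref{eq:GN-f} to $u=\partial_\ell b$, which lies in $H^s\cap L^\infty$. This gives $\|D^{k+1}b\|_{p_k}\le C\|\nabla b\|_\infty^{1-k/s}\|Db\|_{H^s}^{k/s}$ after summing components.
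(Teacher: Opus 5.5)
Your proposal follows the paper's proof: the trivial endpoints $k=0$ and $k=s$, the classical Gagliardo--Nirenberg inequality between $L^\infty$ and $\dot W^{s,2}$ with $\theta=k/s$ (the same scaling computation gives $p=2s/k$), approximation, and then application to each $\partial_\ell b$ to obtain \eqref{eq:GN-b}. The self-contained derivation of the interpolation inequality is extra, with one correction: the iteration $a_k^2\le Ca_{k-1}a_{k+1}$ for $k\ge2$ needs the H\"older form $\|Dv\|_r^2\le C\|v\|_p\|D^2v\|_q$ with $2/r=1/p+1/q$, not only the $L^\infty$ case you wrote, and the same integration by parts proves that form.
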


\begin{proof}
For $k=0$, the first estimate is the identity
$\|u\|_\infty=\|u\|_\infty$. For $k=s$, it is
$\|D^su\|_2\le C_s\|u\|_{H^s}$. For $0<k<s$, apply the classical
Gagliardo--Nirenberg interpolation inequality between $L^\infty$ and
$\dot W^{s,2}$. Its interpolation parameter and exponent obey
\[
 \theta=\frac{k}{s},\qquad
 \frac1p-\frac{k}{n}
 =\theta\left(\frac12-\frac{s}{n}\right)
 +(1-\theta)\frac1\infty.
\]
Substitution gives $1/p=k/(2s)$ and therefore $p=2s/k$.
The inequality reads
\[
 \|D^ku\|_{2s/k}
 \le C_{n,s}\|u\|_\infty^{1-k/s}\|D^su\|_2^{k/s}
 \le C_{n,s}\|u\|_\infty^{1-k/s}\|u\|_{H^s}^{k/s}.
\]
This proves \eqref{eq:GN-f}, first for smooth functions and then by
approximation. Apply it to each first derivative of $b$. The quantities
used are
\[
 \|Db\|_\infty,
 \quad\|D^s(Db)\|_2\le C_s\|Db\|_{H^s},
\]
which give \eqref{eq:GN-b}. The zero-order norm $\|b\|_2$ does not enter
this argument. Since the number of components and multi-indices is
fixed by $n,m,s$, combining their estimates changes only $C_{n,m,s}$.
The interpolation inequality itself is a standard result; the exponent
choice is recorded because it determines the later H\"older identity.
\end{proof}

\begin{lemma}\label{lem:weighted-AMGM}
Let $A_i,B_i\ge0$ and $\theta_i\ge0$, $i=0,\ldots,q$, with
$\sum_i\theta_i=1$.  Then
\begin{equation}\label{eq:weighted-AMGM}
 \prod_{i=0}^q A_i^{\theta_i}B_i^{1-\theta_i}
 \le\sum_{i=0}^q\theta_iA_i\prod_{j\ne i}B_j.
\end{equation}
\end{lemma}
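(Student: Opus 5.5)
The plan is to reduce the statement to the classical weighted AM--GM inequality, applied once to each factor, and then sum. We first dispose of degenerate cases. If some $B_j=0$ with $\theta_j<1$, the left side vanishes, because the factor $B_j^{1-\theta_j}$ is zero (we use the convention $0^0=1$), and the right side is nonnegative. If $\theta_i=1$ for some $i$, then all other $\theta_j$ vanish. The left side is then $A_i\prod_{j\ne i}B_j$, which equals the single surviving term on the right, so equality holds. Otherwise every $B_j>0$, and we may divide.

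In the main case put $P=\prod_{j=0}^qB_j>0$ and $t_i=A_i/B_i\ge0$. The left side factors as
\[
 \prod_{i=0}^q A_i^{\theta_i}B_i^{1-\theta_i}
 =\prod_{i=0}^q B_i\cdot\prod_{i=0}^q (A_i/B_i)^{\theta_i}
 =P\prod_{i=0}^q t_i^{\theta_i}.
\]
The right side factors similarly:
\[
 \sum_{i=0}^q\theta_iA_i\prod_{j\ne i}B_j
 =P\sum_{i=0}^q\theta_i t_i.
\]
After dividing by $P$, the claim becomes $\prod_i t_i^{\theta_i}\le\sum_i\theta_it_i$. This is the weighted arithmetic--geometric mean inequality.

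To prove that inequality, we may assume all $t_i$ with $\theta_i>0$ are positive, since otherwise the left side is zero. Terms with $\theta_i=0$ contribute the factor $t_i^0=1$ on the left and nothing on the right, so we drop them. The inequality then follows from concavity of $\log$ via Jensen's inequality:
\[
 \sum_i\theta_i\log t_i\le\log\sum_i\theta_it_i .
\]
Exponentiating gives the claim.

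No step here is a serious obstacle. The only care needed is in the conventions for zero bases and zero exponents, and the case analysis above handles these.
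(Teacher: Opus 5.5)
Your proposal is correct and follows the paper's route: factor out $\prod_j B_j$ and apply the weighted AM--GM inequality to the ratios $A_i/B_i$. The only difference is in the degenerate cases. You handle vanishing $B_j$ by an explicit case split (and prove AM--GM via Jensen), whereas the paper replaces $B_i$ by $B_i+\epsilon$ and lets $\epsilon\downarrow0$.
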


\begin{proof}
When all $B_i>0$, write the product as
\[
 \left(\prod_{i=0}^qB_i\right)
 \prod_{i=0}^q\left(\frac{A_i}{B_i}\right)^{\theta_i}.
\]
The weighted arithmetic--geometric mean inequality, using
$\sum_i\theta_i=1$, bounds the second factor by
$\sum_i\theta_iA_i/B_i$. Multiplying out gives
\eqref{eq:weighted-AMGM}. Replace $B_i$ by $B_i+\epsilon$ and take
$\epsilon\downarrow0$ to allow vanishing $B_i$. Factors with exponent
zero are omitted. In the interpolation application this means that an
undifferentiated function contributes its low norm only; no expression
$0^0$ has to be evaluated.
\end{proof}

\subsection{Differentiating the commutator}
The change of variables $y=x-z$ is crucial: all $x$-derivatives preserve the difference-quotient structure.

\begin{lemma}\label{lem:CJ-derivative}
Let $\alpha$ be a multi-index and let the functions be smooth.  Then
\begin{equation}\label{eq:CJ-derivative}
 D^\alpha\mathfrak C_{q,\Omega}^{\varepsilon}
 (b_1,\ldots,b_q;f)
 =\sum_{\alpha_0+\cdots+\alpha_q=\alpha}
 \binom{\alpha}{\alpha_0,\ldots,\alpha_q}
 \mathfrak C_{q,\Omega}^{\varepsilon}
 (D^{\alpha_1}b_1,\ldots,D^{\alpha_q}b_q;
 D^{\alpha_0}f).
\end{equation}
\end{lemma}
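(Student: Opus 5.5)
The plan is to follow the hint just before the statement and substitute $y=x-z$ in \eqref{eq:Cq-eps}, so that all dependence on $x$ sits in the integrand:
\[
 \mathfrak C_{q,\Omega}^{\varepsilon}(b_1,\ldots,b_q;f)(x)
 =\int_{\R^n}\vartheta_\varepsilon(|z|)\frac{\Omega(z/|z|)}{|z|^n}
 \prod_{j=1}^q\frac{b_j(x)-b_j(x-z)}{|z|}\,f(x-z)\,dz .
\]
The kernel factor $k_\varepsilon(z)=\vartheta_\varepsilon(|z|)\Omega(z/|z|)|z|^{-n-q}$ does not depend on $x$. It is smooth and bounded, and it is supported in the fixed annulus $\varepsilon\le|z|\le 2/\varepsilon$. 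Note that $\vartheta_\varepsilon(r)=0$ for $r\le\varepsilon$ and for $r\ge 1/\varepsilon$, so the integration runs over a compact $z$-set and there is no singularity.

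Next, $x$ enters only through the $q+1$ functions $x\mapsto b_j(x)-b_j(x-z)$ and $x\mapsto f(x-z)$. These are smooth, and their derivatives satisfy
\[
 D^\beta\bigl(b_j(x)-b_j(x-z)\bigr)=(D^\beta b_j)(x)-(D^\beta b_j)(x-z),
 \qquad
 D^\beta\bigl(f(x-z)\bigr)=(D^\beta f)(x-z).
\]
So every derivative again produces a difference quotient of the same kind, now built from a differentiated function. Applying the multinomial Leibniz rule to the product of these $q+1$ factors gives exactly the sum in \eqref{eq:CJ-derivative}, with the multinomial coefficients. After undoing the substitution, each summand is $\mathfrak C_{q,\Omega}^{\varepsilon}(D^{\alpha_1}b_1,\ldots,D^{\alpha_q}b_q;D^{\alpha_0}f)$. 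For vector-valued $b_j$ the statement is read componentwise.

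The one step that needs justification is differentiating under the integral sign. I would make the statement precise for $b_j,f\in C_c^\infty$, or more generally for smooth functions with bounded derivatives up to order $|\alpha|$, which covers the later applications. For $x$ in a compact set, every $x$-derivative of order at most $|\alpha|$ of the integrand is bounded by $C\|k_\varepsilon\|_\infty\mathbf 1_{\{|z|\le2/\varepsilon\}}$. This bound is uniform in $x$ and integrable in $z$, so dominated convergence allows the derivatives to pass inside, one order at a time by induction on $|\alpha|$.

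I do not expect a real obstacle here. The main point to state carefully is that the identity is proved at fixed $\varepsilon$, where the annular cutoff makes the $z$-integral compact and nonsingular. Passing to $\varepsilon\downarrow0$ is left to the uniform bounds of Theorem~\ref{thm:CJ-holder}.
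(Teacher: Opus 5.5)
Your proposal is correct and is essentially the paper's proof: substitute $z=x-y$ so the annular kernel is independent of $x$ and supported on a compact set away from the origin, differentiate under the integral, use that $x$-derivatives commute with the difference quotients and the translates $f(x-z)$, and apply the multinomial Leibniz rule. One small remark: the restriction to functions with bounded derivatives is not needed, because for $x$ in a compact set and $z$ in the compact annulus (the support of $\vartheta_\varepsilon$ lies in $\varepsilon\le|z|\le1/\varepsilon$) local bounds on merely smooth functions already give the domination.
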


\begin{proof}
With $z=x-y$, define the truncated convolution factor
\[
 k_\epsilon(z)=\vartheta_\epsilon(|z|)
       \frac{\Omega(z/|z|)}{|z|^n},\qquad
 \Delta_zb(x)=\frac{b(x)-b(x-z)}{|z|}.
\]
Then
\[
 \mathfrak C^\epsilon_{q,\Omega}(b_1,\ldots,b_q;f)(x)
 =\int k_\epsilon(z)\prod_{j=1}^q\Delta_zb_j(x)\,f(x-z)\,dz.
\]
For fixed $\epsilon$, the $z$ domain is bounded and bounded away from
zero.Differentiation under the integral is justified for smooth functions,
and every $x$-derivative acts only on the displayed function factors. In
particular,
\[
 D_x^\beta\Delta_zb(x)
 =\frac{D^\beta b(x)-D^\beta b(x-z)}{|z|}
 =\Delta_z(D^\beta b)(x),\qquad
 D_x^\beta f(x-z)=(D^\beta f)(x-z).
\]
The multinomial product rule gives
\[
 D^\alpha\left(f(x-z)\prod_{j=1}^q\Delta_zb_j(x)\right)
 =\sum_{\alpha_0+\cdots+\alpha_q=\alpha}
 \frac{\alpha!}{\alpha_0!\cdots\alpha_q!}
 (D^{\alpha_0}f)(x-z)
 \prod_{j=1}^q\Delta_z(D^{\alpha_j}b_j)(x).
\]
Integration gives \eqref{eq:CJ-derivative}. There are no derivatives
of the singular kernel or of the cutoff in this identity. Changing to
$z$ before differentiation is what makes that statement exact.
\end{proof}

For integer $s$, we use the equivalent Sobolev norm
\begin{equation}\label{eq:Hs-top-equivalence}
 \|u\|_{H^s}
 \asymp_s\|u\|_2+
 \sum_{|\alpha|=s}\|D^\alpha u\|_2.
\end{equation}
Thus only the undifferentiated estimate and the top derivatives need to be treated.

\subsection{Sobolev estimates}

\begin{lemma}\label{lem:tame-commutator}
Let $s\in\mathbb N$, $s>n/2$.  There are constants $A_s\ge1$ and $M_s\ge0$ such that, for every $q\ge1$ satisfying \eqref{eq:CJ-parity} and all smooth compactly supported functions,
\begin{align}
 \|\mathfrak C_{q,\Omega}(b_1,\ldots,b_q;f)\|_{H^s}
 \le A_s^q(1+q)^{M_s}
 \Bigg[&
 \prod_{j=1}^q\|\nabla b_j\|_{L^\infty}
 \|f\|_{H^s}\notag\\
 &+\sum_{\ell=1}^q
 \|D b_\ell\|_{H^s}
 \prod_{j\ne\ell}\|\nabla b_j\|_{L^\infty}
 \|f\|_{L^\infty}
 \Bigg].
 \label{eq:tame-f}
\end{align}
Moreover,
\begin{equation}\label{eq:tame-pure}
 \|\mathfrak C_{q,\Omega}(b_1,\ldots,b_q;1)\|_{H^s}
 \le A_s^q(1+q)^{M_s}
 \sum_{\ell=1}^q
 \|D b_\ell\|_{H^s}
 \prod_{j\ne\ell}\|\nabla b_j\|_{L^\infty}.
\end{equation}
The estimates are uniform for the smooth symmetric regularizations \eqref{eq:Cq-eps} and extend by density to the indicated Sobolev classes.
\end{lemma}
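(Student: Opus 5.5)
The estimate will be proved first for the regularizations $\mathfrak C^\varepsilon_{q,\Omega}$ with constants independent of $\varepsilon$. By the norm equivalence \eqref{eq:Hs-top-equivalence}, it suffices to bound $\|\mathfrak C^\varepsilon_{q,\Omega}(\mathbf b;f)\|_2$ and $\|D^\alpha\mathfrak C^\varepsilon_{q,\Omega}(\mathbf b;f)\|_2$ for $|\alpha|=s$.

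The undifferentiated term is handled by Theorem~\ref{thm:CJ-holder} with exponents $p_0=2$ and $p_j=\infty$. This gives $A^q(1+q)^M\|f\|_2\prod_j\|\nabla b_j\|_\infty$, which is dominated by the first term of \eqref{eq:tame-f}. For \eqref{eq:tame-pure} with $f=1$ this choice is not available. In that case we instead write one difference quotient as a segment mean, or we simply use the $|\alpha|=s$ estimate below together with the fact that the $L^2$ norm is controlled by placing $L^2$ on one coefficient. Concretely, we take $p_\ell=2$ for a single $\ell$ and all other exponents infinite in Theorem~\ref{thm:CJ-holder}, with $f=1$ and $p_0=\infty$. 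This yields $\|Db_\ell\|_2\prod_{j\ne\ell}\|\nabla b_j\|_\infty$, which is bounded by the right side of \eqref{eq:tame-pure}.

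For the top derivatives, Lemma~\ref{lem:CJ-derivative} expands $D^\alpha\mathfrak C^\varepsilon$ as a sum of at most $(q+1)^s$ multinomially weighted terms $\mathfrak C^\varepsilon_{q,\Omega}(D^{\alpha_1}b_1,\ldots,D^{\alpha_q}b_q;D^{\alpha_0}f)$ with $\sum|\alpha_i|=s$. Since the multinomial coefficients sum to $(q+1)^s$, this contributes only a polynomial factor in $q$. The parity condition is unchanged, so Theorem~\ref{thm:CJ-holder} applies to each term with the new coefficients $D^{\alpha_j}b_j$. The Hölder exponents enter through $\|D(D^{\alpha_j}b_j)\|_{p_j}\le\|D^{k_j+1}b_j\|_{p_j}$, where $k_j=|\alpha_j|$. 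We choose $p_j=p_{k_j}$ from Lemma~\ref{lem:GN-derivative}, that is $p_j=2s/k_j$, or $\infty$ when $k_j=0$. Then $\sum_i 1/p_i=\sum_i k_i/(2s)=1/2$, which is exactly \eqref{eq:CJ-operator-exponents}. Lemma~\ref{lem:GN-derivative} then gives
\[
\|D^{k_0}f\|_{p_0}\prod_j\|D^{k_j+1}b_j\|_{p_j}\le C_s^{q+1}\bigl(\|f\|_\infty^{1-\theta_0}\|f\|_{H^s}^{\theta_0}\bigr)\prod_j\|\nabla b_j\|_\infty^{1-\theta_j}\|Db_j\|_{H^s}^{\theta_j},
\]
where $\theta_i=k_i/s$ and $\sum_i\theta_i=1$. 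Lemma~\ref{lem:weighted-AMGM} is then applied with $A_0=\|f\|_{H^s}$, $B_0=\|f\|_\infty$, $A_j=\|Db_j\|_{H^s}$ and $B_j=\|\nabla b_j\|_\infty$. It converts this product into the sum of the terms in which exactly one factor carries the $H^s$ norm. For $i=0$ the result is $\|f\|_{H^s}\prod_j\|\nabla b_j\|_\infty$, and for $i=\ell\ge1$ it is $\|Db_\ell\|_{H^s}\|f\|_\infty\prod_{j\ne\ell}\|\nabla b_j\|_\infty$. This is precisely the bracket in \eqref{eq:tame-f}. The factor $C_s^{q+1}$ and the polynomial factors combine into $A_s^q(1+q)^{M_s}$.

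For $f=1$, every term with $\alpha_0\ne0$ vanishes, so $\theta_0=0$. Lemma~\ref{lem:weighted-AMGM} then omits the $i=0$ term, and Theorem~\ref{thm:CJ-holder} holds with $f=1$ and $p_0=\infty$, which gives \eqref{eq:tame-pure}.

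The main obstacle is bookkeeping rather than analysis. We must confirm that the Gagliardo--Nirenberg exponent choice lands exactly on the admissible set \eqref{eq:CJ-operator-exponents} with all $p_i\in[2,\infty]$; this holds because $k_i\le s$. We must also check that no constant grows faster than geometrically in $q$: the number of terms, the constants $C_s^{q+1}$, and $A^q(1+q)^M$ are all of this type. Finally, uniformity in $\varepsilon$ is inherited from Theorem~\ref{thm:CJ-holder}. Passage to the principal value and extension by density then follow by Fatou's lemma, or by the strong convergence recorded later in Proposition~\ref{prop:canonical-realization}.
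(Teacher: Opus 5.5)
Your proposal is correct and follows the paper's proof essentially step by step. The steps coincide: the $L^2$ base case comes from Theorem~\ref{thm:CJ-holder} with $p_0=2$, or with $p_\ell=2$, $p_0=\infty$, $f=1$ in the pure case. The top derivatives come from Lemma~\ref{lem:CJ-derivative} with exponents $p_i=2s/k_i$, followed by Gagliardo--Nirenberg and the weighted geometric-mean inequality. The multinomial count is $(q+1)^s$, and uniformity in $\varepsilon$ is inherited from Theorem~\ref{thm:CJ-holder}.

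The differences are cosmetic. You allow a factor $C_s^{q+1}$ from the interpolation constants, whereas the paper notes that at most $s$ factors are differentiated. Your tentative alternative of controlling the $L^2$ norm of the pure form through the top-derivative estimate should be dropped, since top derivatives do not control the $L^2$ norm. The concrete choice $p_\ell=2$ that you then make is exactly what is needed.
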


\begin{proof}
We divide the argument into four steps. Every exponent used below
belongs to $[2,\infty]$ and satisfies the condition in
Theorem~\ref{thm:CJ-holder}. Denote its constants by $A$ and $M$.
The factors arising from differentiating the products will be included
in $A_s$ and $M_s$.

Choose $p_0=2$ and $p_j=\infty$ for $j\ge1$ in Theorem \ref{thm:CJ-holder}.  Then
\begin{equation}\label{eq:CJ-L2-base}
 \|\mathfrak C_{q,\Omega}^{\varepsilon}(b_1,\ldots,b_q;f)\|_2
 \le A^q(1+q)^M
 \|f\|_2\prod_{j=1}^q\|\nabla b_j\|_\infty.
\end{equation}
This is the first term on the right of \eqref{eq:tame-f} at the $L^2$ level, after enlarging $A_s$ and $M_s$.

For the pure form, fix an index $\ell$ and take
$p_\ell=2$, $p_j=\infty$ for $j\ne\ell$, and $p_0=\infty$ with $f\equiv1$.  We obtain
\begin{equation}\label{eq:CJ-pure-L2}
 \|\mathfrak C_{q,\Omega}^{\varepsilon}(b_1,\ldots,b_q;1)\|_2
 \le A^q(1+q)^M
 \|\nabla b_\ell\|_2
 \prod_{j\ne\ell}\|\nabla b_j\|_\infty.
\end{equation}
Summing this valid estimate over $\ell$ and using
$\|\nabla b_\ell\|_2\le\|D b_\ell\|_{H^s}$ yields the $L^2$ part of \eqref{eq:tame-pure}.

Fix $|\alpha|=s$ and a term in \eqref{eq:CJ-derivative}, and put
$k_i=|\alpha_i|$, so
\begin{equation}\label{eq:derivative-partition}
 k_0+k_1+\cdots+k_q=s.
\end{equation}
For $k_i>0$, set $p_i=2s/k_i$, and set $p_i=\infty$ when $k_i=0$.  Then
\[
 \frac1{p_0}+\sum_{j=1}^q\frac1{p_j}
 =\frac1{2s}\sum_{i=0}^q k_i=\frac12.
\]
The H\"older estimate \eqref{eq:CJ-holder} therefore gives
\begin{align}
 &\|\mathfrak C_{q,\Omega}^{\varepsilon}
 (D^{\alpha_1}b_1,\ldots,D^{\alpha_q}b_q;
 D^{\alpha_0}f)\|_2\notag\\
 &\qquad\le A^q(1+q)^M
 \|D^{k_0}f\|_{L^{p_0}}
 \prod_{j=1}^q
 \|D^{k_j+1}b_j\|_{L^{p_j}}.
 \label{eq:CJ-partition-bound}
\end{align}
Apply Lemma~\ref{lem:GN-derivative}.  Introduce
\[
 A_0=\|f\|_{H^s},\quad B_0=\|f\|_\infty,
 \qquad
 A_j=\|D b_j\|_{H^s},\quad
 B_j=\|\nabla b_j\|_\infty,
\]
and $\theta_i=k_i/s$.  Because \eqref{eq:derivative-partition} gives
$\sum_i\theta_i=1$, the product in \eqref{eq:CJ-partition-bound} is bounded by
\[
 C_s\prod_{i=0}^qA_i^{\theta_i}B_i^{1-\theta_i}.
\]
Using Lemma~\ref{lem:weighted-AMGM}, this is at most
\begin{equation}\label{eq:tame-partition}
 C_s\left(
 A_0\prod_{j=1}^qB_j
 +\sum_{\ell=1}^qA_\ell B_0
 \prod_{j\ne\ell}B_j
 \right).
\end{equation}
At most $s$ factors have a positive derivative order, so the interpolation constants are bounded independently of $q$.  The right-hand side is independent of the derivative partition.

The sum of the multinomial coefficients is exactly
\[
 \sum_{\alpha_0+\cdots+\alpha_q=\alpha}
 \binom{\alpha}{\alpha_0,\ldots,\alpha_q}
 =(q+1)^{|\alpha|}=(q+1)^s.
\]
Thus the sum of the differentiation coefficients is polynomial in $q$
for fixed $s$.  Summing \eqref{eq:tame-partition} over \eqref{eq:CJ-derivative} yields
\begin{align}\label{eq:top-derivative-tame}
 \|D^\alpha\mathfrak C_{q,\Omega}^{\varepsilon}(b_1,\ldots,b_q;f)\|_2
 \le A_s^q(1+q)^{M_s}\Bigg[
 &\|f\|_{H^s}\prod_j\|\nabla b_j\|_\infty\\
 &+\sum_\ell\|D b_\ell\|_{H^s}
 \|f\|_\infty
 \prod_{j\ne\ell}\|\nabla b_j\|_\infty
 \Bigg].\notag
\end{align}

Now $f\equiv1$, so $\alpha_0=0$.  The same exponent choice and interpolation argument apply to the $q$ coefficient factors, with
$\sum_{j=1}^qk_j=s$.  The weighted geometric-mean reduction gives
\[
 \|D^\alpha\mathfrak C_{q,\Omega}^{\varepsilon}(b_1,\ldots,b_q;1)\|_2
 \le A_s^q(1+q)^{M_s}
 \sum_{\ell=1}^q
 \|D b_\ell\|_{H^s}
 \prod_{j\ne\ell}\|\nabla b_j\|_\infty.
\]
Together with \eqref{eq:CJ-pure-L2} and \eqref{eq:Hs-top-equivalence}, this proves the truncated version of \eqref{eq:tame-pure}.

For each fixed smooth regularization, Steps 1--3 have already proved the
estimates; in particular they do not rely on the existence of an
untruncated operator. The case of a zero low norm causes no problem:
if $\|Db_j\|_\infty=0$, the corresponding difference quotient is
identically zero, and the same is true for its high derivative norm.
The assertion for arbitrary Sobolev entries will be a norm-continuous
extension, rather than a pointwise differentiation of nonsmooth functions.

All estimates are uniform in $\varepsilon$ for the smooth annular
regularizations \eqref{eq:Cq-eps}.  For smooth compactly supported
functions, the odd leading homogeneous term cancels on every centered
annulus and the Taylor remainder is locally integrable after all
$x$-derivatives of order at most $s$.  Thus the regularizations converge
in distributions (indeed in $H^s$) to the principal-value form; the
complete convergence argument is recorded in Proposition
\ref{prop:canonical-realization}.  Lower semicontinuity gives
\eqref{eq:tame-f}--\eqref{eq:tame-pure}.

Finally, approximate the Sobolev spaces by smooth compactly supported
functions.  Multilinearity and the uniform tame estimate show that the
corresponding outputs are Cauchy in $H^s$, which defines the stated
extension and completes the proof.
\end{proof}

\subsection{The commutator with $|D|$}
The mixed quadratic term in the orthogonal field is exactly the first Calder\'on commutator.

\begin{lemma}\label{lem:Lambda-commutator}
Let $s>n/2$ be an integer.  For $\varphi\in H^{s+1}\cap W^{1,\infty}$ and
$\rho\in H^s\cap L^\infty$,
\begin{equation}\label{eq:Lambda-commutator-identity}
 \operatorname{p.v.}\int
 \frac{\varphi(x)-\varphi(y)}{|x-y|^{n+1}}
 \rho(y)\,dy
 =c_{n,\Lambda}[|D|,\varphi]\rho,
\end{equation}
and
\begin{equation}\label{eq:Lambda-commutator-estimate}
 \|[|D|,\varphi]\rho\|_{H^s}
 \le C_s\left(
 \|D\varphi\|_\infty\|\rho\|_{H^s}
 +\|D\varphi\|_{H^s}\|\rho\|_\infty
 \right).
\end{equation}
\end{lemma}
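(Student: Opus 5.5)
The plan is to prove the identity \eqref{eq:Lambda-commutator-identity} first for smooth compactly supported $\varphi,\rho$ and then extend by density. The estimate \eqref{eq:Lambda-commutator-estimate} should follow from the $q=1$ case of Lemma~\ref{lem:tame-commutator}, because the left side of \eqref{eq:Lambda-commutator-identity} is precisely $\mathfrak C_{1,\Omega}(\varphi;\rho)$ with $\Omega\equiv1$. The parity condition \eqref{eq:CJ-parity} holds here: for $q=1$ it requires $\Omega(-\theta)=\Omega(\theta)$, which is true for $\Omega\equiv1$. Lemma~\ref{lem:tame-commutator} with $q=1$ then gives
\[
 \|\mathfrak C_{1,1}(\varphi;\rho)\|_{H^s}\le C_s\bigl(\|\nabla\varphi\|_\infty\|\rho\|_{H^s}+\|D\varphi\|_{H^s}\|\rho\|_\infty\bigr),
\]
and this is \eqref{eq:Lambda-commutator-estimate} once the identity is in place.

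For the identity with smooth compact $\varphi,\rho$, I will write
\[
 (\varphi(x)-\varphi(y))\rho(y)=\bigl(\varphi(x)\rho(x)-\varphi(y)\rho(y)\bigr)-\varphi(x)\bigl(\rho(x)-\rho(y)\bigr).
\]
With centered truncations $\epsilon<|x-y|<R$, each of the two pieces is an absolutely convergent integral. By \eqref{eq:flat-Lambda}, the first piece converges as $\epsilon\downarrow0$, $R\uparrow\infty$ to $c_{n,\Lambda}|D|(\varphi\rho)(x)$, and the second converges to $c_{n,\Lambda}\varphi(x)|D|\rho(x)$. Their difference is $c_{n,\Lambda}[|D|,\varphi]\rho$. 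I also need the combined truncated integral to converge to the principal value in \eqref{eq:Lambda-commutator-identity}. Near $y=x$, the Taylor expansion $\varphi(x)-\varphi(y)=D\varphi(x)(x-y)+O(|x-y|^2)$ gives an odd leading term that cancels on centered annuli. At infinity, the compact support of $\rho$ gives decay $|x-y|^{-n-1}$. So the principal value exists pointwise and agrees with the limit of the two separate pieces.

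For the extension, I will take $\varphi_j,\rho_j\in C_c^\infty$ with $\varphi_j\to\varphi$ in $H^{s+1}$ and $\rho_j\to\rho$ in $H^s$. Since $s>n/2$, these convergences also hold in $W^{1,\infty}$ and $L^\infty$ by Sobolev embedding. Bilinearity and the estimate just proved then show that $[|D|,\varphi_j]\rho_j$ is Cauchy in $H^s$. On the commutator side, $|D|(\varphi_j\rho_j)$ and $\varphi_j|D|\rho_j$ converge in $H^{s-1}$ and in distributions, so the limit is $[|D|,\varphi]\rho$ interpreted in the distributional sense. On the left side, Lemma~\ref{lem:tame-commutator} extends the principal-value form by density, and its limit is the same $H^s$ function.

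I expect the main obstacle to be interpreting the left side for rough entries as a genuine pointwise principal value rather than merely as the density extension. I would handle this by defining the left side through the canonical $H^s$ extension, in line with the forward reference to Proposition~\ref{prop:canonical-realization}. For $\varphi\in C^{1,\alpha}$ and $\rho$ bounded, the pointwise principal value also exists and agrees with this extension almost everywhere.
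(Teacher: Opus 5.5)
Your proposal is correct and follows essentially the same route as the paper: the splitting $(\varphi(x)-\varphi(y))\rho(y)=(\varphi\rho)(x)-(\varphi\rho)(y)-\varphi(x)(\rho(x)-\rho(y))$ combined with the singular-integral formula for $|D|$ in the smooth compactly supported case, the estimate as the $q=1$, $\Omega\equiv1$ case of Lemma~\ref{lem:tame-commutator}, extension by density, and deferral of the pointwise principal-value representative to Proposition~\ref{prop:canonical-realization}. One small remark on that last step: the paper gets the principal value at every point from $\rho\in H^s\subset C^\alpha$, not from mere boundedness of $\rho$. This H\"older regularity is available here because $s>n/2$.
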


\begin{proof}
First take smooth compactly supported functions.  Using the singular-integral formula for $|D|$,
\begin{align*}
 |D|(\varphi\rho)(x)-\varphi(x)|D|\rho(x)
 &=c_{n,\Lambda}^{-1}\operatorname{p.v.}\int
 \frac{\varphi(x)\rho(x)-\varphi(y)\rho(y)
       -\varphi(x)(\rho(x)-\rho(y))}
 {|x-y|^{n+1}}\,dy\\
 &=c_{n,\Lambda}^{-1}\operatorname{p.v.}\int
 \frac{\varphi(x)-\varphi(y)}{|x-y|^{n+1}}
 \rho(y)\,dy.
\end{align*}
For the stated Sobolev spaces, $[|D|,\varphi]\rho$ is initially
understood as a distribution.  Approximation by smooth functions and the
$q=1$ bound identify it with the convergent $H^s$ extension of the
combined difference integral.  Its principal-value representative is
justified in Proposition~\ref{prop:canonical-realization} below.
No separate pointwise principal value for $|D|\rho$ is required.
This proves \eqref{eq:Lambda-commutator-identity} in this sense.  Estimate \eqref{eq:Lambda-commutator-estimate} is the $q=1$, $\Omega\equiv1$ case of Lemma \ref{lem:tame-commutator}.  It is a derivative-norm form of the commutator estimate.  We use the proved $q=1$ case rather than requiring a separate endpoint version of Kato--Ponce; compare \cite{KatoPonce,Muscalu}.
\end{proof}

\begin{proposition}[Convergence in Sobolev spaces]
\label{prop:canonical-realization}
Let $s>n/2$ be an integer.  For $b_j\in H^{s+1}$ and $f\in H^s$,
or with the endpoint fixed at $f=1$, the forms in
Lemma~\ref{lem:tame-commutator} have a unique bounded extension with values in $H^s$.
The smooth annular regularizations converge to it in $H^s$.
It agrees with the hard centered principal value.  The functions
need not have compact support.
\end{proposition}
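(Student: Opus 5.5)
The argument runs in three stages: define the extension by density, show the regularizations converge to it in $H^s$, and identify the limit with the hard principal value.

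\textbf{Extension.} Write the form $\mathfrak C_{q,\Omega}(b_1,\ldots,b_q;f)$ as a map on $H^{s+1}\times\cdots\times H^{s+1}\times H^s$, or with $f=1$ fixed. Since $s>n/2$, Sobolev embedding gives $\|\nabla b_j\|_\infty\le C\|b_j\|_{H^{s+1}}$ and $\|f\|_\infty\le C\|f\|_{H^s}$. The tame estimates \eqref{eq:tame-f}--\eqref{eq:tame-pure} therefore bound the $H^s$ norm by $C\prod_j\|b_j\|_{H^{s+1}}\|f\|_{H^s}$, with the last factor omitted when $f=1$. This holds on $C_c^\infty$ entries and uniformly in $\varepsilon$. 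Multilinearity and the telescoping identity
\[
\mathfrak C(\mathbf b;f)-\mathfrak C(\mathbf b';f')=\sum_i\mathfrak C(b'_1,\ldots,b'_{i-1},b_i-b'_i,b_{i+1},\ldots;f)+\cdots
\]
show that outputs of approximating sequences are Cauchy in $H^s$. This defines the extension $\mathfrak C$ and proves uniqueness among bounded extensions. No compact support is assumed: $C_c^\infty$ is dense in $H^{s+1}$ and $H^s$, and the pure form with $f=1$ needs only the density of the $b_j$.

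\textbf{Convergence of regularizations.} I would use an $\varepsilon/3$ argument. The operators $\mathfrak C^\varepsilon$ obey the same multilinear bounds uniformly in $\varepsilon$, so it suffices to prove $\mathfrak C^\varepsilon(\mathbf b;f)\to\mathfrak C(\mathbf b;f)$ in $H^s$ for smooth compactly supported entries. For those, Lemma~\ref{lem:CJ-derivative} reduces each $D^\alpha$, $|\alpha|\le s$, to regularized forms with smooth compactly supported entries $D^{\alpha_j}b_j$ and $D^{\alpha_0}f$. It then remains to show that the forms $\int(\vartheta_\varepsilon(|z|)-\vartheta_{\varepsilon'}(|z|))k(z)\prod\Delta_z b_j(x)f(x-z)\,dz$ tend to zero in $L^2$, which I would split into small and large $|z|$.

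\emph{Small $|z|$.} Write $\prod\Delta_zb_j(x)f(x-z)=P_0(x,z/|z|)+O(|z|)$, where $P_0=\prod(\theta\cdot\nabla b_j(x))f(x)$. The leading term is even or odd in $\theta$ so that, by \eqref{eq:CJ-parity}, it integrates to zero against the radial cutoff times $\Omega(\theta)|z|^{-n}$. The remainder is $O(|z|^{1-n})$ and is dominated in $L^2_x$, with a constant depending on $C^2$ norms and supports.

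\emph{Large $|z|$.} For $x$ in a fixed ball, the tail is bounded by $\|b\|_\infty^q|z|^{-n-q}$. For $x$ far from the supports, $b_j(x)=0$ and $f(x-z)$ forces $|z|\approx|x|$, giving an integrand $\lesssim|x|^{-n-q}$, which is in $L^2$ for $q\ge1$. In the case $f=1$ the integrand is $\prod b_j(x-z)/|z|^{n+q}$, supported in $x$ within a bounded set plus $|x|$ large; its decay comes from the $q$ differences.

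Dominated convergence then gives the Cauchy property in $L^2$ for every derivative, hence in $H^s$.

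\textbf{Identification with the hard principal value.} I expect this to be the main obstacle for rough entries. For smooth compactly supported entries, hard and smooth truncations differ by an annular term over $\varepsilon\le|z|\le2\varepsilon$ (and symmetrically at $1/\varepsilon$). By the same cancellation this term is $O(\varepsilon)$ near the origin and small at infinity, so both truncations converge pointwise and in $L^2$ to the same limit. For general Sobolev entries I would prove a maximal bound for the hard truncations, $\|\sup_\varepsilon|\mathfrak C_\varepsilon(\mathbf b;f)|\|_2\lesssim\prod\|\nabla b_j\|_\infty\|f\|_2$ (respectively $\|\nabla b_\ell\|_2$ for the pure form). I would obtain this from Cotlar's inequality, as in Proposition~\ref{prop:normal-pv}, applied to the Calder\'on--Zygmund operator $f\mapsto\mathfrak C(\mathbf b;f)$, whose kernel satisfies standard estimates with constants $\prod\|\nabla b_j\|_\infty$. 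The $L^2$ bound of Theorem~\ref{thm:CJ-holder} supplies boundedness, so the weak $(1,1)$ and Cotlar steps carry over verbatim. For the pure form, linearity in one $b_\ell$ with the others fixed plays the role of $f$, after rewriting $\Delta_zb_\ell$ as the segment mean of $\nabla b_\ell$ via the rotation formula. A.e.\ convergence on a dense class together with the maximal bound gives a.e.\ existence of the hard principal value, and it coincides with the $L^2$ (indeed $H^s$) limit obtained above.
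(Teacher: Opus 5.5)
Your extension step and your proof that the smooth regularizations converge are essentially the paper's. Both use the same telescoping identity with the uniform tame bounds, the same $\varepsilon/3$ reduction to smooth compactly supported entries, the same cancellation of the odd frozen term via \eqref{eq:CJ-parity}, and the same majorant $C(1+|x|)^{-n-q}$. For the $\varepsilon/3$ step you should also record, as the paper does, that at fixed $\varepsilon$ the annular integral with rough entries coincides with the density extension of $\mathfrak C^\varepsilon$. This holds because the kernel is integrable on a bounded annulus and $b_j^{(N)},\nabla b_j^{(N)},f^{(N)}$ converge uniformly.

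You genuinely diverge in identifying the extension with the hard principal value. You treat this as the main obstacle and use Cotlar's inequality and a maximal-truncation bound.

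The paper observes that there is no obstacle. Since $s>n/2$, Sobolev embedding gives $b_j\in C^{1,\alpha}$ and $f\in C^\alpha$ for $0<\alpha<\min\{1,s-n/2\}$. The Taylor argument you already used for smooth entries then works with remainder $O(|z|^{-n+\alpha})$ and tail $O(|z|^{-n-q})$, so the hard principal value exists at every point. The approximants satisfy these bounds uniformly in $N$, so their pointwise values converge locally uniformly, and the limit is a representative of the $H^s$ extension. This is elementary and gives existence everywhere.

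Your route gives only a.e.\ existence, but it asks less of $f$ (only $L^2$), in the spirit of Proposition~\ref{prop:normal-pv}. Two points need care.

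First, the dense class on which the hard principal value is known must vary $\mathbf b$ as well as $f$. So the maximal bound has to be applied in telescoped multilinear form, with the difference entry carrying the small norm.

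Second, for the pure form the Cotlar step is not verbatim. Cotlar's inequality is applied to localized inputs $g\mathbf 1_{B(x,2\epsilon)}$, which are not gradients. This creates three requirements:
\begin{enumerate}
\item You need the $L^2$ bound for arbitrary vector fields $g$ for the operator obtained by replacing $\Delta_zb_\ell(x)=(b_\ell(x)-b_\ell(x-z))/|z|$ with $\int_0^1\theta\cdot g(x-tz)\,dt$. This comes from the rotation proof of Theorem~\ref{thm:CJ-holder}, not from its statement.
\item You need standard kernel estimates for the resulting averaged kernel $\Omega(\hat w)\hat w\,|w|^{-n}\int_0^1\prod_{j\ne\ell}\Delta_{w/t}b_j(x)\,dt$, obtained after the substitution $w=tz$.
\item You must compare the truncation $|z|>\epsilon$, that is $|w|>t\epsilon$, with $|w|>\epsilon$. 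This costs $C\,\mathcal Mg$ because $\int_0^1\log(1/t)\,dt<\infty$.
\end{enumerate}
All of these hold, but the H\"older observation makes them unnecessary.
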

\begin{proof}
We separate smooth convergence, Sobolev extension, and identification of
the pointwise representative. This order is needed because convergence
of truncated operators in distributions, by itself, would not establish
strong $H^s$ convergence.

 Assume first that all
nonconstant entries are smooth with compact support. Keep $f=1$ in the
pure case. By Lemma~\ref{lem:CJ-derivative}, each derivative of order at
most $s$ is a finite sum of forms with smooth differentiated entries.
For any one such form and $z=x-y$, Taylor's formula gives locally
uniformly in $x$,
\[
 \frac{b_j(x)-b_j(x-z)}{|z|}
 =Db_j(x)\cdot\theta+O(|z|),\qquad
 f(x-z)=f(x)+O(|z|),\qquad \theta=z/|z|.
\]
The elementary product identity
$\prod_j u_j-\prod_jv_j
 =\sum_\ell(u_\ell-v_\ell)\prod_{j<\ell}u_j\prod_{j>\ell}v_j$
shows that the integrand equals
\[
 \frac{\Omega(\theta)}{|z|^n}
   f(x)\prod_{j=1}^q(Db_j(x)\cdot\theta)
       +O(|z|^{-n+1}).
\]
The leading angular function is odd by \eqref{eq:CJ-parity}, so its
integral over any centered radial annulus is zero, even with a smooth
radial weight. Thus the part removed at the inner scale is bounded by
\begin{equation}\label{eq:CJ-smooth-inner-error}
 C\int_{|z|<2\epsilon}|z|^{-n+1}\,dz\le C\epsilon.
\end{equation}
The same conclusion applies to all the differentiated forms. Only
smooth functions have been Taylor expanded in this step.

 On a fixed
compact set of $x$, boundedness of the entries gives an integrand bound
$C|z|^{-n-q}$ for $|z|>1$. Consequently the outer cutoff error is at most
\begin{equation}\label{eq:CJ-smooth-outer-error}
 C\int_{|z|>(2\epsilon)^{-1}}|z|^{-n-q}\,dz
 \le C\epsilon^q.
\end{equation}
Combining \eqref{eq:CJ-smooth-inner-error} and
\eqref{eq:CJ-smooth-outer-error} gives local uniform convergence of all
output derivatives up to order $s$.

Take a ball containing the supports of all coefficient entries. For
$x$ outside twice that ball, $b_j(x)=0$ for every $j$, and a nonzero
product of differences requires $y$ to lie in the intersection of their
supports. The differentiated versions have the same property. Hence,
uniformly in $\epsilon$,
\begin{equation}\label{eq:CJ-smooth-global-majorant}
 |D^\gamma\mathfrak C^\epsilon_{q,\Omega}(x)|
 +|D^\gamma\mathfrak C_{q,\Omega}(x)|
 \le C(1+|x|)^{-n-q},\qquad |\gamma|\le s.
\end{equation}
This also holds when a compact endpoint function is present. The bound is
square integrable, since $q\ge1$ and $n\ge1$. Local uniform convergence
and dominated convergence in the exterior now prove convergence in
$L^2$ of every derivative up to order $s$, and thus in $H^s$.
Hard centered cutoffs give the same smooth limit because the leading
term still cancels and the estimates for the inner and outer errors are
unchanged.

 Let
$b_j^{(N)}\to b_j$ in $H^{s+1}$ and $f^{(N)}\to f$ in $H^s$, with
smooth compactly supported approximants. In the pure form the last
entry remains exactly $1$. Sobolev embedding, using $s>n/2$, gives
\[
 \|Db_j^{(N)}-Db_j\|_\infty\to0,
 \quad\|Db_j^{(N)}-Db_j\|_{H^s}\to0,
 \quad\|f^{(N)}-f\|_\infty+\|f^{(N)}-f\|_{H^s}\to0.
\]
For compact smooth functions, multilinearity gives the exact telescoping
identity
\begin{align*}
 C(\mathbf b;f)-C(\widetilde{\mathbf b};\widetilde f)
 ={}&C(\mathbf b;f-\widetilde f)\\
 &+\sum_{\ell=1}^q
 C(\widetilde b_1,\ldots,\widetilde b_{\ell-1},
 b_\ell-\widetilde b_\ell,b_{\ell+1},\ldots,b_q;\widetilde f).
\end{align*}
Here $C$ can be either a smooth regularization or the smooth limit.
The uniform estimates in Lemma~\ref{lem:tame-commutator} bound each term
by a difference norm times uniformly bounded norms of the remaining
entries. For fixed $q$ and a bounded family of functions, this yields
\begin{equation}\label{eq:CJ-Sobolev-telescoping}
 \|C(U)-C(\widetilde U)\|_{H^s}
 +\sup_\epsilon\|C^\epsilon(U)-C^\epsilon(\widetilde U)\|_{H^s}
 \le C_{q,U,\widetilde U}\|U-\widetilde U\|_{(H^{s+1})^q\times H^s}.
\end{equation}
First applying this to pairs of approximants makes the limits Cauchy
and defines $C(U)$ independently of the approximating sequence. The
regularized integral for the nonsmooth entries agrees with the
corresponding extension: at fixed $\epsilon$ its kernel is integrable
on a bounded annulus and the entries converge uniformly. For the pure
form omit the endpoint difference in the last display. No assertion of
norm density in $L^\infty$ is involved.

For the convergence of the regularizations, write
\begin{align*}
 \|C^\epsilon(U)-C(U)\|_{H^s}
 \le{}&\|C^\epsilon(U)-C^\epsilon(U^{(N)})\|_{H^s}\\
 &+\|C^\epsilon(U^{(N)})-C(U^{(N)})\|_{H^s}
 +\|C(U^{(N)})-C(U)\|_{H^s}.
\end{align*}
The first and third terms are uniformly small for large fixed $N$,
by \eqref{eq:CJ-Sobolev-telescoping}. The middle term tends to zero as
$\epsilon\downarrow0$ by Step 2. This proves strong $H^s$ convergence.

For later use in the nonlinear series, these approximations may also be
chosen in a common small-slope class. To see this, multiply each function by smooth cutoffs tending to one
and then convolve with a nonnegative mollifier. The resulting compact smooth functions converge in
$H^{s+1}$ for graph coefficients and in $H^s$ for density perturbations.
Since $s>n/2$, their gradients and densities converge uniformly.
If $\|D\varphi\|_\infty\le\eta_*/2$, all approximants with sufficiently large $N$ satisfy $\|D\varphi^{(N)}\|_\infty<\eta_*$.
If also $\|\rho\|_\infty\le1/4$, they may be chosen with
$\|\rho^{(N)}\|_\infty\le1/2$. Discarding finitely many terms gives
one approximation sequence with these bounds. The constant endpoint
$f=1$ in the pure forms is not approximated.

 Fix
$0<\alpha<\min\{1,s-n/2\}$. Sobolev embedding provides
$b_j\in C^{1,\alpha}$ and $f\in C^\alpha$, with the relevant uniform
norms controlled by their Sobolev norms. Taylor's formula with this
H\"older remainder gives
\[
 \frac{b_j(x)-b_j(x-z)}{|z|}
   =Db_j(x)\cdot\theta+O(|z|^\alpha),\qquad
 f(x-z)=f(x)+O(|z|^\alpha).
\]
The leading term is the same odd homogeneous kernel as in Step 1.
The remainder is now bounded by $C|z|^{-n+\alpha}$, whose integral
on $|z|<r$ is $C r^\alpha/\alpha$. The far integrand is bounded by
$C|z|^{-n-q}$ because all entries are bounded. This proves existence
of the centered principal value. The same bounds with constants
uniform in $N$ hold for the smooth approximants, and their differences
on a fixed annulus tend uniformly to zero. Splitting at small and large radii therefore shows that these pointwise
values converge locally uniformly to the value corresponding to the
limiting functions. Their $H^s$ limit
already equals $C(U)$ by Step 3. They are thus the same representative.
This argument requires only $s>n/2$, not pointwise smoothness of the top
Sobolev derivatives. It makes no endpoint assertion when $s=n/2$.
\end{proof}

\section{Estimates for the nonlinear terms}\label{sec:nonlinear-remainder}
This section estimates the difference between the normalized Riesz field and its flat linearization. Under a small-slope assumption, we expand the kernels into convergent multilinear series and apply Section \ref{sec:linearization} to estimate the tangential and orthogonal remainders, treating the quadratic graph-density interaction separately. The resulting bound contains the small factor \(\|D\phi\|_\infty+\|\rho\|_\infty\), with constants independent of the supports, and supplies the error estimate needed for the absorption argument in Section \ref{sec:high-order-proof}.

Throughout this section $s>n/2$ is an integer, $\varphi\in H^{s+1}$, and $\rho\in H^s$, without support restrictions.  Put
\begin{equation}\label{eq:eta-r0}
 \eta=\|D\varphi\|_{L^\infty},
 \qquad
 r_0=\|\rho\|_{L^\infty}.
\end{equation}
Write
\begin{equation}\label{eq:N-definition}
 \mathbf T(\varphi,\rho)
 =D\mathbf T(0,0)[\varphi,\rho]
 +N(\varphi,\rho),
 \qquad
 N=(N^\top,N^\perp).
\end{equation}
We prove the refined component estimates stated below.  In addition to yielding the small-factor bound needed for absorption, the refined form records the quadratic and cubic cancellations of the graph geometry.

\subsection{Expansion of the kernels}
Let $p=(n+1)/2$.  For $|t|<1$,
\begin{equation}\label{eq:binomial-series}
 (1+t)^{-p}=\sum_{k=0}^\infty\alpha_kt^k,
 \qquad
 \alpha_k=(-1)^k\frac{(p)_k}{k!}.
\end{equation}
For $k\ge1$, the absolute coefficient also satisfies
\[
 |\alpha_k|=\prod_{j=1}^k\left(1+\frac{p-1}{j}\right).
\]
Here $p=(n+1)/2\ge1$. If $p=1$, this product is one. Otherwise,
$\log(1+t)\le t$ for $t\ge0$ and the harmonic-sum bound give
\[
 \log|\alpha_k|
 \le(p-1)\sum_{j=1}^k\frac1j
 \le(p-1)(1+\log k).
\]
Consequently the coefficients have polynomial growth:
\begin{equation}\label{eq:alpha-growth}
 |\alpha_k|\le C_p(1+k)^{p-1}.
\end{equation}
Hence, for $|q|<1$,
\begin{equation}\label{eq:A-B-series}
 A(q)=1+\sum_{k\ge1}\alpha_k|q|^{2k},
 \qquad
 B(q)=q+\sum_{k\ge1}\alpha_kq|q|^{2k}.
\end{equation}

For an ordered $k$-tuple
$\mathbf i=(i_1,\ldots,i_k)\in\{1,\ldots,m\}^k$, write
\begin{equation}\label{eq:Q-monomial}
 Q_{\mathbf i}^{(2k)}
 =Q_{i_1}^2\cdots Q_{i_k}^2.
\end{equation}
Then
\begin{equation}\label{eq:Q-power-expand}
 |Q|^{2k}=\sum_{\mathbf i\in\{1,\ldots,m\}^k}
 Q_{\mathbf i}^{(2k)}.
\end{equation}
There are $m^k$ terms.  Every monomial in \eqref{eq:Q-power-expand} is represented by a Christ--Journ\'e form with $2k$ graph difference quotients.  In the orthogonal component, multiplication by the output factor $Q_a$ produces $2k+1$ graph difference quotients.

\begin{lemma}\label{lem:series-convergence}
Fix an integer $s>n/2$.  There exists $\eta_*=\eta_*(n,m,s)>0$ such that, whenever $\eta<\eta_*$, the series obtained by substituting \eqref{eq:A-B-series} into \eqref{eq:T-tangent}--\eqref{eq:T-normal} converge absolutely in $H^s$.  Their sums agree with the principal-value fields defined by \eqref{eq:T-tangent}--\eqref{eq:T-normal}.
\end{lemma}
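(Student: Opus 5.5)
The plan is to write each term of the expanded series as a Christ--Journ\'e form from Section~\ref{sec:linearization}, bound it in $H^s$ by Lemma~\ref{lem:tame-commutator} and Proposition~\ref{prop:canonical-realization}, and sum using the polynomial coefficient growth \eqref{eq:alpha-growth}.

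\emph{Writing the terms as forms.} Substitute \eqref{eq:A-B-series} and \eqref{eq:Q-power-expand} into \eqref{eq:T-tangent}--\eqref{eq:T-normal}. The term of order $k$ in the tangential component, for a multi-index $\mathbf i$, is
\[
 \alpha_k\,\mathfrak C_{2k,\theta_\ell}(\varphi_{i_1},\varphi_{i_1},\ldots,\varphi_{i_k},\varphi_{i_k};1+\rho).
\]
We split it into the pure form with endpoint $1$ plus the form with endpoint $\rho$. Parity \eqref{eq:CJ-parity} holds because $q=2k$ is even and $\Omega=\theta_\ell$. The zeroth-order tangential term is simply $c_{n,R}\mathcal R\rho$. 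The term of order $k$ in the $a$-th orthogonal component is $\alpha_k\mathfrak C_{2k+1,1}(\varphi_a,\varphi_{i_1},\ldots;1+\rho)$, with $q=2k+1$ odd. Its $k=0$ part gives $c_{n,\Lambda}|D|\varphi_a$ from the endpoint $1$, and the commutator of Lemma~\ref{lem:Lambda-commutator} from the endpoint $\rho$.

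\emph{Bounding each term.} By \eqref{eq:tame-f}--\eqref{eq:tame-pure}, with every $\|\nabla b_j\|_\infty\le\eta$ and $\|Db_j\|_{H^s}\le\|D\varphi\|_{H^s}$, the $H^s$ norm of the $(k,\mathbf i)$ term is at most
\[
 |\alpha_k|A_s^{q}(1+q)^{M_s}\Bigl[\eta^{q}\|\rho\|_{H^s}+q\,\eta^{q-1}\|D\varphi\|_{H^s}(1+\|\rho\|_\infty)\Bigr],
\]
where $q=2k$ or $2k+1$. There are $m^k$ multi-indices $\mathbf i$. Combining this with \eqref{eq:alpha-growth}, the sum over $k$ is dominated by
\[
 \sum_k C(1+k)^{C}(mA_s^2\eta^2)^k,
\]
which converges once $\eta<\eta_*:=(2mA_s^2)^{-1/2}$. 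This gives absolute convergence in $H^s$. (Here $\|\rho\|_\infty<\infty$ because $s>n/2$.)

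\emph{Identifying the sum with the principal-value field.} This is the main obstacle. We must show that summing the series and taking the principal value commute.

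First take smooth, compactly supported $\varphi,\rho$ and work at a fixed annular regularization $\vartheta_\varepsilon$. Pointwise, $|Q_\varphi|\le\eta<1$, so on the bounded annulus the series in the kernel converges uniformly. Termwise integration is then legitimate, and the regularized field equals the sum of the regularized forms.

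Lemma~\ref{lem:tame-commutator} bounds each regularized form uniformly in $\varepsilon$, and this bound is summable in $k$. Proposition~\ref{prop:canonical-realization} gives $H^s$ convergence of each form as $\varepsilon\downarrow0$. Dominated convergence for series (Tannery's theorem) then shows that the regularized fields converge in $H^s$ to the sum of the series. By Proposition~\ref{prop:canonical-realization} and Appendix~\ref{app:principal-values}, this limit coincides with the centered principal value.

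Second, for general $(\varphi,\rho)$, approximate within the common small-slope class described at the end of the proof of Proposition~\ref{prop:canonical-realization}. The telescoping estimate \eqref{eq:CJ-Sobolev-telescoping} holds with the same geometric summability, so the series sums converge in $H^s$. The pointwise principal values converge locally uniformly, by the Hölder argument in Step~4 of that proof applied to the full kernels $A$ and $B$, which are analytic for $|q|<1$. The two limits therefore agree, which proves the identification in general.
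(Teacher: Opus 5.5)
Your proposal is correct and follows the paper's proof: tame bounds on each Christ--Journ\'e monomial combined with the $m^k$ count and the polynomial growth of $\alpha_k$, termwise integration at a fixed smooth annular regularization, a Tannery-type interchange (the paper's estimate \eqref{eq:series-truncation-interchange}), and identification with the centered principal value through the odd frozen kernel. The only difference is that you first treat smooth compactly supported data and then approximate. The paper runs the same interchange directly for Sobolev entries, because Proposition~\ref{prop:canonical-realization} already gives $H^s$ convergence of the regularized forms there and the fixed-$\varepsilon$ termwise identity needs only bounded data on a bounded annulus.
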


\begin{proof}
Let $G=\|D\varphi\|_{H^s}$ and $P=\|\rho\|_{H^s}$.
At a fixed level $k$, the coordinate expansion has $m^k$ monomials.
Each tangential monomial has $2k$ graph factors and each orthogonal
monomial has $2k+1$. Applying Lemma~\ref{lem:tame-commutator} before
summing the components gives the following bounds. After enlarging fixed
constants, choose $b=mA_s^2\ge1$ and $J>0$, depending only on $n,m,s$,
so that for every $k\ge1$,
\begin{align}
 \|G_k^\top\|_{H^s}
   &\le C b^k(1+k)^J\eta^{2k-1}G,\notag\\
 \|H_k^\top\|_{H^s}
   &\le C b^k(1+k)^J
          (\eta^{2k}P+r_0\eta^{2k-1}G),\notag\\
 \|G_k^\perp\|_{H^s}
   &\le C b^k(1+k)^J\eta^{2k}G,\notag\\
 \|H_k^\perp\|_{H^s}
   &\le C b^k(1+k)^J
          (\eta^{2k+1}P+r_0\eta^{2k}G).
 \label{eq:five-series-majorants}
\end{align}
The remaining orthogonal mixed term has $q=1$ and satisfies
$C(r_0G+\eta P)$; it is not part of the sum over $k\ge1$.
The polynomial $(1+k)^J$ includes the coefficient bound
\eqref{eq:alpha-growth}, the number of choices of a coefficient measured in $H^s$,
and the polynomial in the commutator estimate. The factor $mA_s$ from
the additional orthogonal coordinate is independent of $k$ and is
absorbed into $C$.

Choose $\eta_*\le1/2$ so that $b\eta_*^2\le1/4$. For any fixed
nonnegative $J$,
\begin{equation}\label{eq:numerical-series-uniform}
 \sup_{0\le\theta\le1/4}
 \sum_{k\ge1}(1+k)^J\theta^{k-1}<\infty.
\end{equation}
For example, this follows from convergence of a polynomial times a
geometric sequence at $\theta=1/4$ and termwise domination.
For $0<\eta\le\eta_*$, factor the smallest powers explicitly:
\begin{align*}
 b^k\eta^{2k-1}&=b\eta(b\eta^2)^{k-1},&
 b^k\eta^{2k}&=b\eta^2(b\eta^2)^{k-1},\\
 b^k\eta^{2k+1}&=b\eta^3(b\eta^2)^{k-1}.&&
\end{align*}
There is no division by $\eta$ in these factorizations. If $\eta=0$,
then every graph difference is zero and all the displayed nonlinear
terms vanish, so the conclusion also holds at that endpoint.
Equations~\eqref{eq:five-series-majorants}--\eqref{eq:numerical-series-uniform}
prove absolute convergence in $H^s$ of every series, uniformly for the
smooth radial regularizations.

We next identify the sum. At fixed $\epsilon>0$ the regularized
integrals are over a bounded annulus avoiding zero. Since
$|Q_\varphi(x,y)|\le\eta_*<1$, the scalar binomial series is uniformly
absolutely convergent there. It may therefore be integrated term by
term, and the regularized nonlinear field is precisely the sum of the
regularized terms. Denote these terms by $F_k^\epsilon$ and their
$H^s$ limits by $F_k$, treating the finite $q=1$ term separately.
For any $K$,
\begin{align}
 \left\|\sum_{k\ge1}F_k^\epsilon-\sum_{k\ge1}F_k\right\|_{H^s}
 \le{}&\sum_{k=1}^K\|F_k^\epsilon-F_k\|_{H^s}\notag\\
 &+\sum_{k>K}\|F_k^\epsilon\|_{H^s}
  +\sum_{k>K}\|F_k\|_{H^s}.
 \label{eq:series-truncation-interchange}
\end{align}
The last two sums are uniformly small for large fixed $K$, by the
geometric majorants; the finite sum tends to zero by
Proposition~\ref{prop:canonical-realization}. Thus the infinite series
and the removal of the regularization commute in $H^s$.

Finally, the pointwise local singularity of the full nonlinear kernel
has the same cancellation. With $y=x-z$, one has
$Q_\varphi=D\varphi(x)\theta+O(|z|^\alpha)$ and
$\rho(x-z)=\rho(x)+O(|z|^\alpha)$ for
$0<\alpha<\min\{1,s-n/2\}$. The tangential frozen factor is the product
of $\theta$ and an even function of $D\varphi(x)\theta$; the orthogonal
frozen factor is an odd function of $D\varphi(x)\theta$.
Both are odd in $\theta$, and subtracting them leaves
$O(|z|^{-n+\alpha})$. The nonlinear far tails have at least two graph quotients tangentially
and at least one orthogonally; boundedness of the graph function and
density perturbation makes them absolutely integrable. Therefore hard centered
principal values have the same distributional limit as the smooth
radial regularizations. Equation~\eqref{eq:series-truncation-interchange}
identifies that limit with the $H^s$ sum. The linear density term is
handled by the ordinary Riesz $H^s$ operator. This completes the
identification without relying on pointwise convergence of a series
alone.
\end{proof}

\subsection{The tangential component}
From \eqref{eq:T-tangent} and \eqref{eq:A-B-series}, the linear term is the flat Riesz transform of $\rho$.  Thus
\begin{equation}\label{eq:Ntop-series}
 N^\top=N^\top_{\rm geom}+N^\top_{\rm mix},
\end{equation}
where, componentwise,
\begin{align}
 N^\top_{\rm geom}
 &=\sum_{k\ge1}\alpha_k
 \operatorname{p.v.}\int
 \frac{z}{|z|^{n+1}}|Q_\varphi(x,y)|^{2k}\,dy,
 \label{eq:Ntop-geom}\\
 N^\top_{\rm mix}
 &=\sum_{k\ge1}\alpha_k
 \operatorname{p.v.}\int
 \frac{z}{|z|^{n+1}}|Q_\varphi(x,y)|^{2k}
 \rho(y)\,dy.
 \label{eq:Ntop-mix}
\end{align}

For a fixed output coordinate $\ell\le n$ and a fixed tuple
$\mathbf i=(i_1,\ldots,i_k)$, the corresponding integral with endpoint $1$ is exactly
\[
 \mathfrak C_{2k,\theta_\ell}
 (\varphi_{i_1},\varphi_{i_1},\ldots,
  \varphi_{i_k},\varphi_{i_k};1).
\]
The mixed integral has the same coefficient entries and endpoint
$\rho$. The angular function is odd and the number $2k$ is even,
so \eqref{eq:CJ-parity} holds.

For a monomial in \eqref{eq:Ntop-geom}, all $2k$ graph factors are components of $\varphi$.  Estimate \eqref{eq:tame-pure} gives
\begin{equation}\label{eq:Ntop-geom-level}
 \|G_k^\top\|_{H^s}
 \le C_s|\alpha_k|m^kA_s^{2k}(1+2k)^{M_s}
 (2k)\eta^{2k-1}
 \|D\varphi\|_{H^s}.
\end{equation}
The factor $2k$ records the choice of the graph factor carrying the high Sobolev norm and can be absorbed into the polynomial factor.

For \eqref{eq:Ntop-mix}, \eqref{eq:tame-f} yields
\begin{align}
 \|H_k^\top\|_{H^s}
 \le C_s|\alpha_k|m^kA_s^{2k}(1+2k)^{M_s}
 \Bigl(&\eta^{2k}\|\rho\|_{H^s}\notag\\
 &+2k\eta^{2k-1}r_0
 \|D\varphi\|_{H^s}\Bigr).
 \label{eq:Ntop-mix-level}
\end{align}
Summation of \eqref{eq:Ntop-geom-level}--\eqref{eq:Ntop-mix-level} gives
\begin{equation}\label{eq:N-tangent}
 \|N^\top(\varphi,\rho)\|_{H^s}
 \le C_s\Bigl[
 (1+r_0)\eta\|D\varphi\|_{H^s}
 +\eta^2\|\rho\|_{H^s}
 \Bigr].
\end{equation}
Indeed, every geometric series begins with $k=1$, so the pure term contains one remaining factor $\eta$, while the term involving $\|\rho\|_{H^s}$ contains $\eta^2$.

\subsection{The orthogonal component}
The orthogonal field contains the linear graph term $c_{n,\Lambda}|D|\varphi$ and the mixed quadratic term produced by multiplying the linear factor $Q_\varphi$ by $\rho$.  We therefore write
\begin{equation}\label{eq:Nperp-decomp}
 N^\perp=N^\perp_{\rm geom}
 +N^\perp_{\rm low\,mix}
 +N^\perp_{\rm high\,mix}.
\end{equation}
The lowest mixed term is
\begin{equation}\label{eq:lowest-mixed}
 N^\perp_{\rm low\,mix}(x)
 =\operatorname{p.v.}\int
 \frac{\varphi(x)-\varphi(y)}{|x-y|^{n+1}}
 \rho(y)\,dy
 =c_{n,\Lambda}[|D|,\varphi]\rho.
\end{equation}
By Lemma \ref{lem:Lambda-commutator},
\begin{equation}\label{eq:KP-mixed}
 \|N^\perp_{\rm low\,mix}\|_{H^s}
 \le C_s\left(
 r_0\|D\varphi\|_{H^s}
 +\eta\|\rho\|_{H^s}
 \right).
\end{equation}

For a fixed orthogonal output coordinate $a\le m$ and a tuple
$\mathbf i$, the pure level-$k$ integral is
\[
 \mathfrak C_{2k+1,1}
 (\varphi_a,\varphi_{i_1},\varphi_{i_1},\ldots,
  \varphi_{i_k},\varphi_{i_k};1),
\]
and its higher mixed counterpart has endpoint $\rho$.
The angular factor is even and the number $2k+1$ is odd, so the parity hypothesis holds. The $k=0$ mixed term is already
\eqref{eq:lowest-mixed}; including it again in the higher mixed sum
would double count the quadratic interaction.

At level $k\ge1$, the pure orthogonal monomial has $2k+1$ graph factors.  The pure tame estimate gives
\begin{equation}\label{eq:Nperp-geom-level}
 \|G_k^\perp\|_{H^s}
 \le C_s|\alpha_k|m^{k+1}A_s^{2k+1}(1+2k)^{M_s}
 (2k+1)\eta^{2k}
 \|D\varphi\|_{H^s}.
\end{equation}
The fixed factor $m$ accounts for the output components and is
absorbed into the constant.  Summing from $k=1$ produces
\begin{equation}\label{eq:Nperp-geom-sum}
 \|N^\perp_{\rm geom}\|_{H^s}
 \le C_s\eta^2\|D\varphi\|_{H^s}.
\end{equation}

For the higher-order mixed orthogonal terms, \eqref{eq:tame-f} gives
\begin{align}
 \|H_k^\perp\|_{H^s}
 \le C_s|\alpha_k|m^{k+1}A_s^{2k+1}(1+2k)^{M_s}
 \Bigl(&\eta^{2k+1}\|\rho\|_{H^s}\notag\\
 &+(2k+1)\eta^{2k}r_0
 \|D\varphi\|_{H^s}\Bigr).
 \label{eq:Nperp-high-mix-level}
\end{align}
After summation and using $\eta\le1$,
\begin{equation}\label{eq:Nperp-high-mix-sum}
 \|N^\perp_{\rm high\,mix}\|_{H^s}
 \le C_s\left(
 \eta^3\|\rho\|_{H^s}
 +\eta^2r_0\|D\varphi\|_{H^s}
 \right).
\end{equation}
Combining \eqref{eq:KP-mixed}, \eqref{eq:Nperp-geom-sum}, and \eqref{eq:Nperp-high-mix-sum} yields
\begin{equation}\label{eq:N-normal}
 \|N^\perp(\varphi,\rho)\|_{H^s}
 \le C_s\Bigl[
 (\eta^2+r_0)\|D\varphi\|_{H^s}
 +\eta\|\rho\|_{H^s}
 \Bigr].
\end{equation}

We have proved the following proposition.

\begin{proposition}\label{prop:remainder}
Let $s>n/2$ be an integer.  There exists
$\eta_*=\eta_*(n,m,s)>0$ such that, whenever
$\eta\le\eta_*$, the estimates \eqref{eq:N-tangent} and \eqref{eq:N-normal} hold.  Consequently, if $\eta+r_0\le1$, then
\begin{equation}\label{eq:remainder-main}
 \|N(\varphi,\rho)\|_{H^s}
 \le C_s(\eta+r_0)
 \bigl(\|D\varphi\|_{H^s}+\|\rho\|_{H^s}\bigr).
\end{equation}
\end{proposition}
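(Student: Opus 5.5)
The plan is to assemble the proposition from the component estimates already derived in this section, since \eqref{eq:N-tangent} and \eqref{eq:N-normal} were obtained under the hypothesis $\eta\le\eta_*$ with $\eta_*$ from Lemma~\ref{lem:series-convergence}. First I would fix $\eta_*=\eta_*(n,m,s)$ as in that lemma, so that $b\eta_*^2\le1/4$ and $\eta_*\le1/2$. For $\eta\le\eta_*$ the binomial expansions \eqref{eq:A-B-series} converge absolutely in $H^s$ and their sums are the principal-value fields. This justifies the decompositions \eqref{eq:Ntop-series} and \eqref{eq:Nperp-decomp} of $N$ as $H^s$-convergent sums of Christ--Journ\'e forms. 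The convergence must hold uniformly for the smooth regularizations, so that the identification with $\mathbf T-\mathcal L$ from \eqref{eq:N-definition} and Proposition~\ref{prop:linearization} is legitimate; this is provided by \eqref{eq:series-truncation-interchange}.

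Next, the level-by-level bounds \eqref{eq:Ntop-geom-level}, \eqref{eq:Ntop-mix-level}, \eqref{eq:Nperp-geom-level}, and \eqref{eq:Nperp-high-mix-level} come from Lemma~\ref{lem:tame-commutator}. The parity condition \eqref{eq:CJ-parity} holds in each case. I would sum these bounds using \eqref{eq:alpha-growth} and the factorization $b^k\eta^{2k-j}=b\eta^{2-j}(b\eta^2)^{k-1}$ together with \eqref{eq:numerical-series-uniform}. This yields \eqref{eq:N-tangent}. Adding the separately treated quadratic term \eqref{eq:KP-mixed} from Lemma~\ref{lem:Lambda-commutator} then yields \eqref{eq:N-normal}. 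This is the main point to check: the $k=0$ mixed term must not be double counted, and the smallest powers of $\eta$ must be extracted without dividing by $\eta$. The case $\eta=0$ is trivial, because every graph difference quotient then vanishes.

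Finally, for the consequence \eqref{eq:remainder-main}, assume $\eta+r_0\le1$, so that $\eta\le1$ and $r_0\le1$. Then $(1+r_0)\eta\le2\eta$, $\eta^2\le\eta$, and $\eta^2+r_0\le\eta+r_0$. Substituting these into the sum of \eqref{eq:N-tangent} and \eqref{eq:N-normal} bounds each coefficient of $\|D\varphi\|_{H^s}$ and of $\|\rho\|_{H^s}$ by $C(\eta+r_0)$. Since $\|N\|_{H^s}\le\|N^\top\|_{H^s}+\|N^\perp\|_{H^s}$, the estimate follows with a constant depending only on $n,m,s$. I expect no obstacle beyond the bookkeeping already performed in the series lemma; the only care needed is that the constant $C_s$ remains independent of the supports and of the Sobolev sizes $G$ and $P$. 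This holds because only $\eta$ is required to be small.
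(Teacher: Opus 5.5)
Your proposal is correct and follows the paper's proof essentially step for step. The refined bounds \eqref{eq:N-tangent} and \eqref{eq:N-normal} come from summing the level estimates with the factorization $b^k\eta^{2k-j}=b\eta^{2-j}(b\eta^2)^{k-1}$ and adding the separate $q=1$ commutator term \eqref{eq:KP-mixed}; the consequence \eqref{eq:remainder-main} then follows from $(1+r_0)\eta\le2\eta$, $\eta^2\le\eta$ and $\eta^2+r_0\le\eta+r_0$, exactly as in the paper.

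One minor point concerns the order of the logic. Define $N$ as $\mathbf T$ minus the explicitly computed candidate $\mathcal L$, not via Proposition~\ref{prop:linearization}. That proposition's differentiability claim is itself derived from this remainder bound, so citing it here would be circular.
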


\begin{proof}
The preceding level estimates and
\eqref{eq:numerical-series-uniform} prove the two refined bounds.
We record the absorption-size consequence with all coefficients
visible. Set $G=\|D\varphi\|_{H^s}$ and $P=\|\rho\|_{H^s}$.
When $\delta=\eta+r_0\le1$,
\[
 (1+r_0)\eta\le2\eta\le2\delta,\qquad
 \eta^2\le\eta\le\delta,\qquad
 \eta^2+r_0\le\eta+r_0=\delta.
\]
Thus
\begin{align*}
 \|N\|_{H^s}
 &\le\|N^\top\|_{H^s}+\|N^\perp\|_{H^s}\\
 &\le C_s\bigl[((1+r_0)\eta+\eta^2+r_0)G
                  +(\eta^2+\eta)P\bigr]\\
 &\le C_s\delta(G+P),
\end{align*}
which is \eqref{eq:remainder-main}. The constant is independent of the supports of $\varphi$ and $\rho$
and of their higher-order Sobolev norms. Those high
norms occur linearly on the right; they are not among the small
parameters. This distinction is preserved in the absorption argument.
\end{proof}

\begin{remark}\label{rem:second-variation}
The same cancellations can be displayed without the full power series.  Put
$\mathscr F(t)=\mathbf T(t\varphi,t\rho)$.  Then
\[
 \mathbf T(\varphi,\rho)-D\mathbf T(0,0)[\varphi,\rho]
 =\int_0^1(1-t)\mathscr F''(t)\,dt.
\]
For the tangential field, $DA(0)=0$ ensures that every term in
$\mathscr F''$ contains at least two graph differences or one graph difference together with an additional small coefficient.  For the orthogonal field, $D^2B(0)=0$ makes the pure geometric remainder cubic, while the only quadratic term is the commutator \eqref{eq:lowest-mixed}.  The series proof above is useful because it reduces every contribution explicitly to the multilinear forms of Section \ref{sec:tame-commutators} and verifies convergence in $H^s$.
\end{remark}

\section{Proof of the Sobolev stability theorem}\label{sec:high-order-proof}
This section completes the proof of Theorem \ref{thm:high-stability} for integers \(s>n/2\). After establishing that the normalized field belongs to \(H^s\), we combine the linear lower estimate with the nonlinear remainder bound and absorb the error under the stated \(L^\infty\) smallness condition. This gives control of the graph gradient and density perturbation with constants independent of the supports, while control of the full graph norm requires an additional low-frequency condition. Analytic dependence further yields a two-point estimate and local injectivity under the stronger smallness condition on \(Z_s\).
\begin{proposition}\label{prop:local-mapping}
Let $s>n/2$ be an integer, $\varphi\in H^{s+1}$ and $\rho\in H^s$.
If $\eta=\|D\varphi\|_\infty\le\eta_*$, then
$\mathbf T(\varphi,\rho)\in H^s$ and
\begin{equation}\label{eq:local-mapping}
 \|\mathbf T(\varphi,\rho)\|_{H^s}
 \le C_s(1+\eta+\|\rho\|_\infty)
       Z_s(\varphi,\rho).
\end{equation}
\end{proposition}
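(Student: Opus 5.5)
We decompose $\mathbf T(\varphi,\rho)$ as in \eqref{eq:N-definition}, namely $\mathbf T=\mathcal L(\varphi,\rho)+N(\varphi,\rho)$. Then we bound the two pieces separately in $H^s$.

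For the linear part, Proposition~\ref{prop:linearization} and the Plancherel identities \eqref{eq:linear-Riesz-Hs-detail}--\eqref{eq:linear-gradient-Hs-detail} give the exact identity
\[
 \|\mathcal L(\varphi,\rho)\|_{H^s}^2
 =c_{n,R}^2\|\rho\|_{H^s}^2+c_{n,\Lambda}^2\|D\varphi\|_{H^s}^2
 \le C_n Z_s(\varphi,\rho)^2 .
\]
Here it matters that the tangential linear term is the flat Riesz transform $c_{n,R}\mathcal R\rho$. This operator is $H^s$-bounded with norm one, so it contributes $\|\rho\|_{H^s}$. The orthogonal linear term is $c_{n,\Lambda}|D|\varphi$, and it contributes $\|D\varphi\|_{H^s}$, not $\|\varphi\|_{H^{s+1}}$. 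This is why the right side involves only $Z_s$.

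For the nonlinear part, we use the refined component bounds \eqref{eq:N-tangent} and \eqref{eq:N-normal}. Both are valid for $\eta\le\eta_*$ with no smallness of $r_0=\|\rho\|_\infty$, since the series in Lemma~\ref{lem:series-convergence} are geometric in $b\eta^2$ only and $r_0$ enters linearly. Adding them gives
\[
 \|N\|_{H^s}\le C_s\bigl[((1+r_0)\eta+\eta^2+r_0)\|D\varphi\|_{H^s}+(\eta+\eta^2)\|\rho\|_{H^s}\bigr].
\]
Since $\eta\le\eta_*\le1/2$, we have $\eta^2\le\eta$ and $(1+r_0)\eta\le\eta+r_0/2$. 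So every coefficient is at most $C(\eta+r_0)$. The resulting bound is $\|N\|_{H^s}\le C_s(\eta+r_0)Z_s$, without the restriction $\eta+r_0\le1$ imposed in Proposition~\ref{prop:remainder}. Adding the linear bound yields \eqref{eq:local-mapping}.

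The main point needing care is membership $\mathbf T(\varphi,\rho)\in H^s$ itself, i.e.\ that the principal-value field defined by \eqref{eq:T-tangent}--\eqref{eq:T-normal} coincides with $\mathcal L(\varphi,\rho)+N(\varphi,\rho)$ as an $H^s$ function. For this I would invoke Lemma~\ref{lem:series-convergence}, which identifies the absolutely convergent $H^s$ series with the principal-value nonlinear field for $\eta<\eta_*$. The endpoint $\eta=\eta_*$ is handled by shrinking $\eta_*$ slightly at the outset.

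I would also invoke Proposition~\ref{prop:canonical-realization} and Lemma~\ref{lem:Lambda-commutator}. These identify the $q=1$ mixed term and the individual multilinear forms with their hard principal values. The linear Riesz and $|D|$ pieces are the standard $L^2$ principal-value realizations in \eqref{eq:flat-Riesz}--\eqref{eq:flat-Lambda}.

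Since no support hypothesis on $\varphi,\rho$ was used in these results, the constant depends only on $n,m,s$.
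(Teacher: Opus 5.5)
Your proposal is correct and follows the paper's proof: you split $\mathbf T=\mathcal L(\varphi,\rho)+N(\varphi,\rho)$, bound the linear part by $C Z_s$ via Plancherel, bound $N$ by the refined estimates \eqref{eq:N-tangent}--\eqref{eq:N-normal} using $\eta_*\le 1$, and obtain $H^s$ membership from Lemma~\ref{lem:series-convergence}. The paper also writes out the far-field integrability (Cauchy--Schwarz for the linear density tail) and the near-diagonal H\"older cancellation that fix the pointwise meaning of the normalized field, whereas you delegate these to the cited lemmas; the endpoint $\eta=\eta_*$ needs no shrinking, because the proof of Lemma~\ref{lem:series-convergence} already covers $0<\eta\le\eta_*$.
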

\begin{proof}
The candidate linear term is in $H^s$ by the model operator identities.
Lemma~\ref{lem:series-convergence} constructs the nonlinear part in
$H^s$, independently of any lower estimate. Therefore
\[
 \|\mathbf T\|_{H^s}
 \le C_sZ_s(\varphi,\rho)
      +\|N^\top\|_{H^s}+\|N^\perp\|_{H^s}.
\]
Since $\eta_*\le1$, the refined estimates bound the right side by
$C_s(1+\eta+\|\rho\|_\infty)Z_s(\varphi,\rho)$.
This proves the mapping bound before using linear coercivity.

We make explicit why the original integrals have no additional
normalization ambiguity at infinity. Sobolev embedding gives bounded
$\varphi$ and $\rho$. For $|z|>1$,
\[
 |Q_\varphi(x,x-z)|\le2\|\varphi\|_\infty|z|^{-1},\qquad
 |A(Q_\varphi)-1|\le C\|\varphi\|_\infty^2|z|^{-2},\qquad
 |B(Q_\varphi)|\le C\|\varphi\|_\infty|z|^{-1}.
\]
The tangential integrand splits as
\[
 \frac z{|z|^{n+1}}\rho(x-z)
 +\frac z{|z|^{n+1}}(A(Q_\varphi)-1)(1+\rho(x-z)).
\]
The second term is bounded by
$C(1+\|\rho\|_\infty)\|\varphi\|_\infty^2|z|^{-n-2}$.
For the first term, Cauchy--Schwarz gives uniformly in $x$
\begin{align}
 \int_{|z|>1}|z|^{-n}|\rho(x-z)|\,dz
 &\le\left(\int_{|z|>1}|z|^{-2n}\,dz\right)^{1/2}\|\rho\|_2\notag\\
 &=\left(\frac{|\mathbb S^{n-1}|}{n}\right)^{1/2}\|\rho\|_2.
 \label{eq:density-L2-far-detail}
\end{align}
The orthogonal tail is bounded by
$C(1+\|\rho\|_\infty)\|\varphi\|_\infty|z|^{-n-1}$.
Both nonlinear bounds are integrable at infinity. Compact support is
not used in any of these tail estimates.

Near the diagonal, choose
$0<\alpha<\min\{1,s-n/2\}$. The graph function is $C^{1,\alpha}$ and the
density is $C^\alpha$. The odd frozen homogeneous terms cancel under
centered parameter truncations; the remainders are
$O(|z|^{-n+\alpha})$. Appendix~\ref{app:principal-values} identifies
these values with graph-distance truncations by comparison with the
frozen ellipsoidal cutoffs. This is a statement about the properly
normalized field: the flat tangential background is subtracted as in
\eqref{eq:T-tangent}. Thus both the pointwise meaning and the Sobolev
meaning of $\mathbf T$ are fixed before the lower estimate is proved.
\end{proof}

\begin{proof}[Proof of Theorem~\ref{thm:high-stability}]
Let $C_{\rm lin}$ be the constant in
Proposition~\ref{prop:linear-coercivity}, and let $C_{\rm rem}$ be the
constant in \eqref{eq:remainder-main}. These constants depend only on
$n,m,s$. Fix
\begin{equation}\label{eq:epsilon-choice}
 \varepsilon_s\le\min\left\{\frac14,\frac{\eta_*}{2},
                  \frac1{2C_{\rm lin}C_{\rm rem}}\right\}.
\end{equation}
The first restriction gives
$1+\rho\ge1-\|\rho\|_\infty\ge1/2$.
The second permits the analytic kernel expansion; the third will
absorb the nonlinear error. These choices do not involve the size of the higher Sobolev norms.

Set $Z=Z_s(\varphi,\rho)$ and
$\delta=\|D\varphi\|_\infty+\|\rho\|_\infty$.
By Proposition~\ref{prop:local-mapping}, all terms of
$\mathbf T=\mathcal LU+N(U)$ are already defined in $H^s$.
Consequently,
\begin{align}
 Z
 &\le C_{\rm lin}\|\mathcal L(\varphi,\rho)\|_{H^s}\notag\\
 &=C_{\rm lin}\|\mathbf T(\varphi,\rho)-N(\varphi,\rho)\|_{H^s}\notag\\
 &\le C_{\rm lin}\|\mathbf T(\varphi,\rho)\|_{H^s}
       +C_{\rm lin}C_{\rm rem}\delta Z.
 \label{eq:absorb-line}
\end{align}
For $\delta\le\varepsilon_s$, the last coefficient is at most $1/2$.
Subtract this term and divide by the remaining positive coefficient:
\[
 (1-C_{\rm lin}C_{\rm rem}\delta)Z
 \le C_{\rm lin}\|\mathbf T\|_{H^s},\qquad
 Z\le2C_{\rm lin}\|\mathbf T\|_{H^s}.
\]
This proves \eqref{eq:high-stability}. The proof also covers $Z=0$;
no division by $Z$ was made.

The elementary Fourier weight comparison from
Proposition~\ref{prop:linear-coercivity} gives
\[
 \|\varphi\|_{H^{s+1}}+\|\rho\|_{H^s}
 \le C_s(\|\varphi\|_2+Z)
 \le C_s(\|\varphi\|_2+\|\mathbf T\|_{H^s}),
\]
which is \eqref{eq:high-with-low-frequency}. If the graph is supported
in $B_R$, apply $\|\varphi\|_2\le C_nR\|D\varphi\|_2$ and the
already proved derivative estimate. The density remains unrestricted
in support. The use of the full inhomogeneous Sobolev space specifies
$\varphi$ and its behavior at infinity, whereas the estimate whose constant is independent of the supports
itself measures only derivatives of the graph.
\end{proof}

\begin{corollary}
\label{cor:full-Sobolev}
Under the smallness assumptions of Theorem~\ref{thm:high-stability}, if
\begin{equation}\label{eq:low-frequency-condition}
 \|\varphi\|_2\le L_0\|D\varphi\|_{H^s},
\end{equation}
then
\begin{equation}\label{eq:full-Sobolev}
 \|\varphi\|_{H^{s+1}}+\|\rho\|_{H^s}
 \le C_s(1+L_0)\|\mathbf T(\varphi,\rho)\|_{H^s}.
\end{equation}
In particular this holds with $L_0=C_nR$ for
$\supp\varphi\subset B_R$, regardless of the support of $\rho$.
\end{corollary}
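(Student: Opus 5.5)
The plan is to combine inequality \eqref{eq:high-with-low-frequency} of Theorem~\ref{thm:high-stability} with the derivative estimate \eqref{eq:high-stability}, and then use the low-frequency hypothesis \eqref{eq:low-frequency-condition} to eliminate the $L^2$ term. The smallness assumptions are exactly those of the theorem, so both estimates hold with the same constant $C_s$. No new analysis of $\mathbf T$ is needed.

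The argument has three steps.

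First, start from the Fourier weight comparison in Proposition~\ref{prop:linear-coercivity}, which gives
\[
 \|\varphi\|_{H^{s+1}}+\|\rho\|_{H^s}\le C_s\bigl(\|\varphi\|_2+Z_s(\varphi,\rho)\bigr).
\]

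Second, insert the hypothesis $\|\varphi\|_2\le L_0\|D\varphi\|_{H^s}\le L_0Z_s(\varphi,\rho)$. This bounds the left side by $C_s(1+L_0)Z_s(\varphi,\rho)$.

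Third, apply \eqref{eq:high-stability}, namely $Z_s\le C_s\|\mathbf T(\varphi,\rho)\|_{H^s}$. This yields \eqref{eq:full-Sobolev} after enlarging $C_s$.

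It matters to substitute the hypothesis before invoking the absorption estimate. If one instead starts from \eqref{eq:high-with-low-frequency}, the $\|\varphi\|_2$ term is still on the right and must be bounded by $L_0Z_s$ and then by $L_0C_s\|\mathbf T\|_{H^s}$. Both orders lead to the same constant $C_s(1+L_0)$.

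For the compact-support case, take $\supp\varphi\subset B_R$. As in the proof of Proposition~\ref{prop:linear-coercivity}, view $\varphi\in H^1_0(B_{2R})$. The zero-boundary Poincar\'e inequality then gives $\|\varphi\|_2\le C_nR\|D\varphi\|_2\le C_nR\|D\varphi\|_{H^s}$. So \eqref{eq:low-frequency-condition} holds with $L_0=C_nR$, and no condition on $\rho$ is used.

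There is no real obstacle. The only thing to check is that the constant stays independent of supports apart from the explicit factor $1+L_0$, and that the statement covers $Z=0$. Both follow directly because no division takes place.
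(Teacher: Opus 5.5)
Your proposal is correct and is essentially the paper's proof: combine $\|\varphi\|_{H^{s+1}}\le C_s(\|\varphi\|_2+\|D\varphi\|_{H^s})$ with the hypothesis \eqref{eq:low-frequency-condition} and the derivative estimate \eqref{eq:high-stability}, and handle the compact-support case with the zero-boundary Poincar\'e inequality on $B_{2R}$. Your remark that the order of substitution ``matters'' is unnecessary, since, as you note yourself, both orders give the same bound $C_s(1+L_0)\|\mathbf T(\varphi,\rho)\|_{H^s}$.
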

\begin{proof}
Use $\|\varphi\|_{H^{s+1}}\le C_s(\|\varphi\|_2+
\|D\varphi\|_{H^s})$ and \eqref{eq:high-stability}.
\end{proof}

\begin{corollary}
\label{cor:near-reflectionless}
Under Theorem~\ref{thm:high-stability},
$\|\mathbf T(\varphi,\rho)\|_{H^s}\le\epsilon$ implies
$Z_s(\varphi,\rho)\le C_s\epsilon$.
If $\mathbf T(\varphi,\rho)=0$, then $\varphi=0$ and $\rho=0$.
\end{corollary}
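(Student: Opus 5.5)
The first assertion follows directly from \eqref{eq:high-stability} in Theorem~\ref{thm:high-stability}. Under the hypotheses $\varphi\in H^{s+1}$, $\rho\in H^s$ and $\delta\le\varepsilon_s$, the theorem gives
\[
 Z_s(\varphi,\rho)=\|D\varphi\|_{H^s}+\|\rho\|_{H^s}\le C_s\|\mathbf T(\varphi,\rho)\|_{H^s}.
\]
If $\|\mathbf T(\varphi,\rho)\|_{H^s}\le\epsilon$, then $Z_s(\varphi,\rho)\le C_s\epsilon$. No new estimate is needed; the only point to check is that the constant is the one already fixed in the absorption step \eqref{eq:absorb-line}, namely $2C_{\rm lin}$. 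That constant depends only on $n,m,s$ and not on the supports or on the size of the high Sobolev norms.

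For the second assertion, suppose $\mathbf T(\varphi,\rho)=0$. The first part with $\epsilon=0$ gives $\|D\varphi\|_{H^s}=0$ and $\|\rho\|_{H^s}=0$.

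The density is immediate: $\rho=0$ in $H^s$, so $\rho=0$ almost everywhere. Since $s>n/2$, the continuous representative vanishes identically.

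For the graph, $D\varphi=0$ in $L^2$, so the Fourier transform $\widehat\varphi$ is supported in $\{\xi=0\}$. Because $\varphi\in L^2$, its Fourier transform is an $L^2$ function, and a function supported on a null set is zero. Hence $\varphi=0$.

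Equivalently, one can use the weight comparison from Proposition~\ref{prop:linear-coercivity}. There $|\xi|^2|\widehat\varphi|^2=0$ a.e. forces $\widehat\varphi=0$ a.e. This is the one place where the inhomogeneous hypothesis $\varphi\in H^{s+1}$, rather than membership in the completion under $Z_s$, is essential. A nonzero constant graph would also give $\mathbf T=0$ (compare Theorem~\ref{thm:global-rigidity}), but it is excluded by $\varphi\in L^2$.

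The main obstacle is only to make sure the low-frequency step is stated correctly. The derivative estimate controls $D\varphi$ but not $\varphi$, so the vanishing of $\varphi$ must come from $L^2$ membership and not from \eqref{eq:high-stability} itself. Apart from this point, the corollary is a direct restatement of the absorption inequality.
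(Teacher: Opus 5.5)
Your proposal is correct and follows the paper's proof: the first claim is read off from \eqref{eq:high-stability}, and in the zero case one gets $D\varphi=0$ and $\rho=0$, with $\varphi\in L^2(\R^n)$ ruling out a nonzero constant. Your Fourier-support argument ($\xi_j\widehat\varphi=0$ a.e., hence $\widehat\varphi=0$ a.e.) simply spells out the paper's remark that a constant in $L^2(\R^n)$ is zero.
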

\begin{proof}
The first claim is \eqref{eq:high-stability}.  In the zero case,
$D\varphi=0$ and $\rho=0$.  A constant $\varphi\in L^2(\R^n)$ is zero.
\end{proof}

\subsection{Local injectivity}
\begin{proposition}
\label{prop:analytic-derivative}
Let $s>n/2$ be an integer, and set
$\mathcal X_s=H^{s+1}(\R^n;\R^m)\times H^s(\R^n)$.
The map $\mathbf T:\mathcal X_s\to H^s$ is real analytic in a
neighborhood of zero. There are constants $r_s,C_s>0$ such that
\begin{equation}\label{eq:derivative-strong}
 \|D\mathbf T(U)[V]-\mathcal L V\|_{H^s}
 \le C_s Z_s(U)Z_s(V)
 \quad\text{if }U,V\in\mathcal X_s,\quad Z_s(U)<r_s.
\end{equation}
\end{proposition}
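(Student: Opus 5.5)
The plan is to view $\mathbf T$ as the sum of the series built in Lemma~\ref{lem:series-convergence}, namely
\[
 \mathbf T(U)=\mathcal L U+\sum_{q\ge2}P_q(U,\ldots,U),
\]
where each $P_q$ is a bounded symmetric $q$-linear map $\mathcal X_s^q\to H^s$ assembled from the forms $\mathfrak C_{q',\Omega}$. Analyticity then amounts to an estimate $\|P_q\|\le C R^q$ for the multilinear norms measured in $Z_s$. Here $P_q$ collects the terms of total degree $q$ in $(\varphi,\rho)$. The pure tangential terms have $2k$ graph entries and endpoint $1$. The mixed tangential terms have $2k$ graph entries and endpoint $\rho$. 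The orthogonal terms have $2k+1$ graph entries, with endpoint $1$ or $\rho$.

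To get the multilinear bounds, I polarize each monomial and apply Lemma~\ref{lem:tame-commutator} with distinct entries $b_1,\ldots,b_q$. The tame bound \eqref{eq:tame-f} controls the output by one entry in $H^s$ and all the others in $L^\infty$. Since $s>n/2$, Sobolev embedding gives $\|\nabla b_j\|_\infty\le C\|Db_j\|_{H^s}$ and $\|f\|_\infty\le C\|f\|_{H^s}$. Hence every monomial of degree $q$ is bounded by
\[
 A_s^q(1+q)^{M_s}C^q\prod_{i}Z_s(V_i),
\]
with the coefficient $|\alpha_k|m^k$ absorbed into $C^q$ by \eqref{eq:alpha-growth}. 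One caveat: $Z_s$ controls only $D\varphi$, not $\varphi$ itself. This costs nothing, because graph entries enter only through difference quotients, so only $Db_j$ appears in \eqref{eq:tame-f}.

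Summing over $q$ gives a power series with radius of convergence $r_s\sim(CA_s)^{-1}$, measured in $Z_s$. Since $Z_s\le\|\cdot\|_{\mathcal X_s}$, this yields real analyticity on a neighbourhood of zero in $\mathcal X_s$. Continuity of each $P_q$ on all of $\mathcal X_s$ is supplied by Proposition~\ref{prop:canonical-realization}.

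For \eqref{eq:derivative-strong}, differentiate termwise, which is legitimate because the series of multilinear norms converges:
\[
 D\mathbf T(U)[V]-\mathcal LV=\sum_{q\ge2}q\,P_q(U,\ldots,U,V).
\]
The bound above gives
\[
 \|qP_q(U,\ldots,U,V)\|_{H^s}\le qC^q(1+q)^{M}Z_s(U)^{q-1}Z_s(V).
\]
Summing from $q=2$ gives $C_sZ_s(U)Z_s(V)$ whenever $Z_s(U)<r_s$, since the leading power is $Z_s(U)^1$ and the remaining series is geometric.

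The main obstacle is the polarization step. Lemma~\ref{lem:tame-commutator} is stated for arbitrary distinct entries, so it applies directly. However, the tangential monomials pair entries as $\varphi_{i}\varphi_{i}$, and the endpoint slot must be treated as a linear entry that is either $\rho$ or the constant $1$. I would therefore define $P_q$ explicitly as symmetrized sums of $\mathfrak C$ forms and check that the combinatorial count of terms stays at most exponential in $q$. Finally, identifying the regularized field with this series near zero, and not only under the $L^\infty$ smallness, follows from the same interchange argument as in \eqref{eq:series-truncation-interchange}.
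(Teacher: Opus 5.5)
Your proposal is correct and follows essentially the same route as the paper: write the nonlinear part as a sum of homogeneous multilinear pieces built from the forms $\mathfrak C_{q,\Omega}$, bound each degree-$q$ piece by $C^q(1+q)^J\prod Z_s(\cdot)$ using Lemma~\ref{lem:tame-commutator} with Sobolev embedding for the $L^\infty$ factors, and differentiate the normally convergent series termwise. The only difference is cosmetic: the paper does not symmetrize, but notes that differentiating a degree-$k$ form replaces one of its $k$ copies of $U$ by $V$, which gives the same bound $C^k(1+k)^{J+1}Z_s(U)^{k-1}Z_s(V)$.
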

\begin{proof}
Write $U=(\varphi,\rho)$ and let $V=(\psi,\sigma)$.
Since $s>n/2$,
\begin{equation}\label{eq:strong-low-control-detail}
 \|D\psi\|_\infty+\|\sigma\|_\infty\le C_sZ_s(V).
\end{equation}
Group the nonlinear series by total degree in the pair $U$:
\[
 N(U)=\sum_{k\ge2}P_k(U).
\]
For fixed $k$, each $P_k$ is a finite sum of continuous multilinear
forms evaluated on copies of $U$. In the mixed terms at most one entry
is a density, because the forward field is affine in $\rho$.
By the coefficient count, the component count, and
Lemma~\ref{lem:tame-commutator}, there are constants $C,J$, independent
of $k$, such that these forms satisfy
\begin{equation}\label{eq:homogeneous-strong-bound-detail}
 \|P_k(U)\|_{H^s}\le C^k(1+k)^J Z_s(U)^k.
\end{equation}
For example, there are at most $k$ choices for the coefficient
measured in $H^s$, while all remaining coefficient and density factors
are bounded in $L^\infty$ by \eqref{eq:strong-low-control-detail}. All such
polynomial factors are included in $(1+k)^J$.

Differentiation of a degree-$k$ multilinear expression replaces one of
its $k$ copies of $U$ by $V$. Applying the same tame estimate with either
the $V$ entry or a $U$ entry carrying the high norm, and using
\eqref{eq:strong-low-control-detail} on the remaining entries, yields
\begin{equation}\label{eq:homogeneous-derivative-bound-detail}
 \|DP_k(U)[V]\|_{H^s}
 \le C^k(1+k)^{J+1}Z_s(U)^{k-1}Z_s(V).
\end{equation}
In particular, for $CZ_s(U)\le1/4$,
\begin{align*}
 \sum_{k\ge2}\|DP_k(U)[V]\|_{H^s}
 &\le C^2Z_s(U)Z_s(V)
     \sum_{k\ge2}(1+k)^{J+1}(CZ_s(U))^{k-2}\\
 &\le C_sZ_s(U)Z_s(V).
\end{align*}
The convergence is uniform when $Z_s(U)$ stays in a smaller fixed ball
and $Z_s(V)\le1$. This proves \eqref{eq:derivative-strong} and justifies
differentiation of the series term by term. Higher derivatives acquire
at most $k(k-1)\cdots(k-j+1)$ at order $j$ and obey the corresponding
uniformly summable bounds on smaller balls.

To specify the topology, the homogeneous polynomials above are
continuous on the Banach space $\mathcal X_s$, since
$Z_s(U)\le C_s\|U\|_{\mathcal X_s}$.
Their normally convergent polynomial series therefore defines a real
analytic map on a neighborhood of zero in $\mathcal X_s$.
The same derivative calculation applies at any $U\in\mathcal X_s$
with sufficiently small $Z_s(U)$, even if $\|\varphi\|_2$ is large:
$Z_s$ is continuous in the Banach topology, so a sufficiently small
Banach neighborhood of that point still lies within the convergence
region. We do not require or assert that $\mathcal X_s$ is complete
under $Z_s$. The polynomial bounds, not an inverse mapping theorem,
provide the derivative estimate.
\end{proof}

\begin{corollary}
\label{cor:local-injectivity}
There is $r_s'>0$ such that, for $U_1,U_2\in\mathcal X_s$ with
$Z_s(U_j)\le r_s'$, one has
\begin{equation}\label{eq:difference-stability}
 Z_s(U_1-U_2)\le C_s
       \|\mathbf T(U_1)-\mathbf T(U_2)\|_{H^s}.
\end{equation}
In particular $\mathbf T$ is injective on this class.  Under a fixed
support or a condition of the form \eqref{eq:low-frequency-condition}
for $\varphi_1-\varphi_2$, the left side can be replaced by the full
product Sobolev norm, with the corresponding constant.
\end{corollary}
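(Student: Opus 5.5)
The plan is to integrate the derivative along the segment joining the two points and compare the result with the linear operator. Put $W=U_1-U_2$ and $U_t=U_2+tW$ for $t\in[0,1]$. Since $Z_s$ is a seminorm, $Z_s(U_t)\le(1-t)Z_s(U_2)+tZ_s(U_1)\le r_s'$, so once $r_s'<r_s$ every point of the segment lies in the region where Proposition~\ref{prop:analytic-derivative} applies. Analyticity of $\mathbf T$ on a neighborhood of the segment in $\mathcal X_s$ then gives the Banach-valued fundamental theorem of calculus:
\[
 \mathbf T(U_1)-\mathbf T(U_2)=\int_0^1 D\mathbf T(U_t)[W]\,dt
 =\mathcal LW+\int_0^1\bigl(D\mathbf T(U_t)[W]-\mathcal LW\bigr)\,dt .
\]
The neighborhood exists even when $\|\varphi_j\|_2$ is large, by the remark at the end of that proof.

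Next I would estimate the integral term. By \eqref{eq:derivative-strong}, its $H^s$ norm is at most $C_s\sup_tZ_s(U_t)\,Z_s(W)\le C_sr_s'Z_s(W)$. Proposition~\ref{prop:linear-coercivity} gives $Z_s(W)\le C_{\rm lin}\|\mathcal LW\|_{H^s}$. Combining the two,
\[
 Z_s(W)\le C_{\rm lin}\|\mathbf T(U_1)-\mathbf T(U_2)\|_{H^s}+C_{\rm lin}C_sr_s'Z_s(W).
\]
Choose $r_s'\le\min\{r_s/2,(2C_{\rm lin}C_s)^{-1}\}$ and absorb the last term. This is legitimate because $Z_s(W)<\infty$ for $W\in\mathcal X_s$, so no division by $Z_s(W)$ is needed and the case $Z_s(W)=0$ is covered. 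The result is \eqref{eq:difference-stability} with constant $2C_{\rm lin}$.

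For injectivity, suppose $\mathbf T(U_1)=\mathbf T(U_2)$. Then $D(\varphi_1-\varphi_2)=0$ and $\rho_1=\rho_2$. A constant function in $L^2(\R^n)$ vanishes, so $U_1=U_2$. For the full-norm version, use $\|\psi\|_{H^{s+1}}\le C_s(\|\psi\|_2+\|D\psi\|_{H^s})$ for $\psi=\varphi_1-\varphi_2$. Under the support hypothesis, the Poincaré bound $\|\psi\|_2\le C_nR\|D\psi\|_2$ handles the low-order term. Under \eqref{eq:low-frequency-condition} applied to $\psi$, the same bound holds with $L_0$ in place of $C_nR$, exactly as in Corollary~\ref{cor:full-Sobolev}.

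I expect the main obstacle to be justifying the integral formula rigorously. It needs $t\mapsto D\mathbf T(U_t)[W]$ to be continuous into $H^s$, and the derivative bound to be usable at every $U_t$ and not only near zero. Both come from the normally convergent polynomial series in the proof of Proposition~\ref{prop:analytic-derivative}. Its estimates depend on $Z_s(U_t)$ alone, because the $L^\infty$ factors are controlled via \eqref{eq:strong-low-control-detail}, so they hold uniformly along the segment. If needed, one can instead verify the identity termwise on each homogeneous polynomial $P_k$, where it is an algebraic identity for multilinear forms, and then sum using the uniform majorants.
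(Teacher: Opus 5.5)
Your proposal is correct and is essentially the paper's own proof: integrate $D\mathbf T$ along the segment $U_t$ (where $Z_s(U_t)\le r_s'$ by convexity), bound the error by \eqref{eq:derivative-strong}, absorb it through linear coercivity with $C_{\rm lin}C_sr_s'\le 1/2$, and then use that constants in $L^2(\R^n)$ vanish, together with the Fourier weight comparison, for injectivity and the full-norm version. Your remark that the derivative bounds depend only on $Z_s(U_t)$, so the argument survives large $\|\varphi_j\|_2$, matches the justification the paper gives at the end of Proposition~\ref{prop:analytic-derivative}.
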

\begin{proof}
Choose $r_s'$ smaller than the radius in
Proposition~\ref{prop:analytic-derivative} and so that
$C_{\rm lin}C_sr_s'\le1/2$. Let $V=U_1-U_2$ and
$U_t=(1-t)U_2+tU_1$. The triangle inequality gives
\[
 Z_s(U_t)\le(1-t)Z_s(U_2)+tZ_s(U_1)\le r_s'
 \qquad(0\le t\le1).
\]
The differentiated series converges uniformly on this segment. The
fundamental theorem of calculus for the Banach-valued map therefore
gives
\[
 \mathbf T(U_1)-\mathbf T(U_2)=\mathcal LV+E,
 \qquad E=\int_0^1\bigl(D\mathbf T(U_t)[V]-\mathcal LV\bigr)\,dt.
\]
The derivative bound implies
\[
 \|E\|_{H^s}\le\int_0^1 C_sZ_s(U_t)Z_s(V)\,dt
 \le C_sr_s'Z_s(V).
\]
Apply linear coercivity and absorb:
\begin{align*}
 Z_s(V)
 &\le C_{\rm lin}\|\mathcal LV\|_{H^s}\\
 &\le C_{\rm lin}\|\mathbf T(U_1)-\mathbf T(U_2)\|_{H^s}
      +C_{\rm lin}C_sr_s'Z_s(V),
\end{align*}
so $Z_s(V)\le2C_{\rm lin}\|\mathbf T(U_1)-\mathbf T(U_2)\|_{H^s}$.
If the two fields agree, then $D(\varphi_1-\varphi_2)=0$ in
distributions and $\rho_1=\rho_2$. The graph difference belongs to
$L^2(\R^n)$, and the only constant in that space is zero. This proves
injectivity.

The full norm follows if
$\|\varphi_1-\varphi_2\|_2
\le L_0\|D(\varphi_1-\varphi_2)\|_{H^s}$.
Apply the same Fourier weight comparison used in
Corollary~\ref{cor:full-Sobolev}. In particular a common fixed graph
support suffices. This last step requires a condition on the graph
\emph{difference}, not two separate unrelated choices of its constant
representative.
\end{proof}

The corollary requires smallness of $Z_s(U_j)$, whereas
Theorem~\ref{thm:high-stability} assumes only smallness of
$\|D\varphi\|_\infty+\|\rho\|_\infty$. It therefore gives local
injectivity on a smaller class. Neither estimate implies surjectivity
onto an open set in the space of vector fields.  For instance the tangential component of the flat differential
has Fourier vector parallel to $\xi$, which is a compatibility
restriction when $n\ge2$.

\appendix
\section{Frozen kernels and comparison of truncations}\label{app:principal-values}

The argument is local and does not require compact support.
Let $\varphi\in C^{1,\alpha}$ near a fixed compact set, and let
$a\in C^\alpha$ there, with $0<\alpha<1$.  A smooth signed function can
replace $a$ in this appendix.  Put $z=y-x$ and
\[
 \mathcal K_\varphi(x,x+z)
 =-a(x+z)\frac{\varphi(x+z)-\varphi(x)}
 {|X_\varphi(x+z)-X_\varphi(x)|^{n+1}}.
\]

\begin{lemma}\label{lem:frozen-subtraction}
Let $P=D\varphi(x)$ and $L_xz=(z,Pz)$.  The frozen kernel
\begin{equation}\label{eq:frozen-kernel}
 \mathcal K_x^{\rm lin}(z)=-a(x)\frac{Pz}{|L_xz|^{n+1}}
\end{equation}
is odd in $z$, and, uniformly for $x$ in the compact set and small $z$,
\begin{equation}\label{eq:frozen-error}
 |\mathcal K_\varphi(x,x+z)-\mathcal K_x^{\rm lin}(z)|
 \le C|z|^{-n+\alpha}.
\end{equation}
\end{lemma}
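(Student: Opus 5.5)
Oddness is immediate: $L_x$ is linear, so $L_x(-z)=-L_xz$ and $|L_x(-z)|=|L_xz|$, while the numerator $Pz$ changes sign. The estimate \eqref{eq:frozen-error} I would prove by a three-term splitting,
\[
 \mathcal K_\varphi(x,x+z)-\mathcal K_x^{\rm lin}(z)=I_1+I_2+I_3,
\]
obtained by changing one factor at a time:
\begin{itemize}[leftmargin=2em]
 \item $I_1=-(a(x+z)-a(x))\,\dfrac{\varphi(x+z)-\varphi(x)}{|X_\varphi(x+z)-X_\varphi(x)|^{n+1}}$ (density);
 \item $I_2=-a(x)\,\dfrac{\varphi(x+z)-\varphi(x)-Pz}{|X_\varphi(x+z)-X_\varphi(x)|^{n+1}}$ (numerator);
 \item $I_3=-a(x)\,Pz\Bigl(|X_\varphi(x+z)-X_\varphi(x)|^{-n-1}-|L_xz|^{-n-1}\Bigr)$ (denominator).
\end{itemize}
Throughout I use the lower bound $|X_\varphi(x+z)-X_\varphi(x)|\ge|z|$ from \eqref{eq:graph-biLip}, which needs only local Lipschitz control, together with $|L_xz|\ge|z|$.

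For the first two terms:
\begin{itemize}[leftmargin=2em]
 \item In $I_1$, the Hölder bound $|a(x+z)-a(x)|\le[a]_\alpha|z|^\alpha$ and $|\varphi(x+z)-\varphi(x)|\le\|D\varphi\|_\infty|z|$ give $|I_1|\le C|z|^{\alpha+1-n-1}=C|z|^{-n+\alpha}$.
 \item In $I_2$, Taylor's formula with Hölder remainder gives $\varphi(x+z)-\varphi(x)-Pz=\int_0^1(D\varphi(x+tz)-D\varphi(x))z\,dt=O(|z|^{1+\alpha})$. Combined with $\|a\|_\infty$ this gives $|I_2|\le C|z|^{-n+\alpha}$.
\end{itemize}
All constants are uniform over the compact set, using the $C^{1,\alpha}$ and $C^\alpha$ norms on a fixed neighborhood; "small $z$" keeps $x+z$ inside that neighborhood.

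The main step is the denominator difference $I_3$. Write $W=X_\varphi(x+z)-X_\varphi(x)$. Then $W-L_xz=(0,\varphi(x+z)-\varphi(x)-Pz)$, so $|W-L_xz|\le C|z|^{1+\alpha}$. The mean value theorem applied to $r\mapsto r^{-n-1}$ between $|W|$ and $|L_xz|$, both of which are $\ge|z|$, yields
\[
 \bigl||W|^{-n-1}-|L_xz|^{-n-1}\bigr|\le(n+1)|z|^{-n-2}\,\bigl||W|-|L_xz|\bigr|\le C|z|^{-n-2}|z|^{1+\alpha}.
\]
Multiplying by $|a(x)Pz|\le C|z|$ gives $|I_3|\le C|z|^{-n+\alpha}$.

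Summing the three bounds proves \eqref{eq:frozen-error}. The only care needed is keeping every comparison point on the segment at distance $\ge|z|$, so the constant depends only on $n$, the Lipschitz bound, $\|a\|_\infty$ and the two Hölder seminorms.
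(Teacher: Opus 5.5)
Your proof is correct and follows essentially the same route as the paper: the same Taylor remainders for $\varphi$ and $a$, a three-term telescoping of the kernel difference, and the mean value theorem for $t\mapsto t^{-n-1}$ on the denominator using $|W|,|L_xz|\ge|z|$. The only difference is the order of telescoping. You pair the density increment with the actual increment $\varphi(x+z)-\varphi(x)$ and the Taylor remainder with $a(x)$, whereas the paper pairs them with $Pz$ and $a(x+z)$ respectively; this is immaterial.
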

\begin{proof}
Fix a compact set $K$ of base points. Choose $r_0>0$ so that the
$2r_0$-neighborhood of $K$ is contained in a region where the stated
norms of $\varphi$ and $a$ are bounded. All constants in this proof are
uniform for $x\in K$ and $|z|<r_0$, and may depend on those local norms.
Writing $P=D\varphi(x)$, the integral form of Taylor's formula gives
\begin{align}
 \varphi(x+z)-\varphi(x)
 &=Pz+r_x(z),\qquad
 r_x(z)=\int_0^1(D\varphi(x+tz)-D\varphi(x))z\,dt,\notag\\
 |r_x(z)|
 &\le [D\varphi]_{C^\alpha}|z|^{1+\alpha}
                      \int_0^1t^\alpha\,dt
 =\frac{[D\varphi]_{C^\alpha}}{1+\alpha}|z|^{1+\alpha}.
 \label{eq:phi-Taylor}
\end{align}
Similarly,
\begin{equation}\label{eq:a-Taylor}
 a(x+z)=a(x)+b_x(z),\qquad
 |b_x(z)|\le[a]_{C^\alpha}|z|^\alpha.
\end{equation}
The actual and frozen distances are
\[
 D=\bigl|(z,Pz+r_x(z))\bigr|,
 \qquad d=|L_xz|=\bigl|(z,Pz)\bigr|.
\]
The reverse triangle inequality and the unchanged first $n$ coordinates
give
\begin{equation}\label{eq:X-Taylor}
 |D-d|\le|r_x(z)|\le C|z|^{1+\alpha},\qquad
 D\ge|z|,\quad d\ge|z|.
\end{equation}
Thus replacing the increment of the graph by $Pz$ and its density by
$a(x)$ produces the frozen kernel in \eqref{eq:frozen-kernel}.
Here the tangent plane at $X_\varphi(x)$ is used; it need not be
horizontal.

Subtracting the two kernels by adding and subtracting the appropriate
numerators gives the exact decomposition
\begin{align}
 \mathcal K_\varphi(x,x+z)-\mathcal K_x^{\rm lin}(z)
 ={}&-a(x+z)r_x(z)D^{-n-1}\notag\\
 &-b_x(z)PzD^{-n-1}\notag\\
 &-a(x)Pz\bigl(D^{-n-1}-d^{-n-1}\bigr).
 \label{eq:frozen-three-errors-detail}
\end{align}
The first term has size at most
$C|z|^{1+\alpha}|z|^{-n-1}=C|z|^{-n+\alpha}$.
The second has size at most
$C|z|^\alpha|z||z|^{-n-1}=C|z|^{-n+\alpha}$.
For the third, the mean value theorem applied to $t\mapsto t^{-n-1}$
on the interval between $D$ and $d$ gives
\[
 |D^{-n-1}-d^{-n-1}|
 \le(n+1)|D-d|\min(D,d)^{-n-2}
 \le C|z|^{-n-1+\alpha}.
\]
Multiplication by $|Pz|\le C|z|$ gives the same bound. Adding the
three estimates proves \eqref{eq:frozen-error}. Their radial integral
near the origin is finite because
\[
 \int_{|z|<r_0}|z|^{-n+\alpha}\,dz
 =|\mathbb S^{n-1}|\int_0^{r_0}r^{\alpha-1}\,dr
 =\frac{|\mathbb S^{n-1}|}{\alpha}r_0^\alpha.
\]
Finally, $P(-z)=-Pz$ and $L_x(-z)=-L_xz$. The numerator is odd and
the denominator is even, so
$\mathcal K_x^{\rm lin}(-z)=-\mathcal K_x^{\rm lin}(z)$.
This cancellation belongs to the frozen kernel; the actual kernel
need not be odd in $z$.
\end{proof}

\begin{lemma}\label{lem:cutoff-comparison}
Fix a sufficiently small $r_0>0$.  For $0<\epsilon<r_0$,
\begin{equation}\label{eq:cutoff-comparison}
 \left|\int_{\substack{|z|<r_0\\
       |X_\varphi(x+z)-X_\varphi(x)|>\epsilon}}
       \mathcal K_x^{\rm lin}(z)\,dz\right|
 \le C\epsilon^\alpha.
\end{equation}
\end{lemma}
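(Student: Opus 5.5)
The plan is to compare the graph-distance truncation region with the frozen ellipsoidal region $\{|L_xz|>\epsilon\}$ and to use the oddness of $\mathcal K_x^{\rm lin}$ on the latter. Put $E_\epsilon=\{z:|z|<r_0,\ |X_\varphi(x+z)-X_\varphi(x)|>\epsilon\}$ and $F_\epsilon=\{z:|z|<r_0,\ |L_xz|>\epsilon\}$.

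First I will show that the integral over $F_\epsilon$ vanishes. The set $F_\epsilon$ is symmetric under $z\mapsto-z$, because $|L_x(-z)|=|L_xz|$ and the ball is symmetric. The kernel is odd by Lemma~\ref{lem:frozen-subtraction}, and it is absolutely integrable on $F_\epsilon$: there $|z|\ge\epsilon/L_M$ with $L_M=(1+|P|^2)^{1/2}$, and $|\mathcal K_x^{\rm lin}(z)|\le C|z|^{-n}$. Hence $\int_{F_\epsilon}\mathcal K_x^{\rm lin}=0$.

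It then remains to bound $\int_{E_\epsilon\triangle F_\epsilon}|\mathcal K_x^{\rm lin}|$. With $D(z)=|X_\varphi(x+z)-X_\varphi(x)|$ and $d(z)=|L_xz|$, the estimate \eqref{eq:X-Taylor} gives $|D-d|\le C|z|^{1+\alpha}$, together with $D,d\ge|z|$. A point lies in the symmetric difference only if $\epsilon$ lies between $D$ and $d$. This forces $|z|\le\epsilon$ (since $\min(D,d)\le\epsilon$) and $|z|\ge\epsilon/(2L_M)$ once $r_0$ is small, because $\max(D,d)>\epsilon$ and $D\le d+C|z|^{1+\alpha}\le 2L_M|z|$. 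It also forces $|d(z)-\epsilon|\le C|z|^{1+\alpha}\le C\epsilon^{1+\alpha}$.

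The step I expect to be the main obstacle is measuring this thin shell. I will use polar coordinates $z=r\omega$. Since $d(r\omega)=r|L_x\omega|$ and $1\le|L_x\omega|\le L_M$, for fixed $\omega$ the condition $|r|L_x\omega|-\epsilon|\le C\epsilon^{1+\alpha}$ confines $r$ to an interval of length at most $2C\epsilon^{1+\alpha}$, uniformly in $\omega$ and in $x\in K$. On the shell $|\mathcal K_x^{\rm lin}(z)|\le C|z|^{-n}\le C\epsilon^{-n}$, and the Jacobian factor satisfies $r^{n-1}\le\epsilon^{n-1}$. The symmetric-difference integral is therefore at most
\[
C\int_{\mathbb S^{n-1}}\epsilon^{-n}\,\epsilon^{n-1}\,\epsilon^{1+\alpha}\,d\omega=C\epsilon^\alpha .
\]

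Combining the vanishing integral over $F_\epsilon$ with this shell bound gives \eqref{eq:cutoff-comparison}. The constants depend only on the local $C^{1,\alpha}$ norm of $\varphi$, on $\|a\|_\infty$, and on $r_0$. The same argument works for $n=1$, where $\mathbb S^0$ consists of two points.
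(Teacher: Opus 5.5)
Your proposal is correct and follows essentially the same route as the paper. Both arguments show that the frozen-kernel integral over the centrally symmetric region $F_\epsilon$ vanishes, confine the symmetric difference to the shell $|z|\asymp\epsilon$ with $|d-\epsilon|\le C\epsilon^{1+\alpha}$, bound its measure by $C\epsilon^{n+\alpha}$ in polar coordinates, and multiply by the kernel size $C\epsilon^{-n}$. The only cosmetic difference is that you derive $|z|\ge\epsilon/(2L_M)$ from the Taylor bound with small $r_0$, whereas the paper gets $D(z)\le L_M|z|$ directly from the local Lipschitz bound along the segment.
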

\begin{proof}
Keep $x$ fixed and put
\[
 A_\epsilon=\{z:|z|<r_0,\ D(z)>\epsilon\},\qquad
 F_\epsilon=\{z:|z|<r_0,\ d(z)>\epsilon\}.
\]
Let $L_M=(1+M^2)^{1/2}$, where $M$ bounds the local derivative on the
neighborhood used in the preceding proof. The segment from $x$ to
$x+z$ remains in that neighborhood, so
\[
 |z|\le D(z),d(z)\le L_M|z|.
\]
Since $d(-z)=d(z)$, $F_\epsilon$ is centrally symmetric. Also
$d(z)>\epsilon$ implies $|z|>\epsilon/L_M$, so the frozen kernel is
absolutely integrable on $F_\epsilon$ at fixed $\epsilon$. The change
of variables $z\mapsto-z$ is therefore legitimate and gives
\[
 \int_{F_\epsilon}\mathcal K_x^{\rm lin}(z)\,dz
 =\int_{F_\epsilon}\mathcal K_x^{\rm lin}(-z)\,dz
 =-\int_{F_\epsilon}\mathcal K_x^{\rm lin}(z)\,dz=0.
\]
The actual domain $A_\epsilon$ is not generally centrally symmetric,
so it cannot be used in this equality.

Let $E_\epsilon=A_\epsilon\triangle F_\epsilon$. If $z\in E_\epsilon$,
then either $D>\epsilon\ge d$ or $d>\epsilon\ge D$. In either case,
$\epsilon$ lies between $D$ and $d$. Thus
\begin{equation}\label{eq:mismatch-shell}
 \frac{\epsilon}{L_M}\le|z|\le\epsilon,
 \qquad |d(z)-\epsilon|\le|D(z)-d(z)|
                     \le C\epsilon^{1+\alpha}.
\end{equation}
In polar coordinates $z=r\theta$, the frozen distance is exactly
\[
 d(r\theta)=r\lambda_x(\theta),\qquad
 \lambda_x(\theta)=\sqrt{1+|P\theta|^2}\in[1,L_M].
\]
For a fixed direction $\theta$, the radial section of $E_\epsilon$
is contained in
\[
 \left\{r:\left|r-\frac{\epsilon}{\lambda_x(\theta)}\right|
       \le\frac{C\epsilon^{1+\alpha}}{\lambda_x(\theta)}\right\}
 \cap[\epsilon/L_M,\epsilon].
\]
Its one-dimensional measure is therefore at most
$2C\epsilon^{1+\alpha}$. This is only an interval containing the
section: no monotonicity of the actual distance along the ray is
required. Polar integration gives
\begin{align}
 |E_\epsilon|
 &=\int_{\mathbb S^{n-1}}\int_0^{r_0}
       \mathbf1_{E_\epsilon}(r\theta)r^{n-1}\,dr\,d\theta\notag\\
 &\le\epsilon^{n-1}\int_{\mathbb S^{n-1}}
        2C\epsilon^{1+\alpha}\,d\theta
 \le C\epsilon^{n+\alpha}.
 \label{eq:shell-volume}
\end{align}
For $n=1$ the same argument integrates over the two rays and uses
$r^{n-1}=1$.

On $E_\epsilon$, the frozen kernel satisfies
\[
 |\mathcal K_x^{\rm lin}(z)|
 \le |a(x)|\|P\|\,|z|\,|z|^{-n-1}
 \le C L_M^n\epsilon^{-n}.
\]
Subtracting the zero frozen-domain integral now gives
\begin{align*}
 \left|\int_{A_\epsilon}\mathcal K_x^{\rm lin}(z)\,dz\right|
 &=\left|\int
    (\mathbf1_{A_\epsilon}-\mathbf1_{F_\epsilon})
                       \mathcal K_x^{\rm lin}(z)\,dz\right|\\
 &\le\int_{E_\epsilon}|\mathcal K_x^{\rm lin}(z)|\,dz
 \le C\epsilon^{-n}\epsilon^{n+\alpha}=C\epsilon^\alpha.
\end{align*}
This proves \eqref{eq:cutoff-comparison}. The estimate uses both the measure of the symmetric difference and
the size of the kernel on that set.
\end{proof}

\subsection*{Convergence rate and the normalization of the full field}
Let $E_x(z)=\mathcal K_\varphi(x,x+z)-\mathcal K_x^{\rm lin}(z)$.
The previous lemma does not say that the actual frozen integral is
identically zero, but only that it tends to zero. Since $E_x$ is
integrable, the actual local truncated integral decomposes as
\begin{align*}
 \int_{A_\epsilon}\mathcal K_\varphi(x,x+z)\,dz
 -\int_{|z|<r_0}E_x(z)\,dz
 =\int_{A_\epsilon}\mathcal K_x^{\rm lin}(z)\,dz
 -\int_{\substack{|z|<r_0\\D(z)\le\epsilon}}E_x(z)\,dz.
\end{align*}
The second integration region lies inside $|z|\le\epsilon$, because
$D(z)\ge|z|$. Hence
\begin{align}
 \left|\int_{A_\epsilon}\mathcal K_\varphi(x,x+z)\,dz
       -\int_{|z|<r_0}E_x(z)\,dz\right|
 &\le C\epsilon^\alpha
        +C\int_{|z|\le\epsilon}|z|^{-n+\alpha}\,dz\notag\\
 &\le C_\alpha\epsilon^\alpha.
 \label{eq:local-pv-rate}
\end{align}
In particular, if $0<\delta<\epsilon<r_0$, subtraction of the two
estimates gives
\[
 \left|\int_{A_\delta}\mathcal K_\varphi(x,x+z)\,dz
       -\int_{A_\epsilon}\mathcal K_\varphi(x,x+z)\,dz\right|
 \le C_\alpha(\delta^\alpha+\epsilon^\alpha).
\]
The constants are uniform on the prescribed compact set of $x$.
The portion $|z|\ge r_0$ is unchanged by the inner truncation once
$\epsilon<r_0$, and is treated by absolute convergence in each
application. This proves both the local principal value and its local
uniform convergence by applying domination to the remainder after
subtracting the odd homogeneous term.

For the full vector kernel, the frozen term is
\[
 \widetilde K_x^{\rm lin}(z)
 =-a(x)\frac{(z,Pz)}{|L_xz|^{n+1}}.
\]
It is odd and of size $C|z|^{-n}$. The same three-error decomposition
and the same shell estimate apply componentwise. Let
$G_\epsilon=\{\epsilon<|z|<r_0\}$.
The frozen integrals over both $F_\epsilon$ and $G_\epsilon$ vanish.
These two centered domains need not have a small symmetric difference;
smallness is not needed because both integrals are exactly zero.
The integrable remainder has the same limit on either domain, whereas
the actual-versus-$F_\epsilon$ error tends to zero by the shell estimate.
This proves agreement of parameter-distance and graph-distance
principal values.

For the normalized tangential component there is one additional
background subtraction. In the coordinates $z=y-x$, its flat kernel is
$-z/|z|^{n+1}$, which is odd and has the same size bound. Its integral
vanishes under either centered frozen cutoff, and the actual cutoff
differs from the frozen one only on $E_\epsilon$. Its contribution to
that difference is again $O(\epsilon^\alpha)$. Thus subtracting the
flat tangential background is compatible with the two inner-cutoff
conventions for \eqref{eq:T-tangent}--\eqref{eq:T-normal}. The
subtraction, rather than assigning an absolutely convergent transform
to the infinite constant plane, fixes the far-field convention.

All H\"older expansions in this appendix require the stated regularity
of the density. They are applied to smooth signed functions during
approximation and to $1+\rho$ in the Sobolev theorem. For the merely
bounded density in Proposition~\ref{prop:normal-pv}, the condition is instead
localized and treated by strong $L^2$ graph-operator convergence; no
H\"older expansion of that density is asserted.

\section{Intrinsic density and parameter density}\label{app:density-conversion}
\subsection*{Bounded-slope conversion and square-integrable tails}
Writing $\widetilde g=g\circ X_\varphi$, the area formula gives
\[
 a(x)=\widetilde g(x)J_\varphi(x),\qquad
 J_\varphi(x)=\sqrt{\det(I+D\varphi(x)^TD\varphi(x))}.
\]
Consequently
\begin{equation}\label{eq:density-conversion}
 a-a_\infty
 =J_\varphi(\widetilde g-a_\infty)+a_\infty(J_\varphi-1).
\end{equation}
If $\|D\varphi\|\le M$, the eigenvalues of $D\varphi^TD\varphi$
are nonnegative and at most $M^2$, so
\[
 1\le J_\varphi\le(1+M^2)^{n/2}=:C_M.
\]
The smooth function $F(P)=\sqrt{\det(I+P^TP)}$ satisfies
$F(0)=1$ and $DF(0)=0$. Taylor's integral formula on the matrix ball
$|P|\le M$ therefore gives
\[
 |F(P)-1|
 =\left|\int_0^1(1-t)D^2F(tP)[P,P]\,dt\right|
 \le C_M|P|^2.
\]
Thus $J_\varphi-1\in L^2$ whenever $D\varphi\in L^4$.
Under this additional condition, \eqref{eq:density-conversion} and
its rearrangement
\[
 \widetilde g-a_\infty
 =J_\varphi^{-1}(a-a_\infty)-a_\infty J_\varphi^{-1}(J_\varphi-1)
\]
show that the two density-tail conditions are equivalent. Intrinsic
$L^2$ and parameter $L^2$ are comparable because
\[
 \|g-a_\infty\|_{L^2(\Gamma_\varphi,d\mathcal H^n)}^2
 =\int|\widetilde g(x)-a_\infty|^2J_\varphi(x)\,dx.
\]
The finite-half-energy graph theorem is stated in terms of $a$.
It does not infer $J_\varphi-1\in L^2$ merely from the finite
half-order seminorm. The intrinsic density formulation of that theorem
therefore requires the additional Jacobian condition stated above.

\subsection*{The Sobolev derivative estimate in intrinsic coordinates}
For the high-order theorem $D\varphi\in H^s$, $s>n/2$, and hence
$D\varphi\in L^2\cap L^\infty\subset L^4$. More precisely,
$\|D\varphi\|_4^4\le\|D\varphi\|_\infty^2\|D\varphi\|_2^2$.
Here it is possible to compare the high norms as well.

We first record the product estimate being used. For smooth functions
$u_1,\ldots,u_q\in H^s\cap L^\infty$, Leibniz's rule, the exponent
choice $p_i=2s/k_i$ for derivative orders with $\sum k_i=s$, and
Lemmas~\ref{lem:GN-derivative}--\ref{lem:weighted-AMGM} give
\begin{equation}\label{eq:ordinary-product-tame-detail}
 \left\|\prod_{i=1}^q u_i\right\|_{H^s}
 \le C_s(1+q)^s\sum_{\ell=1}^q
       \|u_\ell\|_{H^s}\prod_{i\ne\ell}\|u_i\|_\infty.
\end{equation}
For the zero-order term use one factor in $L^2$ and all others in
$L^\infty$. For a top derivative use H\"older with reciprocals summing
to $1/2$, interpolate each differentiated factor, and apply the weighted
geometric-mean inequality. The total Leibniz coefficient is $q^s$;
at most $s$ interpolation constants occur. These observations prove
\eqref{eq:ordinary-product-tame-detail} without appealing to a singular
integral estimate for an ordinary product. Approximation extends it to
the stated Sobolev class. In particular, for $q=2$,
\[
 \|uv\|_{H^s}\le C_s(\|u\|_\infty\|v\|_{H^s}
                        +\|u\|_{H^s}\|v\|_\infty).
\]

The matrix functions
$F_+(P)=\det(I+P^TP)^{1/2}-1$ and
$F_-(P)=\det(I+P^TP)^{-1/2}-1$
are analytic near zero, with zero constant and linear coefficients.
In a sufficiently small fixed matrix ball their power series contain
only degrees at least two. Applying
\eqref{eq:ordinary-product-tame-detail} to each monomial and summing the
convergent coefficient series gives
\begin{equation}\label{eq:Jacobian-tame}
 \|J_\varphi-1\|_{H^s}+\|J_\varphi^{-1}-1\|_{H^s}
 \le C_s\|D\varphi\|_\infty\|D\varphi\|_{H^s}.
\end{equation}
The same expansion, without derivatives, gives
$\|J_\varphi-1\|_\infty+\|J_\varphi^{-1}-1\|_\infty\le C\eta^2$
for $\eta=\|D\varphi\|_\infty$.

Put $j=J_\varphi-1$, $\gamma=\widetilde g-1$ and $\rho=a-1$.
Then
\[
 \rho=\gamma+j+\gamma j.
\]
The two-factor product estimate and \eqref{eq:Jacobian-tame} give
\begin{align}
 \|\rho\|_{H^s}
 &\le(1+C_s\eta^2)\|\gamma\|_{H^s}
       +C_s\eta(1+\|\gamma\|_\infty)\|D\varphi\|_{H^s}.
 \label{eq:intrinsic-forward-norm-detail}
\end{align}
Using
$\gamma=\rho+(J_\varphi^{-1}-1)+\rho(J_\varphi^{-1}-1)$
gives the reverse estimate with $\rho,\gamma$ interchanged.
Consequently, for the corresponding small slope and density norms,
\[
 \|D\varphi\|_{H^s}+\|\rho\|_{H^s}
 \asymp\|D\varphi\|_{H^s}+\|\gamma\|_{H^s}.
\]
The low norms also match quantitatively:
\[
 \|\rho\|_\infty
 \le(1+C\eta^2)\|\gamma\|_\infty+C\eta^2,
\]
and the converse follows from $J_\varphi^{-1}$. Thus a suitably small
intrinsic density perturbation and slope satisfy the smallness required
in Theorem~\ref{thm:high-stability}, with possibly adjusted fixed
constants. This is the precise sense in which that theorem has an
intrinsic-density formulation. The observed field itself is unchanged;
only the coordinates used for its density variable are converted.


\begin{thebibliography}{99}

\bibitem{ChristJourne}
M.~Christ and J.-L.~Journ\'e,
\emph{Polynomial growth estimates for multilinear singular integral operators},
Acta Math. \textbf{159} (1987), 51--80.

\bibitem{DallaRivaLuzziniMusolino}
M.~Dalla Riva, P.~Luzzini, and P.~Musolino,
\emph{Shape analyticity and singular perturbations for layer potential operators},
ESAIM, Math. model. numer. anal. \textbf{56} (2022), 1889-1910.

\bibitem{DavidSemmes}
G.~David and S.~Semmes,
\emph{Analysis of and on Uniformly Rectifiable Sets},
Mathematical Surveys and Monographs, Vol.~38,
American Mathematical Society, Providence, RI, 1993.

\bibitem{DiNezzaPalatucciValdinoci}
E.~Di Nezza, G.~Palatucci, and E.~Valdinoci,
\emph{Hitchhiker's guide to the fractional Sobolev spaces},
Bull. Sci. Math. \textbf{136} (2012), no.~5, 521--573.


\bibitem{GhoshSaloUhlmann}
T.~Ghosh, M.~Salo, and G.~Uhlmann,
\emph{The Calder\'on problem for the fractional Schr\"odinger equation},
Anal. PDE \textbf{13} (2020), no.~2, 455--475.
\href{https://doi.org/10.2140/apde.2020.13.455}{doi:10.2140/apde.2020.13.455}.

\bibitem{JayeNazarovI}
B.~Jaye and F.~Nazarov,
\emph{Reflectionless measures for Calder\'on--Zygmund operators I: General theory},
J. Anal. Math. \textbf{135} (2018), no.~2, 599--638.

\bibitem{JayeNazarovII}
B.~Jaye and F.~Nazarov,
\emph{Reflectionless measures for Calder\'on--Zygmund operators II: Wolff potentials and rectifiability},
J. Eur. Math. Soc. \textbf{21} (2019), no.~2, 549--583.


\bibitem{KatoPonce}
T.~Kato and G.~Ponce,
\emph{Commutator estimates and the Euler and Navier--Stokes equations},
Comm. Pure Appl. Math. \textbf{41} (1988), no.~7, 891--907.

\bibitem{MatiocMatioc}
A.-V.~Matioc and B.-V.~Matioc,
\emph{A new reformulation of the Muskat problem with surface tension},
J. Differential Equations \textbf{350} (2023), 308--335.


\bibitem{Muscalu}
C.~Muscalu,
\emph{Calder\'on commutators and the Cauchy integral on Lipschitz curves revisited I: First commutator and generalizations},
Rev. Mat. Iberoam. \textbf{30} (2014), no.~2, 727--750.

\bibitem{MuscaluII}
C.~Muscalu,
\emph{Calder\'on commutators and the Cauchy integral on Lipschitz curves
revisited II. The Cauchy integral and its generalizations},
Rev. Mat. Iberoam. \textbf{30} (2014), no.~3, 1089--1122.


\bibitem{NazarovTolsaVolberg}
F.~Nazarov, A.~Volberg and  X.~Tolsa,
\emph{On the uniform rectifiability of AD-regular measures with bounded
Riesz transform operator: the case of codimension 1},
Acta Math. \textbf{213} (2014), no.~2, 237--321.

\bibitem{SeegerSmartStreet}
A.~Seeger, C.~K. Smart, and B.~Street,
\emph{Multilinear singular integral forms of Christ--Journ\'e type},
Mem. Amer. Math. Soc. \textbf{257} (2019), no.~1231.


\bibitem{Stein}
E.~M. Stein,
\emph{Singular Integrals and Differentiability Properties of Functions},
Princeton Mathematical Series, Vol.~30,
Princeton University Press, Princeton, NJ, 1970.

\bibitem{Tolsa2008}
X.~Tolsa,
\emph{Principal values for Riesz transforms and rectifiability},
J. Funct. Anal. \textbf{254} (2008), no.~7, 1811--1863.

\bibitem{TolsaICM}
X.~Tolsa,
\emph{Interactions between quantitative rectifiability, singular integrals, and boundary value problems for harmonic functions},
Proceedings of the International Congress of Mathematicians 2026,
Vol.~2, 313--340, SIAM/IMU, 2026.

\bibitem{TolsaNewCriteria}
X.~Tolsa,
\emph{New criteria for the rectifiability of Radon measures in terms of
Riesz transforms}, preprint (2025), revised 2026.
\href{https://arxiv.org/abs/2512.14534v2}{arXiv:2512.14534v2}.

\end{thebibliography}
\end{document}